 \newtheorem{thm}{Theorem}[section]
 \newtheorem{prop}[thm]{Proposition}
 \newtheorem{cor}[thm]{Corollary}
 \newtheorem{lem}[thm]{Lemma}
\theoremstyle{definition}
\newtheorem{defn}[thm]{Definition}
\theoremstyle{remark}
\newtheorem{rem}[thm]{Remark}
\newcommand{\Z}{\mathbb{Z}}
\newcommand{\Q}{\mathbb{Q}}
\newcommand{\R}{\mathbb{R}}
\newcommand{\C}{\mathbb{C}}
\newcommand{\transpose}[1]{\text{$^t\!#1$}}
\newcommand{\G}{\ifmmode {\mathcal{G}}\else${\mathcal{G}}$\ \fi}
\def\sectionnam{\@empty}
\def\subsectionnam{\@empty}
\numberwithin{equation}{section}
\begin{document}

\title[Algebraicity and the $p$-adic Interpolation of Special $L$-values] 
{Algebraicity and the $p$-adic Interpolation of Special $L$-values for certain Classical Groups}

\author{Yubo Jin}
\address{Institute for Advanced Study in Mathematics\\ Zhejiang University\\ Hangzhou, 310058, China}
\email{yubo.jin@zju.edu.cn}

\subjclass[2020]{11F67, 11F55}
\keywords{doubling method; integral representations; Eisenstein series; special $L$-values; Deligne's conjecture; $p$-adic $L$-functions}
\date{\today}

\maketitle

\begin{abstract}
In this paper, we calculate the ramified local integrals in the doubling method and present an integral representation of standard $L$-functions for classical groups. We explicitly construct local sections of Eisenstein series such that the local ramified integrals represent certain ramified $L$-factors. As an application, we prove algebraicity of special $L$-values and construct $p$-adic $L$-functions for symplectic, unitary, quaternionic unitary and quaternionic orthogonal groups.
\end{abstract}

\tableofcontents

\section{Introduction}

One of the central problems in number theory is the study of special values of $L$-functions. The main object studied in this paper is the standard $L$-function for automorphic forms on classical groups. We present an integral representation for these $L$-functions using the doubling method. Utilizing the integral representation, we prove the algebraicity of certain special $L$-values and construct $p$-adic $L$-functions interpolating these values. We introduce our results and compare with works in the literature in the following three subsections.

\subsection{Integral representations}

Let $G$ be a classical group over a number field $F$ defined as in \eqref{2.1.2} or \eqref{2.1.4}. The first main theme of this paper is an integral representation for standard $L$-functions of classical groups. One way to obtain such an integral representation is the doubling method originated in \cite{G84b, Piatetski-ShapiroRallis1987Doubling}. Take a cuspidal representation $\pi$ of $G(\mathbb{A})$ and a cusp form $\phi\in\pi$. We consider a doubling embedding \eqref{2.1.7} $G\times G\to H$ into a bigger classical group $H$ defined as in \eqref{2.1.6}. The main strategy of the doubling method is to pullback an Eisenstein series $E(h;f_s)$ on $H(\mathbb{A})$ induced from a character $\chi$ along such an embedding and to pair it with the cusp form on $G(\mathbb{A})$. That is, we consider the global integral of the form
\begin{equation}
\label{111}
\begin{aligned}
&\mathcal{Z}(s;\phi,f_s)\\
=&\int_{G(F)\times G(F)\backslash G(\mathbb{A})\times G(\mathbb{A})}E((g_1,g_2);f_s)\overline{\phi_1(g_1)}\phi_2(g_2)\chi(\nu(g_2))^{-1}dg_1dg_2,
\end{aligned}
\end{equation}
where $(g_1,g_2)$ is the image of $g_1,g_2\in G(\mathbb{A})$ in $H(\mathbb{A})$ and $\phi_1,\phi_2$ are certain translate of $\phi$.

It is shown in \cite{Piatetski-ShapiroRallis1987Doubling} that this global integral has an Euler product expression \eqref{2.2.11} and thus reduces the study of \eqref{111} to the study of local integrals place by place. It is also well known that one can make a proper choice of the section $f_s$ such that $\mathcal{Z}(s;\phi,f_s)$ represents the partial $L$-function of $\phi$, i.e. all the ramified local $L$-factors are set to $1$. For the study of arithmetic problems of special $L$-values and especially the construction of $p$-adic $L$-functions, the information at ramified places is indispensable.

The definition of local $L$-factors is indeed a fundamental problem in the study of automorphic representations. In \cite{L70}, Langlands conjectured that one can associate to any cuspidal representation $\pi=\otimes\pi_v$ a local $L$-factor $L_v(s,\pi_v)$ and an epsilon factor such that the global $L$-function $L(s,\pi)$ satisfies a functional equation. In \cite{Y14}, Yamana defined these local factors and proved the functional equation using the doubling method. His approach works for all irreducible automorphic representations of classical groups and is used in proving some analytic properties of $L$-functions. However, he did not construct the local sections of Eisenstein series and did not compute the local integrals explicitly so it is not clear how his computations can be used to study the algebraicity of special $L$-values or to construct $p$-adic $L$-functions.

We study the ramified local integrals in a different way which is inspired by \cite{Sh95}. Assume $\phi$ is fixed by some open compact subgroup $K(\mathfrak{n})$ \eqref{3.4.2} and is an eigenform for a certain Hecke algebra $\mathcal{H}(K(\mathfrak{n}),\mathfrak{X})$. For a Hecke character $\chi$ whose conductor divides $\mathfrak{n}$, we define the $L$-function $L(s,\phi\times\chi)$ to be a Dirichlet series of the Hecke eigenvalues of $\phi$. This extends the definition of the $L$-function for symplectic groups in \cite{Sh95} and is an analogue of the $L$-functions for classical (elliptic) modular forms defined by Dirichlet series of Fourier coefficients. In particular, the $L$-function $L(s,\phi\times\chi)$ has all bad Euler factors outside the conductor of $\chi$. In \cite{Sh95}, Shimura constructs local sections of Eisenstein series explicitly at all places such that $\mathcal{Z}(s;\phi,f_s)$ represents $L(s,\phi\times\chi)$. Our main theorem on integral representation, which extends his result to all classical groups, is stated as follow.

\begin{thm}
\label{theorem 1.1}
(Theorem \ref{theorem 2.2}, \ref{theorem 6.1}) There is a choice of $f_s$ such that
\begin{equation}
\mathcal{Z}(s;\phi,f_s)=C\cdot L\left(s+\frac{1}{2},\phi\times\chi\right)\cdot\mathcal{Z}_{\infty}(s;\phi_{\infty},f_s^{\infty})\cdot\langle\phi',\phi\rangle,
\end{equation}
where $C$ is some nonzero constant depending on $s$, $\phi'$ is some simple translate of $\phi$ and $\mathcal{Z}_{\infty}(s;\phi_{\infty},f_s^{\infty})$ a nonzero constant depending on the choice of the archimedean  section $f_s^{\infty}$. When the underlying symmetric space of $G$ is hermitian, and $\phi$ is a holomorphic cusp form as in Definition \ref{definition 5.5}, we can further make a choice of $f_s^{\infty}$ such that $\mathcal{Z}_{\infty}(s;\phi_{\infty},f_s^{\infty})$ is the constant given in Proposition \ref{proposition 5.4}.
\end{thm}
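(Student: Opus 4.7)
The plan is to reduce the global integral $\mathcal{Z}(s;\phi,f_s)$ to a product of local zeta integrals and then construct, place by place, a distinguished section $f_{s,v}$ whose local integral yields exactly the desired local $L$-factor (or, at the archimedean place, the prescribed constant). Invoking the standard unfolding of the doubling integral, as in Piatetski-Shapiro--Rallis and referenced via \eqref{2.2.11}, one obtains a factorization
\[
\mathcal{Z}(s;\phi,f_s) \;=\; \langle \phi',\phi\rangle \cdot \prod_{v} \mathcal{Z}_v(s;\phi_v,f_{s,v})
\]
once $f_s = \otimes_v f_{s,v}$ is decomposable and $\phi'$ absorbs the effect of the doubling embedding on the Petersson pairing. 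The proof then splits according to the type of place $v$.

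At a finite place $v \nmid \mathfrak{n}$, the local vector $\phi_v$ is spherical and $\chi_v$ is unramified; I would take $f_{s,v}$ to be the normalized spherical section of the degenerate principal series on $H(F_v)$. The unramified doubling computation (the Piatetski-Shapiro--Rallis identity) then expresses $\mathcal{Z}_v$ as $L_v(s+\tfrac12,\pi_v\times\chi_v)$ divided by the product of standard abelian $L$-factors coming from the intertwining operator; the reciprocals of these abelian factors assemble, together with volume constants, into the nonzero constant $C$ appearing in the theorem.

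At a finite place $v \mid \mathfrak{n}$, which is the technical heart of the theorem and the direct analogue of Shimura's construction in \cite{Sh95}, I would build $f_{s,v}$ as an explicit section supported on a Bruhat cell adapted to the open compact subgroup $K(\mathfrak{n})$ of \eqref{3.4.2}. The key requirement is that, when paired with $\phi_v$ through the local integral, this section reproduces the Dirichlet series of Hecke eigenvalues that \emph{defines} $L(s,\phi\times\chi)$ at $v$. Concretely, one arranges the section to be an appropriate eigenvector under the Hecke algebra $\mathcal{H}(K(\mathfrak{n}),\mathfrak{X})$, so that the local integral becomes a geometric series in $q_v^{-s}$ summing to the correct local Euler factor; when $v$ divides the conductor of $\chi$ the local factor collapses to $1$, matching the fact that all bad factors of $L(s,\phi\times\chi)$ lie outside $\mathrm{cond}(\chi)$.

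At the archimedean place, for a general cusp form $\phi_\infty$ one simply chooses any admissible section $f_{s,v}^\infty$ and encapsulates the resulting contribution in $\mathcal{Z}_\infty(s;\phi_\infty,f_s^\infty)$, which is automatically a nonzero scalar. When the underlying symmetric space is Hermitian and $\phi$ is holomorphic in the sense of Definition \ref{definition 5.5}, I would instead take $f_{s,v}^\infty$ to be the canonical scalar-weight section attached to the weight of $\phi$; the computation carried out in Proposition \ref{proposition 5.4} then evaluates $\mathcal{Z}_\infty$ to the explicit constant stated there.

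The main obstacle is the uniform construction at ramified finite places across the four families of classical groups (symplectic, unitary, quaternionic unitary, quaternionic orthogonal). For each family the Siegel parabolic of $H$, the doubling embedding, and the relevant Hecke algebra all have somewhat different combinatorics, and one must verify in every case that the proposed $f_{s,v}$ (a) decomposes correctly under the Iwasawa decomposition of $H(F_v)$, (b) is supported compatibly with $K(\mathfrak{n})$ so that the integral against $\phi_v$ is nonzero only on the Hecke orbit, and (c) produces the eigenvalue identity matching the Dirichlet series definition of $L(s,\phi\times\chi)$. Tracking the normalizing constant $C$ through these four parallel calculations, and ensuring it is independent of the ramified data in the way asserted, is the most delicate point of the argument.
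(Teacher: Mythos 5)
Your overall plan — factor the global integral into local zeta integrals, construct a distinguished section place by place, and verify that each local integral produces the corresponding local $L$-factor or archimedean constant — is the right framework and matches the high-level structure of the paper's proof (Euler product \eqref{2.2.11} combined with Propositions \ref{proposition 4.1}, \ref{proposition 4.2}, \ref{proposition 4.4}, \ref{proposition 5.4}). But there are three concrete points where your argument as written would not go through.

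First, the factorization you posit, namely $\mathcal{Z}(s;\phi,f_s)=\langle\phi',\phi\rangle\cdot\prod_v\mathcal{Z}_v(s;\phi_v,f_{s,v})$, does not exist as a prior identity. The Euler product gives $\mathcal{Z}=\prod_v\mathcal{Z}_v$, where each $\mathcal{Z}_v$ already contains the local matrix coefficient $\langle\pi(g)\phi_{1,v},\phi_{2,v}\rangle$. The quantity denoted $\langle\phi',\phi\rangle$ in the theorem (written out in \eqref{2.3.7} as $\prod_{v\nmid\infty}\langle\pi(\eta)\phi_v|U'(\mathfrak{n}_1),\phi_v\rangle$) is not a factor you can pull out of the doubling integral in advance; it is the \emph{output} of the local computations at ramified and unramified places, where the local matrix coefficient reduces, after integrating over the support of $f_{s,v}$, to the indicated Hecke-translated inner product. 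Your phrase that $\phi'$ ``absorbs the effect of the doubling embedding on the Petersson pairing'' does not describe a valid reduction.

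Second, you treat all ramified finite places uniformly, but the paper (and the theorem as stated) requires two essentially different constructions there. When $v\mid\mathfrak{n}_1$, i.e.\ $\chi_v$ is unramified, the section $f_s^{\dagger,\mathfrak{c}_v}$ supported on a single Bruhat cell adapted to $K(\mathfrak{p}^{\mathfrak{c}_v})$ works, and the local integral produces exactly the ramified Euler factor times a Hecke eigenvalue term. But when $v\mid\mathfrak{n}_2$, i.e.\ $\chi_v$ is ramified, that section would give an identically vanishing integral; one must instead use the twisted section $f_s^{\ddagger,\mathfrak{c}_v}$ of \eqref{4.4.1}, a character-weighted average over $\mathrm{GL}_m(\mathcal{O})/\varpi^{\mathfrak{c}}\mathrm{GL}_m(\mathcal{O})$ producing a Gauss-sum structure. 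This twist is the device that makes the integral nonzero in the ramified-character case, and it is precisely what the theorem's claim about the local factor being $1$ at $v\mid\mathfrak{n}_2$ depends on. Your proposal as phrased (``arranges the section to be an appropriate eigenvector under the Hecke algebra'') is not the right mechanism — the section is not a Hecke eigenvector; $\phi_v$ is — and does not address the ramified-$\chi$ obstruction at all.

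Third, a structural input you omit entirely: the test vectors in the local pairings are not $\phi_v$ itself but explicit Weyl-element translates, $\phi_1=\pi(\eta_1)\phi$ and $\phi_2=\pi(\eta_2)\phi$ (see \eqref{2.3.5} and \eqref{6.1}), chosen to place the support of the translated form in exact correspondence with the support of the local section. Without this translation, the local ramified integrals do not land on the Hecke double cosets that define $L(s,\phi\times\chi)$. Finally, a minor normalization point: your plan to collect the abelian (intertwining) factors into the constant $C$ represents a different $L$-function than the one the theorem asserts; in the paper these factors $b(s,\chi)$ are built into the normalization of the unramified section $f_s^0$ in \eqref{4.2.1} and into the definition \eqref{3.1.5} of the $L$-function itself, so that $C$ comes purely from the finitely many ramified places.
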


If $G$ is a unitary group, we assume all places $v|\mathfrak{n}$ are nonsplit in the quadratic extension defining the group $G$. This is only for simplicity and also because the split case is well studied in \cite{HLS} and \cite{EHLS}. The main difficulty for extending the result of \cite{Sh95} is to deal with the classical groups which are not totally isotropic (i.e. $r>0$ in \eqref{2.1.3}). In this case, the doubling map and the image of the doubling embedding \eqref{2.1.9} are much more involved which complicates the computations. For the purpose of constructing $p$-adic $L$-functions, our local sections are also properly chosen such that the Eisenstein series has a nice Fourier expansion. This is indeed the core technical issue of this work. We do not study the archimedean integral in general in this work. For the special cases we are considering, the archimedean computations follow from \cite{Sh97, Sh00} and \cite{JYB1}.

Recently, Cai, Friedberg, Ginzburg and Kaplan \cite{CaiFriedbergGinzburgKaplan2019} presented an integral representation for $\mathrm{Sp}_{2n}\times\mathrm{GL}_k$ by the twisted doubling method generalizing the classical doubling method studied in this paper. In \cite{Cai2021}, the unfolding of the global integral are also worked out for $G\times\mathrm{GL}_k$ with $G$ any classical group. It will also be important to study the ramified integrals derived from the twisted doubling method.  For example, one should expect that one can define the local $L$-factors and prove the functional equations for standard $L$-functions of $G\times\mathrm{GL}_k$ as in \cite{Y14}. It is also an interesting question whether one can construct local sections and compute the ramified integral for $G\times\mathrm{GL}_k$ explicitly as we have done here for $G\times\mathrm{GL}_1$.

\subsection{The algebraic result}

The celebrated Deligne's conjecture \cite{D79} claims that the critical values of motivic $L$-functions, up to certain periods, are algebraic numbers. In this paper, we study the automorphic counterpart of this conjecture. As the approach here relies heavily on the theory of Shimura varieties, we restrict ourselves to the classical group $G$ whose underlying symmetric space is hermitian. Such groups (except some orthogonal groups) are listed at the beginning of Section \ref{section 5}. In these cases, the notion of the algebraic modular forms is well defined. We refer the reader to the beginning of Section \ref{section 5.3} for a summary of various characterizations of algebraic modular forms in the literature. All of them rely on the fact that the symmetric space of $G$ is hermitian so that one can associate $G$ to a Shimura variety.

We fix the following setup. Assume $F$ is a totally real number field. Let $\boldsymbol{l}=(l,...,l)$ be a parallel weight satisfying
\begin{equation}
\label{1.3}
\begin{aligned}
l&\geq\left\{\begin{array}{cc}
m+1 & \text{ Case II,}\\
n+1 & \text{ Case III, IV, V.}\\
\end{array}\right.&&\text{ when }F\neq\Q,\\
l&\geq\left\{\begin{array}{cc}
m+1 & \text{ Case II,}\\
n+r+1 & \text{ Case III, IV, V,}\\
\end{array}\right.&&\text{ when }F=\Q,
\end{aligned}
\end{equation}
with $m,n,r$ as in \eqref{2.1.3}.
Fix a specific prime $\boldsymbol{p}$ of $F$ and an integral ideal $\mathfrak{n}=\mathfrak{n}_1\mathfrak{n}_2=\prod_{v}\mathfrak{p}_v^{\mathfrak{c}_v}$ with $\mathfrak{n}_1,\mathfrak{n}_2,\boldsymbol{p}$ coprime. We make the following assumptions:\\
(1) $2\in\mathcal{O}_v^{\times}$ and $\theta\in\mathrm{GL}_r(\mathcal{O}_v)$ for all $v|\mathfrak{n}\boldsymbol{p}$.\\
(2) $\boldsymbol{f}\in\mathcal{S}_{\boldsymbol{l}}(K(\mathfrak{n}\boldsymbol{p}),\overline{\Q})$ is an algebraic eigenform for the Hecke algebra $\mathcal{H}(K(\mathfrak{n}\boldsymbol{p}),\mathfrak{X})$ as in Section \ref{section 3.4}.\\
(3) $\boldsymbol{f}$ is an eigenform for the $U(\boldsymbol{p})$ operator defined in \eqref{3.2.8} with eigenvalue $\alpha(\boldsymbol{p})\neq 0$.\\
(4) $\chi=\chi_1\boldsymbol{\chi}$ with $\chi_1$ has conductor $\mathfrak{n}_2$ and $\boldsymbol{\chi}$ has conductor $\boldsymbol{p}^{\boldsymbol{c}}$ for some integer $\boldsymbol{c}\geq 0$. We assume $\chi$ has infinity type $\boldsymbol{l}$. That is, $\chi_v(x)=x^{l}|x|^{-l}$ for all $v|\infty$.\\
(5) When $G$ is the unitary group, we assume all places $v|\mathfrak{n}\boldsymbol{p}$ are nonsplit in the imaginary quadratic extension $E/F$ defining the unitary group.

We are interested in the special value of the $L$-function $L\left(s+\frac{1}{2},\boldsymbol{f}\times\chi\right)$ at \begin{equation}
\label{1.4}
s=s_0:=\left\{\begin{array}{cc}
l-\kappa & \text{ Case II, III, IV,}\\
\frac{l}{2}-\kappa & \text{ Case V}.
\end{array}\right.
\end{equation}
with $\kappa$ a constant depending on $n$ given in \eqref{2.2.5}. Our main theorem on algebraicity is stated as follow.
\begin{thm}
\label{theorem 1.2}
(Theorem \ref{theorem 8.1}) For $l,s_0$ as above,
\begin{equation}
\begin{aligned}
&\frac{L\left(s_0+\frac{1}{2},\boldsymbol{f}\times\chi\right)}{\pi^{d(F)\boldsymbol{d}(\pi)}\Omega\cdot\langle\boldsymbol{f},\boldsymbol{f}\rangle}\in\overline{\Q},\qquad & \text{ if }\boldsymbol{c}>0,\\
&\frac{L\left(s_0+\frac{1}{2},\boldsymbol{f}\times\chi\right)M\left(s_0+\frac{1}{2},\boldsymbol{f}\times\chi\right)}{\pi^{d(F)\boldsymbol{d}(\pi)}\Omega\cdot\langle\boldsymbol{f},\boldsymbol{f}\rangle}\in\overline{\Q},\qquad & \text{ if }\boldsymbol{c}=0,
\end{aligned}
\end{equation}
where $d(F)=[F:\Q]$, $\boldsymbol{d}(\pi)$ is the constant given in \eqref{8.4}, $\langle\cdot,\cdot\rangle$ is the Petersson inner product and $M\left(s_0+\frac{1}{2},\boldsymbol{f}\times\chi\right)$ is the modification factor listed in Proposition \ref{proposition 4.6}. Here $\Omega=1$ in Case II, III, IV and in Case V, $\Omega$ is the CM period \eqref{cmperiod} depending only on the group $G$.
\end{thm}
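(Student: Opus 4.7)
The plan is to deploy Theorem~\ref{theorem 1.1} at $s=s_0$ and reduce the algebraicity assertion to Shimura's theory of nearly holomorphic forms and the algebraicity of the Petersson pairing on algebraic modular forms.

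First, I would substitute $s=s_0$ into the integral representation of Theorem~\ref{theorem 1.1} and solve for the $L$-value:
\[
L\left(s_0+\tfrac{1}{2},\boldsymbol{f}\times\chi\right)=\frac{\mathcal{Z}(s_0;\boldsymbol{f},f_{s_0})}{C\cdot\mathcal{Z}_{\infty}(s_0;\boldsymbol{f}_{\infty},f_{s_0}^{\infty})\cdot\langle\boldsymbol{f}',\boldsymbol{f}\rangle}.
\]
By the second assertion of Theorem~\ref{theorem 1.1}, the archimedean factor is the explicit constant of Proposition~\ref{proposition 5.4}, which contributes the power $\pi^{d(F)\boldsymbol{d}(\pi)}$ together with algebraic gamma values; the constant $C$ and the translate $\boldsymbol{f}'$ are controllably algebraic. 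Hence the stated algebraicity reduces to showing that $\mathcal{Z}(s_0;\boldsymbol{f},f_{s_0})/\langle\boldsymbol{f},\boldsymbol{f}\rangle$ lies in $\overline{\Q}$, respectively in $\overline{\Q}\cdot\Omega$ in Case~V.

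Next, I would use the weight condition \eqref{1.3} and the choice of $s_0$ in \eqref{1.4} to recognise this as the distinguished point at which the Eisenstein series $E(h;f_{s_0})$ is nearly holomorphic of parallel weight $\boldsymbol{l}$, along the lines of \cite{Sh97,Sh00} and \cite{JYB1}. The archimedean section is chosen so that the Eisenstein series is nearly holomorphic at infinity, while the finite-place sections from Theorem~\ref{theorem 1.1} are tailored so that its global Fourier expansion has algebraic coefficients (up to $\Omega$ in Case~V, where the relevant Shimura variety involves CM abelian varieties). Pulling back along the doubling embedding $G\times G\hookrightarrow H$ yields a nearly holomorphic modular form on $G\times G$ of parallel weight $\boldsymbol{l}$ with algebraic $q$-expansion in the same sense. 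Applying Shimura's holomorphic projection and the algebraicity of the Petersson pairing between algebraic holomorphic cusp forms and algebraic nearly holomorphic forms, the global integral $\mathcal{Z}(s_0;\boldsymbol{f},f_{s_0})$ becomes an algebraic multiple of $\Omega\cdot\langle\boldsymbol{f},\boldsymbol{f}\rangle$.

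Finally, I would account for the prime $\boldsymbol{p}$ and the modification factor. When $\boldsymbol{c}>0$ the section at $\boldsymbol{p}$ is the ramified one produced in Theorem~\ref{theorem 1.1}, whose local integral contributes to $C$ a factor proportional to the $U(\boldsymbol{p})$-eigenvalue $\alpha(\boldsymbol{p})$; the nonvanishing hypothesis $\alpha(\boldsymbol{p})\neq 0$ is precisely what is needed to keep this denominator nonzero. When $\boldsymbol{c}=0$, the section at $\boldsymbol{p}$ is spherical and the local integral computes the full unramified Euler factor; Proposition~\ref{proposition 4.6} measures the discrepancy between what the spherical integral contributes and the Euler factor absorbed by the Hecke-eigenvalue definition of $L(s,\boldsymbol{f}\times\chi)$, which is exactly the factor $M$ that must be inserted to recover algebraicity. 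The hard part, inherited from \cite{Sh95}, is the verification that the pullback Eisenstein series with the explicit sections from Theorem~\ref{theorem 1.1} has Fourier coefficients compatible with Shimura's rational structure on nearly holomorphic forms at the bad primes dividing $\mathfrak{n}\boldsymbol{p}$; once this compatibility is in hand, the rationality of the Petersson pairing and the explicit archimedean constant package everything into the claimed form.
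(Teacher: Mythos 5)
Your proposal takes a genuinely different route from the paper, and the difference is not cosmetic: it would force a stronger hypothesis on the weight than the theorem actually allows.

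You propose to recognize the Eisenstein series at $s=s_0$ as nearly holomorphic, pull it back along the doubling embedding, apply Shimura's holomorphic projection, and invoke the algebraicity of the Petersson pairing. The paper's proof of Theorem~\ref{theorem 8.1} explicitly does not follow this line, and the Remark inside that proof explains why: the holomorphic-projection / direct-sum argument in \cite{Sh00,JYB1} relies on \cite{H84}, which requires the Eisenstein series to be absolutely convergent at $s=s_0$, and that would force $l\geq 2m+1$ (Case~II) or $l\geq 2n+1$ (Cases~III,IV,V). The theorem as stated only assumes the smaller bound \eqref{8.1}, so holomorphic projection is not available in the range being claimed. Instead the paper observes, using the Fourier-coefficient analysis in Proposition~\ref{proposition 7.10} and the action of $\boldsymbol{U}(\boldsymbol{p}^{\boldsymbol{n}-1})$, that the sections have been engineered so only $\beta>0$ contributes; the Eisenstein series $\boldsymbol{E}(g_1,g_2;f_{s_0})$ is therefore \emph{genuinely holomorphic}, not merely nearly holomorphic, and no holomorphic projection is needed. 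It is then written as a finite linear combination $\sum a_{ij}\boldsymbol{f}_i^1\otimes\boldsymbol{f}_j^{2,c}|\boldsymbol{V}$ plus an orthogonal complement, the coefficient $a_{11}$ is extracted by Petersson inner products and identified with the normalized $L$-value via Corollary~\ref{corollary 6.4}, and a $\sigma\in\mathrm{Aut}(\C/\overline{\Q})$-equivariance argument using Corollary~\ref{corollary 7.12} (algebraicity of the Fourier coefficients $\mathbb{E}(\beta;\chi)/\pi^{d(F)d(\pi)}$) shows that this ratio is $\sigma$-invariant. The hypothesis $\alpha(\boldsymbol{p})\neq 0$ enters to guarantee the Petersson pairing against $\boldsymbol{f}^1$ is nonvanishing; it is not merely about keeping a denominator in $C$ nonzero.

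Two smaller issues. First, when $\boldsymbol{c}=0$ the section at $\boldsymbol{p}$ is \emph{not} spherical: Theorem~\ref{theorem 6.1} uses the specifically constructed $p$-adic section $f_{s,\boldsymbol{p}}^{p}$ from \eqref{4.5.6}, and the modification factor $M$ in Proposition~\ref{proposition 4.6} arises from that section's local zeta integral, not from a spherical/Hecke-eigenvalue discrepancy. Second, your statement that one reduces to showing $\mathcal{Z}(s_0;\boldsymbol{f},f_{s_0})/\langle\boldsymbol{f},\boldsymbol{f}\rangle\in\overline{\Q}$ elides the additional inner product $\langle\pi(\eta)\boldsymbol{f}|U'(\mathfrak{n}_1),\boldsymbol{f}\rangle$ appearing in \eqref{6.16}--\eqref{6.17}; handling this factor is precisely where the paper's orthogonal-decomposition bookkeeping (the terms $\langle\boldsymbol{f}^2|\boldsymbol{V},\boldsymbol{f}^2\rangle$) earns its keep, and it is not obviously a controllable algebraic constant without the $\sigma$-argument.
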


When the group is totally isotropic (i.e. $r=0$ in \eqref{2.1.3}), we also obtain a refined version of the above theorem. That is we describe the action of $\mathrm{Gal}(\overline{\Q}/F)$ on these special $L$-values in Theorem \ref{theorem 8.2}. The proof of this theorem uses the standard strategy in \cite{BS} and \cite{Sh00}. That is, we derive the algebraicity from the integral representation \eqref{6.16}, \eqref{6.17} reformulated from Theorem \ref{theorem 1.1}, \ref{theorem 2.2}, \ref{theorem 6.1} and the algebraic properties of the Fourier coefficients of Eisenstein series in Corollary \ref{corollary 7.12}. 

This kind of result is also obtained in \cite{BS, Sh95, Sh00} for symplectic and unitary groups and in \cite{JYB1} for quaternionic unitary groups. We explain what is new in this work. First of all, all these works, except \cite{Sh95} for symplectic groups, only consider partial $L$-functions while the $L$-function considered here includes those ramified $L$-factors. Of course, there are only finitely many missing $L$-factors in the partial $L$-function and if these ramified $L$-factors are known to be algebraic one may manually add these factors to the algebraicity result of partial $L$-functions. But this only make sense at the special values beyond the absolutely convergence bound, i.e. $s=s_0>\kappa$. Secondly, inspired by \cite{BS}, our local sections of the Eisenstein series are chosen such that we only need information about the Fourier coefficients of rank greater or equal to $2m$ (where $m$ is the Witt index of $G$). This allows us to get a better bound on $l$ and discuss the special values below the absolutely convergence bound.

The orthogonal groups are not studied here for two reasons. Firstly, when the symmetric space of $G$ is hermitian (i.e. $G(F_v)$ has Witt index $2$ for any archimedean places $v$), the symmetric space of $H$ is no longer hermitian so that one need to carefully define the meaning of algebraic modular forms on $H(\mathbb{A})$. Secondly, the archimedean computations for the Fourier coefficients of Eisenstein series will involve certain generalized Bessel functions studied in \cite{Sh99b}. The analytic properties are studied there but there are no explicit formulas as for the confluent hypergeometric function in \cite{Sh82} so that we do not know the algebraic properties of these functions so far.

\subsection{The $p$-adic $L$-function}

We keep the setup in the previous subsection. In particular, we fix a prime ideal $\boldsymbol{p}$ of $F$ above an odd prime number $p$.

Once the algebraicity of special $L$-values is known, one can ask about the $p$-adic interpolation of these values. Keeping the setup as in last subsection, our main theorem on $p$-adic $L$-functions (Theorem \ref{theorem 8.5}, Theorem \ref{theorem 8.6}, \eqref{8.12}, \eqref{8.13}, \eqref{8.18}, \eqref{8.19}) is stated as follows.

\begin{thm}
\label{theorem 1.3}
Assume $\boldsymbol{f}$ is $\boldsymbol{p}$-ordinary in the sense that $\alpha(\boldsymbol{p})\in\mathcal{O}_{\C_p}^{\times}$. Fix $\chi_1$ be a Hecke character of conductor $\mathfrak{n}_2$ and infinity type $\boldsymbol{l}$. That is $\chi_{1,v}(x)=x^{l}|x|^{-l}$ for any $v|\infty$.
 \\
(1) For unitary and quaternionic unitary groups, there exists a $p$-adic measure $\mu(\boldsymbol{f})$ on $\mathrm{Cl}_E^+(\boldsymbol{p}^{\infty})$ such that for any finite order Hecke character $\boldsymbol{\chi}$ of conductor $\boldsymbol{p^c}$,
\begin{equation}
\begin{aligned}
\int_{\mathrm{Cl}_E^+(\boldsymbol{p}^{\infty})}\boldsymbol{\chi}d\mu(\boldsymbol{f})&=|\boldsymbol{\varpi}|^{\boldsymbol{c}\mathbf{d}_1\frac{m(m-1)}{2}}\pi^{d(F)d(\pi)}\left(c_l(s_0)\prod_{i=0}^{n-1}\Gamma(\mathbf{d}_1(l-i))\right)^{d(F)}\\
&\times G^D(\chi)^{-m}M\left(s_0+\frac{1}{2},\boldsymbol{f}\times\chi\right)\cdot\frac{L\left(s_0+\frac{1}{2},\boldsymbol{f}\times\chi\right)}{\Omega\cdot\langle\boldsymbol{f},\boldsymbol{f}\rangle}.
\end{aligned}
\end{equation}
(2) For symplectic groups we assume the Witt index $m$ has the same parity with the weight $l$ of $\boldsymbol{f}$, i.e. $l\equiv m\text{ mod }2$. For quaternionic orthogonal groups, we assume $\boldsymbol{p}$ splits in the quaternion algebra $D$ if the group is not totally isotropic (i.e. $r>0$ in \eqref{2.1.3}). Then in these two cases, there exists a $p$-adic measure $\mu(\boldsymbol{f})$ on $\mathrm{Cl}_F^{+}(\boldsymbol{p}^{\infty})$ such that for any finite order Hecke character $\boldsymbol{\chi}$ of conductor $\boldsymbol{p^c}$,
\begin{equation}
\begin{aligned}
&\int_{\mathrm{Cl}_F^+(\boldsymbol{p}^{\infty})}\boldsymbol{\chi}d\mu(\boldsymbol{f})\\
=&|\boldsymbol{\varpi}|^{\boldsymbol{c}\mathbf{d}_1\frac{m(m-1)}{2}}N_{F/\Q}(\boldsymbol{p})^{\boldsymbol{c}\left(s_0-\frac{1}{2}\right)}G^D(\chi)^{-m}G^F(\chi)^{-1}\pi^{d(F)d(\pi)}\\
\times&\left(c_l(s_0)\Gamma\left(s_0+\frac{1}{2}\right)\prod_{i=0}^{n\mathbf{d}_1-1}\Gamma\left(l-\frac{i}{2}\right)\right)^{d(F)}\cdot\frac{L_{\boldsymbol{p}}\left(s_0+\frac{1}{2},\chi\right)}{L_{\boldsymbol{p}}\left(\frac{1}{2}-s_0,\chi^{-1}\right)}\\
\times&M\left(s_0+\frac{1}{2},\boldsymbol{f}\times\chi\right)\cdot\frac{L\left(s_0+\frac{1}{2},\boldsymbol{f}\times\chi\right)}{\langle\boldsymbol{f},\boldsymbol{f}\rangle}.
\end{aligned}
\end{equation}

Here we are again denoting $\chi=\boldsymbol{\chi}\chi_1$ when $\boldsymbol{\chi}$ varying and:\\
(a) $E/F$ is an imaginary quadratic extension defining the unitary group and in other cases $E=F$,\\
(b) $\Omega=1$ for quaternionic unitary groups and $\Omega$ is the CM period \eqref{cmperiod} for unitary groups,\\
(c) $\mathrm{Cl}_E^+(\boldsymbol{p}^{\infty})$ is the $p$-adic analytic group defined in \eqref{8.2.2},\\
(d) $d(F)=[F:\Q]$, $\mathbf{d}_1=2$ for two quaternionic cases and $\mathbf{d}_1=1$ for symplectic and unitary groups,\\
(e) $G^D(\chi)$ is the Gauss sum of $\chi$ defined on $D$,\\
(f) $c_l(s_0)$ is given by Proposition \ref{proposition 5.4} and $d(\pi)$ is given in \eqref{7.4.8},\\
(g) $M(s_0+\frac{1}{2},\boldsymbol{f}\times\chi)$ is given in Proposition \ref{proposition 4.6} if $\boldsymbol{c}=0$ and is understood as $1$ if $\boldsymbol{c}>0$,\\
(h) $L_{\boldsymbol{p}}\left(s_0+\frac{1}{2},\chi\right)^{-1}=1-\chi(\boldsymbol{\varpi})|\boldsymbol{\varpi}|^{s_0+\frac{1}{2}}$ if $\boldsymbol{c}=0$ and is understood as $1$ if $\boldsymbol{c}>0$.
\end{thm}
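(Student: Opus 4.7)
The plan is to obtain the $p$-adic measure by $p$-adically interpolating the Eisenstein-series side of the integral representation in Theorem \ref{theorem 1.1} and then pairing the resulting measure-valued family of Eisenstein series against the ordinary form $\boldsymbol{f}$. First I would reformulate Theorem \ref{theorem 1.2}, via Theorem \ref{theorem 1.1}, as an identity expressing the normalized $L$-value $L(s_0+\frac{1}{2},\boldsymbol{f}\times\chi)/(\Omega\cdot\langle\boldsymbol{f},\boldsymbol{f}\rangle)$ as a Petersson pairing between $\boldsymbol{f}$ and the pullback along the doubling embedding of the Siegel--Eisenstein series built from the local sections of Theorem \ref{theorem 6.1}. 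Each factor in the right-hand side of the interpolation formula then acquires a clear origin: the constant $c_l(s_0)$, the $\Gamma$-values and the powers of $\pi$ come from the archimedean integral computed via Proposition \ref{proposition 5.4}; the Gauss sums $G^D(\chi)$ and $G^F(\chi)$ come from the local integrals at places dividing $\mathfrak{n}_2$; and $M(s_0+\frac{1}{2},\boldsymbol{f}\times\chi)$ arises from the ramified nonsplit places.

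Second, I would construct an Eisenstein measure $\mu^{\mathrm{Eis}}$ on $\mathrm{Cl}_E^+(\boldsymbol{p}^\infty)$, respectively on $\mathrm{Cl}_F^+(\boldsymbol{p}^\infty)$ in Case (2), with values in a space of $p$-adic modular forms on the doubled group $H$, characterized by the property that its integral against a finite order character $\boldsymbol{\chi}$ of conductor $\boldsymbol{p}^{\boldsymbol{c}}$ is the holomorphic Eisenstein series whose local section at $\boldsymbol{p}$ is the one prescribed in Theorem \ref{theorem 6.1} for $\chi=\chi_1\boldsymbol{\chi}$. The existence of $\mu^{\mathrm{Eis}}$ follows from the explicit Fourier expansion of Corollary \ref{corollary 7.12}: the Fourier coefficients are finite $\overline{\Q}$-linear combinations of Gauss sums and values of $\boldsymbol{\chi}$ on Fourier indices, so they satisfy the Kummer-style congruences needed to assemble into a bounded $\mathcal{O}_{\C_p}$-valued measure. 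The fact, emphasized after Theorem \ref{theorem 1.2}, that only Fourier coefficients of rank $\geq 2m$ enter the chosen local sections is precisely what makes these coefficients uniformly bounded as $\boldsymbol{c}$ grows.

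The crucial third step is to combine $\mu^{\mathrm{Eis}}$ with the $\boldsymbol{p}$-ordinary projector $e_{\boldsymbol{p}}=\lim_n U(\boldsymbol{p})^{n!}$. Since $\alpha(\boldsymbol{p})\in\mathcal{O}_{\C_p}^{\times}$, the operator $e_{\boldsymbol{p}}$ preserves $p$-integrality, and the linear functional $\Phi\mapsto\langle\Phi,\boldsymbol{f}\rangle/\langle\boldsymbol{f},\boldsymbol{f}\rangle$ on the ordinary part is continuous in the $p$-adic topology. Applying this functional to $e_{\boldsymbol{p}}\mu^{\mathrm{Eis}}$ produces the desired measure $\mu(\boldsymbol{f})$. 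At a finite order character $\boldsymbol{\chi}$ the interpolation formula is then verified by unfolding: the pairing returns the doubling integral $\mathcal{Z}(s_0;\boldsymbol{f},f_{s_0})$ for the chosen section, which by the reformulation above equals $L(s_0+\frac{1}{2},\boldsymbol{f}\times\chi)$ multiplied by exactly the arithmetic and archimedean constants listed in the theorem, and the action of $e_{\boldsymbol{p}}$ inserts the factor $|\boldsymbol{\varpi}|^{\boldsymbol{c}\mathbf{d}_1 m(m-1)/2}$ as well as, in the symplectic and quaternionic orthogonal cases, the Euler-factor ratio $L_{\boldsymbol{p}}(s_0+\frac{1}{2},\chi)/L_{\boldsymbol{p}}(\frac{1}{2}-s_0,\chi^{-1})$.

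The principal obstacle is the explicit local computation at $\boldsymbol{p}$ of the action of $U(\boldsymbol{p})$ on the family of Eisenstein series, which must produce precisely the stabilization factor and the Euler-factor ratio appearing in Theorem \ref{theorem 1.3}. The parity hypothesis $l\equiv m\pmod 2$ in the symplectic case and the splitness of $\boldsymbol{p}$ in the quaternionic orthogonal case enter here to guarantee that this local calculation is compatible with $\alpha(\boldsymbol{p})$ being a $p$-adic unit; similarly the nonsplit assumption at $v\mid\boldsymbol{p}$ in the unitary case is what makes the unitary and quaternionic unitary computations structurally parallel. A secondary obstacle is justifying that the ordinary projection makes sense in the Hida-theoretic framework for holomorphic $p$-adic modular forms on the doubled group $H$, which in the present hermitian Shimura variety setting reduces to the duality between ordinary forms and their conjugates, using the algebraic models of Section \ref{section 5.3}.
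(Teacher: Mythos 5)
Your proposal follows the Eisenstein-measure-plus-ordinary-projection template (in the style of Hida, EHLS, Liu--Zheng), whereas the paper takes the more elementary Böcherer--Schmidt route: it simply \emph{defines} the distribution $\mu(\boldsymbol{f})$ by the explicit formulas \eqref{8.11} and \eqref{8.17} (prenormalizing by $\alpha(\boldsymbol{p})^{2-2\boldsymbol{n}}$, which is bounded because $\alpha(\boldsymbol{p})$ is a unit), and then proves boundedness by expanding the level-lowered Eisenstein series $\boldsymbol{E}(g_1,g_2;f_s)$ in a fixed $\overline{\Q}$-basis of algebraic modular forms and controlling the coefficients $p$-adically via Lemma \ref{lemma 8.4}, which in turn reduces to the explicit $p$-integrality of the Fourier coefficients in Proposition \ref{proposition 7.10}. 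The appearance of $\alpha(\boldsymbol{p})^{2\boldsymbol{n}-2}$ in Corollary \ref{corollary 6.4} is not the result of an ordinary projector but of the explicit level-descent operator $\boldsymbol{U}(\boldsymbol{p}^{\boldsymbol{n}-1})$ of Lemma \ref{lemma 6.3}, and the Euler-factor ratio $L_{\boldsymbol{p}}(s_0+\tfrac12,\chi)/L_{\boldsymbol{p}}(\tfrac12-s_0,\chi^{-1})$ in Case~(2) is inserted by hand in the definition \eqref{8.17} and controlled using the known $p$-adic interpolation of abelian Hecke $L$-functions, because the Fourier expansion for symplectic and quaternionic orthogonal groups carries the extra factor $\prod_v L_v(s_0+\tfrac12,\chi\lambda_\beta)$.

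The main gap in your route is that it presupposes a theory of $p$-adic modular forms on the doubled group $H$ (Eisenstein measures valued in $p$-adic modular forms, an ordinary idempotent $e_{\boldsymbol{p}}=\lim U(\boldsymbol{p})^{n!}$, a $p$-adic continuity of the pairing $\Phi\mapsto\langle\Phi,\boldsymbol{f}\rangle/\langle\boldsymbol{f},\boldsymbol{f}\rangle$ on the ordinary part). That framework is available in the split unitary and symplectic PEL settings, but is precisely what the paper says it cannot use here: the quaternionic Shimura varieties are not of PEL type, and even in the nonsplit unitary case the nonvanishing of the ordinary locus is not established. Your final paragraph ("duality between ordinary forms and their conjugates, using the algebraic models of Section 5.3") asserts that this difficulty can be routed through the algebraicity formalism, but Section \ref{section 5.3} only gives $\overline{\Q}$-rationality of individual forms via CM points; it does not by itself give a $p$-adic continuity of the Petersson pairing. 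What actually makes that continuity precise is Lemma \ref{lemma 8.4}, which you do not identify: it chooses CM points and $\beta$'s once and for all, picks a fixed period $\Omega_{\boldsymbol{p}}$, and shows that $p$-integrality of Fourier coefficients forces $p$-integrality (up to $\Omega\cdot\Omega_{\boldsymbol{p}}$) of the coefficients in the algebraic-basis expansion. Without a substitute for that lemma, the boundedness of $\mu(\boldsymbol{f})$ is unproved under your approach. Secondarily, your claim that applying $e_{\boldsymbol{p}}$ to the Eisenstein measure is what "inserts" the stabilization factor and Euler-ratio is not a calculation the paper performs and would itself require a local $U(\boldsymbol{p})$-eigenvalue computation on the Eisenstein family that you have not supplied; in the paper, those factors come from the explicit Fourier-coefficient computations in Propositions \ref{proposition 7.4}, \ref{proposition 7.5} and from the abelian $p$-adic $L$-function, not from ordinary projection.
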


The assumption for symplectic groups is only for simplicity while the assumption for quaternionic orthogonal groups is technical and is necessary in our proof of Theorem \ref{theorem 8.6}. Again, the construction of $p$-adic $L$-functions relies on properly choosing the local sections of the Eisenstein series such that its Fourier coefficients have $p$-adic interpolations. We refer the reader to \cite{LZ20} where it is carefully explained how these local sections should be chosen.

We compare our result with other works in the following.

For symplectic groups, $p$-adic $L$-functions have been constructed in \cite{BS} using the doubling method and in \cite{CP96} using the Rankin-Selberg method. The $p$-adic $L$-functions for ordinary families are constructed in \cite{LZ20}. We admit that the approach in our work is highly inspired by \cite{BS} and \cite{LZ20}. Although all these works are concerning the base field $F=\Q$, there is no difficulty to generalize their work to any totally real field $F$ as we have done here.

For unitary groups, $p$-adic $L$-functions are studied in \cite{E21, EHLS, HLS, SU, WX15b} for ordinary families. All these works assume that $\boldsymbol{p}$ is split in the imaginary quadratic extension $E/F$ so that the local group $G(F_{\boldsymbol{p}})$ at $\boldsymbol{p}$ is a general linear group. Their local sections are always chosen as certain Godement-Jacquet sections, and we do not discuss this case in our work. When $\boldsymbol{p}$ is inert, the $p$-adic $L$-function is constructed in \cite{B16} for totally isotropic groups (i.e. $r=0$ in \eqref{2.1.3}). Our result for the general unitary group with $\boldsymbol{p}$ nonsplit is new.

The $L$-functions for the two quaternionic cases are less studied than symplectic and unitary groups. In our previous work \cite{JYB2}, we have constructed $p$-adic $L$-functions for these totally isotropic groups and restricted to the case when $\boldsymbol{p}$ splits in the quaternion algebra. This case is much simpler as the local group $G(F_{\boldsymbol{p}})$ will be either an orthogonal group or symplectic group and both are totally isotropic. We have removed these restrictions in this paper.

We admit that the construction of $p$-adic $L$-functions for ordinary families is beyond the scope of this work. For $p$-adic families, one also needs to understand more about the geometry of Shimura varieties and $p$-adic modular forms. For example, in \cite{E21, EHLS, HLS} the split assumption on $\boldsymbol{p}$ is also used to guarantee the nonvanishing of a certain ordinary locus in defining the $p$-adic modular forms (see also \cite[5.3(2)]{E21}). In the two quaternionic cases, the geometry of Shimura varieties becomes more challenging as these Shimura varieties are not of PEL type.

We end up the introduction by giving an overview of the rest of the paper. In Section \ref{section 2}, we review the global integral in the doubling method and state our main theorems on integral representation of standard $L$-functions. The definition of local $L$-factors and computations of non-archimedean local integrals are carried out in Section \ref{section 3} and Section \ref{section 4} respectively. The rest of the paper will devoted to study properties of special $L$-values and we restrict to certain classical groups starting in Section \ref{section 5} in which we also review the definition of algebraic modular forms and calculate the archimedean local integrals. The integral representation is then reformulated in Section \ref{section 6} and we calculate the Fourier expansion of Eisenstein series in Section \ref{section 7}. With all the results obtained in these two sections we apply our computations to prove the algebraicity of special $L$-values and construct the $p$-adic $L$-functions in the final Section \ref{section 8}.

\section{The Integral Representation of the Standard $L$-function}
\label{section 2}

We present a global integral representation of the standard $L$-function for classical groups in this section. The definition of the local $L$-factors and computation of the local integrals will be carried out in next two sections.

\subsection{The doubling embedding}
\label{section 2.1}

We start by fixing some general notations. For an associative ring $R$ with identity, denote by $\mathrm{Mat}_{m,n}(R)$ the $R$-module of all $m\times n$ matrices with entries in $R$. Set $\mathrm{Mat}_n(R)=\mathrm{Mat}_{n,n}(R)$ and $\mathrm{GL}_n(R)=\mathrm{Mat}_n(R)^{\times}$. For $x\in\mathrm{Mat}_{m,n}(R)$, denote $\transpose{x}$ for its transpose. Denote by $1_n$ and $0_n$, or even simply $1$ and $0$ if their sizes are clear from the context, for the identity matrix and zero matrix in $\mathrm{Mat}_n(R)$, respectively. We follow the setup for classical groups as in \cite{Y14}.

Let $F$ be a local or global field and $D$ an $F$-algebra with involution $\rho$ whose center $E$ contains $F$. The couple $(D,\rho)$ considered in this paper will belong to the following five types:\\
(a) $D=E=F$ and $\rho$ is the identity map,\\
(b) $D$ is a division quaternion algebra over $E=F$ and $\rho$ is the main involution of $D$,\\
(c) $D$ is a division algebra central over a quadratic extension $E$ of $F$ and $\rho$ generates $\mathrm{Gal}(E/F)$,\\
(d) $D=\mathrm{Mat}_2(E),E=F$ and $\rho$ is given by $\left[\begin{array}{cc}
a & b\\
c & d
\end{array}\right]^{\rho}=\left[\begin{array}{cc}
d & -b\\
-c & a
\end{array}\right]$,\\
(e) $D=\mathbf{D}\oplus\mathbf{D}^{\mathrm{op}},E=F\oplus F$ and $\rho$ is given by $(x,y)^{\rho}=(y,x)$, where $\mathbf{D}$ is a division algebra central over $F$ and $\mathbf{D}^{\mathrm{op}}$ is its opposite algebra.

For $x=(x_{ij})\in\mathrm{Mat}_{mn}(D)$, set $x^{\rho}=(x_{ij}^{\rho})$ and $x^{\ast}=\transpose{x}^{\rho},\hat{x}=(x^{\ast})^{-1}$. For $x\in\mathrm{Mat}_n(D)$, $\nu(x)\in E,\tau(x)\in E$ stand for its reduced norm and reduced trace to the center $E$.

Fix a triple $(D,\rho,\epsilon)$ with $\epsilon=\pm 1$. Let $W$ be a free left $D$-module of rank $n$. By an $\epsilon$-hermitian space we mean a structure $\mathcal{W}=(W,\langle\cdot,\cdot\rangle)$ where $\langle\cdot,\cdot\rangle$ is an $\epsilon$-hermitian form on $W$, that is, an $F$-bilinear map $\langle\cdot,\cdot\rangle:W\times W\to D$ such that
\begin{equation}
\label{2.1.1}
\langle x,y\rangle^{\rho}=\epsilon\langle y,x\rangle,\quad\langle ax,by\rangle=a\langle x,y\rangle b^{\rho}, \quad(a,b\in D;x,y\in W).
\end{equation}
We always assume such a form to be non-degenerate, i.e. $\langle x,W\rangle=0$ implies $x=0$. Denote the ring of all $D$-linear endomorphisms of $W$ by $\mathrm{End}_D(W)$ and $\mathrm{GL}_D(W)=\mathrm{End}_D(W)^{\times}$. If we view elements of $W$ as row vectors, then $\mathrm{GL}_D(W)$ acts on $W$ from the right. The classical group of $\mathcal{W}$ is defined as 
\begin{equation}
\label{2.1.2}
G:=G(\mathcal{W}):=\{g\in\mathrm{GL}_D(W):\langle xg,yg\rangle=\langle x,y\rangle\text{ for all }x,y\in W\},
\end{equation}
which is a (possibly disconnected) reductive algebraic group over $F$. By fixing a basis of $W$, we can identify $\mathrm{End}_D(W)$ with $\mathrm{Mat}_n(D)$ and $\mathrm{GL}_D(W)$ with $\mathrm{GL}_n(D)$. Then $\langle\cdot,\cdot\rangle$ can be expressed as a matrix of the form
\begin{equation}
\label{2.1.3}
\Phi=\left[\begin{array}{ccc}
0 & 0 & 1_m\\
0 & \theta & 0\\
\epsilon\cdot1_m & 0 & 0
\end{array}\right]\text{ with }n=2m+r,\theta^{\ast}=\epsilon\theta\in\mathrm{GL}_{r}(D)
\end{equation}
and thus the classical group $G$ can be realized as
\begin{equation}
\label{2.1.4}
G:=G(W,\Phi)=\{g\in\mathrm{GL}_n(D):g\Phi g^{\ast}=\Phi\}.
\end{equation}
We assume $m\geq 1$ throughout the paper to avoid the discussion of definite classical groups. Doubling the underlying $\epsilon$-hermitian space we consider $\mathcal{V}=(W\oplus W,\langle\langle\cdot,\cdot\rangle\rangle)$ where
\begin{equation}
\label{2.1.5}
\langle\langle(x_1,x_2),(y_1,y_2)\rangle\rangle:=\langle x_1,y_1\rangle-\langle x_2,y_2\rangle\text{ for }(x_1,x_2),(y_1,y_2)\in W\oplus W.
\end{equation}
By fixing a basis of $\mathcal{V}$, the classical group $G(\mathcal{V})$ is isomorphic to
\begin{equation}
\label{2.1.6}
H=\{h\in\mathrm{GL}_{2n}(D):gJ_ng^{\ast}=J_n\},\quad J_n=\left[\begin{array}{cc}
0 & 1_n\\
\epsilon\cdot 1_n & 0
\end{array}\right].
\end{equation}
Note that
\[
R\left[\begin{array}{cc}
\Phi & 0\\
0 & -\Phi
\end{array}\right]R^{\ast}=J_n,
\]
with
\[
R=\left[\begin{array}{cccccc}
0 & \frac{\epsilon}{2}\cdot 1_r & 0 & 0 & \frac{\epsilon}{2}\cdot 1_r & 0\\
0 & 0 & 0 & 0 & 0 & -\epsilon\cdot 1_m\\
1_m & 0 & 0 & 0 & 0 & 0\\
0 & \theta^{-1} & 0 & 0 & -\theta^{-1} & 0\\
0 & 0 & 0 & 1_m & 0 & 0\\
0 & 0 & 1_m & 0 & 0 & 0
\end{array}\right].
\]
Then we define a doubling map
\begin{equation}
\label{2.1.7}
\begin{aligned}
G\times G&\to H\\
(g_1,g_2)&\mapsto R\left[\begin{array}{cc}
g_1 & 0\\
0 & g_2
\end{array}\right]R^{-1}.
\end{aligned}
\end{equation}
We thus view $G\times G$ as a subgroup of $H$ and identify $(g_1,g_2)$ with its image in $H$. More explicitly, if we write
\begin{equation}
\label{2.1.8}
g_1=\left[\begin{array}{ccc}
a_1 & f_1 & b_1\\
h_1 & e_1 & j_1\\
c_1 & k_1 & d_1
\end{array}\right],\qquad g_2=\left[\begin{array}{ccc}
a_2 & f_2 & b_2\\
h_2 & e_2 & j_2\\
c_2 & k_2 & d_2
\end{array}\right],
\end{equation}
with $a_1,a_2,d_1,d_2$ of size $m\times m$, $e_1,e_2$ of size $r\times r$, then
\begin{equation}
\label{2.1.9}
(g_1,g_2)=\left[\begin{array}{cccccc}
\frac{e_1+e_2}{2} & -\frac{j_2}{2} & \frac{\epsilon h_1}{2} & \frac{\epsilon(e_1-e_2)\theta}{4} & \frac{\epsilon h_2}{2} & \frac{\epsilon j_1}{2}\\
-k_2 & d_2 & 0 & \frac{\epsilon k_2\theta}{2} & -\epsilon c_2 & 0\\
\epsilon f_1 & 0 & a_1 & \frac{f_1\theta}{2} & 0 & b_1\\
\epsilon\theta^{-1}(e_1-e_2) & \epsilon\theta^{-1}j_2 & \theta^{-1}h_1 & \theta^{-1}\frac{e_1+e_2}{2}\theta & -\theta^{-1}h_2 & \theta^{-1}j_1\\
\epsilon f_2 & -\epsilon b_2 & 0 & -\frac{f_2\theta}{2} & a_2 & 0\\
\epsilon k_1 & 0 & c_1 & \frac{k_1\theta}{2} & 0 & d_1
\end{array}\right].
\end{equation}

\subsection{The global zeta integral}
\label{section 2.2}

Let $F$ be a number field with adele ring $\mathbb{A}$. We consider tuples $(D,\rho,\epsilon)$ of following five cases:

\begin{tabular}{ll}
(Case I, Orthogonal) & $(D,\rho)$ of type (a) with $\epsilon=1$,\\
(Case II, Symplectic) & $(D,\rho)$ of type (a) with $\epsilon=-1$,\\
(Case III, Quaternionic Orthogonal) & $(D,\rho)$ of type (b) with $\epsilon=1$,\\
(Case IV, Quaternionic Unitary) & $(D,\rho)$ of type (b) with $\epsilon=-1$,\\
(Case V, Unitary) & $(D,\rho)$ of type (c) with $D=E$ and $\epsilon=-1$.
\end{tabular}

\begin{rem}
\label{remark 2.1} 
In this paper, we label our groups as Case I-V for simplicity but we may also call the name of the groups (i.e. orthogonal, symplectic, ...) so that one can easily compare the groups here with the one in other papers. The notion of orthogonal, symplectic and unitary groups are well known and appear frequently in the literature while the groups of Case III, IV do not have a standard name. Here we call them quaternionic orthogonal or unitary depending on whether the form defining the group is hermitian (like the orthogonal group) or skew-hermitian (like the unitary).  But indeed, both groups have been called `quaternionic unitary' in the literature so the reader should be careful about their meaning. For example, the groups studied in \cite{G77} and \cite{Sh99} are the quaternionic orthogonal group here. 
\end{rem}

Denote $\mathbb{A}_{D}=D\otimes_{F}\mathbb{A}$ for the adelization and $D_v=D\otimes_FF_v$ for the localization at a place $v$ of $F$. Fix an $\epsilon$-hermitian matrix $\Phi\in\mathrm{GL}_n(D)$ of the form in \eqref{2.1.3} with $\theta$ an anisotropic matrix (so the Witt index of $\Phi$ is $m$ and $n=2m+r$) and define group $G,H$ as in \eqref{2.1.4},\eqref{2.1.6} with a doubling embedding $G\times G\to H$ given by \eqref{2.1.7}. For global groups, we will write $G(\mathbb{A}),H(\mathbb{A}),G(F_v),H(F_v)$ for its adelization and localization but simply write $G=G(F),H=H(F)$ for the rational points if its meaning is clear from the context. 

\begin{rem}
In this paper, we shall use the term `totally isotropic group' to indicate the group whose associated $\epsilon$-hermitian form is totally isotropic (i.e. $r=0$ in \eqref{2.1.3}) instead of using the term split group or quasi-split group. In the sequel, when discussing the local groups $G(F_v)$, we will distinguish the `split' and `nonsplit' case according to whether $v$ is split in $D$ or not.
\end{rem}

Let $P\subset H$ be the Siegel parabolic subgroup whose Levi component is $\mathrm{GL}_n(D)$. More explicitly, $P=M\ltimes N$ with
\begin{equation}
\label{2.2.1}
M=\left\{\left[\begin{array}{cc}
a & 0\\
0 & \hat{a}
\end{array}\right]:a\in\mathrm{GL}_n(D)\right\},\quad N=\left\{\left[\begin{array}{cc}
1 & b\\
0 & 1
\end{array}\right]:b\in S_n(F)\right\}.
\end{equation}
Here $S_n(F)$ is an additive algebraic group with
\begin{equation}
\label{2.2.2}
S_n(F)=\left\{b\in\mathrm{Mat}_n(D):\epsilon b+b^{\ast}=0\right\}.
\end{equation}
Let $\chi:E^{\times}\backslash\mathbb{A}_E^{\times}\to\C^{\times}$ be a Hecke character and extend it to a character on $\mathrm{GL}_n(\mathbb{A}_D)$ (still denoted by $\chi$) by taking the composite with the reduced norm $\nu:\mathrm{GL}_n(\mathbb{A}_D)\to \mathbb{A}_E^{\times}$. Consider the induced representation
\begin{equation}
\label{2.2.3}
\mathrm{Ind}_{P(\mathbb{A})}^{H(\mathbb{A})}(\chi|\nu(\cdot)|^{s})
\end{equation}
consisting functions $f_s:H(\mathbb{A})\to\C$ such that
\begin{equation}
\label{2.2.4}
f_s(pg)=\chi(\nu(a))|\mathrm{N}_{E/F}(\nu(a))|^{s+\kappa}f_s(g),\text{ for }p=\left[\begin{array}{cc}
a & b\\
0 & \hat{a}
\end{array}\right]\in P(\mathbb{A}),
\end{equation}
where 
\begin{equation}
\label{2.2.5}
\kappa=\left\{\begin{array}{cc}
\frac{n-1}{2} & \text{ Case I, }\\
\frac{n+1}{2} & \text{ Case II, }\\
\frac{2n+1}{2} & \text{ Case III, }\\
\frac{2n-1}{2} & \text{ Case IV, }\\
\frac{n}{2} & \text{ Case V.}
\end{array}\right.
\end{equation}
We then form the Eisenstein series 
\begin{equation}
\label{2.2.6}
E(h;f_s)=\sum_{\gamma\in P(F)\backslash H(F)}f_s(\gamma h),\qquad h\in H(\mathbb{A})
\end{equation}
on $H(\mathbb{A})$ associated to a standard section $f_s$. 

Let $\pi$ be a cuspidal automorphic representation of $G(\mathbb{A})$ with trivial central character and $\phi_1,\phi_2\in\pi$ be two cusp forms. The global integral we consider is
\begin{equation}
\label{2.2.7}
\begin{aligned}
&\mathcal{Z}(s;\phi_1,\phi_2,f_s)\\
=&\int_{G(F)\times G(F)\backslash G(\mathbb{A})\times G(\mathbb{A})}E((g_1,g_2);f_s)\overline{\phi_1(g_1)}\phi_2(g_2)\chi(\nu(g_2))^{-1}dg_1dg_2.
\end{aligned}
\end{equation}
Unfolding the Eisenstein series, we get
\begin{equation}
\label{2.2.8}
\mathcal{Z}(s;\phi_1,\phi_2,f_s)=\int_{G(\mathbb{A})}f_s(\delta(g,1))\langle\pi(g)\phi_1,\phi_2\rangle dg,
\end{equation}
where
\begin{equation}
\label{2.2.9}
\langle\phi_1,\phi_2\rangle=\int_{G(F)\backslash G(\mathbb{A})}\phi_1(g)\overline{\phi_2(g)}dg,
\end{equation}
is the standard inner product on $G(\mathbb{A})$ and
\begin{equation}
\label{2.2.10}
\delta=\left[\begin{array}{cccccc}
1_r & 0 & 0 & 0 & 0 & 0\\
0 & 1_m & 0 & 0 & 0 & 0\\
0 & 0 & 1_{m} & 0 & 0 & 0\\
0 & 0 & 0 & 1_{r} & 0 & 0\\
0 & 0 & -1_m & 0 & 1_m & 0\\
0 & \epsilon\cdot 1_m & 0 & 0 & 0 & 1_m
\end{array}\right].
\end{equation}
Here $\delta$ is chosen such that $\delta(g,g)\delta^{-1}\in P$. Also note that we must take $\phi_1,\phi_2\in\pi$ in a same representation space otherwise the integral will be identically zero. 

Write $\pi=\otimes_v'\pi_{v}$ and assume $\phi_1=\otimes_v\phi_{1,v},\phi_2=\otimes_v\phi_{2,v}$ with $\phi_{1,v},\phi_{2,v}\in\pi_v$. Also choose the section $f_s$ such that $f_s=\prod_vf_{s,v}$ is factorizable with local sections $f_{s,v}\in\mathrm{Ind}_{P(F_v)}^{H(F_v)}(\chi|\nu(\cdot)|^s)$. Due to the uniqueness of the pairing, $\langle\cdot,\cdot\rangle$ is factorizable in the sense that $\langle\phi_1,\phi_2\rangle=\prod_v\langle\phi_{1,v},\phi_{2,v}\rangle$, where
\begin{equation}
\langle\phi_{1,v},\phi_{2,v}\rangle=\int_{G(F_v)}\overline{\phi_{1,v}(g)}\phi_{2,v}(g)dg
\end{equation}
is the local pairing. Then $\mathcal{Z}(s;\phi_1,\phi_2,f_s)$ has an Euler product expression
\begin{equation}
\label{2.2.11}
\begin{aligned}
\mathcal{Z}(s;\phi_1,\phi_2,f_s)&=\prod_v\mathcal{Z}_v(s;\phi_{1,v},\phi_{2,v},f_{s,v}),\\
\mathcal{Z}_v(s;\phi_{1,v},\phi_{2,v},f_{s,v})&=\int_{G(F_v)}f_{s,v}(\delta(g,1))\langle\pi(g)\phi_{1,v},\phi_{2,v}\rangle dg.
\end{aligned}
\end{equation}
Hence, the global integral $\mathcal{Z}(s;\phi_1,\phi_2,f_s)$ can be studied locally place by place.

In some works of the doubling method (e.g. \cite{BS,G84b,Sh97,Sh00}), the integral of the following form is considered
\begin{equation}
\label{global2}
\mathcal{Z}'(g_2;\phi_1,f_s)=\int_{G(F)\backslash G(\mathbb{A})}E((g_1,g_2);f_s)\overline{\phi_1(g_1)}dg_1.
\end{equation}
The computation of \eqref{global2} is same as the one for  \eqref{2.2.7}. In particular, we have
\begin{equation}
\begin{aligned}
\mathcal{Z}'(g_2;\phi_1,f_s)&=\chi(\nu(g_2))\int_{G(\mathbb{A})}f_s(\delta(g_1,1))\phi_1(g_2g_1)dg_1\\
&=\chi(\nu(g_2))\prod_v\mathcal{Z}'_v(s;\phi_{1,v},f_{s,v}),\\
\mathcal{Z}'_v(g_2;\phi_{1,v},f_{s,v})&=\int_{G(F_v)}f_{s,v}(\delta(g_1,1))\phi_{1,v}(g_2g_1)dg_1.
\end{aligned}
\end{equation}

\subsection{Main result on integral representations}
\label{section 2.3}

The first main result of this paper is an integral representation of standard $L$-functions. That is, we make the choice of $f_s$ such that the global integral $\mathcal{Z}$ in \eqref{2.2.7} represents the $L$-function defined in Section \ref{section 3}. We summarize our result here.

Let $\mathfrak{o}$ be the ring of integers of $F$ and $\mathcal{O}$ a maximal order of $D$. Denote $\mathfrak{o}_v,\mathcal{O}_v$ for their localizations and assume $D_v=\mathcal{O}_v\otimes_{\mathfrak{o}_v}F_v$. For a finite place $v$ corresponds to a prime ideal $\mathfrak{p}_v$ of $\mathfrak{o}$, denote $\varpi_v$ for the uniformizer of $\mathfrak{p}_v$ and set $q_v=|\varpi_v|_v^{-1}$. Fix an $\mathfrak{o}$-ideal $\mathfrak{n}=\mathfrak{n}_1\mathfrak{n}_2$ with $\mathfrak{n}_1,\mathfrak{n}_2$ coprime and write $\mathfrak{n}=\prod_v\mathfrak{p}_v^{\mathfrak{c}_v}$. Define the following open compact subgroup of $G(\mathfrak{o})$:
\begin{equation}
\label{2.3.1}
K(\mathfrak{n})=G(\mathfrak{o})\cap\left[\begin{array}{ccc}
\mathrm{Mat}_{m}(\mathcal{O}) & \mathrm{Mat}_{m,r}(\mathcal{O}) & \mathrm{Mat}_{m}(\mathcal{O})\\
\mathrm{Mat}_{r,m}(\mathfrak{n}\mathcal{O}) &1+\mathrm{Mat}_{r}(\mathfrak{n}'\mathcal{O}) & \mathrm{Mat}_{r,m}(\mathcal{O})\\
\mathrm{Mat}_{m}(\mathfrak{n}\mathcal{O}) & \mathrm{Mat}_{m,r}(\mathfrak{n}\mathcal{O}) & \mathrm{Mat}_{m}(\mathcal{O})
\end{array}\right],
\end{equation}
where $\mathfrak{n}'=\prod_{v,\mathfrak{c}_v\geq 1}\mathfrak{p}_v$ is the support of $\mathfrak{n}$.

Let $\phi\in\pi$ be a cusp form. Set $S_{\infty}$ be the set of all archimedean places of $F$. Denote $S_1$ be the set consisting of places dividing $\mathfrak{n}_1$ and $S_2$ the set consisting of places dividing $\mathfrak{n}_2$. We make the following assumptions:\\
(1) $2\in\mathcal{O}_v^{\times}$ and $\theta\in\mathrm{GL}_r(\mathcal{O}_v)$ for all $v\in S_1\cup S_2$,\\
(2) $\phi$ is fixed by $K(\mathfrak{n})$ and is unramified outside $S_1,S_2$, i.e. fixed by $G(\mathfrak{o}_v)$ for $v\notin S_1\cup S_2\cup S_{\infty}$,\\
(3) $\phi$ is an eigenfunction for the Hecke algebra $\mathcal{H}(K(\mathfrak{n}),\mathfrak{X})$ as in Section \ref{section 3.4},\\
(4) $\chi$ has conductor $\mathfrak{n}_2$,\\
(5) In Case V, all places $v\in S_1\cup S_2$ are nonsplit in $\mathcal{O}$.

The standard $L$-function $L(s,\phi\times\chi)$ of $\phi$ twisted by $\chi$ is defined in Section \ref{section 3.4}. There is an Euler product expression
\begin{equation}
\label{2.3.2}
L(s,\phi\times\chi)=\prod_vL_v(s,\phi_v\times\chi_v).
\end{equation}
When $v\notin S_1\cup S_2$, $L_v(s,\phi_v\times\chi_v)$ is the unramified local $L$-factors defined with $m$ in Section \ref{section 3.1} replaced by the Witt index of $G(F_v)$. When $v\in S_1\cup S_2$, $L_v(s,\phi_v\times\chi_v)$ are the ramified local $L$-factors, and in particular $L_v(s,\phi_v\times\chi_v)=1$ if $v\in S_2$. The integral representation for the partial $L$-function
\begin{equation}
\label{2.3.3}
L^{S_1\cup S_2}(s,\phi\times\chi):=\prod_{v\notin S_1\cup S_2}L_v(s,\phi\times\chi)
\end{equation}
is well known. We make the choice of local sections $f_{s,v}$ properly for $v\in S_1\cup S_2$ such that the global integral $\mathcal{Z}$ represent the complete $L$-function. Let 
\begin{equation}
\label{2.3.4}
w=\left[\begin{array}{ccc}
0 & 0 & 1_m\\
0 & 1_r & 0\\
\epsilon\cdot 1_m & 0 & 0
\end{array}\right]
\end{equation}
be a Weyl element. Define $\eta_1\in G(\mathbb{A})$ to be an element such that $(\eta_1)_v=w$ for $v\in S_1$ and $(\eta_1)_v=1$ for $v\notin S_1$. Similarly set $\eta_2\in G(\mathbb{A})$ to be an element such that $(\eta_2)_v=w$ for $v\in S_2$ and $(\eta_2)_v=1$ for $v\notin S_2$. 

Take $\phi_1=\pi(\eta_1)\phi$, $\phi_2=\pi(\eta_2)\phi$ and write
\begin{equation}
\label{2.3.5}
\mathcal{Z}(s;\phi,f_s):=\mathcal{Z}(s;\phi_1,\phi_2,f_2).
\end{equation}

\begin{thm}
\label{theorem 2.2}
Keep the assumptions of $\phi,\chi$ as above. Take the section $f_s$ to be
\begin{equation}
\label{2.3.6}
f_s=\prod_{v\notin S_1\cup S_2\cup S_{\infty}}f_{s,v}^0\cdot\prod_{v\in S_1}f_{s,v}^{\dagger,\mathfrak{c}_v}\cdot\prod_{v\in S_2}f_{s,v}^{\ddagger,\mathfrak{c}_v}\cdot\prod_{v\in S_{\infty}}f_{s,v}^{\infty}.
\end{equation}
Then
\begin{equation}
\label{2.3.7}
\mathcal{Z}(s;\phi,f_s)=C\cdot L\left(s+\frac{1}{2},\phi\times\chi\right)\cdot\mathcal{Z}_{\infty}(s;\phi_{\infty},f_s^{\infty})\cdot\prod_{v\nmid\infty}\langle\pi(\eta)\phi_v|U'(\mathfrak{n_1}),\phi_v\rangle.
\end{equation}
Here:\\
(a) $f_{s,v}^0,f_{s,v}^{\dagger,\mathfrak{c}_v},f_{s,v}^{\ddagger,\mathfrak{c}_v}$ are local sections defined  by \eqref{4.2.1}, \eqref{4.3.2}, \eqref{4.4.1} and $f_{s,v}^{\infty}$ can be chosen such that
\begin{equation}
\label{2.3.8}
\mathcal{Z}_{\infty}(s;\phi_{\infty},f_s^{\infty}):=\prod_v\mathcal{Z}_{v|\infty}(s;\phi_v,f^{\infty}_{s,v})\neq 0,
\end{equation}
(b) $U'(\mathfrak{n}_1)=\prod_{v|\mathfrak{n_1}}U'(\mathfrak{p}_v^{\mathfrak{c}_v})$ is the Hecke operator defined by \eqref{3.2.9} and 
\begin{equation}
\label{2.3.9}
\eta=\prod_{v\in S_2}\left[\begin{array}{ccc}
0 & 0 & \varpi_v^{-\mathfrak{c}_v}\cdot 1_m\\
0 & 1_r & 0\\
\varpi_v^{\mathfrak{c}_v}\cdot 1_m & 0 & 0
\end{array}\right],
\end{equation}
(c) $C$ is a constant given by
\begin{equation}
\label{2.3.10}
C=\chi(\mathfrak{n}_1)^{m\mathbf{d}_1}|\mathfrak{n}_1|^{m\mathbf{d}_2(s+\kappa)}\mathrm{vol}(\mathrm{GL}_m(\mathcal{O})/\mathrm{GL}_m(\mathfrak{n}_2\mathcal{O})),
\end{equation}
with
\begin{equation}
\label{2.3.11}
\mathbf{d}_1=\left\{\begin{array}{cc}
1 & \text{ Case I, II, V},\\
2 & \text{ Case III, IV},
\end{array}\right.\qquad\mathbf{d}_2=\left\{\begin{array}{cc}
1 & \text{ Case I, II},\\
2 & \text{ Case III, IV, V}.
\end{array}\right.
\end{equation}
\end{thm}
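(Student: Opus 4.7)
The proof proceeds place-by-place using the Euler product \eqref{2.2.11}. With the choices $\phi_1=\pi(\eta_1)\phi$ and $\phi_2=\pi(\eta_2)\phi$ in \eqref{2.3.5}, the local vectors are $\phi_{1,v}=\phi_{2,v}=\phi_v$ off of $S_1\cup S_2$, $\phi_{1,v}=\pi(w)\phi_v$ and $\phi_{2,v}=\phi_v$ at $v\in S_1$, and $\phi_{1,v}=\phi_v$ and $\phi_{2,v}=\pi(w)\phi_v$ at $v\in S_2$. For each place type one aims at an identity of the form
\[
\mathcal{Z}_v(s;\phi_{1,v},\phi_{2,v},f_{s,v})=C_v\cdot L_v\!\left(s+\tfrac12,\phi_v\times\chi_v\right)\cdot(\text{local pairing}),
\]
after which the theorem follows by multiplication.

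At unramified finite places $v\notin S_1\cup S_2\cup S_\infty$, the spherical section $f_{s,v}^0$ and the $G(\mathfrak{o}_v)$-fixed vector $\phi_v$ fit the basic identity of the doubling method (\cite{Piatetski-ShapiroRallis1987Doubling}, in the extended form used by Yamana \cite{Y14}), giving $\mathcal{Z}_v=L_v(s+\tfrac12,\phi_v\times\chi_v)\cdot\langle\phi_v,\phi_v\rangle$ with suitable Haar measure normalizations. At $v\in S_2$ one has $L_v(s+\tfrac12,\phi_v\times\chi_v)=1$ since $\chi_v$ has conductor $\mathfrak{p}_v^{\mathfrak{c}_v}$; the section $f_{s,v}^{\ddagger,\mathfrak{c}_v}$ of \eqref{4.4.1} is designed to be supported on a single $P(F_v)$-orbit, so the local integrand collapses via the transformation law \eqref{2.2.4} to a nonzero multiple of $\langle\pi(w)\phi_v,\phi_v\rangle$. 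Accumulating these multiples over $v\in S_2$, together with the volume normalization of the integration domain in \eqref{2.2.8}, produces the constant $C$ of \eqref{2.3.10}; the twist element $\eta$ of \eqref{2.3.9} arises from pulling $w$ across a Siegel Levi element and having the $\chi_v\,|\cdot|^{s+\kappa}$ character absorb the resulting $\varpi_v^{\pm\mathfrak{c}_v}$ factors.

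The essential new computation is at $v\in S_1$: here $\chi_v$ is unramified while $\phi_v$ is only $K(\mathfrak{p}_v^{\mathfrak{c}_v})$-fixed, and the target $L_v(s+\tfrac12,\phi_v\times\chi_v)$ is a genuine ramified $L$-factor, defined following Shimura as a Dirichlet series in the eigenvalues of $\phi_v$ under $\mathcal{H}(K(\mathfrak{n}),\mathfrak{X})$. For the section $f_{s,v}^{\dagger,\mathfrak{c}_v}$ of \eqref{4.3.2} my plan is the following: decompose the double cosets $P(F_v)\backslash P(F_v)\cdot\delta(G(F_v),1)$ using the explicit doubling formula \eqref{2.1.9}; parametrize the orbits contributing to the integrand by a standard set of Hecke-algebra representatives; evaluate $f_{s,v}^{\dagger,\mathfrak{c}_v}$ on each orbit by \eqref{2.2.4} to extract the associated monomial in $\chi_v$ and $|\cdot|^{s+\kappa}$; apply the eigenvalue property to replace the action of each Hecke representative by a scalar; and finally recognize the resulting Dirichlet series as $L_v(s+\tfrac12,\phi_v\times\chi_v)$. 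The left-translation by $w$ in $\phi_{1,v}$ is forced by the asymmetry of $\delta$ and is precisely what rewrites the local pairing as $\langle\pi(w)\phi_v|U'(\mathfrak{p}_v^{\mathfrak{c}_v}),\phi_v\rangle$, matching the factor in \eqref{2.3.7}.

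The principal obstacle will be the orbit enumeration at $v\in S_1$ in the presence of a nontrivial anisotropic kernel, i.e. when $r>0$ in \eqref{2.1.3}. The image $(g_1,g_2)\in H$ of the doubling embedding \eqref{2.1.9} acquires many $\theta$-linked off-diagonal blocks, so the combinatorics of $P(F_v)$-orbits is considerably more intricate than in Shimura's totally isotropic symplectic treatment \cite{Sh95}. Verifying that $f_{s,v}^{\dagger,\mathfrak{c}_v}$ is supported exactly on the orbits indexed by the desired Hecke representatives, and vanishes on all others, is what forces the elaborate form of this section in Section \ref{section 4} and is where the standing assumptions $2\in\mathcal{O}_v^\times$ and $\theta\in\mathrm{GL}_r(\mathcal{O}_v)$ will be used. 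The archimedean factor is left untouched and packaged as $\mathcal{Z}_\infty(s;\phi_\infty,f_s^\infty)$; its nonvanishing for a suitable $f_s^\infty$ follows from the fact that the local archimedean integral is a nondegenerate bilinear pairing in the section, so one can choose $f_s^\infty$ to avoid its zero locus.
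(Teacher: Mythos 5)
Your strategy is the paper's: factor the Euler product, evaluate the unramified, $S_1$, $S_2$ and archimedean local integrals separately, and multiply; the $S_1$ and $S_2$ computations are precisely Propositions~\ref{proposition 4.2} and~\ref{proposition 4.4}, and the paper (Remark~\ref{remark 2.3}) explicitly proves Theorem~\ref{theorem 2.2} by combining Propositions~\ref{proposition 4.1}, \ref{proposition 4.2}, \ref{proposition 4.4}. The outline of the $S_1$ computation — determine the support of $f_{s,v}^{\dagger,\mathfrak{c}_v}$ along $\delta(G(F_v),1)$, sweep out the Hecke double cosets, and read off the Dirichlet series — is also what the paper does, under the assumptions $2\in\mathcal{O}_v^\times$, $\theta\in\mathrm{GL}_r(\mathcal{O}_v)$.

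Two bookkeeping claims need correction, though neither is a structural gap. First, you attribute all of the constant $C$ in \eqref{2.3.10} to the $S_2$ places, but the factor $\chi(\mathfrak{n}_1)^{m\mathbf{d}_1}|\mathfrak{n}_1|^{m\mathbf{d}_2(s+\kappa)}$ is produced by the $S_1$ places (compare $\chi(\varpi)^{\mathfrak{c}m\mathbf{d}_1}q^{-\mathfrak{c}m\mathbf{d}_2(s+\kappa)}$ in \eqref{4.3.4}); only the volume factor $\mathrm{vol}(\mathrm{GL}_m(\mathcal{O})/\mathrm{GL}_m(\mathfrak{n}_2\mathcal{O}))$ comes from $S_2$. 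Second, you state that the local pairing at $v\in S_1$ is $\langle\pi(w)\phi_v\,|\,U'(\mathfrak{p}_v^{\mathfrak{c}_v}),\phi_v\rangle$; in fact the translate by $w$ is absorbed in the change of variables that produces the $U'$ operator and the correct local pairing is $\langle\phi_v\,|\,U'(\mathfrak{p}_v^{\mathfrak{c}_v}),\phi_v\rangle$, consistent with \eqref{4.3.4} and with the fact that $\eta$ in \eqref{2.3.9} is trivial at $S_1$ places.
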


\begin{rem}
\label{remark 2.3}
This is proved by combining the local computations of Proposition \ref{proposition 4.1}, \ref{proposition 4.2}, \ref{proposition 4.4}. In Case III, IV, if $D_v$ splits then the local computations follow from the one for Case I, II as the local group $G(F_v)$ is a symplectic group in Case III or an orthogonal group in Case IV (see also Section \ref{section 3.3}). For Case V, we do not cover the split case in this paper for simplicity and also because the split is well studied in \cite{HLS} and \cite{EHLS}. Hence throughout the paper we will assume all $v|\mathfrak{n}$ are nonsplit in $\mathcal{O}$ for Case V.
\end{rem}

\section{Hecke Operators and local $L$-factors}

\label{section 3}
In this and the next section, we fix the following local setup. Let $F$ be a non-archimedean local field and $\mathfrak{o}$ its ring of integers with the maximal ideal $\mathfrak{p}$. Fix a uniformizer $\varpi$ and the absolute value $|\cdot|$ on $F$ normalized so that $|\varpi|=q^{-1}$ with $q$ the cardinality of the residue field. We consider tuples $(D,\rho,\epsilon)$ of following eight cases:

\begin{tabular}{ll}
(Case I, Orthogonal) & $(D,\rho)$ of type (a) with $\epsilon=1$,\\
(Case II, Symplectic)  &$(D,\rho)$ of type (a) with $\epsilon=-1$,\\
(Case III, Quaternionic Orthogonal Nonsplit) &$(D,\rho)$ of type (b) with $\epsilon=1$,\\
(Case III', Quaternionic Orthogonal Split) &$(D,\rho)$ of type (d) with $\epsilon=1$,\\
(Case IV, Quaternionic Unitary Nonsplit) &$(D,\rho)$ of type (b) with $\epsilon=-1$,\\
(Case IV', Quaternionic Unitary Split) &$(D,\rho)$ of type (d) with $\epsilon=-1$,\\
(Case V, Unitary Nonsplit) &$(D,\rho)$ of type (c) with $D=E$, $\epsilon=-1$,\\
(Case V', Unitary Split) &$(D,\rho)$ of type (e) with $D=E$, $\epsilon=-1$.
\end{tabular}

We fix a maximal order $\mathcal{O}$ of $D$ such that $D=\mathcal{O}\otimes_{\mathfrak{o}}F$. Let $\mathfrak{q}$ be a prime in $\mathcal{O}$ above $\mathfrak{p}$ and fix $\widetilde{\varpi}$ a uniformizer of $\mathfrak{q}$.

\subsection{Unramified local $L$-factors}
\label{section 3.1}

In this and the next subsection, we do not consider three split cases (i.e Case III', IV', V'). Let
\begin{equation}
\label{3.1.1}
G:=G(F):=\{g\in\mathrm{GL}_n(D):g\Phi g^{\ast}=\Phi\},\,\Phi=\left[\begin{array}{ccc}
0 & 0 & 1_m\\
0 & \theta & 0\\
\epsilon\cdot 1_m & 0 & 0
\end{array}\right],
\end{equation}
with $n=2m+r$ and $\theta^{\ast}=\epsilon\theta\in\mathrm{GL}_r(D)$ is anisotropic. Assume $\pi$ is an unramified admissible representation of $G(F)$ and $\phi\in\pi$ a spherical vector. Also assume $\chi$ is an unramified character of $E^{\times}$.

Recall the Cartan decomposition
\begin{equation}
\label{3.1.2}
\begin{aligned}
G(F)&=\coprod_{\substack{e_1,...,e_m\in\Z\\0\leq e_1\leq ...\leq e_m}}K_{e_1,...,e_m},\\
K_{e_1,...,e_m}&=G(\mathfrak{o})\mathrm{diag}[\widetilde{\varpi}^{e_1},...,\widetilde{\varpi}^{e_m},1_r,\widetilde{\varpi}^{-e_1},...,\widetilde{\varpi}^{-e_m}]G(\mathfrak{o}).
\end{aligned}
\end{equation}

The local spherical Hecke algebra $\mathcal{H}$ is generated by all such double cosets $K_{e_1,...e_m}$. The action of the Hecke operator associated to $[K_{e_1,...,e_{m}}]$ on $\phi$ is given by
\begin{equation}
\label{3.1.3}
\phi|[K_{e_1,...,e_{m}}]=\int_{K_{e_1,...,e_{m}}}\pi(g)\phi dg.
\end{equation}
Here the measure $dg$ is normalized such that $G(\mathfrak{o})$ has volume $1$. Since the space of spherical vectors has dimension one, $\phi$ is an eigenvector under the action of Hecke operators, that is
\begin{equation}
\label{3.1.4}
\phi|[K_{e_1,...,e_{m}}]=\lambda_{e_1,...,e_{m}}(\phi)\phi,
\end{equation}
for some scalar $\lambda_{e_1,...,e_{m}}(\phi)$. We define the unramified local $L$-factors as
\begin{equation}
\label{3.1.5}
\begin{aligned}
&L\left(s+\frac{1}{2},\phi\times\chi\right)\\
=&b(s,\chi)\sum_{\substack{e_1,...,e_{m}\in\Z\\0\leq e_1\leq ...\leq e_{m}}}\lambda_{e_1,...,e_{m}}(\phi)\left(\chi(\nu(\widetilde{\varpi}))|\mathrm{N}_{E/F}(\nu(\widetilde{\varpi}))|^{s+\kappa}\right)^{e_1+...+e_{m}}.
\end{aligned}
\end{equation}
Here $b(s,\chi)$ is the normalizing factor given in the following list (taken from \cite[p.667]{Y14} but Case III, IV should be calculated from \cite[Proposition 3.5]{Sh99}). \\
(Case I, Orthogonal) 
\[
\begin{aligned}
b(s,\chi)&=\prod_{i=1}^{\lfloor\frac{n}{2}\rfloor}L(2s+n+1-2i,\chi^2),
\end{aligned}
\]
(Case II, Symplectic) 
\[
\begin{aligned}
b(s,\chi)&=L\left(s+\frac{n+1}{2},\chi\right)\prod_{i=1}^{\frac{n}{2}}L(2s-1+2i,\chi^2),
\end{aligned}
\]
(Case III, Quaternionic Orthogonal Nonsplit)
\[
b(s,\chi)=L\left(s+\frac{2n+1}{2},\chi\right)\prod_{i=1}^nL(2s+2n+1-4i,\chi^2),
\]
(Case IV, Quaternionic Unitary Nonsplit)
\[
b(s,\chi)=\prod_{i=1}^nL(2s+2n+3-4i,\chi^2),
\]
(Case V, Unitary) Set $\chi^0=\chi|_{F^{\times}}$ and let $\chi_{E/F}$ be the quadratic character associated to $E/F$, then
\[
\begin{aligned}
b(s,\chi)&=\prod_{i=1}^{n}L(2s+i,\chi^0\chi_{E/F}^{n+i}).
\end{aligned}
\]
Here $L(s,\chi)$ means the local $L$-factor of Hecke $L$-functions.

\begin{prop}
\label{proposition 3.1}
Let $\alpha_i,1\leq i\leq m$ be the Satake parameters of $\phi$. Then $L(s,\phi\times\chi)$ has an Euler product expansion with $L(s,\phi\times\chi)^{-1}$ given by the following list. \\
(Case I, Orthogonal)
\[
\prod_{i=1}^{\lfloor\frac{r}{2}\rfloor}\left(1-\chi(\varpi)^2q^{2i-r-2s}\right)\times\prod_{i=1}^{m}\left(1-\chi(\varpi)\alpha_iq^{-1+\frac{r}{2}-s}\right)\left(1-\chi(\varpi)\alpha_i^{-1}q^{1-\frac{r}{2}-s}\right),
\]
(Case II, Symplectic)
\[
\left(1-\chi(\varpi)q^{-s}\right)\times\prod_{i=1}^{m}\left(1-\chi(\varpi)\alpha_iq^{-s}\right)\left(1-\chi(\varpi)\alpha^{-1}_iq^{-s}\right),
\]
(Case III, Quaternionic Orthogonal Nonsplit)
\[
\begin{aligned}
&\left(1-\chi(\varpi)q^{-r-s}\right)\times\prod_{i=1}^{m+r}\left(1-\chi(\varpi)^2q^{4i-2r-2s}\right)\\
\times&\prod_{i=1}^m\left(1-\chi(\varpi)\alpha_iq^{-1+r-s}\right)\left(1-\chi(\varpi)\alpha_i^{-1}q^{-r-s}\right),
\end{aligned}
\]
(Case IV, Quaternionic Unitary Nonsplit)
\[
\begin{aligned}
&\prod_{i=1}^{m+r}\left(1-\chi(\varpi)^2q^{4i-2-2r-2s}\right)\\
\times&\prod_{i=1}^m\left(1-\chi(\varpi)\alpha_iq^{-2+r-s}\right)\left(1-\chi(\varpi)\alpha_i^{-1}q^{1-r-s}\right),
\end{aligned}
\]
(Case V, Unitary Inert) $E/F$ is inert,
\[
\prod_{i=1}^{m}\left(1-\chi(\varpi)\alpha_i q^{-1+r-s}\right)\left(1-\chi(\varpi)\alpha_i^{-1}q^{1-r-s}\right),
\]
(Case V, Unitary Ramified) $E/F$ is ramified, 
\[
\prod_{i=1}^{m}\left(1-\chi(\widetilde{\varpi})\alpha_i q^{\frac{r-1}{2}-s}\right)\left(1-\chi(\widetilde{\varpi})\alpha_i^{-1}q^{-\frac{r-1}{2}-s}\right),
\]
\end{prop}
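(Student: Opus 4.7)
The plan is to compute the Dirichlet series in \eqref{3.1.5} by the Satake isomorphism together with Macdonald's formula (equivalently the Gindikin--Karpelevich formula) for spherical functions on classical groups. Setting $X:=\chi(\nu(\widetilde\varpi))|\mathrm{N}_{E/F}(\nu(\widetilde\varpi))|^{s+\kappa}$, the sum in \eqref{3.1.5} becomes $\Sigma(X):=\sum_{0\le e_1\le\cdots\le e_m}\lambda_{e_1,\dots,e_m}(\phi)\,X^{e_1+\cdots+e_m}$, and by Satake $\lambda_{e_1,\dots,e_m}(\phi)$ is, up to a normalizing power of $q$, the irreducible character of the Langlands dual group $G^\vee$ of highest weight $(e_1,\dots,e_m)$ evaluated at the semisimple class parametrized by $(\alpha_1,\dots,\alpha_m)$.

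The second step is to invoke Macdonald's formula, which expresses $\Sigma(X)$ as a ratio whose denominator is a product of local Hecke L-factors indexed by positive roots of $G^\vee$ and whose numerator is a polynomial in $X$ with linear factors $(1-\alpha_i^{\pm1}q^{c}X)$ and $(1-q^{c'}X)$ matching the weights of the standard representation of $G^\vee$. The normalizing factor $b(s,\chi)$ is designed precisely to cancel this denominator, so the proof amounts to verifying this cancellation case by case. For Cases I and II the dual group has known root data of type $B$, $C$, or $D$, and the factors $L(2s+\cdots,\chi^2)$ in $b(s,\chi)$ match the long roots while the lone factor $L(s+\tfrac{n+1}{2},\chi)$ in Case II arises from the short roots of $\mathrm{SO}_{2m+1}$. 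For the quaternionic Cases III and IV the uniformizer $\widetilde\varpi$ of $\mathcal{O}$ satisfies $\widetilde\varpi^2\sim\varpi$ up to units, producing the doubling of exponents visible in the shifts $4i-2r-2s$ and $4i-2-2r-2s$; here I would follow Shimura \cite[Proposition 3.5]{Sh99} directly. For Case V the twists by $\chi^0\chi_{E/F}^{n+i}$ reflect the Galois structure of $G^\vee$, and the inert versus ramified dichotomy is handled by tracking whether $\widetilde\varpi=\varpi$ or $\widetilde\varpi^2\sim\varpi$ in $\mathcal{O}$, which explains the appearance of $\chi(\widetilde\varpi)$ and the half-integer shifts $\pm(r-1)/2$ in the ramified subcase.

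The main obstacle is the meticulous bookkeeping of exponents in each case, particularly to account for the parameter $r$ from the anisotropic kernel $\theta$ and the fact that $\widetilde\varpi$ may not equal $\varpi$. A useful sanity check is the specialization $\chi=1$, which must recover the known standard L-factor of the spherical representation $\pi$; the total degree in $q^{-s}$ of the displayed Euler factor should equal the dimension of the standard representation of $G^\vee$ (for instance, $2m+1$ in Case II matching $\mathrm{SO}_{2m+1}$, and $2m+r$ in Case I when $r$ is even matching $\mathrm{SO}_{2m+r}$).
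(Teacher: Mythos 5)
Your overall strategy---express the Dirichlet series $\Sigma(X)=\sum\lambda_{e_1,\dots,e_m}X^{|e|}$ via Satake, sum it using a spherical-function identity, and verify case by case that $b(s,\chi)\Sigma(X)$ is the displayed Euler product---is the same strategy the paper uses. The sanity check on the degree in $q^{-s}$ against the dimension of the standard representation of the dual group is a legitimate and useful consistency check, and your account of why the quaternionic and ramified-unitary cases produce doubled or halved exponents (the uniformizer $\widetilde\varpi$ of $\mathcal{O}$ squares to $\varpi$ up to units) is correct in spirit.

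Where you and the paper diverge is in the computational engine. The paper does not actually invoke Macdonald's formula; instead, for Cases I, II, III, V it cites Shimura's explicit Dirichlet-series computations directly (\cite[Theorem 19.8]{Sh00}, \cite[Proposition 17.14]{Sh04}, \cite[Theorem 3.12]{Sh99}), and for the new Case IV it carries out Shimura's series-manipulation method from \cite[Section 16]{Sh97} explicitly, writing $\Sigma(q^{-s})=\alpha(s)\,\beta(2s-2m+1)\,\mathcal{A}(s-n+1-r,s)$ for explicit rational products $\alpha,\beta,\mathcal{A}$ and then simplifying. Your plan to ``invoke Macdonald's formula'' is a reasonable alternative engine, but it is more delicate than you indicate for the non-split groups: the quaternionic groups are inner forms (and the unitary group a quasi-split outer form), so the dual group and its action on Satake parameters require some care before Macdonald's formula can be cited cleanly, and you do not actually supply the verification of the cancellation of $b(s,\chi)$ in any case. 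In effect, your proposal describes the shape of the proof accurately but defers the substantive computation to Shimura's references in exactly the cases (III, IV) where new work is needed, while the paper supplies that computation for Case IV explicitly. If you intend to use Macdonald's formula you should either carry out the exponent bookkeeping for at least one non-split case or acknowledge that you are reproducing Shimura's computation rather than replacing it.

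One small slip: the extra factor $L(s+\tfrac{n+1}{2},\chi)$ in $b(s,\chi)$ for Case II and the corresponding $(1-\chi(\varpi)q^{-s})$ in $L^{-1}$ come from the weight-zero vector of the standard $(2m+1)$-dimensional representation of $G^\vee=\mathrm{SO}_{2m+1}$, not from ``the short roots''; the root-theoretic contributions go into the $\chi^2$ factors of $b(s,\chi)$. This does not affect the final check but is worth phrasing correctly.
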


\begin{proof}
The symplectic and unitary cases are given in \cite[Theorem 19.8]{Sh00}. The orthogonal case is given by \cite[Proposition 17.14]{Sh04} and the quaternionic orthogonal group is studied in \cite[Theorem 3.12]{Sh99}. All can be computed using the method in \cite[Section 16]{Sh97}. For the quaternionic unitary groups, by the same manner, we calculate the following Dirichlet series,
\[
\begin{aligned}
&\sum_{\substack{e_1,...,e_m\in\Z\\0\leq e_1\leq ...\leq e_m}}\lambda_{e_1,...,e_m}(q^{-s})^{e_1+...+e_m}=\alpha(s)\beta(2s-2m+1)\mathcal{A}(s-n+1-r,s),
\end{aligned}
\]
where
\[
\begin{aligned}
\alpha(s)&=\prod_{i=1}^m\frac{1-q^{4i-4-2s}}{1-q^{2m+2i-3-2s}},\\
\beta(s)&=\prod_{i=1}^m\frac{1-q^{2i-2-s}}{1-q^{2r+2i-2-s}},\\
\mathcal{A}(s',s)&=\prod_{i=1}^m\frac{1-q^{2i-2-s-s'}}{(1-q^{-2-s'}\alpha_i)(1-q^{2m-s}\alpha_i^{-1})}.
\end{aligned}
\]
Then
\[
\begin{aligned}
&\sum_{\substack{e_1,...,e_m\in\Z\\0\leq e_1\leq ...\leq e_m}}\lambda_{e_1,...,e_m}(q^{-s})^{e_1+...+e_m}=\prod_{i=1}^m\frac{1-q^{4i-4-2s}}{(1-q^{n+r-3-s}\alpha_i)(1-q^{2m-s}\alpha_i^{-1})}.
\end{aligned}
\]
Multiplying the normalizing factor $b(s,\chi)$ we obtain the result in the above list.
\end{proof}

\subsection{Ramified local $L$-factors}
\label{section 3.2}

Let
\begin{equation}
\label{3.2.1}
G:=G(F):=\{g\in\mathrm{GL}_n(D):g\Phi g^{\ast}=\Phi\},\,\Phi=\left[\begin{array}{ccc}
0 & 0 & 1_m\\
0 & \theta & 0\\
\epsilon\cdot 1_m & 0 & 0
\end{array}\right],
\end{equation}
with $n=2m+r$ and $\theta^{\ast}=\epsilon\theta\in\mathrm{GL}_r(D)$ not necessarily anisotropic. In the ramified cases, we will always assume that $2$ and $\theta$ are unramified, i.e. $2\in\mathcal{O}^{\times},\theta\in\mathrm{GL}_r(\mathcal{O})$. For an integer $\mathfrak{c}\geq 1$, we consider the following two open compact subgroups of $G(\mathfrak{o})$:
\begin{equation}
\label{3.2.2}
\begin{aligned}
K(\mathfrak{p}^{\mathfrak{c}})&=G(\mathfrak{o})\cap\left[\begin{array}{ccc}
\mathrm{Mat}_{m}(\mathcal{O}) & \mathrm{Mat}_{m,r}(\mathcal{O}) & \mathrm{Mat}_{m}(\mathcal{O})\\
\mathrm{Mat}_{r,m}(\mathfrak{p^c}\mathcal{O}) &1+\mathrm{Mat}_{r}(\mathfrak{p}\mathcal{O}) & \mathrm{Mat}_{r,m}(\mathcal{O})\\
\mathrm{Mat}_{m}(\mathfrak{p^c}\mathcal{O}) & \mathrm{Mat}_{m,r}(\mathfrak{p^c}\mathcal{O}) & \mathrm{Mat}_{m}(\mathcal{O})
\end{array}\right],\\
K'(\mathfrak{p}^{\mathfrak{c}})&=G(\mathfrak{o})\cap\left[\begin{array}{ccc}
\mathrm{Mat}_{m}(\mathcal{O}) & \mathrm{Mat}_{m,r}(\mathfrak{p^c}\mathcal{O}) & \mathrm{Mat}_{m}(\mathfrak{p^c}\mathcal{O})\\
\mathrm{Mat}_{r,m}(\mathcal{O}) &1+\mathrm{Mat}_{r}(\mathfrak{p}\mathcal{O}) & \mathrm{Mat}_{r,m}(\mathfrak{p^c}\mathcal{O})\\
\mathrm{Mat}_{m}(\mathcal{O}) & \mathrm{Mat}_{m,r}(\mathcal{O}) & \mathrm{Mat}_{m}(\mathcal{O})
\end{array}\right].
\end{aligned}
\end{equation}
Clearly, they are related by $K(\mathfrak{p^c})=wK'(\mathfrak{p^c})w$ with $w$ the Weyl element as \eqref{2.3.4}. Let
\begin{equation}
\label{3.2.3}
\mathfrak{M}=\mathrm{GL}_m(D)\cap\mathrm{Mat}_m(\mathcal{O}),\mathfrak{Q}=\{\mathrm{diag}[u,1_r,\hat{u}],u\in\mathfrak{M}\},\mathfrak{X}=K(\mathfrak{p^c})\mathfrak{Q}K(\mathfrak{p^c}).
\end{equation}
For $\xi=\mathrm{diag}[u,1_r,\hat{u}]\in\mathfrak{Q}$, we define $\mathfrak{d}(\xi)$ be the integer such that $\nu(u)=\widetilde{\varpi}^{\mathfrak{d}(\xi)}$. The local Hecke algebra $\mathcal{H}(K(\mathfrak{p^c}),\mathfrak{X})$ associated to $K(\mathfrak{p^c})$ and $\mathfrak{X}$ is generated by double cosets $[K(\mathfrak{p^c})\xi K(\mathfrak{p^c})]$ with $\xi\in\mathfrak{Q}$. This kind of Hecke algebra generalizes the one in \cite[Section 19]{Sh00}. Let $\pi$ be an admissible representation of $G(F)$. Assume $\phi\in\pi$ is a vector fixed by $K(\mathfrak{p^c})$, the Hecke operator $[K(\mathfrak{p^c})\xi K(\mathfrak{p^c})]$ acts on $\phi$ by
\begin{equation}
\label{3.2.4}
\phi|[K(\mathfrak{p^c})\xi K(\mathfrak{p^c})]=\int_{K(\mathfrak{p^c})\xi K(\mathfrak{p^c})}\pi(g)\phi dg.
\end{equation}
If we assume the measure $dg$ is normalized such that $K(\mathfrak{p^c})$ has volume $1$, then the action can be written as a sum
\begin{equation}
\label{3.2.5}
\phi|[K(\mathfrak{p^c})\xi K(\mathfrak{p^c})]=\sum_{K(\mathfrak{p^c})\xi K(\mathfrak{p^c})/K(\mathfrak{p^c})}\pi(g)\phi.
\end{equation}
The coset in the sum is characterized in the following lemma.

\begin{lem}
\label{lemma 3.2}
Let $\xi=\mathrm{diag}[u,1_r,\hat{u}]$ with $u\in\mathfrak{M}$ then
\begin{equation}
\label{3.2.6}
K(\mathfrak{p^c})\xi K(\mathfrak{p^c})=\coprod_{d,b,c} \left[\begin{array}{ccc}
d & -b^{\ast}\theta^{-1} & c\hat{d}\\
0 & 1 & b\hat{d}\\
0 & 0 & \hat{d}
\end{array}\right]K(\mathfrak{p^c}),
\end{equation}
where $d\in\mathrm{GL}_m(D)u\mathrm{GL}_m(D)/\mathrm{GL}_m(D)$, $b\in\mathrm{Mat}_{m,r}(\mathcal{O})/\mathrm{Mat}_{m,r}(\mathcal{O})d^{\ast}$ and\\ $c\in\mathrm{Mat}_{m}(\mathcal{O})/d\mathrm{Mat}_{m}(\mathcal{O})d^{\ast}$ satisfying $\epsilon c+b^{\ast}\hat{\theta}b+c^{\ast}=0$.
\end{lem}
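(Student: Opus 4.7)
\emph{Strategy.} The plan is to verify the decomposition in three stages: writing $g_{d,b,c}$ for the right-hand side representative in \eqref{3.2.6}, I will (i) show each $g_{d,b,c}$ lies in $K\xi K$, (ii) show the listed cosets $g_{d,b,c}K$ are pairwise distinct, and (iii) prove they exhaust $K\xi K/K$.

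\emph{Existence and distinctness.} For (i), I would first factor
\[
g_{d,b,c} = n_{b,c} \cdot \mathrm{diag}[d, 1_r, \hat d], \qquad
n_{b,c} := \left[\begin{array}{ccc} 1_m & -b^{\ast}\theta^{-1} & c \\ 0 & 1_r & b \\ 0 & 0 & 1_m \end{array}\right].
\]
A direct calculation of $n_{b,c}\Phi n_{b,c}^{\ast}$, simplified using $\theta^{\ast} = \epsilon\theta$ and $\theta\hat\theta = \epsilon$, shows $n_{b,c} \in G$ precisely when $\epsilon c + b^{\ast}\hat\theta b + c^{\ast} = 0$; integrality of $b, c$ together with $2 \in \mathcal{O}^{\times}$ and $\theta \in \mathrm{GL}_r(\mathcal{O})$ then places $n_{b,c}$ in $K(\mathfrak{p}^{\mathfrak{c}})$. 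Writing $d = k_1 u k_2$ with $k_1, k_2 \in \mathrm{GL}_m(\mathcal{O})$ gives $\mathrm{diag}[d, 1_r, \hat d] = \mathrm{diag}[k_1, 1, \hat{k}_1] \cdot \xi \cdot \mathrm{diag}[k_2, 1, \hat{k}_2] \in K\xi K$, so $g_{d,b,c} \in K\xi K$. For (ii), assuming $g_{d_2,b_2,c_2}^{-1} g_{d_1,b_1,c_1} \in K(\mathfrak{p}^{\mathfrak{c}})$, a direct computation shows this product is upper triangular with diagonal blocks $(d_2^{-1}d_1, 1_r, d_2^{\ast}\hat{d}_1)$. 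The level conditions defining $K(\mathfrak{p}^{\mathfrak{c}})$ on these diagonal blocks force $d_1 \equiv d_2$ modulo $\mathrm{GL}_m(\mathcal{O})$; fixing $d_1 = d_2 = d$, the $(2,3)$ block condition then forces $b_1 - b_2 \in \mathrm{Mat}_{r,m}(\mathcal{O})d^{\ast}$, and the $(1,3)$ block condition forces $c_1 - c_2 \in d\mathrm{Mat}_m(\mathcal{O})d^{\ast}$.

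\emph{Coverage and main obstacle.} For (iii), given $g \in K\xi K$, I would apply the Iwasawa decomposition $G = P_1 K_{\max}$ relative to the parabolic $P_1$ with Levi $\mathrm{GL}_m(D) \times G(\theta)$ to write $g = p k_0$ with $p$ upper triangular. Applying the Cartan decomposition in $\mathrm{GL}_m(D)$ to the $(1,1)$ block of $p$ yields the parameter $d$; the unipotent part then provides $b, c$, with the symmetry relation $\epsilon c + b^{\ast}\hat\theta b + c^{\ast} = 0$ arising automatically from $g \in G$ by the calculation in (i). The main technical difficulty is normalizing the $(2,2)$ Levi block of $p$ to $1_r$ within the right coset $gK$: since $\theta$ is anisotropic and integral, the isometry group $G(\theta)$ is compact and contained in $\mathrm{GL}_r(\mathcal{O})$, but the constraint $1 + \mathrm{Mat}_r(\mathfrak{p}\mathcal{O})$ on the $(2,2)$ block in $K(\mathfrak{p}^{\mathfrak{c}})$ only permits direct absorption of Levi factors already congruent to $1_r$ modulo $\mathfrak{p}$. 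The resolution is to combine the freedom in the left $K$-action on $K\xi K$ with a careful analysis of the Bruhat cell $P_1\xi P_1$, exploiting the specific shape $\xi = \mathrm{diag}[u, 1_r, \hat u]$ to trade the $G(\theta)$-ambiguity in the Levi against the allowed congruences in $K(\mathfrak{p}^{\mathfrak{c}})$; this bookkeeping is the technical heart of the lemma.
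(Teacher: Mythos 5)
The paper's ``proof'' of Lemma~3.2 consists only of a citation to \cite[Lemma 19.2]{Sh00} together with the remark that the verification is straightforward, so there is no detailed argument in the text to compare against. Your steps (i) and (ii) are essentially complete and correct: the factorization $g_{d,b,c}=n_{b,c}\,\mathrm{diag}[d,1_r,\hat d]$, the computation $n_{b,c}\Phi n_{b,c}^{\ast}=\Phi$ reducing to $\epsilon c+b^{\ast}\hat\theta b+c^{\ast}=0$, and the triangular form of $g_{d_2,b_2,c_2}^{-1}g_{d_1,b_1,c_1}$ all check out (one should read $b\in\mathrm{Mat}_{r,m}(\mathcal{O})$, matching the block shapes, which is a typo in the lemma as stated).

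The genuine gap is in step (iii). You never actually establish exhaustion; the final paragraph of your proposal explicitly stops at ``this bookkeeping is the technical heart of the lemma,'' which is precisely the point that needed to be carried out. Beyond being unfinished, the sketch you give is also slightly misdirected. First, the Iwasawa decomposition $G=P_1K_{\max}$ with $K_{\max}=G(\mathfrak{o})$ does not by itself pin down a right $K(\mathfrak{p}^{\mathfrak{c}})$-coset, since $K(\mathfrak{p}^{\mathfrak{c}})\subsetneq G(\mathfrak{o})$; one should instead factor the two $K(\mathfrak{p}^{\mathfrak{c}})$-factors directly (e.g.\ via an Iwahori-type factorization into lower-unipotent, Levi and upper-unipotent pieces) and track which pieces are absorbed into $\xi K(\mathfrak{p}^{\mathfrak{c}})\xi^{-1}\cap K(\mathfrak{p}^{\mathfrak{c}})$. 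Second, your appeal to anisotropy of $\theta$ to control the $(2,2)$ Levi block is out of place: in the ramified local setting of Section~3.2 the paper explicitly allows $\theta$ to be isotropic, so $G(\theta)$ need not be compact. What actually rescues this block is the congruence condition in \eqref{3.2.2}, which forces the $(2,2)$ entry of any $k\in K(\mathfrak{p}^{\mathfrak{c}})$ to lie in the pro-$p$ group $1+\mathrm{Mat}_r(\mathfrak{p}\mathcal{O})$; this is already compatible with the representative $1_r$ and needs no appeal to $G(\theta)$. With that corrected, one still has to do the computation showing every right coset in $K(\mathfrak{p}^{\mathfrak{c}})\xi K(\mathfrak{p}^{\mathfrak{c}})$ has a representative of the displayed upper-triangular form, and that is the step that is missing from the proposal.
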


\begin{proof}
This is an analogue of \cite[Lemma 19.2]{Sh00} and can be verified in a straightforward way.
\end{proof}

Assume $\phi\in\pi$ is an eigenvector for all $[K(\mathfrak{p^c})\xi K(\mathfrak{p^c})]$, that is there exists a scalar $\lambda_{\xi}$ such that
\begin{equation}
\label{3.2.7}
\phi|[K(\mathfrak{p^c})\xi K(\mathfrak{p^c})]=\lambda_{\xi}(\phi)\phi.
\end{equation}
For an integer $\mathfrak{n}\geq 1$, we consider a special Hecke operator
\begin{equation}
\label{3.2.8}
U(\mathfrak{p^n}):=[K(\mathfrak{p^c})\xi K(\mathfrak{p^c})]\text{ with }\xi=\mathrm{diag}[\varpi^{\mathfrak{n}}\cdot 1_m,1_r,\varpi^{-\mathfrak{n}}\cdot 1_m].
\end{equation}
Denote the Hecke eigenvalue for operator $U(\mathfrak{p^n})$ as $\alpha(\mathfrak{p^n})$. Clearly by Lemma \ref{lemma 3.2}, one has $U(\mathfrak{p^n})=U(\mathfrak{p})^{\mathfrak{n}}$ and $\alpha(\mathfrak{p^n})=\alpha(\mathfrak{p})^{\mathfrak{n}}$. In later computations, we will also use another kind of Hecke operator
\begin{equation}
\label{3.2.9}
U'(\mathfrak{p^n}):=[K(\mathfrak{p^c})\xi K(\mathfrak{p^c})]\text{ with }\xi=\left[\begin{array}{ccc}
0 & 0 & \varpi^{-\mathfrak{n}}\cdot 1_m\\
0 & 1_r & 0\\
\epsilon\varpi^{\mathfrak{n}}\cdot 1_m & 0 & 0
\end{array}\right].
\end{equation}
Its action on $\phi$ is defined similarly as above.

Assume $\chi$ is an unramified character, define the ramified local $L$-factors as
\begin{equation}
\label{3.2.10}
L\left(s+\frac{1}{2},\phi\times\chi\right)=\sum_{\xi\in K(\mathfrak{p^c})\backslash\mathfrak{X}/K(\mathfrak{p^c})}\lambda_{\xi}(\phi)\left(\chi(\nu(\widetilde{\varpi}))|\mathrm{N}_{E/F}(\nu(\widetilde{\varpi}))|^{s+\kappa}\right)^{\mathfrak{d}(\xi)}.
\end{equation}
If $\chi$ is ramified then we simply set
\begin{equation}
\label{3.2.11}
L\left(s+\frac{1}{2},\phi\times\chi\right)=1.
\end{equation}

\begin{prop}
\label{proposition 3.3}
Let $\beta_i,1\leq i\leq m$ be the Satake parameters of $\phi$ and assume $\chi$ is unramified. Then $L(s,\phi\times\chi)$ has an Euler product expansion with $L(s,\phi\times\chi)^{-1}$ given by the following list. \\
(Case I, Orthogonal)
\[
\prod_{i=1}^m\left(1-\chi(\varpi)\beta_iq^{-1+\frac{r}{2}-s}\right),
\]
(Case II, Symplectic)
\[
\prod_{i=1}^m\left(1-\chi(\varpi)\beta_iq^{-s}\right),
\]
(Case III, Quaternionic Orthogonal Nonsplit)
\[
\prod_{i=1}^m\left(1-\chi(\varpi)\beta_iq^{-1+r-s}\right),
\]
(Case IV, Quaternionic Unitary Nonsplit)
\[
\prod_{i=1}^m\left(1-\chi(\varpi)\beta_iq^{-2+r-s}\right),
\]
(Case V, Unitary Inert) $E/F$ is inert,
\[
\prod_{i=1}^m\left(1-\chi(\varpi)\beta_i q^{-1+r-s}\right),
\]
(Case V, Unitary Ramified) $E/F$ is ramified,
\[
\prod_{i=1}^m\left(1-\chi(\widetilde{\varpi})\beta_iq^{\frac{r-1}{2}-s}\right),
\]
\end{prop}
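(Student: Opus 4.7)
The plan is to mirror the proof of Proposition \ref{proposition 3.1}, replacing the spherical Hecke algebra by the parabolic Hecke algebra $\mathcal{H}(K(\mathfrak{p}^{\mathfrak{c}}), \mathfrak{X})$. The essential difference is that in the Cartan decomposition \eqref{3.1.2} the integers $e_i$ range over all of $\Z$, whereas for $\xi = \mathrm{diag}[u, 1_r, \hat{u}] \in \mathfrak{Q}$ one has $u \in \mathfrak{M} \subset \mathrm{Mat}_m(\mathcal{O})$, so the elementary divisors of $u$ force $e_i \geq 0$. This asymmetry is precisely what produces only the ``positive half'' of the Euler product, explaining why the formulas in Proposition \ref{proposition 3.3} have a single factor per Satake parameter while those of Proposition \ref{proposition 3.1} have two.

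First I would reduce \eqref{3.2.10} to a Dirichlet series in the elementary divisors. Each $\xi \in \mathfrak{Q}$ has the form $\mathrm{diag}[u, 1_r, \hat{u}]$ with $u \in \mathfrak{M}$, and two such elements lie in the same $K(\mathfrak{p}^{\mathfrak{c}})$-double coset iff the corresponding $u$ and $u'$ share the same elementary divisor type $(\widetilde{\varpi}^{e_1}, \ldots, \widetilde{\varpi}^{e_m})$ with $0 \leq e_1 \leq \cdots \leq e_m$. Writing $\lambda_{e_1, \ldots, e_m}(\phi)$ for the corresponding Hecke eigenvalue, this gives
\begin{equation*}
L(s+\tfrac{1}{2}, \phi \times \chi) = \sum_{0 \leq e_1 \leq \cdots \leq e_m} \lambda_{e_1, \ldots, e_m}(\phi)\, t^{e_1 + \cdots + e_m},
\end{equation*}
where $t = \chi(\nu(\widetilde{\varpi}))|\mathrm{N}_{E/F}(\nu(\widetilde{\varpi}))|^{s+\kappa}$.

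Next I would evaluate this series case by case using the formal Macdonald-type manipulations of \cite[Section 16]{Sh97}. For the symplectic and unitary cases this is essentially already carried out in \cite[Theorem 19.8]{Sh00}; for the orthogonal case one adapts \cite[Proposition 17.14]{Sh04}; for the quaternionic orthogonal case one adapts \cite[Theorem 3.12]{Sh99}; and the quaternionic unitary case can be handled by imitating the direct computation given in the proof of Proposition \ref{proposition 3.1}, but now restricted to the monoid $e_i \geq 0$ instead of the group $e_i \in \Z$. Concretely, the intermediate identity should take the form
\begin{equation*}
\sum_{0 \leq e_1 \leq \cdots \leq e_m} \lambda_{e_1, \ldots, e_m}(\phi)\, T^{e_1 + \cdots + e_m} = \frac{A(T)}{\prod_{i=1}^m (1 - \beta_i T)},
\end{equation*}
where $A(T)$ is a product of local Euler factors of $\chi$ and $\chi^2$ that cancels exactly against the normalizing factor $b(s,\chi)$ appearing implicitly in the definition. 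Substituting the appropriate value of $T$ (a power of $\chi(\varpi) q^{-s}$, or of $\chi(\widetilde{\varpi}) q^{-s}$ in the unitary ramified case) and matching exponents of $q$ with the list in Proposition \ref{proposition 3.1} then yields the claimed Euler product.

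The main obstacle is performing this telescoping in the quaternionic cases, where no reference gives the result directly; one has to start from the coset enumeration of Lemma \ref{lemma 3.2} and carry out the same kind of manipulation that produced the auxiliary series $\alpha(s), \beta(s), \mathcal{A}(s',s)$ in the proof of Proposition \ref{proposition 3.1}, but truncated to non-negative exponents so that the ``negative half'' of the product disappears. A minor subtlety arises in the unitary ramified case, where the uniformizer of $E$ is $\widetilde{\varpi}$ rather than $\varpi$; the exponent of $q$ in the Euler factor then shifts by $1/2$, matching the statement.
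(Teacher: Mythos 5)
Your plan rests on a misreading of the parameter space: in the Cartan decomposition \eqref{3.1.2} the sum is already over $0 \leq e_1 \leq \cdots \leq e_m$, not over all of $\Z^m$, so the exponent range is identical in Propositions \ref{proposition 3.1} and \ref{proposition 3.3}. The degree drop from $2m$ to $m$ therefore cannot come from truncating the cone of $e_i$'s. It comes from the Hecke algebra: in the unramified case the spherical Hecke algebra of $G$ makes the eigenvalues $\lambda_{e_1,\ldots,e_m}$ into Weyl-invariant symmetric functions of $\{\alpha_i, \alpha_i^{-1}\}$, so the Dirichlet series produces both $(1-\alpha_i T)$ and $(1-\alpha_i^{-1}T)$; in the ramified case $\mathcal{H}(K(\mathfrak{p}^{\mathfrak{c}}),\mathfrak{X})$ is an abelian algebra essentially equal to the positive part of the $\mathrm{GL}_m$ Hecke algebra, so the eigenvalues are not $\beta_i\leftrightarrow\beta_i^{-1}$-symmetric, and the generating function closes up with only one factor per parameter.

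A second error: you posit a numerator $A(T)$ "that cancels exactly against the normalizing factor $b(s,\chi)$ appearing implicitly in the definition," but the ramified definition \eqref{3.2.10} contains no $b(s,\chi)$ at all (compare with \eqref{3.1.5}). The Dirichlet series must therefore evaluate directly to $\prod_i(1-\beta_i T)^{-1}$ with $A(T)=1$, and this is exactly what the paper obtains. The paper's proof is simpler than what you describe: it defines a Satake-type map $\omega : \mathcal{H}(K(\mathfrak{p}^{\mathfrak{c}}),\mathfrak{X}) \to \Q[t_1,\ldots,t_m]$ (a plain polynomial ring) by reading off the diagonal of the cosets in Lemma \ref{lemma 3.2}, computes the volume of the $(b,c)$-fiber over each $d\in\mathrm{GL}_m(F)/\mathrm{GL}_m(\mathfrak{o})$ to get $|\nu(d)|^{-r-m+1}$ (via \cite[Lemma 13.2]{Sh97}), and then invokes the known $\mathrm{GL}_m$ generating function \cite[Lemma 19.9]{Sh00} to close the sum into $\prod_i(1-\chi(\varpi)\beta_i q^{m+r-2-s})^{-1}$ in one shot. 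No case-by-case Macdonald-type telescoping is needed — the group $G$ enters only through the volume exponent and the final change of variable $s\mapsto s+\kappa-\tfrac{1}{2}$.
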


\begin{proof}
This is an analogue of \cite[Theorem 19.8]{Sh00} for symplectic and unitary cases. The proof for all the cases are the same so we only compute the orthogonal case as an example and omit the other cases. 

The Satake map $\omega:\mathbb{T}(K(\mathfrak{p^c}),\mathfrak{X})\to\Q[t_1,...,t_m]$ is defined as follows. Given a coset $d\mathrm{GL}_m(\mathfrak{o})$ for $d\in\mathrm{GL}_m(F)$, we can find a lower triangular matrix $g\in\mathrm{GL}_m(F)$ such that $d\mathrm{GL}_m(\mathfrak{o})=g\mathrm{GL}_m(\mathfrak{o})$. Assume the diagonal elements of $g$ are of the form $\varpi^{e_1},...,\varpi^{e_m}$ with $e_i\in\Z$ and set $\omega_0(d\mathrm{GL}_m(\mathfrak{o}))=\prod_{i=1}^m(q^{-i}t_i)^{e_i}$. For $K(\mathfrak{p^c})\xi K(\mathfrak{p^c})=\coprod_yyK(\mathfrak{p^c})$ with $y$ as in Lemma \ref{lemma 3.2}, we then define $\omega([K(\mathfrak{p^c})\xi K(\mathfrak{p^c})])=\sum_y\omega_0(y\mathrm{GL}_m(\mathfrak{o}))$.

Set $T=\chi(\varpi)q^{-s}$. By \cite[Lemma 19.9]{Sh00} we calculate the Dirichlet series
\[
\begin{aligned}
&\sum_{\xi\in K(\mathfrak{p^c})\backslash\mathfrak{X}/K(\mathfrak{p^c})}\lambda_{\xi}(\phi)T^{\nu(d)}=\sum_{d\in\mathrm{GL}_m(F)/\mathrm{GL}_m(\mathfrak{o})}\mathrm{vol}(b,c)\omega_0(d\mathrm{GL}_m(\mathfrak{o}))T^{\nu(d)}.
\end{aligned}
\]
Here $\mathrm{vol}(b,c)$ is the volume of $K(\mathfrak{p^c})\xi K(\mathfrak{p^c})/K(\mathfrak{p^c})$ with fixed $d$. Clearly 
\[
\mathrm{vol}\left(\mathrm{Mat}_{m,r}(\mathfrak{p^c}\mathcal{O})/\mathrm{Mat}_{m,r}(\mathfrak{p^c}\mathcal{O})d^{\ast}\right)=|\nu(d)|^{-r},
\]
and $c':=cJ+\frac{1}{2}b^{\ast}\hat{\theta} b$ satisfies $c+\epsilon c'^{\ast}=0$. Then by \cite[Lemma 13.2]{Sh97} we have
\[
\begin{aligned}
&\sum_{\xi\in K(\mathfrak{p^c})\backslash\mathfrak{X}/K(\mathfrak{p^c})}\lambda_{\xi}(\phi)T^{\nu(d)}\\
=&\sum_{d\in\mathrm{GL}_m(F)/\mathrm{GL}_m(\mathfrak{o})}|\nu(d)|^{-r-m+1}\omega_0(d\mathrm{GL}_m(\mathfrak{o}))T^{\nu(d)}\\
=&\prod_{i=1}^m(1-\chi(\varpi)\beta_iq^{m+r-2-s})^{-1}
\end{aligned}
\]
Changing $s\mapsto s+\frac{n-1}{2}-\frac{1}{2}$ we obtain the result in above list.
\end{proof}

\subsection{The split case}
\label{section 3.3}
Let
\begin{equation}
\label{3.3.1}
G:=G(F):=\{g\in\mathrm{GL}_n(D):g\Phi g^{\ast}=\Phi\},\,\Phi=\left[\begin{array}{ccc}
0 & 0 & 1_m\\
0 & \theta & 0\\
\epsilon\cdot 1_m & 0 &0
\end{array}\right],
\end{equation}
with $n=2m+r$ and $\theta^{\ast}=\epsilon\theta\in\mathrm{GL}_r(D)$ not necessarily anisotropic. In Case III', IV', this group is isomorphic to
\begin{equation}
\label{3.3.2}
\widetilde{G}:=\widetilde{G}(F):=\{g\in\mathrm{GL}_{2n}(F):g\widetilde{\Phi}\transpose{g}=\widetilde{\Phi}\},\,\widetilde{\Phi}=\left[\begin{array}{ccc}
0 & 0 & 1_{2m}\\
0 & \widetilde{\theta} & 0\\
-\epsilon\cdot 1_{2m} & 0 & 0
\end{array}\right],
\end{equation}
with $\transpose{\widetilde{\theta}}=-\epsilon\widetilde{\theta}\in\mathrm{GL}_{2r}(F)$. In Case V', $G\cong\mathrm{GL}_n(F)$ is simply the general linear group. We omit the discussion of Case V' for simplicity as it is well studied in \cite{Sh97,Sh00}. 

\subsubsection{Unramified local $L$-factors}

The group $\widetilde{G}$ is further isomorphic to
\begin{equation}
\label{3.3.3}
\widetilde{G}':=\widetilde{G}'(F):=\{g\in\mathrm{GL}_{2n}(F),g\widetilde{\Phi}'\transpose{g}=\widetilde{\Phi}'\},\,\widetilde{\Phi}'=\left[\begin{array}{ccc}
0 & 0 & 1_{m'}\\
0 & \theta' & 0\\
-\epsilon\cdot 1_{m'} & 0 & 0
\end{array}\right],
\end{equation}
with $2n=2m'+r'$ and $\transpose{\theta}'=-\epsilon\theta'\in\mathrm{GL}_{r'}(F)$ anisotropic. This is a group of Case I or II discussed in Section \ref{section 3.1}. Assume $\chi$ is an unramified character of $F^{\times}$, $\pi$ an unramified admissible representation of $G(F)$ and $\phi\in\pi$ a spherical vector. Let $\pi'$ be an unramified admissible representation of $\widetilde{G}'(F)$ and $\phi'\in\pi'$ a spherical vector obtained from $\pi,\phi$ under the isomorphism $G\cong\widetilde{G}'$. We thus define the local $L$-factor $L(s,\phi\times\chi):=L(s,\phi'\times\chi)$ as in Section \ref{section 3.1}. In particular, the normalizing factors are\\
(Case III', Quaternionic Orthogonal Split) 
\[
\begin{aligned}
b(s,\chi)&=L\left(s+\frac{2n+1}{2},\chi\right)\prod_{i=1}^{n}L(2s-1+2i,\chi^2),
\end{aligned}
\]
(Case IV', Quaternionic Unitary Split) 
\[
\begin{aligned}
b(s,\chi)&=\prod_{i=1}^{n}L(2s+2n+1-2i,\chi^2).
\end{aligned}
\]
Let $\alpha_i$ be the Satake parameters of $\phi$ then $L(s,\phi\times\chi)^{-1}$ are given by\\
(Case III', Quaternionic Orthogonal Split)
\[
\left(1-\chi(\varpi)q^{-s}\right)\times\prod_{i=1}^{n}\left(1-\chi(\varpi)\alpha_iq^{-s}\right)\left(1-\chi(\varpi)\alpha^{-1}_iq^{-s}\right),
\]
(Case IV', Quaternionic Unitary Split)
\[
\prod_{i=1}^{\lfloor\frac{r'}{2}\rfloor}\left(1-\chi(\varpi)^2q^{2i-r'-2s}\right)\times\prod_{i=1}^{m'}\left(1-\chi(\varpi)\alpha_iq^{-1+\frac{r'}{2}-s}\right)\left(1-\chi(\varpi)\alpha_i^{-1}q^{1-\frac{r'}{2}-s}\right).
\]

\subsubsection{Ramified local $L$-factors}

For an integer $\mathfrak{c}\geq 1$, we consider the open compact subgroup $K(\mathfrak{p^c})$ of $G(\mathfrak{o})$ as in Section \ref{section 3.2}. The isomorphism between $G$ and $\widetilde{G}$ can be chosen such that the image of $G(\mathfrak{o})$ is $\widetilde{G}(\mathfrak{o})$. We will fix such an isomorphism throughout the paper. In this case, the image of $K(\mathfrak{p^c})$ is
\begin{equation}
\label{3.3.4}
\widetilde{K}(\mathfrak{p^c})=\widetilde{G}(\mathfrak{o})\cap\left[\begin{array}{ccc}
\mathrm{Mat}_{2m}(\mathfrak{o}) & \mathrm{Mat}_{2m,2r}(\mathfrak{o}) & \mathrm{Mat}_{2m}(\mathfrak{o})\\
\mathrm{Mat}_{2r,2m}(\mathfrak{p^c}\mathfrak{o}) &1+\mathrm{Mat}_{2r}(\mathfrak{p}\mathfrak{o}) & \mathrm{Mat}_{2r,2m}(\mathfrak{o})\\
\mathrm{Mat}_{2m}(\mathfrak{p^c}\mathfrak{o}) & \mathrm{Mat}_{2m,2r}(\mathfrak{p^c}\mathfrak{o}) & \mathrm{Mat}_{2m}(\mathfrak{o})
\end{array}\right]
\end{equation}
We can define the Hecke algebras $\mathcal{H}(K(\mathfrak{p^c}),\mathfrak{X})$ and $\mathcal{H}(\widetilde{K}(\mathfrak{p^c}),\mathfrak{X})$ similarly as Section \ref{section 3.2}. Let $\pi$ be an admissible representation of $G(F)$. Assume $\phi\in\pi$ is a vector fixed by $K(\mathfrak{p^c})$ and is an eigenvector for the Hecke algebra $\mathcal{H}(K(\mathfrak{p^c}),\mathfrak{X})$. Let $\pi'$ be an admissible representation of $\widetilde{G}(F)$ and $\phi'\in\pi$ a vector obtained from $\pi,\phi$
under the isomorphism $G\cong\widetilde{G}$. We thus define the local $L$-factor $L(s,\phi\times\chi):=L(s,\phi'\times\chi)$ as in \eqref{3.2.10}, \eqref{3.2.11}. In particular, $L(s,\phi\times\chi)^{-1}$ are given by\\
(Case III', Quaternionic Orthogonal Split)
\[
\prod_{i=1}^{2m}\left(1-\chi(\varpi)\beta_iq^{-s}\right),
\]
(Case IV', Quaternionic Unitary Split)
\[
\prod_{i=1}^{2m}\left(1-\chi(\varpi)\beta_iq^{-1+r-s}\right).
\]
The operator $U'(\mathfrak{p^n})$ is defined as in \eqref{3.2.9} for orthogonal and symplectic groups. In Case IV', the $U(\mathfrak{p^n})$ is also the one defined for Case I in \eqref{3.2.8}. In Case III', we define $U(\mathfrak{p^n})$ as
\begin{equation}
\label{3.3.5}
U(\mathfrak{p^n}):=[\widetilde{K}(\mathfrak{p^c})\xi\widetilde{K}(\mathfrak{p^c})]\text{ with }\xi=\mathrm{diag}[\varpi^{\mathfrak{n}}\cdot 1_n,\varpi^{-\mathfrak{n}}\cdot 1_n].
\end{equation}

\begin{rem}
\label{remark 3.4}
Note that when defining local $L$-factors, we always assume the group is chosen such that $m$ is the global Witt index in unramified cases which is not applied for ramified cases. In other words, in ramified cases our open compact subgroup $K(\mathfrak{p^c})$ is not chosen to be maximal. For example in above Case III', clearly the local group $\widetilde{G}$ can be further isomorphic to $\widetilde{G}'$ with $m'=n, r'=0$ as in the unramified computations. But we are still considering the open compact subgroup $\widetilde{K}(\mathfrak{p^c})$ rather than the bigger one
\[
\widetilde{G}'(\mathfrak{o})\cap\left[\begin{array}{cc}
\mathrm{Mat}_{n}(\mathfrak{o}) & \mathrm{Mat}_n(\mathfrak{o})\\
\mathrm{Mat}_{n}(\mathfrak{p^c}\mathfrak{o}) & \mathrm{Mat}_n(\mathfrak{o})
\end{array}\right],
\]
which causes our local $L$-factors to be of degree $2m$ rather than the expected $n$ as in Case II. We make those restrictions because only these $L$-factors show up in our integral representations.
\end{rem}

\subsection{The global $L$-function}
\label{section 3.4}

We summarize our definition for the standard $L$-function. Let $F$ be a number field and
\begin{equation}
\label{3.4.1}
G:=G(F):=\{g\in\mathrm{GL}_n(D):g\Phi g^{\ast}=\Phi\},\,\Phi=\left[\begin{array}{ccc}
0 & 0 & 1_m\\
0 & \theta & 0\\
\epsilon\cdot 1_m & 0 & 0
\end{array}\right],
\end{equation}
be the global group as in Section \ref{section 2}. Let $\mathfrak{o}$ be the ring of integers of $F$ and $\mathcal{O}$ a fixed maximal order of $D$ such that $D=F\otimes_{\mathfrak{o}}\mathcal{O}$. Let $\mathfrak{n}=\mathfrak{n}_1\mathfrak{n}_2$ be an integral ideal of $\mathfrak{o}$ with $\mathfrak{n}_1,\mathfrak{n}_2$ coprime and $\chi:E^{\times}\backslash\mathbb{A}_E^{\times}\to\C^{\times}$ a Hecke character of conductor $\mathfrak{n}_2$. Consider the open compact subgroup
\begin{equation}
\label{3.4.2}
K(\mathfrak{n})=G(\mathfrak{o})\cap\left[\begin{array}{ccc}
\mathrm{Mat}_{m}(\mathcal{O}) & \mathrm{Mat}_{m,r}(\mathcal{O}) & \mathrm{Mat}_{m}(\mathcal{O})\\
\mathrm{Mat}_{r,m}(\mathfrak{n}\mathcal{O}) &1+\mathrm{Mat}_{r}(\mathfrak{n'}\mathcal{O}) & \mathrm{Mat}_{r,m}(\mathcal{O})\\
\mathrm{Mat}_{m}(\mathfrak{n}\mathcal{O}) & \mathrm{Mat}_{m,r}(\mathfrak{n}\mathcal{O}) & \mathrm{Mat}_{m}(\mathcal{O})
\end{array}\right],
\end{equation}
with $\mathfrak{n}'$ the support of $\mathfrak{n}$. For $v$ a finite place corresponds to a prime ideal $\mathfrak{p}$ denote $\mathfrak{M}_v=\mathrm{GL}_m(D_v)\cap\mathrm{Mat}_m(\mathcal{O}_v)$ and set
\begin{equation}
\label{3.4.3}
\mathfrak{Q}_v=\left\{\begin{array}{cc}
G(\mathfrak{o}_v) & \mathfrak{p}\nmid\mathfrak{n},\\
\mathrm{diag}[u,1_r,\hat{u}]\text{ with }u\in\mathfrak{M}_v& \mathfrak{p}|\mathfrak{n_1}\\
1 & \mathfrak{p}|\mathfrak{n_2}.
\end{array}\right.
\end{equation}
Let $\mathfrak{Q}=\prod_v\mathfrak{Q}_v$ and $\mathfrak{X}=K(\mathfrak{n})\mathfrak{Q}K(\mathfrak{n})$. Define the global Hecke algebra $\mathcal{H}:=\mathcal{H}(K(\mathfrak{n}),\mathfrak{X})$ associated to $K(\mathfrak{n})$ and $\mathfrak{X}$ be the one generated by double cosets $[K(\mathfrak{n})\xi K(\mathfrak{n})]$ with $\xi\in\mathfrak{Q}$. Its action on a cusp form $\phi\in\pi$ can be similarly defined as in the local case treated above. Assume $\phi\in\pi$ is fixed by $K(\mathfrak{n})$ and is an eigenfunction for $\mathcal{H}$. That is
\begin{equation}
\label{3.4.4}
\phi|[K(\mathfrak{n})\xi K(\mathfrak{n})]=\lambda_{\xi}(\phi)\phi.
\end{equation}
The standard $L$-function of $\phi$ twisted by $\chi$ is defined as
\begin{equation}
\label{3.4.5}
L\left(s+\frac{1}{2},\phi\times\chi\right)=b(s,\chi)\sum_{\substack{\xi\in K(\mathfrak{n})\backslash\mathfrak{X}/K(\mathfrak{n})\\\xi=\mathrm{diag}[u,1,\hat{u}]}}\lambda_{\xi}(\phi)\chi(\nu(u))|\mathrm{N}_{E/F}(\nu(u))|^{s+\kappa}.
\end{equation}
Clearly, it has an Euler product expression
\begin{equation}
\label{3.4.6}
L\left(s+\frac{1}{2},\phi\times\chi\right)=\prod_vL_v\left(s+\frac{1}{2},\phi_v\times\chi_v\right),
\end{equation}
with $L_v\left(s+\frac{1}{2},\phi_v\times\chi_v\right)$ the local $L$-factors defined in the previous two subsections. Note that when $v\nmid\mathfrak{n}$ is unramified, $G(\mathfrak{o}_v)$ may have Witt index $m'\geq m$ and the unramified local $L$-factors is defined by replacing $m$ with $m'$ in Section \ref{section 3.1}.

\begin{rem}
\label{remark 3.5}
We give several remarks on our $L$-functions.\\
(1) Here we define the $L$-function by a Dirichlet series associated to certain Hecke eigenvalues which can be viewed as an analogue of the $L$-function for classical ($\mathrm{GL}_2$) modular forms. This kind of $L$-function is also studied in \cite{Sh97,Sh00} for symplectic and unitary groups.\\
(2) As the reader may notice, we are writing $L(s,\phi\times\chi)$ to indicate its dependence on the cusp form $\phi$. Unlike the $\mathrm{GL}_2$ case, we do not have a clear correspondence between eigenforms and cuspidal representations. This is because, the subspace of $\pi$ fixed by $K(\mathfrak{n})$ (take $\mathfrak{n}$ to be the conductor of $\pi$) may not be of dimension one. However, we remark that newform theory is recently known for unitary groups in \cite{AOY24, Ato25}.\\
(3) The unramified local $L$-factors defined here are really the Langlands $L$-function associated to the natural embedding of the $L$-group ${^L}G$ into a general linear group.\\
(4) We make no claim that our definition of ramified $L$-factors is `correct'. Indeed, it is a conjecture of Langlands \cite{L70} that for any cuspidal representation $\pi$ one can associate to any place is a local $L$-factor $L_v(s,\pi_v)$ and a local root number $\epsilon_v(s,\pi_v)$ such that the global $L$-function satisfies a functional equation of the form
\[
L(s,\pi)=\epsilon(s,\pi)L(1-s,\pi).
\]
Using the doubling method, Yamana \cite{Y14} gives a definition of local $L$-factors and proves the functional equation for classical groups. However, he does not define these factors explicitly as in our Proposition \ref{proposition 3.1}, \ref{proposition 3.3} and it is not clear how his approach can be used to study algebraicity or $p$-adic properties which is interested in this paper. We have not compared our $L$-factors with his and we also do not know whether the $L$-function defined here can be proved to satisfy a functional equation.
\end{rem}

\section{The Non-archimedean Local Integrals}
\label{section 4}

We carried out the computations of the non-archimedean local integrals in this section. We keep the setting for non-archimedean local fields and tuples $(D,\rho,\epsilon)$ as in the beginning of Section \ref{section 3}. The unramified local integrals are well known but we will also review the computations for completeness. For ramified local integrals, it is also well known that one can choose a local section $f_{s,v}$ such that $\mathcal{Z}_v(s;\phi_{1,v},\phi_{2,v},f_{s,v})\neq 0$. For our purpose, we explicitly construct certain local sections $f_s^{\dagger},f_s^{\ddagger},f_s^p$ such that $\mathcal{Z}$ represents the local $L$-factors defined in the last section or the $p$-adic modifications. These local sections will also be chosen such that the Eisenstein series has a nice Fourier expansion. To make our notations and computations consistent, we do not consider the split cases (i.e. Case III', IV', V'). For Case III', IV', the local groups are isomorphic to the groups in Case I, II and our arguments can be directly extended to these two cases. The Case V' should be treated separately and we omit it for simplicity. We will assume in this paper that Case V' does not occur in the ramified setting.

\subsection{Setup for non-archimedean local integrals}
\label{section 4.1}
Let 
\begin{equation}
\label{4.1.1}
G:=G(F):=\{g\in\mathrm{GL}_n(D):g\Phi g^{\ast}=\Phi\},\,\Phi=\left[\begin{array}{ccc}
0 & 0 & 1_m\\
0 & \theta & 0\\
\epsilon\cdot 1_m & 0 &0
\end{array}\right],
\end{equation}
with $n=2m+r$ and $\theta^{\ast}=\epsilon\theta\in\mathrm{GL}_r(D)$ not necessarily anisotropic. Let
\begin{equation}
\label{4.1.2}
H:=H(F):=\{h\in\mathrm{GL}_{2n}(D):hJ_n h^{\ast}=J_n\},\,J_n=\left[\begin{array}{cc}
0 & 1_n\\
\epsilon\cdot 1_n & 0
\end{array}\right],
\end{equation}
and define an embedding
\begin{equation}
\label{4.1.3}
\begin{aligned}
G\times G&\to H,\\
(g_1,g_2)&\mapsto R\left[\begin{array}{cc}
g_1 & 0 \\
0 & g_2
\end{array}\right]R^{-1},
\end{aligned}
\end{equation}
with
\[
R=\left[\begin{array}{cccccc}
0 & \frac{\epsilon}{2}\cdot 1_r & 0 & 0 & \frac{\epsilon}{2}\cdot 1_r & 0\\
0 & 0 & 0 & 0 & 0 & -\epsilon\cdot 1_m\\
1_m & 0 & 0 & 0 & 0 & 0\\
0 & \theta^{-1} & 0 & 0 & -\theta^{-1} & 0\\
0 & 0 & 0 & 1_m & 0 & 0\\
0 & 0 & 1_m & 0 & 0 & 0
\end{array}\right].
\]
We identify $(g_1,g_2)$ with its image in $H$. Let $P\subset H$ be the Siegel parabolic subgroup consisting elements of the form $\left[\begin{array}{cc}
\ast & \ast\\
0_n & \ast
\end{array}\right]$, with Levi decomposition $P=M\ltimes N$. Consider the induced representation $\mathrm{Ind}_{P(F)}^{H(F)}(\chi|\nu(\cdot)|^{s})$ for a character $\chi:E^{\times}\to\C^{\times}$. Let $\pi$ be an admissible representation of $G(F)$ and $\phi_1,\phi_2\in\pi$. For a section $f_s\in\mathrm{Ind}_{P(F)}^{H(F)}(\chi|\nu(\cdot)|^s)$ we consider the local integral
\begin{equation}
\label{4.1.4}
\mathcal{Z}(s;\phi_1,\phi_2,f_s)=\int_{G(F)}f_s(\delta(g,1))\langle\pi(g)\phi_1,\phi_2\rangle dg,
\end{equation}
where
\[
\delta=\left[\begin{array}{cccccc}
1_r & 0 & 0 & 0 & 0 & 0\\
0 & 1_m & 0 & 0 & 0 & 0\\
0 & 0 & 1_{m} & 0 & 0 & 0\\
0 & 0 & 0 & 1_{r} & 0 & 0\\
0 & 0 & -1_m & 0 & 1_m & 0\\
0 & \epsilon\cdot 1_m & 0 & 0 & 0 & 1_m
\end{array}\right].
\]

\subsection{The unramified local integrals}
\label{section 4.2}

Assume $\chi$ is an unramified character of $E^{\times}$. Let $\pi$ be an unramified admissible representation of $G(F)$ and $\phi_1=\phi_2=\phi\in\pi$ a spherical vector.

Take the local section $f_s^0\in\mathrm{Ind}_{P(F)}^{H(F)}(\chi|\nu\cdot|^s)$ to be the spherical section normalized such that
\begin{equation}
\label{4.2.1}
f_s^0(pk)=\chi(\nu(a))|\mathrm{N}_{E/F}(\nu(a))|^{s+\kappa}b(s,\chi)\text{ with }p=\left[\begin{array}{cc}
a & b\\
0 & \hat{a}
\end{array}\right],k\in H(\mathfrak{o}).
\end{equation}
Here $b(s,\chi)$ is the normalizing factor given in Section \ref{section 3.1}. Note that in \eqref{4.1.1}, $\theta$ is not necessarily anisotropic. Let $m'$ be the Witt index of $G$. That is $G$ is isomorphic to the following $F$-group
\begin{equation}
\label{4.2.2}
G':=G'(\Phi'):=\{g\in\mathrm{GL}_n(D):g\Phi'g^{\ast}=\Phi'\}
\end{equation}
with
\[
\Phi'=\left[\begin{array}{ccc}
0 & 0 & 1_{m'}\\
0 & \theta' & 0\\
\epsilon 1_{m'} & 0 & 0
\end{array}\right],n=2m'+r',\theta'^{\ast}=\epsilon\theta'\in\mathrm{GL}_{r'}(D)\text{ anisotropic }.
\]
Then there exists a matrix $S\in\mathrm{GL}_r(D)$ with $S\Phi' S^{\ast}=\Phi$ and the isomorphism between $G$ and $G'$ are given by
\begin{equation}
\label{4.2.3}
\begin{aligned}
G'&\stackrel{\sim}\longrightarrow G,\\
g&\mapsto SgS^{-1}.
\end{aligned}
\end{equation}
Denote by $\pi',\phi'$ for the admissible representation and cusp form of $G'(F)$ obtained from $\pi,\phi$ under isomorphism \eqref{4.2.3}. Then the local $L$-factor $L(s,\phi\times\chi)=L(s,\phi'\times\chi)$ is defined in Section \ref{section 3.1} with $m$ replaced by $m'$. 

\begin{prop}
\label{proposition 4.1}
For $f_s^0$ chosen as above and $\phi_1=\phi_2=\phi\in\pi$ a spherical vector, we have
\begin{equation}
\label{4.2.4}
\mathcal{Z}(s;\phi,\phi,f_s^0)=L\left(s+\frac{1}{2},\phi\times\chi\right)\langle\phi,\phi\rangle.
\end{equation}
\end{prop}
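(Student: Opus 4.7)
\textbf{Proof proposal for Proposition \ref{proposition 4.1}.} The plan is the standard unramified doubling computation: partition $G(F)$ by the Cartan decomposition, exploit sphericity of $\phi$ and $K_H$-invariance of $f_s^0$ to make the integrand constant on each double coset, then match the resulting Dirichlet series against the defining formula \eqref{3.1.5}. First I would reduce to the case where $\theta$ is anisotropic. The inner conjugation \eqref{4.2.3} by $S \in \mathrm{GL}_n(D)$ can be chosen to preserve the maximal order $\mathcal{O}$ and hence carry $G(\mathfrak{o})$ to $G'(\mathfrak{o})$; it identifies the unramified Hecke structure, the doubling embedding, and the section $f_s^0$ compatibly, and the $L$-factor is defined identically on both sides. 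So one may assume $m$ equals the Witt index of $G$ and apply \eqref{3.1.2}.

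The next step is to show that $g \mapsto f_s^0(\delta(g,1))\langle\pi(g)\phi,\phi\rangle$ is constant on each $K_{e_1,\ldots,e_m} = G(\mathfrak{o}) t_e G(\mathfrak{o})$, where $t_e = \mathrm{diag}[\widetilde\varpi^{e_1},\ldots,\widetilde\varpi^{e_m},1_r,\widetilde\varpi^{-e_1},\ldots,\widetilde\varpi^{-e_m}]$. Writing $g = k_1 t_e k_2$, sphericity reduces the matrix coefficient to $\langle\pi(t_e)\phi,\phi\rangle$. For the section, the key inputs are: (i) $(k,1),(1,k) \in K_H := H(\mathfrak{o})$ for $k \in G(\mathfrak{o})$, visible directly from \eqref{2.1.9}; (ii) $\delta\cdot(k,k)\cdot\delta^{-1} \in P(F)$ for all $k \in G(F)$, a property designed into the choice of $\delta$ in Section \ref{section 2.2}; and (iii) $\chi(\nu(k)) = 1$ and $|\mathrm{N}_{E/F}(\nu(k))| = 1$ for $k \in G(\mathfrak{o})$, since $\nu(k)$ is a unit in the ring of integers of $E$ and $\chi$ is unramified. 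Combining these via the decomposition $(k_1,1) = (k_1,k_1)(1,k_1^{-1})$, one obtains $\delta\cdot(k_1,1)(t_e,1)(k_2,1) \in P(F) \cdot \delta(t_e,1)\cdot K_H$ with trivial $P$-character contribution, so the transformation rule \eqref{4.2.1} and right $K_H$-invariance yield $f_s^0(\delta(g,1)) = f_s^0(\delta(t_e,1))$. Together with the Hecke eigenvalue identity \eqref{3.1.4} and the volume-one normalization on $G(\mathfrak{o})$, this gives
\begin{equation*}
\mathcal{Z}(s;\phi,\phi,f_s^0) = \langle\phi,\phi\rangle \sum_{0\le e_1\le\cdots\le e_m} f_s^0(\delta(t_e,1))\,\lambda_{e_1,\ldots,e_m}(\phi).
\end{equation*}

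The remaining and main computation is the Iwasawa decomposition of $\delta(t_e,1)$ in $H = P K_H$. Using the explicit $6\times 6$ block form \eqref{2.1.9} with $g_1 = t_e$, $g_2 = 1$, I would perform block row reduction to obtain $\delta(t_e,1) = p\cdot k$ with $p = \mathrm{diag}[a,\hat{a}]\cdot n \in P(F)$ and $k \in K_H$, verifying that $\nu(a) = u\cdot\widetilde\varpi^{e_1+\cdots+e_m}$ for some $u \in \mathcal{O}^\times$; by the $\kappa$-shift in \eqref{4.2.1} this yields
\begin{equation*}
f_s^0(\delta(t_e,1)) = \bigl(\chi(\nu(\widetilde\varpi))\,|\mathrm{N}_{E/F}(\nu(\widetilde\varpi))|^{s+\kappa}\bigr)^{e_1+\cdots+e_m}\cdot b(s,\chi).
\end{equation*}
Substituting into the displayed sum and comparing with \eqref{3.1.5} immediately produces $L(s+\tfrac{1}{2},\phi\times\chi)\langle\phi,\phi\rangle$. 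The hard part is this final matrix decomposition; I expect it to proceed uniformly across Cases I--V by direct block manipulation, with the exponents of $\widetilde\varpi$ in $a$ tracking the positive half of the diagonal of $t_e$ and the value of $\kappa$ from \eqref{2.2.5} emerging from the block sizes and the modulus character of $P$.
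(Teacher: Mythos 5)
Your proposal is correct and follows essentially the same route as the paper: reduce to anisotropic $\theta$ by the conjugation \eqref{4.2.3}, decompose the integral via the Cartan decomposition \eqref{3.1.2}, use sphericity and right $K_H$-invariance to collapse each double coset to a single value, and compute $f_s^0$ on the representatives $\delta(t_e,1)$ to match the defining Dirichlet series \eqref{3.1.5}. The only difference is that you spell out the constancy argument (via $(k_1,1)=(k_1,k_1)(1,k_1^{-1})$ and $\delta(k,k)\delta^{-1}\in P(\mathfrak{o})$) and the Iwasawa decomposition step, which the paper treats as standard and leaves implicit.
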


\begin{proof}
The computation is similar to the one in \cite{Pi21} for symplectic groups. We have another doubling embedding
\[
\begin{aligned}
G'\times G'&\to H,\\
(g_1,g_2)&\mapsto R'\left[\begin{array}{cc}
g_1 & 0\\
0 & g_2
\end{array}\right]R'^{-1},\qquad R'=R\left[\begin{array}{cc}
S & 0\\
0 & S
\end{array}\right]
\end{aligned}
\]
which is compatible with \eqref{4.1.3},\eqref{4.2.3}. Then the integral \eqref{4.1.4} is equivalent to the integral 
\[
\mathcal{Z}(s;\phi',\phi',f_s)=\int_{G'(F)}f_s(\delta(g,1))\langle\pi'(g)\phi',\phi'\rangle dg.
\]
By Cartan decomposition, we have
\[
\begin{aligned}
&\sum_{\substack{e_1,...,e_{m'}\in\Z\\0\leq e_1\leq ...\leq e_{m'}}}f^0_s(\delta\cdot(\mathrm{diag}[\widetilde{\varpi}^{e_1},...,\widetilde{\varpi}^{e_{m'}},1_{r'},\widetilde{\varpi}^{-e_{1}},...,\widetilde{\varpi}^{-e_{m'}}],1))\int_{K_{e_1,...,e_{m'}}}\langle\pi'(g)\phi',\phi'\rangle dg\\
=&\sum_{\substack{e_1,...,e_{m'}\in\Z\\0\leq e_1\leq ...\leq e_{m'}}}\left(\chi(\nu(\widetilde{\varpi}))|\mathrm{N}_{E/F}(\nu(\widetilde{\varpi}))|^{s+\kappa}\right)^{e_1+...+e_{m'}}b(s,\chi)\lambda_{e_1,...,e_{m'}}(\phi')\langle\phi',\phi'\rangle\\
=&L\left(s+\frac{1}{2},\phi'\times\chi\right)\langle\phi',\phi'\rangle.
\end{aligned}
\]
\end{proof}

\subsection{The local section $f_s^{\dagger,\mathfrak{c}}$}
\label{section 4.3}

In this and the next two subsections, we consider the ramified local integrals. We will always assume $2\in\mathcal{O}^{\times}$ and $\theta\in\mathrm{GL}_r(\mathcal{O})$ in the ramified cases. For an integer $\mathfrak{c}\geq 1$ let $N'(\mathfrak{p^c})$ be the subgroup of $N(F)$ consisting of elements of the form
\begin{equation}
\label{4.3.1}
\left[\begin{array}{cccc}
1_r & 0 & x & y\\
0 & 1_{2m} & \epsilon y^{\ast} & z\\
0 & 0 & 1_r & 0\\
0 & 0 & 0 & 1_{2m}
\end{array}\right],\,x\in S_r(\mathfrak{p}\mathcal{O}),y\in\mathrm{Mat}_{r,2m}(\mathfrak{p^c}\mathcal{O}),z\in S_{2m}(\mathfrak{p^c}\mathcal{O}).
\end{equation}
Define $f_s^{\dagger,\mathfrak{c}}\in\mathrm{Ind}_{P(F)}^{H(F)}(\chi|\nu(\cdot)|^s)$ to be a local section supported on $P(F)J_nN'(\mathfrak{p^c})$ with
\begin{equation}
\label{4.3.2}
f_s^{\dagger,\mathfrak{c}}(pJ_nn)=\chi(\nu(a))|\mathrm{N}_{E/F}(\nu(a))|^{s+\kappa}\text{ for }p=\left[\begin{array}{cc}
a & b\\
0 & \hat{a}
\end{array}\right]\in P(F),n\in N'(\mathfrak{p^c}).
\end{equation}
Note that when pulling back along $G\times G\to H$, we have
\begin{equation}
\label{4.3.3}
f_s^{\dagger,\mathfrak{c}}((g_1k_1,g_2k_2))=\chi(\nu(d_{k_1}))\chi(\nu(a_{k_2}))f_s^{\dagger,\mathfrak{c}}((g_1,g_2))
\end{equation}
for $k_1=\left[\begin{array}{ccc}
a_{k_1} & f_{k_1} & b_{k_1}\\
h_{k_1} & e_{k_1} & j_{k_1}\\
c_{k_1} & k_{k_1} & d_{k_1}
\end{array}\right]\in K'(\mathfrak{p^c})$ and $k_2=\left[\begin{array}{ccc}
a_{k_2} & f_{k_2} & b_{k_2}\\
h_{k_2} & e_{k_2} & j_{k_2}\\
c_{k_2} & k_{k_2} & d_{k_2}
\end{array}\right]\in K(\mathfrak{p^c})$.

The following proposition is an analogue of the computations in \cite{Sh95} for symplectic groups. We extend the arguments there to all classical groups. Especially, the main difficulty in the computations is to deal with the group with $r\neq 0$.

\begin{prop}
\label{proposition 4.2}
Assume $\phi\in\pi$ is fixed by $K(\mathfrak{p^c})$ and is an eigenvector of the Hecke algebra $\mathcal{H}(K(\mathfrak{p^c}),\mathfrak{X})$. Set $\phi_1=\pi(w)\phi,\phi_2=\phi$. Assume $\chi$ is an unramified character and $2\in\mathcal{O}^{\times},\theta\in\mathrm{GL}_r(\mathcal{O})$, then
\begin{equation}
\label{4.3.4}
\begin{aligned}
&\mathcal{Z}(s;\pi(w)\phi,\phi,f_s^{\dagger,\mathfrak{c}})=\chi(\varpi)^{\mathfrak{c}m\mathbf{d}_1}q^{-\mathfrak{c}m\mathbf{d}_2(s+\kappa)}L\left(s+\frac{1}{2},\phi\times\chi\right)\cdot\left\langle\phi|U'(\mathfrak{p^c}),\phi\right\rangle.
\end{aligned}
\end{equation}
Here 
\begin{equation}
\label{4.3.5}
\mathbf{d}_1=\left\{\begin{array}{cc}
1 & \text{ Case I, II, V},\\
2 & \text{ Case III, IV},
\end{array}\right.\qquad\mathbf{d}_2=\left\{\begin{array}{cc}
1 & \text{ Case I, II},\\
2 & \text{ Case III, IV, V},
\end{array}\right.
\end{equation}
\end{prop}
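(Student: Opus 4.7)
The plan is to unfold the local integral using the support condition on $f_s^{\dagger,\mathfrak{c}}$, reduce it to a sum over double cosets of the ramified Hecke algebra $\mathcal{H}(K(\mathfrak{p^c}),\mathfrak{X})$, and then invoke the Hecke eigenvalue property of $\phi$ to match the Dirichlet series \eqref{3.2.10} defining $L(s+\tfrac{1}{2},\phi\times\chi)$. This parallels Shimura's strategy for the symplectic case \cite{Sh95}, but the general case with $r>0$ requires substantially more bookkeeping.

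First, I would compute $\delta(g,1)$ explicitly from \eqref{2.1.9} with $g_2=1$ and determine the locus on which $\delta(g,1)\in P(F)J_nN'(\mathfrak{p^c})$. Writing $g$ in the block form \eqref{2.1.8}, the membership translates into congruence conditions on the various blocks of $g$ modulo $\mathfrak{p^c}\mathcal{O}$, with extra $\mathfrak{p}\mathcal{O}$-congruences coming from the $S_r(\mathfrak{p}\mathcal{O})$ factor in \eqref{4.3.1}. Unwinding these via an Iwasawa-type decomposition of $G(F)$ should show that the admissible $g$'s form the preimage of $\mathfrak{X}$, so that the integral splits as a sum indexed by $\xi\in K(\mathfrak{p^c})\backslash\mathfrak{X}/K(\mathfrak{p^c})$.

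Next, using Lemma \ref{lemma 3.2} to enumerate left $K(\mathfrak{p^c})$-cosets inside each double coset, I would evaluate $f_s^{\dagger,\mathfrak{c}}$ on the representatives. The equivariance \eqref{4.3.3} together with the Iwasawa formula \eqref{4.3.2} produces the factor $\chi(\nu(u))|\mathrm{N}_{E/F}(\nu(u))|^{s+\kappa}$ for $\xi=\mathrm{diag}[u,1_r,\hat u]$, independent of the chosen representative. Summing over cosets and invoking $\phi|[K(\mathfrak{p^c})\xi K(\mathfrak{p^c})]=\lambda_\xi(\phi)\phi$ collapses the double sum into the single sum defining the local $L$-factor. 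The twist $\phi_1=\pi(w)\phi$ is the source of the $U'(\mathfrak{p^c})$ operator in the answer: since $U'(\mathfrak{p^c})$ is built from $w\cdot\mathrm{diag}[\varpi^{\mathfrak{c}}1_m,1_r,\varpi^{-\mathfrak{c}}1_m]$ (as one checks directly against \eqref{3.2.9}), the Weyl element in $\pi(w)\phi$ transforms the inner product into the pairing $\langle\phi|U'(\mathfrak{p^c}),\phi\rangle$. The prefactor $\chi(\varpi)^{\mathfrak{c}m\mathbf{d}_1}q^{-\mathfrak{c}m\mathbf{d}_2(s+\kappa)}$ then arises from evaluating $\chi(\nu(\cdot))|\mathrm{N}_{E/F}(\nu(\cdot))|^{s+\kappa}$ on the diagonal piece $\mathrm{diag}[\varpi^{\mathfrak{c}}1_m,1_r,\varpi^{-\mathfrak{c}}1_m]$, with the exponents $\mathbf{d}_1,\mathbf{d}_2$ simply encoding the degrees $[D:E]$ and $[E:F]$ that govern how $\nu$ and $\mathrm{N}_{E/F}$ act on a uniformizer.

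The main obstacle is the first step: identifying precisely when $\delta(g,1)\in P(F)J_nN'(\mathfrak{p^c})$. When $r=0$ most blocks of $\delta(g,1)$ in \eqref{2.1.9} vanish and the support condition is essentially diagonal, as in \cite{Sh95}. For $r>0$ the middle blocks involving $e_1,f_1,h_1,j_1,k_1$ enter non-trivially, twisted by $\theta$ and $\theta^{-1}$; verifying the congruence conditions of $N'(\mathfrak{p^c})$ block-by-block and then repackaging $g$ into an Iwasawa-type product whose Levi part is some $\xi\in\mathfrak{Q}$ is the delicate combinatorial step that must be executed, with minor variants depending on the structure of $(D,\rho,\epsilon)$ in each of the five cases. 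This is where the present work genuinely extends Shimura's computation.
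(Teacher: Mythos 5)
Your overall plan—unfold the local integral by determining the support of $f_s^{\dagger,\mathfrak{c}}(\delta(g,1))$, reorganize into Hecke double cosets, and invoke the eigenvalue property—is the paper's strategy. However, you misidentify the support, and this error propagates: it would not produce either the $U'(\mathfrak{p^c})$ operator or the prefactor $\chi(\varpi)^{\mathfrak{c}m\mathbf{d}_1}q^{-\mathfrak{c}m\mathbf{d}_2(s+\kappa)}$.

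The correct support is \emph{not} $\mathfrak{X}=K(\mathfrak{p^c})\mathfrak{Q}K(\mathfrak{p^c})$ but rather $K(\mathfrak{p^c})\mathfrak{Q}(\mathfrak{p^c})K'(\mathfrak{p^c})$, where $\mathfrak{Q}(\mathfrak{p^c})=\{\mathrm{diag}[\hat u,-1_r,u]:u\in\mathrm{GL}_m(D)\cap\mathrm{Mat}_m(\mathfrak{p^c}\mathcal{O})\}$. The block-by-block analysis you describe does not just yield $g\in\mathfrak{X}$: the $z$-condition in $N'(\mathfrak{p^c})$ forces $a^{-1},\,ca^{-1},\,a^{-1}b\in\mathrm{Mat}_m(\mathfrak{p^c}\mathcal{O})$, i.e.\ the $a$-block of $g$ must be \emph{$\mathfrak{p}$-adically large}, whereas for $\xi\in\mathfrak{Q}$ the $a$-block is merely in $\mathcal{O}$. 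The discrepancy is exactly a diagonal shift by $\mathrm{diag}[\varpi^{-\mathfrak{c}}1_m,1_r,\varpi^{\mathfrak{c}}1_m]$, and the decisive step is the factorization
\[
K(\mathfrak{p}^{\mathfrak{c}})\mathfrak{Q}(\mathfrak{p^c})K'(\mathfrak{p^c})=K(\mathfrak{p}^{\mathfrak{c}})\mathfrak{Q}K(\mathfrak{p^c})\cdot K(\mathfrak{p}^{\mathfrak{c}})\mathrm{diag}[\varpi^{-\mathfrak{c}}1_m,1_r,\varpi^{\mathfrak{c}}1_m]K'(\mathfrak{p^c}),
\]
which simultaneously separates the Dirichlet series \eqref{3.2.10} (from the $\mathfrak{Q}$-factor) from the $U'(\mathfrak{p^c})$ operator and the scalar prefactor (from the extra diagonal piece composed with $w$).

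You attempt to recover the $U'(\mathfrak{p^c})$ operator purely from the twist $\phi_1=\pi(w)\phi$ and the definition \eqref{3.2.9}, but this cannot work: $w$ alone does not match $U'(\mathfrak{p^c})$. The diagonal piece $\mathrm{diag}[\varpi^{\mathfrak{c}}1_m,1_r,\varpi^{-\mathfrak{c}}1_m]$ has to actually appear inside the domain of integration, and with your support claim it does not. Concretely, proceeding from support $=\mathfrak{X}$ would terminate with $L(s+\tfrac12,\phi\times\chi)\cdot\langle\pi(w)\phi,\phi\rangle$, missing both the Hecke operator $U'(\mathfrak{p^c})$ and the power of $\chi(\varpi)q^{-(s+\kappa)}$. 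The $\mathfrak{p}$-adic-largeness of the $a$-block is the point at which the ramified computation genuinely differs from the unramified one, and it is precisely the content of the paper's support claim.
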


\begin{proof}
Denote
\[
\mathfrak{M}(\mathfrak{p^c})=\mathrm{GL}_m(D)\cap\mathrm{Mat}_m(\mathfrak{p^c}\mathcal{O}),\,\mathfrak{Q}(\mathfrak{p^c})=\{\mathrm{diag}[\hat{u},-1_r,u],u\in\mathfrak{M}(\mathfrak{p^c})\}.
\]
We claim that
\[
f_s^{\dagger}(\delta(g,1))\neq 0\text{ if and only if }g\in K(\mathfrak{p^c})\mathfrak{Q}(\mathfrak{p^c})K'(\mathfrak{p^c}).
\]

Write $g=\left[\begin{array}{ccc}
a & f & b\\
h & e & j\\
c & k & d
\end{array}\right]$ with $a,d$ of size $m\times m$, $e$ of size $r\times r$ and compute
\[
\delta(g,1)=\left[\begin{array}{cccccc}
\frac{e+1}{2} & 0 & \frac{\epsilon h}{2} & \frac{\epsilon(e-1)\theta}{4} & 0 & \frac{\epsilon j}{2}\\
0 & 1 & 0 & 0 & 0 & 0\\
\epsilon f & 0 & a & \frac{f\theta}{2} & 0 & b\\
\epsilon\theta^{-1}(e-1) & 0 & \theta^{-1}h & \theta^{-1}\frac{e+1}{2}\theta & 0 & \theta^{-1}j\\
-\epsilon f & 0 & -a & -\frac{f\theta}{2} & 1 & -b\\
\epsilon k & \epsilon & c & \frac{k\theta}{2} & 0 & d
\end{array}\right].
\]
The elements in $P(F)J_nN'(\mathfrak{p^c})$ can be written as
\[
\begin{aligned}
&\left[\begin{array}{cccc}
\ast & \ast & \ast & \ast\\
\ast & \ast & \ast & \ast\\
0 & 0 & D_1 & D_2\\
0 & 0 & D_3 & D_4
\end{array}\right]J_n\left[\begin{array}{cccc}
1_r & 0 & x & y\\
0 & 1_{2m} & \epsilon y^{\ast} & z\\
0 & 0 & 1_r & 0\\
0 & 0 & 0 & 1_{2m}
\end{array}\right]\\
=&\left[\begin{array}{cccc}
\ast & \ast & \ast & \ast\\
\ast & \ast & \ast & \ast\\
\epsilon D_1 & \epsilon D_2 & \epsilon D_1x+D_2y^{\ast} & \epsilon D_1y+\epsilon D_2z\\
\epsilon D_3 & \epsilon D_4 & \epsilon D_3x+D_4y^{\ast} & \epsilon D_3y+\epsilon D_4z
\end{array}\right],
\end{aligned}
\]
with $x\in S_r(\mathfrak{p}\mathcal{O}),y\in\mathrm{Mat}_{r,2m}(\mathfrak{p^c}\mathcal{O}),z\in S_{2m}(\mathfrak{p^c}\mathcal{O})$. Assume $\delta(g,1)$ is of above form, then comparing two expressions we need
\[
D_1=\theta^{-1}(e-1),\,D_2=\left[\begin{array}{cc}
0 & \epsilon\theta^{-1}h\end{array}\right],\,D_3=\left[\begin{array}{c}
-f\\k
\end{array}\right],\,D_4=\left[\begin{array}{cc}
0 & -\epsilon a\\
1 & \epsilon c
\end{array}\right],
\]
and
\[
\begin{aligned}
\epsilon D_1x+D_2y^{\ast}&=\theta^{-1}\frac{e+1}{2}\theta,
&D_1y+D_2z&=\left[\begin{array}{cc}
0 & \epsilon\theta^{-1}j
\end{array}\right],\\
\epsilon D_3x+D_4y^{\ast}&=\left[\begin{array}{c}
-\frac{f\theta}{2}\\
\frac{k\theta}{2}
\end{array}\right],&D_3y+D_4z&=\left[\begin{array}{cc}
\epsilon & -\epsilon b\\
0 & \epsilon d
\end{array}\right].
\end{aligned}
\]
First of all, write $y=\left[\begin{array}{cc}
y_1 & y_2
\end{array}\right]$, then
\[
\left[\begin{array}{c}
-\frac{f\theta}{2}\\
\frac{k\theta}{2}
\end{array}\right]=\epsilon D_3x+D_4y^{\ast}=\left[\begin{array}{cc}
-\epsilon fx-\epsilon ay_2^{\ast}\\
\epsilon kx+y_1^{\ast}+\epsilon cy_2^{\ast}
\end{array}\right]
\]
implies
\[
ay_2^{\ast}=-\epsilon f(\epsilon x-\frac{\theta}{2}),\qquad y_1^{\ast}+\epsilon cy_2^{\ast}=-k(\epsilon x-\frac{\theta}{2}).
\]
Since by our assumption $\frac{\theta}{2}\in\mathrm{GL}_r(\mathcal{O})$ and the condition on $x$, the first equation forces $a$ to be invertible and thus 
\[
y_2^{\ast}=-\epsilon a^{-1}f(\epsilon x-\frac{\theta}{2}),\qquad y_1^{\ast}=-(k-ca^{-1}f)(\epsilon x-\frac{\theta}{2}).
\]
The condition on $y$ then forces $a^{-1}f,k-ca^{-1}f\in\mathrm{Mat}_{m,r}(\mathfrak{p^c}\mathcal{O})$. Secondly, from
\[
\theta^{-1}\frac{e+1}{2}\theta=\epsilon D_1x+D_2y^{\ast}=\epsilon\theta^{-1}(e-1)x+\epsilon\theta^{-1}hy_2^{\ast},
\]
we obtain
\[
\epsilon(e-1+ha^{-1}f)x=\frac{e+1-ha^{-1}f}{2}\theta.
\]
The condition on $x$ then forces $e-ha^{-1}f+1\in\mathrm{Mat}_r(\mathfrak{p}\mathcal{O})$. Finally, comparing $D_4$ and $D_3y+D_4z$, the condition on $z$ forces
\[
\left[\begin{array}{cc}
0 & -\epsilon a\\
1 & \epsilon c
\end{array}\right]^{-1}\left[\begin{array}{cc}
\epsilon & -\epsilon b\\
0 & \epsilon d
\end{array}\right]=\left[\begin{array}{cc}
\epsilon ca^{-1} & \epsilon d\\
-a^{-1} & a^{-1}b
\end{array}\right]\in\mathrm{Mat}_{2m}(\mathfrak{p^c}\mathcal{O})
\]
and hence $a^{-1},ca^{-1},a^{-1}b\in\mathrm{Mat}_{m}(\mathfrak{p^c}\mathcal{O})$. Since $a$ is invertible, we can write
\[
g=\left[\begin{array}{ccc}
1_m & 0 & 0\\
ha^{-1} & 1_r & 0\\
ca^{-1} & -\epsilon \hat{a}h^{\ast}\theta^{-1} & 1_m
\end{array}\right]\left[\begin{array}{ccc}
a & 0 & 0\\
0 & e-ha^{-1}f & 0\\
0 & 0 & \hat{a}
\end{array}\right]\left[\begin{array}{ccc}
1_m & a^{-1}f & a^{-1}b\\
0 & 1_r & -\epsilon\theta f^{\ast}\hat{a}\\
0 & 0 & 1_m
\end{array}\right]
\]
and our claim clearly follows. 

By straightforward computations and the fact that
\[
K(\mathfrak{p}^{\mathfrak{c}})\mathfrak{Q}(\mathfrak{p^c})K'(\mathfrak{p^c})=K(\mathfrak{p}^{\mathfrak{c}})\mathfrak{Q}K(\mathfrak{p^c})\cdot K(\mathfrak{p}^{\mathfrak{c}})\mathrm{diag}[\varpi^{-\mathfrak{c}}\cdot 1_m,1_r,\varpi^{\mathfrak{c}}\cdot 1_m]K'(\mathfrak{p^c}),
\]
we have
\[
\begin{aligned}
&\mathcal{Z}(s;\pi(w)\phi,\phi,f^{\dagger,\mathfrak{c}}_s)\\
=&\chi(\varpi)^{\mathfrak{c}m\mathbf{d}_1}q^{-\mathfrak{c}m\mathbf{d}_2(s+\kappa)}\sum_{\xi\in K(\mathfrak{p^c})\backslash\mathfrak{X}/K(\mathfrak{p^c})}\lambda_{\xi}(\phi)(\chi(\nu(\widetilde{\varpi}))|\mathrm{N}_{E/F}(\nu(\widetilde{\varpi}))|^{s+\kappa})^{\mathfrak{d}(\xi)}\\
\times&\int_{K(\mathfrak{p}^{\mathfrak{c}})\mathrm{diag}[\varpi^{-\mathfrak{c}}\cdot 1_m,-1_r,\varpi^{\mathfrak{c}}\cdot 1_m]K'(\mathfrak{p^c})}\left\langle\pi(gw)\phi,\phi\right\rangle dg\\
=&\chi(\varpi)^{\mathfrak{c}m\mathbf{d}_1}q^{-\mathfrak{c}m\mathbf{d}_2(s+\kappa)}L\left(s+\frac{1}{2},\phi\times\chi\right)\left\langle\phi|U'(\mathfrak{p^c}),\phi\right\rangle.
\end{aligned}
\]
\end{proof}

\subsection{The local section $f_s^{\ddagger,\mathfrak{c}}$}
\label{section 4.4}

If $\chi$ is a ramified character then $\mathcal{Z}(s;\phi_1,\phi_2,f_s^{\dagger,\mathfrak{c}})$ will be identically zero. In this subsection, we define a section $f_s^{\ddagger,\mathfrak{c}}$ as a twist of $f_s^{\dagger,0}$ such that $\mathcal{Z}(s;\phi_1,\phi_2,f_s^{\ddagger,\mathfrak{c}})$ is a non-zero constant. Assume $\chi$ has conductor $\mathfrak{p^c}$, we define
\begin{equation}
\label{4.4.1}
\begin{aligned}
f_s^{\ddagger,\mathfrak{c}}(h)&=\sum_{u\in\mathrm{GL}_m(\mathcal{O})/\varpi^{\mathfrak{c}}\mathrm{GL}_m(\mathcal{O})}\chi^{-1}(\nu(u))\\
&\times f_s^{\dagger,0}\left(h\left[\begin{array}{cccccc}
1_r & 0 & 0 & 0 & 0 & 0\\
0 & 1_m & 0 & 0 & 0 & \frac{u}{\varpi^{\mathfrak{c}}}\\
0 & 0 & 1_m & 0 & -\frac{\epsilon u^{\ast}}{\varpi^{\mathfrak{c}}} & 0\\
0 & 0 & 0 & 1_r & 0 & 0\\
0 & 0 & 0 & 0 & 1_m & 0\\
0 & 0 & 0 & 0 & 0 & 1_m
\end{array}\right]\right).
\end{aligned}
\end{equation}

The following lemma shows the reason for the twist.

\begin{lem}
\label{lemma 4.3}
When pulling back along $G\times G\to H$, we have
\begin{equation}
\label{4.4.2}
f_s^{\ddagger,\mathfrak{c}}((g_1k_1,g_2k_2))=\chi(\nu(k_2))f_s^{\ddagger,\mathfrak{c}}((g_1,g_2))
\end{equation}
for $k_1\in K(\mathfrak{p}^{2\mathfrak{c}}), k_2\in K'(\mathfrak{p}^{2\mathfrak{c}})$.
\end{lem}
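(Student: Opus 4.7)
The plan is to evaluate the defining sum $\sum_{u}\chi^{-1}(\nu(u))f_s^{\dagger,0}((g_1k_1,g_2k_2)n(u))$ for $f_s^{\ddagger,\mathfrak{c}}((g_1k_1,g_2k_2))$ by reducing, via a change of variable $u\mapsto u'$, to the corresponding sum at $(g_1,g_2)$.

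First I would substitute the definition (4.4.1) and factor $(g_1k_1,g_2k_2)=(g_1,g_2)\cdot(k_1,k_2)$ in $H(F)$, where $(k_1,k_2)$ denotes the image under the doubling embedding (2.1.9). The main step is then to commute $(k_1,k_2)$ past the unipotent $n(u)\in N(F)$. Writing $(k_1,k_2)$ out via (2.1.9), the hypotheses $k_1\in K(\mathfrak{p}^{2\mathfrak{c}})$ and $k_2\in K'(\mathfrak{p}^{2\mathfrak{c}})$ force those off-diagonal blocks of $(k_1,k_2)$ that can actually interact with the $\varpi^{-\mathfrak{c}}$-entries of $n(u)$ to lie in $\mathfrak{p}^{2\mathfrak{c}}\mathcal{O}$; their products then lie in $\mathfrak{p}^{\mathfrak{c}}\mathcal{O}$ and become invisible modulo the index set $\mathrm{GL}_m(\mathcal{O})/\varpi^{\mathfrak{c}}\mathrm{GL}_m(\mathcal{O})$ of the sum.

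Concretely, I would establish a decomposition
\[
(k_1,k_2)\,n(u)\;=\;n(u')\cdot p_u\cdot (k_1',k_2'),
\]
where $u'$ is obtained from $u$ by a bijection of the index set (roughly $u'\equiv a_{k_2}\,u\,d_{k_1}^{-1}$ modulo $\varpi^{\mathfrak{c}}\mathrm{Mat}_m(\mathcal{O})$, read off from the diagonal blocks of $(k_1,k_2)$ in (2.1.9)), $p_u\in P(F)$ with Levi piece giving an explicit $\chi$-factor, and $(k_1',k_2')$ is the image in $H$ of an element of $K'(\mathfrak{o})\times K(\mathfrak{o})$. Combining this with the transformation rule (2.2.4) applied to $p_u$ and the equivariance (4.3.3) applied to $(k_1',k_2')$ yields
\[
f_s^{\dagger,0}((g_1k_1,g_2k_2)n(u))=\chi(\nu(a_{p_u}))|{\cdots}|^{s+\kappa}\,\chi(\nu(d_{k_1'}))\chi(\nu(a_{k_2'}))\,f_s^{\dagger,0}((g_1,g_2)n(u')).
\]
After substituting $u\mapsto u'$ in the sum, $\chi^{-1}(\nu(u))$ transforms by an explicit ratio in the diagonal blocks of $(k_1,k_2)$; tallying all the $\chi$-factors, the $k_1$-contributions cancel, while the $k_2$-contributions collect to $\chi(\nu(a_{k_2}))\chi(\nu(e_{k_2}))\chi(\nu(d_{k_2}))=\chi(\nu(k_2))$, where the last equality uses that $k_2$ is lower block triangular modulo $\mathfrak{p}^{2\mathfrak{c}}$ (so in particular modulo the conductor $\mathfrak{p}^{\mathfrak{c}}$ of $\chi$).

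The main obstacle is the bookkeeping in the key commutation: one must pin down $u'$, $p_u$, and $(k_1',k_2')$ explicitly from the block form (2.1.9) and confirm that every cross-term lies in the appropriate ideal. The calibration between the congruence level $\mathfrak{p}^{2\mathfrak{c}}$ and the size $\varpi^{-\mathfrak{c}}$ of the entries of $n(u)$ is precisely what makes all corrections either trivial on the sum or absorbable by (4.3.3), and I expect this level matching to pin down why the hypothesis is stated with $\mathfrak{p}^{2\mathfrak{c}}$ rather than $\mathfrak{p}^{\mathfrak{c}}$.
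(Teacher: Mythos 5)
Your approach is the same as the paper's: substitute the defining sum, write $(g_1k_1,g_2k_2)=(g_1,g_2)(k_1,k_2)$, commute $(k_1,k_2)$ past the big unipotent $n(u)$, and then re-index the $u$-sum, collecting the $\chi$-factors from the equivariance \eqref{4.3.3} and from $\chi^{-1}(\nu(u))$.

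There is, however, a bookkeeping slip. The $(2,2)$- and $(6,6)$-diagonal blocks of $(k_1,k_2)$ in \eqref{2.1.9} are $d_{k_2}$ and $d_{k_1}$; conjugating the $(2,6)$-entry $u/\varpi^{\mathfrak{c}}$ of $n(u)$ by these gives
\[
u' = d_{k_2}\,u\,d_{k_1}^{-1},
\]
not $a_{k_2}\,u\,d_{k_1}^{-1}$ as you wrote. With the correct $u'$, and without any extra $p_u$ factor (the paper's commutation is exact once residual terms are absorbed into $(\tilde k_1,\tilde k_2)\in K'(\mathfrak{o})\times K(\mathfrak{o})$ via \eqref{4.3.3}), the tally is: $\chi(\nu(d_{k_1}))\chi(\nu(a_{k_2}))$ from \eqref{4.3.3}, times $\chi(\nu(d_{k_2}))\chi^{-1}(\nu(d_{k_1}))$ from the change of variable, giving $\chi(\nu(a_{k_2}))\chi(\nu(d_{k_2}))$. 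This equals $\chi(\nu(k_2))$ because $k_2\in K'(\mathfrak{p}^{2\mathfrak{c}})$ is block-lower-triangular modulo $\mathfrak{p}^{\mathfrak{c}}$ and because the relations $k_2\Phi k_2^{\ast}=\Phi$ together with $j_{k_2}\equiv 0\ (\mathrm{mod}\ \mathfrak{p}^{2\mathfrak{c}})$ and $e_{k_2}\in 1+\mathrm{Mat}_r(\mathfrak{p}\mathcal{O})$ force $\chi(\nu(e_{k_2}))=1$ (this last point is implicit in the paper). Your stated final tally $\chi(\nu(a_{k_2}))\chi(\nu(e_{k_2}))\chi(\nu(d_{k_2}))$ is therefore also correct, but it is inconsistent with the change of variable $u'\approx a_{k_2}ud_{k_1}^{-1}$ you proposed: that substitution would contribute $\chi(\nu(a_{k_2}))$ and lead to a double count of $a_{k_2}$ rather than producing an $e_{k_2}$-factor. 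Correcting $a_{k_2}\mapsto d_{k_2}$ in the change of variable fixes the discrepancy and recovers exactly the paper's argument.
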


\begin{proof}
For the notations of $k_1,k_2$ as before, we have
\[
\begin{aligned}
f_s^{\ddagger,\mathfrak{c}}((g_1k_1,g_2k_2))&=\chi(\nu(d_{k_1}))\chi(\nu(a_{k_2}))\sum_{u\in\mathrm{GL}_m(\mathcal{O})/\varpi^{\mathfrak{c}}\mathrm{GL}_m(\mathcal{O})}\chi^{-1}(\nu(u))\\
&\times f_s^{\dagger,0}\left((g_1,g_2)\left[\begin{array}{cccccc}
1_r & 0 & 0 & 0 & 0 & 0\\
0 & 1_m & 0 & 0 & 0 & \frac{u'}{\varpi^{\mathfrak{c}}}\\
0 & 0 & 1_m & 0 & -\frac{\epsilon u'^{\ast}}{\varpi^{\mathfrak{c}}} & 0\\
0 & 0 & 0 & 1_r & 0 & 0\\
0 & 0 & 0 & 0 & 1_m & 0\\
0 & 0 & 0 & 0 & 0 & 1_m
\end{array}\right]\right)
\end{aligned}
\]
with $u'=d_{k_2}ud_{k_1}^{-1}$. Then changing variables $u\to d_{k_2}^{-1}ud_{k_1}$ gives the desired result.
\end{proof}

The following proposition is an analogue of the computations in \cite{BS} and \cite[Proposition 11.16]{SU}. Again the main difficulty is to deal with the group with $r\neq 0$ and the arguments is similar to the proof of Proposition \ref{proposition 4.2}.

\begin{prop}
\label{proposition 4.4}
Assume $\phi\in\pi$ is fixed by $K(\mathfrak{p^c})$, $\chi$ is a character of conductor $\mathfrak{p^c}$ and $2\in\mathcal{O}^{\times},\theta\in\mathrm{GL}_r(\mathcal{O})$. Denote
\begin{equation}
\label{4.4.4}
K_1(\mathfrak{p^c})=G(\mathfrak{o})\cap\left[\begin{array}{ccc}
1+\mathrm{Mat}_{2m}(\mathfrak{p}^{\mathfrak{c}}\mathcal{O}) & \mathrm{Mat}_{2m,r}(\mathfrak{p^c}\mathcal{O}) & \mathrm{Mat}_{2m}(\mathcal{O})\\
\mathrm{Mat}_{r,2m}(\mathfrak{p}^{\mathfrak{c}}\mathcal{O}) &1+\mathrm{Mat}_{r}(\mathfrak{p}\mathcal{O}) & \mathrm{Mat}_{r,2m}(\mathfrak{p^c}\mathcal{O})\\
\mathrm{Mat}_{2m}(\mathfrak{p}^{2\mathfrak{c}}\mathcal{O}) & \mathrm{Mat}_{2m,r}(\mathfrak{p}^{\mathfrak{c}}\mathcal{O}) & 1+\mathrm{Mat}_{2m}(\mathfrak{p}^{\mathfrak{c}}\mathcal{O})
\end{array}\right].
\end{equation}
Then
\begin{equation}
\begin{aligned}
&\mathcal{Z}(s;\phi,\pi(w)\phi,f_s^{\ddagger,\mathfrak{c}})\\
=&\mathrm{vol}(\mathrm{GL}_m(\mathcal{O})/\varpi^{\mathfrak{c}}\mathrm{GL}_m(\mathcal{O}))\cdot\mathrm{vol}(K_1(\mathfrak{p}^{\mathfrak{c}}))\cdot\left\langle\pi\left(\left[\begin{array}{ccc}
0 & 0 & \varpi^{-\mathfrak{c}}\cdot1_m\\
0 & 1_r & 0\\
\varpi^{\mathfrak{c}}\cdot 1_m & 0 & 0
\end{array}\right]\right)\phi,\phi\right\rangle.
\end{aligned}
\end{equation}
\end{prop}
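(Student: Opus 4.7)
The proof parallels that of Proposition~\ref{proposition 4.2}, the main new ingredients being the summation over $u$ defining $f_s^{\ddagger,\mathfrak{c}}$ and a twisted change of variables producing the element
\[
w_{\mathfrak{c}}:=\begin{bmatrix} 0 & 0 & \varpi^{-\mathfrak{c}}\cdot 1_m\\ 0 & 1_r & 0\\ \varpi^{\mathfrak{c}}\cdot 1_m & 0 & 0\end{bmatrix}
\]
in place of the Hecke eigenvalue of $U'(\mathfrak{p}^{\mathfrak{c}})$. Unfolding the definition \eqref{4.4.1} and interchanging the finite sum with the integral gives
\[
\mathcal{Z}(s;\phi,\pi(w)\phi,f_s^{\ddagger,\mathfrak{c}})=\sum_{u}\chi^{-1}(\nu(u))\int_{G(F)}f_s^{\dagger,0}\bigl(\delta(g,1)n_u\bigr)\langle\pi(g)\phi,\pi(w)\phi\rangle\,dg,
\]
where $u$ runs over $\mathrm{GL}_m(\mathcal{O})/\varpi^{\mathfrak{c}}\mathrm{GL}_m(\mathcal{O})$ and $n_u\in N(F)$ denotes the unipotent matrix on the right-hand side of \eqref{4.4.1}.

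The next step is to repeat the block-by-block matrix analysis carried out in the proof of Proposition~\ref{proposition 4.2}, but now applied to $\delta(g,1)n_u$ rather than $\delta(g,1)$. Since $n_u$ shifts the off-diagonal $z$-block by $u/\varpi^{\mathfrak{c}}$, every matrix equation derived there remains in force except that the condition coming from the $z$-block is modified from $a^{-1}b\in\mathrm{Mat}_m(\mathfrak{p}^{\mathfrak{c}}\mathcal{O})$ to $a^{-1}b+u/\varpi^{\mathfrak{c}}\in\mathrm{Mat}_m(\mathcal{O})$, and analogously for the block containing $c$. Solving these modified conditions (using $2\in\mathcal{O}^{\times}$ and $\theta\in\mathrm{GL}_r(\mathcal{O})$) pins $g$, for each $u$, to a single double coset
\[
g\in K(\mathfrak{p}^{\mathfrak{c}})\cdot ww_{\mathfrak{c}}\cdot K_1(\mathfrak{p}^{\mathfrak{c}}),
\]
with $u$ controlling the translation within this coset.

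After the substitution $g=k_1\cdot ww_{\mathfrak{c}}\cdot k_2$ with $k_1\in K(\mathfrak{p}^{\mathfrak{c}})$ and $k_2\in K_1(\mathfrak{p}^{\mathfrak{c}})$, the $K(\mathfrak{p}^{\mathfrak{c}})$-invariance of $\phi$ reduces the matrix coefficient to
\[
\langle\pi(ww_{\mathfrak{c}})\phi,\pi(w)\phi\rangle=\langle\pi(w_{\mathfrak{c}})\phi,\phi\rangle,
\]
independent of $u$. The transformation constants of $f_s^{\dagger,0}$ under this substitution (recorded by the character identity in Lemma~\ref{lemma 4.3} together with \eqref{4.3.3}) contribute exactly $\chi(\nu(u))$, which cancels the outer $\chi^{-1}(\nu(u))$. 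Consequently the sum over $u$ simplifies to $\mathrm{vol}(\mathrm{GL}_m(\mathcal{O})/\varpi^{\mathfrak{c}}\mathrm{GL}_m(\mathcal{O}))$, while the residual integration over the right $K_1(\mathfrak{p}^{\mathfrak{c}})$ contributes $\mathrm{vol}(K_1(\mathfrak{p}^{\mathfrak{c}}))$, yielding the claimed formula.

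The main technical obstacle lies in the matrix computation: extending the support analysis of Proposition~\ref{proposition 4.2} to $\delta(g,1)n_u$ when $r>0$ requires careful tracking of how the twist by $n_u$ interacts with the anisotropic $\theta$-block, and verifying that the resulting congruence subgroup is precisely $K_1(\mathfrak{p}^{\mathfrak{c}})$. In particular, the strengthened $\mathrm{Mat}_m(\mathfrak{p}^{2\mathfrak{c}}\mathcal{O})$ condition on the lower-left block of $K_1(\mathfrak{p}^{\mathfrak{c}})$ in \eqref{4.4.4} should arise from the $u/\varpi^{\mathfrak{c}}$ shift combining with the $\mathfrak{p}^{\mathfrak{c}}$-condition on the $c$-block, and the intermediate conditions on the $h,f,j,k$ blocks must be checked to match the remaining entries of \eqref{4.4.4}.
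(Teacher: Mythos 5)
Your outline captures the right global shape (unfold the finite sum, restrict the support, collapse the matrix coefficient) but skips the one structural ingredient that actually makes the bookkeeping tractable, and two of the resulting assertions do not check out as written.

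The paper's proof hinges on the identity $n_u=(w,w)^{-1}d_u\,\delta\,d_u^{-1}(w,w)$ with $d_u=\mathrm{diag}[1_{m+r},\epsilon\varpi^{\mathfrak{c}}u^{-1},1_{m+r},\epsilon\varpi^{-\mathfrak{c}}u^{\ast}]$, which expresses the twisting unipotent $n_u$ as a $G\times G$--conjugate of $\delta$. This is what permits a $u$-dependent change of variable on $G(F)$ that strips $u$ entirely out of the section argument and pushes it into the argument of $\pi$, where it factors off through $K(\mathfrak{p}^{\mathfrak{c}})$-invariance as a $\chi(\nu(u))$ that cancels the outer $\chi^{-1}(\nu(u))$. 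Your proposal names a ``twisted change of variables producing $w_{\mathfrak{c}}$'' but then proceeds by analyzing $\delta(g,1)n_u$ directly, with $u$ still live inside the block equations; you never execute the substitution, and without it the conditions on $g$ genuinely depend on $u$ in a way that is not a translation inside a fixed double coset. Concretely, after the change of variable the paper finds $-g\in K_1(\mathfrak{p}^{\mathfrak{c}})$, which in the original variable corresponds to $g\in -\,wK_1(\mathfrak{p}^{\mathfrak{c}})w^{-1}\cdot m_u^{-1}$, not to your $K(\mathfrak{p}^{\mathfrak{c}})\cdot ww_{\mathfrak{c}}\cdot K_1(\mathfrak{p}^{\mathfrak{c}})$; the group that appears on the left is the $w$-conjugate $wK_1(\mathfrak{p}^{\mathfrak{c}})w^{-1}\subset K'(\mathfrak{p}^{\mathfrak{c}})$, not $K(\mathfrak{p}^{\mathfrak{c}})$.

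This misidentification then infects the matrix-coefficient step. You write $g=k_1\,ww_{\mathfrak{c}}\,k_2$ with $k_1\in K(\mathfrak{p}^{\mathfrak{c}})$, $k_2\in K_1(\mathfrak{p}^{\mathfrak{c}})$ and simplify $\langle\pi(k_1 ww_{\mathfrak{c}}k_2)\phi,\pi(w)\phi\rangle$ to $\langle\pi(ww_{\mathfrak{c}})\phi,\pi(w)\phi\rangle$. The $k_2$ part is harmless since $K_1(\mathfrak{p}^{\mathfrak{c}})\subset K(\mathfrak{p}^{\mathfrak{c}})$ fixes $\phi$; but $\pi(w)\phi$ is fixed by $K'(\mathfrak{p}^{\mathfrak{c}})=wK(\mathfrak{p}^{\mathfrak{c}})w^{-1}$, not by $K(\mathfrak{p}^{\mathfrak{c}})$, so a left factor $k_1\in K(\mathfrak{p}^{\mathfrak{c}})$ does not drop out. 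It does drop out for $k_1\in wK_1(\mathfrak{p}^{\mathfrak{c}})w^{-1}$, which is what the correct coset structure gives. Finally, the cancellation of $\chi^{-1}(\nu(u))$ cannot be read off from Lemma~\ref{lemma 4.3} or \eqref{4.3.3}: those record how $f_s^{\ddagger,\mathfrak{c}}$ and $f_s^{\dagger,\mathfrak{c}}$ transform under right translation by pairs $(k_1,k_2)\in K'\times K$, not under the internal $u$-translation that defines $f_s^{\ddagger,\mathfrak{c}}$. In the paper's route this cancellation is a one-line consequence of the change of variables and the factorization $m_u w=w_{\mathfrak{c}}\cdot\mathrm{diag}[\hat u,1_r,u]$ with the second factor in $K(\mathfrak{p}^{\mathfrak{c}})$; in yours it is asserted but not substantiated. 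To make the proposal a proof you would need to reinstate the conjugation identity and the attendant change of variable, rerun the $D_i$ block analysis once (as in Proposition~\ref{proposition 4.2}) on $\delta(g,1)\delta d_u^{-1}(w,w)$, and verify that the resulting congruences on $g$ are exactly those cutting out $K_1(\mathfrak{p}^{\mathfrak{c}})$ --- in particular that the $\mathfrak{p}^{2\mathfrak{c}}$ bound on the lower-left block arises from the composition of the two $\varpi^{-\mathfrak{c}}$'s coming from $d_u^{-1}$ and from inverting the $D_4$ block.
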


\begin{proof}
Denote $d_u=\mathrm{diag}[1_{m+r},\epsilon\varpi^{\mathfrak{c}}u^{-1},1_{m+r},\epsilon\varpi^{-\mathfrak{c}}u^{\ast}]$,
then
\[
(w,w)^{-1}d_u\delta d_u^{-1}(w,w)=\left[\begin{array}{cccccc}
1_{r} & 0 & 0 & 0 & 0 & 0\\
0 & 1_m & 0 & 0 & 0 & \frac{u}{\varpi^{\mathfrak{c}}}\\
0 & 0 & 1_m & 0 & -\frac{\epsilon u^{\ast}}{\varpi^{\mathfrak{c}}} & 0\\
0 & 0 & 0 & 1_r & 0 & 0\\
0 & 0 & 0 & 0 & 1_m & 0\\
0 & 0 & 0 & 0 & 0 & 1_m
\end{array}\right]
\]
Changing variables $g\mapsto w^{-1}g\left[\begin{array}{ccc}
\epsilon\varpi^{-\mathfrak{c}}u & 0 & 0\\
0 & 1_r & 0\\
0 & 0 & \epsilon\varpi^{\mathfrak{c}}\hat{u}
\end{array}\right]w$, we need to calculate
\[
\begin{aligned}
&\mathcal{Z}(s;\phi,\pi(w)\phi,f_s^{\ddagger,\mathfrak{c}})\\
=&\int_{G(F)}\sum_{u\in\mathrm{GL}_m(\mathcal{O})/\varpi^{\mathfrak{c}}\mathrm{GL}_m(\mathcal{O})}\chi^{-1}(\nu(u))\left\langle\pi\left(g\left[\begin{array}{ccc}
\epsilon\varpi^{-\mathfrak{c}}u & 0 & 0\\
0 & 1_r & 0\\
0 & 0 & \epsilon\varpi^{\mathfrak{c}}\hat{u}
\end{array}\right]w\right)\phi,\phi\right\rangle\\
\times&f_s^{\dagger,0}\left(\delta(g,1)\delta d_u^{-1}(w,w)\right)dg.
\end{aligned}
\]
Write $g=\left[\begin{array}{ccc}
a & f & b\\
h & e & j\\
c & k & d
\end{array}\right]$ with $a,d$ of size $m\times m$, $e$ of size $r\times r$ and compute that
\[
\begin{aligned}
&\delta(g,1)\delta d_u^{-1}(w,w)\\
=&\left[\begin{array}{cccccc}
\frac{e+1}{2} & 0 & \frac{\epsilon \varpi^{\mathfrak{c}}j\hat{u}}{2} & \frac{\epsilon(e-1)\theta}{4} & -\frac{j}{2} & \frac{\varpi^{-\mathfrak{c}}hu}{2}\\
0 & 0 & 0 & 0 & -1_m & 0\\
\epsilon f & 0 & \varpi^{\mathfrak{c}}b\hat{u} & \frac{f}{2} & -\epsilon b & \epsilon\varpi^{-\mathfrak{c}} au\\
\epsilon\theta^{-1}(e-1) & 0 & \varpi^{\mathfrak{c}}\theta^{-1}j\hat{u} & \theta^{-1}\frac{e+1}{2}\theta & -\epsilon\theta^{-1}j & \epsilon\varpi^{-\mathfrak{c}}\theta^{-1}hu\\
-\epsilon f & -\epsilon\cdot 1_m & -\varpi^{\mathfrak{c}}b\hat{u} & -\frac{f\theta}{2} & \epsilon b & -\epsilon\varpi^{-\mathfrak{c}}(a+1)u\\
\epsilon k & 0 & \varpi^{\mathfrak{c}}d\hat{u} & \frac{k\theta}{2} & -\epsilon(d+1) & \epsilon \varpi^{-\mathfrak{c}}cu
\end{array}\right].
\end{aligned}
\]
Suppose it is an element in $P(F)J_nN'(\mathfrak{p}^{0})$, then as in the proof of Proposition \ref{proposition 4.2} it can be written in the form 
\[
\left[\begin{array}{cccc}
\ast & \ast & \ast & \ast\\
\ast & \ast & \ast & \ast\\
\epsilon D_1 & \epsilon D_2 & \epsilon D_1x+D_2y^{\ast} & \epsilon D_1y+\epsilon D_2z\\
\epsilon D_3 & \epsilon D_4 & \epsilon D_3x+D_4y^{\ast} & \epsilon D_3y+\epsilon D_4z
\end{array}\right]
\]
with $x\in S_r(\mathfrak{p}\mathcal{O}),y\in\mathrm{Mat}_{r,2m}(\mathcal{O}),z\in S_{2m}(\mathcal{O})$. We need
\[
D_1=\theta^{-1}(e-1),\,D_2=\left[\begin{array}{cc}
0 & \epsilon\varpi^{\mathfrak{c}}\theta^{-1}j\hat{u}
\end{array}\right],\,D_3=\left[\begin{array}{c}
-f\\
k
\end{array}\right],\,D_4=\left[\begin{array}{cc}
-1_m & -\epsilon \varpi^{\mathfrak{c}}b\hat{u}\\
0 & \epsilon \varpi^{\mathfrak{c}}d\hat{u}
\end{array}\right],
\]
and
\[
\begin{aligned}
\epsilon D_1x+D_2y^{\ast}&=\theta^{-1}\frac{e+1}{2}\theta,&D_1y+D_2z&=\left[\begin{array}{cc}
-\theta^{-1}j & \theta^{-1}\varpi^{-\mathfrak{c}}hu
\end{array}\right],\\
\epsilon D_3x+D_4y^{\ast}&=\left[\begin{array}{c}
-\frac{f\theta}{2}\\
\frac{k\theta}{2}
\end{array}\right],&D_3y+D_4z&=\left[\begin{array}{cc}
b & -\varpi^{-\mathfrak{c}}(a+1)u\\
-(d+1) & \varpi^{-\mathfrak{c}}cu
\end{array}\right].
\end{aligned}
\]
By the same arguments as in the proof of Proposition \ref{proposition 4.2}, the conditions on $x,y,z$ force\\
(1) $d$ is invertible and $1+d^{-1}\in\mathrm{Mat}_m(\mathfrak{p^c}\mathcal{O})$,\\
(2) $d^{-1}c$ have entries in $\mathfrak{p}^{2\mathfrak{c}}$, $d^{-1}k, jd^{-1}$ has entries in $\mathfrak{p^c}$ and $bd^{-1}$ has entries in $\mathcal{O}$,\\
(3) $e-jd^{-1}k\in-1+\mathrm{Mat}_r(\mathfrak{p}\mathcal{O})$.\\
These implies $-g\in K_1(\mathfrak{p^c})$ and we have
\[
\begin{aligned}
&\mathcal{Z}(s;\phi,\pi(w)\phi,f_s^{\ddagger,\mathfrak{c}})\\
=&\int_{K_1(\mathfrak{p^c})}\sum_{u\in\mathrm{GL}_m(\mathcal{O})/\varpi^{\mathfrak{c}}\mathrm{GL}_m(\mathcal{O})}\chi^{-1}(\nu(u))\left\langle\pi\left(g\left[\begin{array}{ccc}
\epsilon\varpi^{-\mathfrak{c}}u & 0 & 0\\
0 & 1_r & 0\\
0 & 0 & \epsilon\varpi^{\mathfrak{c}}\hat{u}
\end{array}\right]w\right)\phi,\phi\right\rangle dg\\
=&\mathrm{vol}(\mathrm{GL}_m(\mathcal{O})/\varpi^{\mathfrak{c}}\mathrm{GL}_m(\mathcal{O}))\cdot\mathrm{vol}(K_1(\mathfrak{p}^{\mathfrak{c}}))\cdot\left\langle\pi\left(\left[\begin{array}{ccc}
0 & 0 & \varpi^{-\mathfrak{c}}\cdot1_m\\
0 & 1_r & 0\\
\varpi^{\mathfrak{c}}\cdot 1_m & 0 & 0
\end{array}\right]\right)\phi,\phi\right\rangle.
\end{aligned}
\]
\end{proof}

\subsection{The $p$-adic section $f_s^p$}
\label{section 4.5}

Assume $\chi$ is unramified, $\phi\in\pi$ is fixed by $K(\mathfrak{q}^2)$ and is an eigenvector for the Hecke algebra $\mathcal{H}(K(\mathfrak{q}^2),\mathfrak{X})$. We construct yet another section $f_s^p$ as a twist of $f_s^{\dagger,0}$ which represent the $p$-adic modification factor in the construction of the $p$-adic $L$-functions. Again, here we are inspired by the idea of \cite[p.1392 and p.1400]{BS}.

For each $0\leq i\leq m$, denote $T_i$ for the Hecke operator given by the double coset
\begin{equation}
\label{4.5.1}
[K(\mathfrak{p})\xi_iK(\mathfrak{p})],\,\xi_i=\mathrm{diag}[u_i,1_r,\hat{u}_i],\,u_i=\left[\begin{array}{cc}
1_{m-i} & 0\\
0 & \widetilde{\varpi}\cdot 1_i
\end{array}\right].
\end{equation}
Suppose there is a double coset decomposition
\begin{equation}
\label{4.5.2}
\mathrm{GL}_m(\mathcal{O})u_i\mathrm{GL}_m(\mathcal{O})=\coprod_j\delta_{ij}\mathrm{GL}_m(\mathcal{O}).
\end{equation}
We define a local section $f_s^{p,i}$ by
\begin{equation}
\label{4.5.3}
\begin{aligned}
f_{s}^{p,i}(h)=\sum_j&\sum_{u\in\widetilde{\varpi}\mathrm{Mat}_m(\mathcal{O})\delta^{-1}_{ij}/\widetilde{\varpi}\mathrm{Mat}_m(\mathcal{O})}\\
&f_s^{\dagger,0}\left(h\left[\begin{array}{cccccc}
1_r & 0 & 0 & 0 & 0 & 0\\
0 & 1_m & 0 & 0 & 0 & \frac{u}{\widetilde{\varpi}}\\
0 & 0 & 1_m & 0 & -\frac{\epsilon u^{\ast}}{\widetilde{\varpi}} & 0\\
0 & 0 & 0 & 1_r & 0 & 0\\
0 & 0 & 0 & 0 & 1_m & 0\\
0 & 0 & 0 & 0 & 0 &1_m
\end{array}\right]\right).
\end{aligned}
\end{equation}

\begin{lem}
\label{lemma 4.5}
Let $\lambda_i$ be the eigenvalues of $\phi$ under $T_i$, i.e. $\phi|T_i=\lambda_i\phi$. Set $\phi_1=\phi,\phi_2=\pi(w)\phi$. Then for each $0\leq i\leq m,$
\begin{equation}
\label{4.5.4}
\begin{aligned}
&\mathcal{Z}(s;\phi,\pi(w)\phi,f_s^{p,i})\\
=&\chi(\widetilde{\varpi})^{\mathbf{d}_3(m-i)}q^{-(m-i)\mathbf{d}_4(s+\kappa)}\lambda_{m-i}L\left(s+\frac{1}{2},\phi\times\chi\right)\\
\times&\left\langle\pi\left(\left[\begin{array}{ccc}
0& 0 & \widetilde{\varpi}^{-1}\cdot 1_m\\
0 & 1_r & 0\\
\widetilde{\varpi}\cdot 1_m & 0 & 0
\end{array}\right]\right)\phi,\phi\right\rangle,
\end{aligned}
\end{equation}
with 
\begin{equation}
\label{4.5.5}
\mathbf{d}_3=\left\{\begin{array}{cc}
1 & \text{ Case I, II, V Ramified,}\\
2 & \text{ Case III, IV, V Inert.}
\end{array}\right.,\qquad \mathbf{d}_4=\left\{\begin{array}{cc}
2 & \text{ Case V Inert,}\\
1 & \text{ otherwise,}
\end{array}\right.
\end{equation}
\end{lem}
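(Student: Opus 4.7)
The plan is to combine the unfolding/support argument from the proof of Proposition \ref{proposition 4.2} with the conjugation-of-displacement trick from Proposition \ref{proposition 4.4}, now reading the sum over $(j,u)$ in \eqref{4.5.3} as encoding the action of the Hecke operator $T_{m-i}$ on $\phi$ rather than a character-weighted Gauss sum. The starting point is the matrix identity used in Proposition \ref{proposition 4.4}: for invertible $u$, the displacement in \eqref{4.5.3} equals $(w,w)^{-1} d_u \delta d_u^{-1}(w,w)$ with $d_u = \mathrm{diag}[1_{m+r},\, \epsilon\widetilde{\varpi} u^{-1},\, 1_{m+r},\, \epsilon\widetilde{\varpi}^{-1} u^{\ast}]$; for a general coset representative one may replace $u$ by an invertible element in its $\widetilde{\varpi}\mathrm{Mat}_m(\mathcal{O})$-coset, since $f_s^{\dagger,0}$ is insensitive to this shift.

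Substituting into $\mathcal{Z}(s;\phi,\pi(w)\phi,f_s^{p,i})$ and performing the right-translation $g \mapsto w^{-1} g\, \mathrm{diag}[\epsilon\widetilde{\varpi}^{-1} u,\, 1_r,\, \epsilon\widetilde{\varpi} \hat{u}]\, w$ exactly as in the proof of Proposition \ref{proposition 4.4} extracts the modular factor $\chi(\nu(\widetilde{\varpi}^{-1} u))|\mathrm{N}_{E/F}(\nu(\widetilde{\varpi}^{-1} u))|^{s+\kappa}$ from $f_s^{\dagger,0}$. Since $\chi$ is unramified and $\mathrm{ord}_{\widetilde{\varpi}}\det u = \mathrm{ord}_{\widetilde{\varpi}}\det u_i = i$ is independent of $j$ and of the representative chosen for $u$, this factor collapses uniformly across the sum to the scalar $\chi(\widetilde{\varpi})^{\mathbf{d}_3(m-i)} q^{-(m-i)\mathbf{d}_4(s+\kappa)}$ appearing in \eqref{4.5.4}, the exponent $m-i$ arising because the valuation of $\nu(\widetilde{\varpi}^{-1} u_i)$ equals $(i-m)\mathbf{d}_1$ up to sign. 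The remaining sum over $(j,u)$ is then to be recognized, via Lemma \ref{lemma 3.2} applied to $\xi_{m-i}$, as a complete set of coset representatives for $T_{m-i}$: the $\delta_{ij}$'s yield the $d$-representatives (the shift $i\mapsto m-i$ reflects that conjugation by $w$ interchanges the upper and lower Levi blocks in \eqref{3.1.1}, so a decomposition indexed by $u_i$ on the source side transports to one indexed by $u_{m-i}$ on the Hecke side), the quotient $u/\widetilde{\varpi}\in\mathrm{Mat}_m(\mathcal{O})\delta_{ij}^{-1}/\mathrm{Mat}_m(\mathcal{O})$ furnishes the $c$-representatives, and the $b$-parameter is trivially zero because our displacement has no off-diagonal $(f,\theta)$-block. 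Consequently the sum collapses to $\lambda_{m-i}\cdot\pi(U)\phi$, where $U=\mathrm{diag}[0,0,\widetilde{\varpi}^{-1}\cdot 1_m;\, 0,1_r,0;\, \widetilde{\varpi}\cdot 1_m,0,0]$.

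What remains is the integral $\int_{G(F)} f_s^{\dagger,0}(\delta(g,1)) \langle \pi(gU)\phi, \phi\rangle\, dg$, handled by the support analysis of Proposition \ref{proposition 4.2} with $\mathfrak{c}=0$: $f_s^{\dagger,0}(\delta(g,1))$ is supported on $G(\mathfrak{o})\mathfrak{Q}G(\mathfrak{o})$, and the Cartan-style summation there reproduces precisely the Dirichlet series \eqref{3.2.10} defining $L(s+\tfrac{1}{2},\phi\times\chi)$, leaving the constant inner product $\langle \pi(U)\phi, \phi\rangle$ as claimed in \eqref{4.5.4}. The main obstacle will be the combinatorial bookkeeping in the second step: verifying that the quotient $\widetilde{\varpi}\mathrm{Mat}_m(\mathcal{O})\delta_{ij}^{-1}/\widetilde{\varpi}\mathrm{Mat}_m(\mathcal{O})$ does biject (under the hermitian constraint $\epsilon c + c^{\ast}=0$) with the cosets appearing in Lemma \ref{lemma 3.2} for $\xi_{m-i}$, and that the $w$-conjugation on Hecke operators genuinely intertwines $T_i$ with $T_{m-i}$ on vectors of level $K(\mathfrak{q}^2)$ without introducing any normalization discrepancy beyond those already absorbed into $\mathbf{d}_3$ and $\mathbf{d}_4$.
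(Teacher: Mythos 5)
Your strategy of reusing the $d_u$-conjugation from Proposition~\ref{proposition 4.4} runs into a structural obstruction that the paper avoids by a different decomposition. The paper does \emph{not} conjugate the displacement in \eqref{4.5.3} by $d_u$; it conjugates by the $u$-\emph{independent} element $d_{\widetilde{\varpi}}=\mathrm{diag}[1_{m+r},\epsilon\widetilde{\varpi},1_{m+r},\widetilde{\varpi}^{-1}]$ and transfers the $u$-dependence to an auxiliary element $\delta_u$ (which has $u$ and $u^{\ast}$ linearly in off-diagonal blocks, with no $u^{-1}$ anywhere), precisely so that the identity holds verbatim for the non-invertible $u\in\widetilde{\varpi}\mathrm{Mat}_m(\mathcal{O})\delta_{ij}^{-1}/\widetilde{\varpi}\mathrm{Mat}_m(\mathcal{O})$. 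Your patch --- choosing an invertible representative for each coset --- creates a real ambiguity: the change-of-variable matrix $\mathrm{diag}[\epsilon\widetilde{\varpi}^{-1}u,1_r,\epsilon\widetilde{\varpi}\hat u]$ depends on $\hat u=(u^\ast)^{-1}$, which is nonlinear in $u$, so replacing $u$ by $u+\widetilde{\varpi}M$ changes the substitution and hence every intermediate quantity; only the value of $f_s^{\dagger,0}$ on the displacement is insensitive to the shift, not the rest of the bookkeeping you need.

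The deeper gap is in how you account for the factor $\chi(\widetilde{\varpi})^{\mathbf{d}_3(m-i)}q^{-(m-i)\mathbf{d}_4(s+\kappa)}$. You claim the right-translation ``extracts the modular factor $\chi(\nu(\widetilde{\varpi}^{-1}u))|\cdot|^{s+\kappa}$ from $f_s^{\dagger,0}$,'' but a change of variable cannot pull a $P$-quasi-invariance factor out of $f_s^{\dagger,0}$; that factor is read off from the Levi block $a$ of $p$ when the translated argument lands in the big cell $P J_n N'(\mathfrak{p}^0)$, and in the paper's computation that $a$-block is $\widetilde{\varpi}d$ with $d$ the lower-right block of $g$. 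It is exactly this $g$-dependence that produces the Dirichlet series $L(s+\tfrac12,\phi\times\chi)$ and the Hecke eigenvalue $\lambda_{m-i}$, via the sum over $\hat d\in\mathrm{GL}_m(D)\cap\widetilde{\varpi}\mathrm{Mat}_m(\mathcal{O})u_i^{-1}$ and the factorisation of that set as $\mathfrak{Q}$ times a fixed double coset. Moreover the valuation of $\nu(\widetilde{\varpi}^{-1}u_i)$ is $(i-m)\mathbf{d}_1$, so the factor you propose to extract has exponents of the \emph{wrong sign} compared to \eqref{4.5.4}; the parenthetical ``up to sign'' cannot be waved away here, because it is what distinguishes the correct power of $q$ from its reciprocal. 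Finally, the assertion that ``the $b$-parameter is trivially zero'' is inconsistent with the support analysis: the conditions one actually obtains (cf.\ the paper, and cf.\ conditions (2)--(3) of Proposition~\ref{proposition 4.4}) force only $bd^{-1}\in\mathrm{Mat}_m(\mathcal{O})$ and $jd^{-1}, d^{-1}k\in\mathfrak{q}$, not vanishing, and these nonzero parameters are needed to tile the full coset decomposition of Lemma~\ref{lemma 3.2} for $\xi_{m-i}$. You flagged this tiling and the $T_i\leftrightarrow T_{m-i}$ intertwining as the ``main obstacle''; in fact they are the heart of the argument, and your outline leaves them open while simultaneously asserting simplifications (uniform collapse, $b=0$) that would make the count come out wrong.
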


\begin{proof}
Denote $d_{\widetilde{\varpi}}=\mathrm{diag}[1_{m+r},\epsilon\widetilde{\varpi},1_{m+r},\widetilde{\varpi}^{-1}]$ and
\[
\delta_u=\left[\begin{array}{cccccc}
1_r & 0 & 0 & 0 & 0 & 0\\
0 & 1_m & 0 & 0 & 0 & 0\\
0 & 0 & 1_m & 0 & 0 & 0\\
0 & 0 & 0 & 1_r & 0 & 0\\
0 & 0 & -u & 0 & 1_m & 0\\
0 & \epsilon u^{\ast} & 0 & 0 & 0 & 1_m
\end{array}\right].
\]
Then
\[
(w,w)^{-1}d_{\widetilde{\varpi}}\delta_ud_{\widetilde{\varpi}}^{-1}(w,w)=\left[\begin{array}{cccccc}
1_r & 0 & 0 & 0 & 0 & 0\\
0 & 1_m & 0 & 0 & 0 & \frac{u}{\widetilde{\varpi}}\\
0 & 0 & 1_m & 0 & -\frac{\epsilon u^{\ast}}{\widetilde{\varpi}} & 0\\
0 & 0 & 0 & 1_r & 0 & 0\\
0 & 0 & 0 & 0 & 1_m & 0\\
0 & 0 & 0 & 0 & 0 & 1_m
\end{array}\right].
\]
Changing variables $g\mapsto w^{-1}g\left[\begin{array}{ccc}
\epsilon\widetilde{\varpi}^{-1}\cdot 1_m & 0 & 0\\
0 & 1_r & 0\\
0 & 0 & \epsilon\widetilde{\varpi}\cdot 1_m
\end{array}\right]w$, we need to calculate
\[
\begin{aligned}
&\mathcal{Z}(s;\phi_1,\phi_2,f_s^{p})\\
=&\int_{G(F)}\sum_j\sum_{u\in\widetilde{\varpi}\mathrm{Mat}_m(\mathcal{O})\delta^{-1}_{ij}/\widetilde{\varpi}\mathrm{Mat}_m(\mathcal{O})}\left\langle\pi\left(g\left[\begin{array}{ccc}
\epsilon\widetilde{\varpi}^{-1}\cdot 1_m& 0 & 0\\
0 & 1_r & 0\\
0 & 0 & \epsilon\widetilde{\varpi}\cdot 1_m
\end{array}\right]w\right)\phi,\phi\right\rangle\\
\times&f_s^{\dagger,0}\left(\delta(g,1)\delta_u d_{\widetilde{\varpi}}^{-1}(w,w)\right)dg.
\end{aligned}
\]
Write $g=\left[\begin{array}{ccc}
a & f & b\\
h & e & j\\
c & k & d
\end{array}\right]$ and compute that
\[
\begin{aligned}
&\delta(g,1)\delta_ud_{\widetilde{\varpi}}^{-1}(w,w)\left[\begin{array}{cccc}
1_r & 0 & 0 & 0\\
0 & \widetilde{\varpi}^{-1}\cdot 1_{2m} & 0 & 0\\
0 & 0 & 1_r & 0\\
0 & 0 & 0 &\widetilde{\varpi}\cdot 1_{2m}
\end{array}\right]\\
=&\left[\begin{array}{cccccc}
\frac{e+1}{2} & 0 & \frac{\widetilde{\varpi}\epsilon j}{2} & \frac{\epsilon(e-1)\theta}{4} & -\frac{ j}{2} & \frac{\widetilde{\varpi}^{-1}h}{2}\\
0 & 0 & 0 & 0 & -1_m & 0\\
\epsilon f & 0 & \widetilde{\varpi}b & \frac{f}{2} & -\epsilon b & \epsilon \widetilde{\varpi}^{-1}a\\
\epsilon\theta^{-1}(e-1) & 0 & \theta^{-1}\widetilde{\varpi}j & \theta^{-1}\frac{e+1}{2}\theta & -\epsilon\theta^{-1}j & \epsilon\widetilde{\varpi}^{-1}\theta^{-1}h\\
-\epsilon f & -\epsilon\cdot 1_m & -\widetilde{\varpi}b & -\frac{f\theta}{2} & \epsilon b & -\epsilon\widetilde{\varpi}^{-1}(a+u)\\
\epsilon k & 0 &\widetilde{\varpi}d & \frac{k\theta}{2} & -\epsilon(du^{\ast}+1) & \epsilon \widetilde{\varpi}^{-1}c
\end{array}\right].
\end{aligned}
\]
By the same arguments as in the proof of Proposition \ref{proposition 4.2}, \ref{proposition 4.4}, this is an element in $P(F)J_nN'(\mathfrak{p}^0)$ if and only if\\
(1) $d$ is invertible and $d^{-1}+u^{\ast}\in\mathrm{Mat}_m(\mathfrak{q}\mathcal{O})$,\\
(2) $d^{-1}c$ have entries in $\mathfrak{q}^{2}$, $d^{-1}k, jd^{-1}$ has entries in $\mathfrak{q}$ and $bd^{-1}$ has entries in $\mathcal{O}$,\\
(3) $e-jd^{-1}k\in-1+\mathrm{Mat}_r(\mathfrak{q}\mathcal{O})$.\\

Since $d$ is invertible, we can write
\[
g=\left[\begin{array}{ccc}
1_m & -\hat{d}j^{\ast}\theta^{-1} & bd^{-1}\\
0 & 1_r & jd^{-1}\\
0 & 0 & 1_m
\end{array}\right]\left[\begin{array}{ccc}
\hat{d} & 0 & 0\\
0 & e-jd^{-1}k & 0\\
0 & 0 & d
\end{array}\right]\left[\begin{array}{ccc}
1_m & 0 & 0\\
-\theta k^{\ast}\hat{d} & 1_r & 0\\
d^{-1}c & d^{-1}k & 1_m
\end{array}\right].
\]
Note that there is a permutation $j\mapsto j'$ such that $\hat{d}$ runs through $\mathrm{GL}_m(D)\cap\widetilde{\varpi}\mathrm{Mat}_m(\mathcal{O})\delta^{-1}_{ij'}$ for fixed $j$. Hence, when $\delta_{ij}$ running through the right coset $\mathrm{GL}_m(\mathcal{O})u_i\mathrm{GL}_m(\mathcal{O})/\mathrm{GL}_m(\mathcal{O})$, all such $d$ run through $\mathrm{GL}_m(D)\cap\widetilde{\varpi}\mathrm{Mat}_m(\mathcal{O})u^{-1}_i$. Therefore,
\[
\begin{aligned}
&\mathcal{Z}(s;\phi,\pi(w)\phi,f_s^{p,i})\\
=&
\sum_{\hat{d}\in\mathrm{GL}_m(D)\cap\widetilde{\varpi}\mathrm{Mat}_m(\mathcal{O})u^{-1}_i}\sum_{g\in K(\mathfrak{q}^2)\mathrm{diag}[\hat{d},1_r,d]K(\mathfrak{q}^2)}\chi(\nu(\widetilde{\varpi}d))|\mathrm{N}_{E/F}(\nu(\widetilde{\varpi} d))|^{s+\kappa}\\
\times&\left\langle\pi\left(g\left[\begin{array}{ccc}
\epsilon\widetilde{\varpi}^{-1}\cdot 1_m& 0 & 0\\
0 & 1_r & 0\\
0 & 0 & \epsilon\widetilde{\varpi}\cdot 1_m
\end{array}\right]w\right)\phi,\phi\right\rangle.
\end{aligned}
\]

Note that when $\hat{d}$ runs through $\mathrm{GL}_m(D)\cap\varpi\mathrm{Mat}_m(\mathcal{O})u^{-1}_i$, we are taking a sum over
\[
K(\mathfrak{q}^2)\mathfrak{Q}K(\mathfrak{q}^2)\cdot K(\mathfrak{q}^2)\mathrm{diag}[\widetilde{\varpi}^{-1} u_i,1_r,\widetilde{\varpi} u_i^{-1}]K(\mathfrak{q}^2).
\]
We thus obtain
\[
\begin{aligned}
&\mathcal{Z}(s;\phi,\pi(w)\phi,f_s^{p,i})\\
=&\chi(\widetilde{\varpi})^{\mathbf{d}_3(m-i)}q^{-(m-i)\mathbf{d}_4(s+\kappa)}\lambda_{m-i}L\left(s+\frac{1}{2},\phi\times\chi\right)\\
\times&\left\langle\pi\left(\left[\begin{array}{ccc}
0& 0 & \widetilde{\varpi}^{-1}\cdot 1_m\\
0 & 1_r & 0\\
\widetilde{\varpi}\cdot 1_m & 0 & 0
\end{array}\right]\right)\phi,\phi\right\rangle.
\end{aligned}
\]
as desired.
\end{proof}

Gluing all these $0\leq i\leq m$ together, we define the local section $f_s^p$ by
\begin{equation}
\label{4.5.6}
f_s^p(h)=\sum_{i=0}^m(-1)^iq^{\mathbf{d}_3\left(\frac{i(i-1)}{2}-im\right)}f_s^{p,i}(h).
\end{equation}

\begin{prop}
\label{proposition 4.6}
Assume $\chi$ is unramified, $\phi\in\pi$ is fixed by $K(\mathfrak{q}^2)$ and is an eigenvector for the Hecke algebra $\mathcal{H}(K(\mathfrak{q}^2),\mathfrak{X})$. Assume $2\in\mathcal{O}^{\times},\theta\in\mathrm{GL}_r(\mathcal{O})$. Set $\phi_1=\phi,\phi_2=\pi(w)\phi$ and denote $\beta_i$ for the Satake parameters of $\phi$. Then
\begin{equation}
\label{4.5.7}
\begin{aligned}
\mathcal{Z}(s;\phi,\pi(w)\phi,f_s^p)=&(-1)^mq^{-\mathbf{d}_3\frac{m^2+m}{2}}L\left(s+\frac{1}{2},\phi\times\chi\right)M\left(s+\frac{1}{2},\phi\times\chi\right)\\
\times&\left\langle\pi\left(\left[\begin{array}{ccc}
0& 0 & \widetilde{\varpi}^{-1}\cdot 1_m\\
0 & 1_r & 0\\
\widetilde{\varpi}\cdot 1_m & 0 & 0
\end{array}\right]\right)\phi,\phi\right\rangle,
\end{aligned}
\end{equation}
where $M(s,\phi\times\chi)$ is the modification factors given in the following list.\\
(Case I, Orthogonal)
\[
\prod_{i=1}^m\left(1-\chi(\varpi)\beta_iq^{\frac{r}{2}-s}\right),
\]
(Case II, Symplectic)
\[
\prod_{i=1}^m\left(1-\chi(\varpi)\beta_iq^{-s+1}\right),
\]
(Case III, Quaternionic Orthogonal Nonsplit)
\[
\prod_{i=1}^m\left(1-\chi(\varpi)\beta_iq^{1+r-s}\right),
\]
(Case IV, Quaternionic Unitary Nonsplit)
\[
\prod_{i=1}^m\left(1-\chi(\varpi)\beta_iq^{r-s}\right),
\]
(Case V, Unitary Inert) $E/F$ is inert,
\[
\prod_{i=1}^m\left(1-\chi(\varpi)\beta_i q^{1+r-s}\right),
\]
(Case V, Unitary Ramified) $E/F$ is ramified,
\[
\prod_{i=1}^m\left(1-\chi(\widetilde{\varpi})\beta_iq^{\frac{r+1}{2}-s}\right).
\]
\end{prop}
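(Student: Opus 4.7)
The plan is to combine Lemma~\ref{lemma 4.5} with a characteristic-polynomial identity for the Hecke operators $T_i$. First, by the definition \eqref{4.5.6} of $f_s^p$ and linearity of the local zeta integral in the section variable,
\begin{equation*}
\mathcal{Z}(s;\phi,\pi(w)\phi,f_s^p) = \sum_{i=0}^m (-1)^i q^{\mathbf{d}_3(i(i-1)/2 - im)} \mathcal{Z}(s;\phi,\pi(w)\phi,f_s^{p,i}).
\end{equation*}
Substituting the formula of Lemma~\ref{lemma 4.5} into each summand and factoring out the two common factors $L(s+\tfrac12,\phi\times\chi)$ and $\langle \pi(\cdot)\phi,\phi\rangle$, the proposition is reduced to the scalar identity
\begin{equation*}
\sum_{i=0}^m (-1)^i q^{\mathbf{d}_3(i(i-1)/2-im)}\chi(\widetilde\varpi)^{\mathbf{d}_3(m-i)} q^{-(m-i)\mathbf{d}_4(s+\kappa)} \lambda_{m-i} = (-1)^m q^{-\mathbf{d}_3 m(m+1)/2}\, M\!\left(s+\tfrac12,\phi\times\chi\right).
\end{equation*}

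Next, I would reindex by $j = m - i$. After pulling out the sign and the $q$-prefactor $(-1)^m q^{-\mathbf{d}_3 m(m+1)/2}$, the $j=0,\dots,m$ sum rewrites cleanly in the form
\begin{equation*}
\sum_{j=0}^m (-1)^j q^{\mathbf{d}_3 j(j-1)/2}\,\lambda_j\, Y^j, \qquad Y := \chi(\widetilde\varpi)^{\mathbf{d}_3} q^{-\mathbf{d}_4(s+\kappa)},
\end{equation*}
after absorbing the remaining powers of $q$ into the variable $Y$.

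Third, I would identify this polynomial-in-$Y$ with $M(s+\tfrac12,\phi\times\chi)$ via the Satake transform. The Hecke eigenvalues $\lambda_j$ for $T_j$ are proportional to the elementary symmetric functions $e_j(\beta_1,\dots,\beta_m)$ of the Satake parameters, with normalization constants that are essentially already computed in the proof of Proposition~\ref{proposition 3.3} (using \cite[Lemma~13.2]{Sh97} and \cite[Lemma~19.9]{Sh00}). Combined with Vieta's formula
\begin{equation*}
\sum_{j=0}^m (-1)^j e_j(\beta)\, X^j = \prod_{k=1}^m (1 - \beta_k X),
\end{equation*}
this converts the sum to a product $\prod_{k=1}^m(1-\beta_k Y\cdot q^{c})$ for a case-dependent shift $c$. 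Matching $Y q^c$ against the $q^{\bullet - s}$-exponents in the list for $M$ then reproduces, case by case, the modification factors stated in the proposition; the relation $L^{-1} \leadsto M$ is simply a uniform shift of the exponent by $\mathbf{d}_3$, which is precisely what the extra $q^{\mathbf{d}_3 j(j-1)/2}$ in our sum supplies.

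The main obstacle is the careful bookkeeping of the $q$-powers together with the case-by-case translation between $\chi(\widetilde\varpi)$ and $\chi(\varpi)$. This is most delicate in Case V, where one must distinguish the inert subcase (with $\widetilde\varpi = \varpi$, $\mathbf{d}_3 = 2$, $\mathbf{d}_4 = 2$) from the ramified subcase (with $\widetilde\varpi^2$ a unit multiple of $\varpi$, $\mathbf{d}_3 = 1$, $\mathbf{d}_4 = 1$); in the quaternionic cases one similarly needs the doubling of the reduced-norm valuation encoded in $\mathbf{d}_3 = 2$. A direct verification of the $m=1$ case would fix the overall normalization constant, after which the general product formula follows from the multiplicative structure of the characteristic polynomial of the Satake matrix.
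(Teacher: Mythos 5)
Your proposal follows the same route as the paper: apply Lemma~\ref{lemma 4.5} termwise to the defining sum \eqref{4.5.6} for $f_s^p$, factor out $L(s+\tfrac12,\phi\times\chi)$ and the inner product, reindex $j=m-i$, and identify the resulting alternating sum in $\lambda_j$ with an Euler-type product via the Satake transform (the paper cites \cite[Lemma 19.13]{Sh00} and the Satake-map description from the proof of Proposition~\ref{proposition 3.3} at this step, which is the content of your Vieta reduction).

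One small correction to your last paragraph: the quadratic-in-$j$ factor $q^{\mathbf{d}_3 j(j-1)/2}$ is present in both the expansion giving $L^{-1}$ and the one giving $M$ — it is part of the characteristic-polynomial identity itself and supplies no shift. The passage from $L^{-1}$ to $M$ comes entirely from the extra linear power $q^{j\mathbf{d}_3}$ produced by the reindexing (since $j(j+1)/2 = j(j-1)/2 + j$), which must be absorbed into $Y$, i.e.\ one should take $Y=\chi(\widetilde\varpi)^{\mathbf{d}_3}\,q^{-\mathbf{d}_4(s+\kappa)+\mathbf{d}_3}$ rather than the $Y$ you first wrote down. This is exactly the shift by $\mathbf{d}_3$ in the $s$-exponent that turns the Euler factor $L^{-1}$ of Proposition~\ref{proposition 3.3} into the modification factor $M$ in the lists.
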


\begin{proof}
By the above lemma, we have
\[
\begin{aligned}
\mathcal{Z}(s;\phi_1,\phi_2,f_s^{p})&=\sum_{i=1}^m(-1)^iq^{\mathbf{d}_3\left(\frac{i(i-1)}{2}-im\right)}\chi(\widetilde{\varpi})^{\mathbf{d}_3(m-i)}q^{-(m-i)(s+\kappa)}\lambda_{m-i}\\
&\times L\left(s+\frac{1}{2},\phi\times\chi\right)\left\langle\pi\left(\left[\begin{array}{ccc}
0& 0 &\widetilde{\varpi}^{-1}\cdot 1_m\\
0 & 1_r & 0\\
\widetilde{\varpi}\cdot 1_m & 0 & 0
\end{array}\right]\right)\phi,\phi\right\rangle.
\end{aligned}
\]
It suffices to compute
\[
\begin{aligned}
&\sum_{i=1}^m(-1)^iq^{\mathbf{d}_3\left(\frac{i(i-1)}{2}-im\right)}\chi(\widetilde{\varpi})^{\mathbf{d}_3(m-i)}q^{-(m-i)(s+\kappa)}\lambda_{m-i}\\
=&(-1)^mq^{-\mathbf{d}_3\frac{m^2+m}{2}}\sum_{i=1}^m(-1)^iq^{\mathbf{d}_3\frac{i(i-1)}{2}}\chi(\widetilde{\varpi})^{\mathbf{d}_3i}q^{-i(s+\kappa-\mathbf{d}_3)}\lambda_i.
\end{aligned}
\]
This equals to $M(s+\frac{1}{2},\phi\times\chi)$ in the above lists. Indeed, using \cite[Lemma 19.13]{Sh00} and the explicit description of the Satake map in the proof of Proposition \ref{proposition 3.3}, one can show that
\[
\sum_{i=0}^m(-1)^iq^{\mathbf{d}_3\frac{i(i-1)}{2}}\lambda_i\chi(\widetilde{\varpi})^{\mathbf{d}_3i}q^{-i(s+\kappa-\frac{1}{2})}
\]
is the Euler factor in Proposition \ref{proposition 3.3} and the proposition easily follows.
\end{proof}

\section{The Archimedean Theory}
\label{section 5}

In the rest of the paper, we restrict ourselves to the following global setting as in Section \ref{section 2.2}. Let $F$ be a totally real field of degree $[F:\Q]=d$ and consider tuples $(D,\rho,\epsilon)$ of following four cases: 

\begin{tabular}{ll}
(Case II, Symplectic) & $(D,\rho)$ of type (a) with $\epsilon=-1$,\\
(Case III, Quaternionic Orthogonal) & $(D,\rho)$ of type (b) with $\epsilon=1$ and\\
& $D_v=\mathrm{Mat}_2(\R)$ for any archimedean places $v$,\\
(Case IV, Quaternionic Unitary) & $(D,\rho)$ of type (b) with $\epsilon=-1$,\\
& $D_v=\mathbb{H}$ for any archimedean places $v$,\\
(Case V, Unitary) & $(D,\rho)$ of type (c) with $\epsilon=-1$,\\
& $D=E$ is an imaginary quadratic extension of $F$.
\end{tabular}

Here $\mathbb{H}$ is the Hamilton quaternion algebra for which we fix an embedding into $\mathrm{Mat}_2(\R)$. The global group $G$ is defined as
\[
G:=G(F):=\{g\in\mathrm{GL}_n(D):g\Phi g^{\ast}=\Phi\},\Phi=\left[\begin{array}{ccc}
0 & 0 & 1_m\\
0 & \theta & 0\\
\epsilon\cdot 1_m & 0 & 0
\end{array}\right],
\]
with $n=2m+r$ and $\theta^{\ast}=\epsilon\theta\in\mathrm{GL}_r(D)$ is anisotropic (over $F$). We may also write it as $G_{m,r}$ to emphasize the index. In Case V, we assume $i\theta_v>0$ for all archimedean place $v$ for simplicity.

\begin{rem}
\label{remark 5.1}
Clearly in Case II, we always have $r=0$. Using the well known Hasse principle for quadratic forms or the one for quaternionic skew-hermitian forms proved in \cite{Hi63}, together with the local computations of quadratic forms in \cite{Sh04} and quaternionic skew-hermitian forms in \cite{Ts61} one can show that $r\leq 3$ in Case III and $r\leq 1$ in Case IV. This is especially because the Witt index of $G(F_v)$ for any archimedean place $v$ is less than $1$ in Case III and is $0$ in Case IV while, in Case V, $r$ is unbounded since the Witt index of $G(F_v)$ for any archimedean place $v$ is unbounded. For example, $i\cdot 1_r$ is always an anisotropic matrix in $G(F_v)$ in Case V for any $r$.
\end{rem}

The main reason for our above global assumption is that the symmetric space associated to $G(F_v)$ at any archimedean place $v$ is hermitian and admits a Shimura variety. In this case, the algebraic modular forms are well studied. The reader can refer to \cite{Helgason, Satake, PS69} for more details on hermitian symmetric spaces and \cite{LKW, Milne} for general theory of Shimura varieties. We consider the local archimedean groups and compute the local archimedean integrals in Section \ref{section 5.1}, \ref{section 5.2} and in Section \ref{section 5.3} we summarize the definition for algebraic modular forms in the adelic setting.

\subsection{Modular forms on $G(F_{\infty})$}
\label{section 5.1}

Let $v$ be an archimedean place of $F$ and $G_v=G(F_v)$ the localization of $G$ at $v$. Fix a maximal compact subgroup $K$ of $G_v$. Then $G_v/K$ is a hermitian symmetric space whose realizations can be found in \cite[Section 6,7]{Sh97} and \cite[Section 3,5]{Sh00} for symplectic and unitary cases; in \cite{JYB1} for quaternionic unitary case. Especially, in \cite[Section 2.2]{JYB1} we explain how one can construct certain realizations and the quaternionic orthogonal case can be treated in a same way. 

We fix an unbounded realization $\mathfrak{Z}_{m,r}$ as in above references with action of $G_v$ on $\mathfrak{Z}_{m,r}$ denoted by $g.z$ and $j(g,z)$ the automorphy factor for $g\in G_v,z\in\mathfrak{Z}_{m,r}$. Let $z_0$ be the central point of $\mathfrak{Z}_{m,r}$ so that $K=\{g\in G_v:g.z_0=z_0\}$. When $r=0$, the symmetric space $\mathcal{H}_m:=\mathfrak{Z}_{m,0}$ can be simply expressed as
\begin{equation}
\label{5.1.1}
\mathcal{H}_m:=\{z=x+iy\in S_{m,v}\otimes\C:x\in S_{m,v},y\in S_{m,v}^+\}
\end{equation}
where $S_m$ is an additive algebraic group with
\begin{equation}
\label{5.1.2}
S_m(F)=\{\beta\in\mathrm{GL}_m(D):\beta^{\ast}=-\epsilon\beta\}
\end{equation}
and $S_{m,v}^+$ the subgroup of $S_{m,v}$ containing positive definite matrices. The central point is chosen as $z_0=i\cdot 1_m$. The action of $G_{m,0}$ on $\mathcal{H}_m$ is defined as
\begin{equation}
\label{5.1.3}
g.z=(az+b)(cz+d)^{-1}\text{ for }z\in\mathcal{H}_m,g=\left[\begin{array}{cc}
a & b\\
c & d
\end{array}\right]\in G_{m,0}
\end{equation}
and $j(g,z)=\nu(cz+d)$. Recall that for archimedean places $v$, $D_v=\mathrm{Mat}_2(\R)$ in Case III and $D_v=\mathbb{H}$ can be embedded into $\mathrm{Mat}_2(\R)$ in Case IV. Then above space $\mathcal{H}_m$ can be embedded into $\mathrm{Mat}_{2m}(\C)$ and in particular one can show that the space $\mathcal{H}_m$ in Case III is same as the one for Case II with index $2m$.

For $\gamma\in G(F)$, we naturally view it as an element of $G(\mathbb{A})$ and its action on $\mathfrak{Z}_{m,r}^d$ is given by $\gamma.z:=(\gamma_v.z_v)_{v|\infty}$ for $z=(z_v)_{v|\infty}\in\mathfrak{Z}_{m,r}^d$. Fix a weight $\boldsymbol{l}=(l_v)_{v|\infty}$ with $l_v\in\mathbb{N}$ and denote
\begin{equation}
j(\gamma,z)^{\boldsymbol{l}}=\prod_{v|\infty}j(\gamma_v,z_v)^{l_v}.
\end{equation}

\begin{defn}
\label{definition 5.2}
A holomorphic function $\varphi:\mathfrak{Z}^d_{m,r}\to\C$ is called a modular form for a congruence subgroup $\Gamma\subset G(F)$ and weight $\boldsymbol{l}$ if for all $\gamma\in\Gamma$,
\begin{equation}
\label{5.1.4}
\varphi(\gamma.z)=j(\gamma,z)^{\boldsymbol{l}}\varphi(z),\qquad z=(z_v)_{v|\infty}.
\end{equation}
\end{defn}

\begin{rem}
\label{remark 5.3}
$\text{ }$\\
(1) In this paper, we use the term `modular form' as an analogue for the modular forms of $\mathrm{GL}_2$ so in particular we only consider the holomorphic functions.\\
(2) When $m\leq 1$ we need further assume $\varphi$ satisfies the cusp condition which is not necessary for $m\geq 2$ due the the Koecher principle \cite[Lemma 1.5]{Krieg} and \cite[Proposition 5.7]{Sh00}.\\
(3) We are restrict ourselves to certain scalar weight modular forms. Also in unitary case, with notations in \cite[Section 5]{Sh00}, there are indeed two automorphy factors $\lambda(g,z),\mu(g,z)$ and
\[
j(g,z)^l=\nu(\mu(g,z))^{l_1}\nu(\lambda(g,z))^{l_2},\text{ with }l=(l_1,l_2).
\]
Here for simplicity we only consider $l_1=l,l_2=0$ to make our discussions consistent in all cases.
\end{rem}

Denote $F_{\infty}=F\otimes_{\Q}\R\cong\R^d$. We rephrase above definitions for functions $\phi:G(F_{\infty})\to\C$. Set 
\begin{equation}
\phi(g)=j(g_v,z_0)^{-\boldsymbol{l}}\varphi((g_v.z_0)_{v|\infty}).
\end{equation}
Then clearly $\phi(gk)=\prod_{v|\infty}j(k_v,z_0)^{-l_v}\phi(g)$ for $k\in K$. We call a function $\phi:G(F_{\infty})\to\C$ a modular form of weight $\boldsymbol{l}$ if it is obtained from some weight $\boldsymbol{l}$ modular form $\varphi$ through this way. We call $\phi$ a cusp form if
\begin{equation}
\int_{U(\R)}\phi(ug)du=0
\end{equation}
for every unipotent radical $U$ of all proper parabolic subgroup of $G$.

\subsection{The archimedean integrals}
\label{section 5.2}

Recall that
\begin{equation}
\label{5.2.1}
H:=H(F):=\{h\in\mathrm{GL}_{2n}(D):hJ_n h^{\ast}=J_n\},
\end{equation}
with a doubling embedding $G\times G\to H$ defined in \eqref{2.1.7}. For an archimedean place $v$ of $F$, denote $H_v=H(F_v)$ for the localization at $v$ and $\mathcal{H}_n:=\mathfrak{Z}_{n,0}$ the symmetric space associated to $H_v$. We also write $J(h,z)$ for the automorphy factor of $H$ to distinguish the one for $G$. There is a doubling embedding (see for example \cite[Section 6,7]{Sh97} and \cite[Section 2.3]{JYB1}) 
\begin{equation}
\label{5.2.2}
\begin{aligned}
\mathfrak{Z}_{m,r}\times\mathfrak{Z}_{m,r}&\to\mathcal{H}_n,\\
z_1,z_2&\mapsto[z_1,z_2],
\end{aligned}
\end{equation}
compatible with the action, i.e. $(g_1,g_2)\cdot[z_0,z_0]=[g_1z_0,g_2z_0]$. In particular, we can normalize $z_0$ and the embedding such that $[z_0,z_0]=i\cdot 1_n\in\mathcal{H}_n$. We simply write $i:=i\cdot 1_n$ if it is clear from the context.

Let $\phi$ be a cusp form of weight $\boldsymbol{l}$ and set $\phi_1=\phi_2=\phi$. Assume $\chi$ is a Hecke character of infinity type $\boldsymbol{l}$. That is $\chi_v(x)=x^{l_v}|x|^{-l_v}$ for any $v|\infty$. Define a section $f_s^{\infty}\in\mathrm{Ind}_{P(F_{\infty})}^{H(F_{\infty})}(\chi|\cdot|^s)$ by $f_s^{\infty}=\prod_{v|\infty}f_{s,v}^{\infty}$ with
\begin{equation}
\label{5.2.3}
\begin{aligned}
f_{s,v}^{\infty}(h)=J(h_v,i)^{-l_v}|J(h_v,i)|^{l_v-s-\kappa} &\qquad \text{ Case II, III, IV,}\\
f_{s,v}^{\infty}(h)=J(h_v,i)^{-l_v}|J(h_v,i)|^{l_v-2s-2\kappa} &\qquad \text{ Case  V}.
\end{aligned}
\end{equation}
and consider the archimedean integral
\begin{equation}
\label{5.2.4}
\mathcal{Z}(s;\phi_1,\phi_2,f_s^{\infty})=\int_{G(F_{\infty})}f_s(\delta(g,1))\langle\pi(g)\phi_1,\phi_2\rangle dg.
\end{equation}

\begin{prop}
\label{proposition 5.4}
Let $\phi$ be a cusp form of weight $\boldsymbol{l}$ and $\mathrm{Re}(s)+l_v>\kappa$ for all $v$. we have
\begin{equation}
\label{5.2.5}
\mathcal{Z}(s;\phi,\phi,f_s^{\infty})=C(s)\cdot \prod_{v|\infty}c_{l_v}(s)\cdot\langle\phi,\pi(w)\phi\rangle.
\end{equation}
Here $w$ is the Weyl element as in \eqref{2.3.4}, $C(s)$ is a constant of power in $2$ depending on $s$ and $c_{l_v}(s)$ is given by the following list:\\
(Case II)
\[
\pi^{\frac{m(m+1)}{2}}\prod_{i=0}^{m-1}\frac{\Gamma\left(\frac{1}{2}\left(s+l_v-\frac{1}{2}\right)-\frac{i}{2}\right)}{\Gamma\left(\frac{1}{2}\left(s+l_v+\frac{2m+1}{2}\right)-\frac{i}{2}\right)},
\]
(Case III)
\[
\pi^{\frac{n(n+1)}{2}}\prod_{i=0}^{n-1}\frac{\Gamma\left(\frac{1}{2}\left(s+l_v-\frac{1}{2}\right)-\frac{i}{2}\right)}{\Gamma\left(\frac{1}{2}\left(s+l_v+\frac{2n+1}{2}\right)-\frac{i}{2}\right)},
\]
(Case IV)
\[
\pi^{\frac{n(n-1)}{2}}\prod_{i=0}^{\left\lfloor \frac{n}{2}\right\rfloor-1}\frac{\Gamma\left(s+l_v+\frac{1}{2}-2i\right)}{\Gamma\left(s+l_v+\frac{2n-1}{2}-2i\right)},
\]
(Case V)
\[
\pi^{m(m+r)}\prod_{i=0}^{m-1}\frac{\Gamma\left(s+\frac{l_v}{2}-i\right)}{\Gamma\left(s+\frac{l_v}{2}+\frac{n}{2}-i\right)}
\]
\end{prop}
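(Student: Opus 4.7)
The plan is to factor $\mathcal{Z}(s;\phi,\phi,f_s^{\infty})$ over archimedean places as $\prod_{v|\infty}\mathcal{Z}_v(s;\phi_v,\phi_v,f_{s,v}^{\infty})$ and to evaluate each local factor separately. Fix such a $v$, work in the unbounded realization $\mathfrak{Z}_{m,r}$ with central point $z_0$, and exploit that $\phi_v$ is holomorphic of weight $l_v$, so that it transforms by $j(k,z_0)^{-l_v}$ under right translation by $K$. Because $f_{s,v}^{\infty}$ has an analogous scalar transformation law on the right under the maximal compact $K_H$ of $H_v$, and because $K\times K$ sits inside $K_H$ through the doubling embedding, an Iwasawa decomposition $G_v=P_GK$ reduces $\mathcal{Z}_v$ to an integral over the Levi of $P_G$ of a product of characters multiplied by the matrix coefficient $\langle\pi_v(\cdot)\phi_v,\phi_v\rangle$.

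Using the compatibility $(g_1,g_2)\cdot[z_0,z_0]=[g_1z_0,g_2z_0]$ together with the explicit form of $\delta$ in \eqref{2.2.10}, I would establish a pullback identity expressing $f_{s,v}^{\infty}(\delta(g,1))$ as a power of $j(g_v,z_0)$ times a power of a cone determinant $\det(y_v+1_m)$. Together with the holomorphic transformation law of $\phi_v$, this recasts $\mathcal{Z}_v$ as a generalized beta integral
\[
\int_{S_{m,v}^{+}}\det(y)^{a(s,l_v)}\det(y+1_m)^{-b(s,l_v)}\,dy
\]
(or its skew--hermitian or quaternionic analogue), multiplied by the local inner product that, after the change of variable inherent in $\delta$, assembles globally into $\langle\phi,\pi(w)\phi\rangle$; the Weyl element $w$ appears precisely because the $\delta$--substitution flips one of the two copies of $G$ onto its opposite parabolic. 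The Siegel--Gindikin beta integral (see \cite[Chapter III]{Sh97}, \cite[Appendix]{Sh00}) evaluates the cone integral in closed form as a ratio of products of Gamma functions, and matching the exponents $a(s,l_v),b(s,l_v)$ to the parameters $m,n,r,\kappa$ in each case yields the explicit $c_{l_v}(s)$ listed in the proposition. The constant $C(s)$ collects powers of $2$ from the normalizations of Haar measure on $S_{m,v}^{+}$, and the hypothesis $\mathrm{Re}(s)+l_v>\kappa$ is exactly what is required for the cone integral to converge absolutely.

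The main obstacle will be to carry out the pullback identity uniformly across Cases II--V, since each case has a different anisotropic $\theta$, a different shape of $\mathcal{H}_n$, and hence a different explicit form of $J(\delta(g,1),i\cdot 1_n)$. Cases II and V are essentially covered by \cite[Theorem 7.2]{Sh97} and \cite[Theorem 22.8]{Sh00}, and Case IV by \cite{JYB1}; the genuinely new computation is Case III. However, at archimedean places in Case III one has $D_v=\mathrm{Mat}_2(\R)$, which is split, so by the discussion of Section \ref{section 3.3} the local group is isomorphic to a symplectic group of rank $n$. This identification converts the Case III computation into a Case II computation with the symplectic index $m$ replaced by $n$, which is exactly the formal relation visible between the two formulas in the statement, and allows one to borrow the Case II result wholesale once the pullback identity has been normalized consistently.
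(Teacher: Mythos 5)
Your proposal is essentially correct and follows the same route the paper takes: factor over archimedean places, use the cocycle identity $J(\delta(g,1),[z_0,z_0])=j(\delta,[gz_0,z_0])\,j(g,z_0)$ together with the holomorphic transformation law of $\phi_v$ to convert $\mathcal{Z}_v$ into a Siegel-type cone integral, identify Case III with Case II via the archimedean split $D_v\cong\mathrm{Mat}_2(\R)$, and quote the closed-form evaluation of the cone integral (the paper cites \cite[Appendix A.2]{Sh97} and Hua's theorems; you cite the equivalent beta-integral sources). The only place you go slightly beyond the paper's terse write-up is in spelling out the Iwasawa reduction and the origin of the $\pi(w)$-twist, which the paper leaves implicit, but this adds exposition rather than changing the argument.
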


\begin{proof}
This is well known (see \cite{Sh00} for symplectic and unitary case, \cite{JYB1} for quaternionic unitary case). Indeed, it suffices to calculate
\[
\begin{aligned}
&\int_{G(F_{\infty})}f_s^{\infty}(\delta(g,1))\overline{\phi(g'g)}dg.
\end{aligned}
\]
Note that $J(\delta(g,1),[z_0,z_0])=j(\delta,[gz_0,z_0])j(g,z_0)$ and rewrite above integral (for Case II, III, IV) as
\[
\int_{G(F_{\infty})}j(\delta,[gz_0,z_0])^{-\boldsymbol{l}}|j(\delta,[gz_0,z_0])|^{\boldsymbol{l}-s-\kappa}|j(g,z_0)|^{-\boldsymbol{l}-s-\kappa}\overline{\phi(g'g)}dg.
\]
This kind of integral is calculated in \cite[Appendix A.2]{Sh97} for symplectic and unitary case. The symmetric space for quaternionic orthogonal group is isomorphic to the one for symplectic group. All these cases including the quaternionic unitary group are treated in \cite[Theorem 2.2.1, 2.3.1, 2.4.1]{Hua}.
\end{proof}

\subsection{Algebraic modular forms}
\label{section 5.3}

For later use, we summarize the definition and properties of algebraic modular forms here. For holomorphic modular forms considered in this paper, there are mainly four characterizations of algebraic modular forms:\\
(1) In \cite{H85, H86, M90} automorphic forms are interpreted as sections of certain automorphic vector bundles. The canonical model of automorphic vector bundles then defines a subspace of algebraic automorphic forms.\\
(2) In \cite{JYB1, G77, G84, Sh00} algebraic modular forms are defined via CM points.\\
(3) In \cite{G81, G83, G84}, a characterization using Fourier-Jacobi expansion is given. In particular, in the special case $r=0$ the modular forms have a Fourier expansion and the algebraic modular form is defined to be the one have algebraic Fourier coefficients. This generalizes the classical definition of algebraic modular forms of $\mathrm{GL}_2$.\\
(4) In \cite{G84}, there is yet another characterization using the pullback to classical modular forms over $\mathrm{GL}_2$. Moreover, three definitions (2,3,4) are also proved to be equivalent there.

We first review the adelic definition of modular forms. Denote $K_{\infty}$ be the maximal compact subgroup of $G(F_{\infty})$ and $K$ be any open compact subgroup of $\prod_{v\nmid\infty}G(F_v)$. Fix a weight $\boldsymbol{l}=(l_v)_{v|\infty}$ with $l_v\in\mathbb{N}$ as before.

Recall the following weak approximation of $G$
\begin{equation}
\label{weakapproximation}
G(\mathbb{A})=\coprod_{i}G(F)t_iKG(F_{\infty}).
\end{equation}
For a function $\boldsymbol{f}:G(\mathbb{A})\to\C$, we can associate a series of functions $\phi_i$ on $G(F_{\infty})$ for each $i$ defined by
\begin{equation}
\phi_i(g_{\infty})=\boldsymbol{f}(t_ig_{\infty})\qquad g_{\infty}\in G(F_{\infty}).
\end{equation}

\begin{defn}
\label{definition 5.5}
 The space of weight $\boldsymbol{l}$ and level $K$ modular forms $\mathcal{M}_{\boldsymbol{l}}(K)$ contain functions $\boldsymbol{f}:G(\mathbb{A})\to\C$ satisfying:\\
(1) $\boldsymbol{f}$ is left invariant under $G(F)$ and right invariant under $K$, i.e. 
\begin{equation}
\label{5.3.1}
\boldsymbol{f}(\gamma gk)=\boldsymbol{f}(g)\text{ for }\gamma\in G(F),k\in K,
\end{equation}
(2) The functions $\phi_i$ associated to $\boldsymbol{f}$ defined as above are weight $\boldsymbol{l}$ defined as in Section \ref{section 5.1}. Especially,
\begin{equation}
\label{5.3.2}
\boldsymbol{f}(gk_{\infty})=\prod_{v|\infty}j(k_v,z_0)^{l_v}\boldsymbol{f}(g)\text{ for }k_{\infty}=(k_v)_{v|\infty}\in K_{\infty}.
\end{equation}
Furthermore, the subspace $\mathcal{S}_{\boldsymbol{l}}(K)$ of cusp forms consisting functions $\boldsymbol{f}\in\mathcal{M}_{\boldsymbol{l}}(K)$ satisfying
 \begin{equation}
\int_{U(\Q)\backslash U(\mathbb{A})}\boldsymbol{f}(ug)du=0,
\end{equation}
for all unipotent radicals $U$ of all proper parabolic subgroups of $G$. The subspace of cusp forms is denoted as $\mathcal{S}_{\boldsymbol{l}}(K)$. We may write $\mathcal{M}^{m,r}_{\boldsymbol{l}}(K)$ and $\mathcal{S}^{m,r}_{\boldsymbol{l}}(K)$ if we want to emphasize the index $m,r$.
\end{defn}

For two modular forms $\boldsymbol{f}_1,\boldsymbol{f}_2\in\mathcal{M}_{\boldsymbol{l}}(K)$, we define the Petersson inner product
\begin{equation}
\langle\boldsymbol{f}_1,\boldsymbol{f}_2\rangle=\int_{G(F)\backslash G(\mathbb{A})/KK_{\infty}}\boldsymbol{f}_1(g)\overline{\boldsymbol{f}_2(g)}dg,
\end{equation}
whenever the integral converges. For example, this is well defined when one of $\boldsymbol{f}_1$ is a cusp form. Here, by convention, the measure $\mathbf{d}g$ is normalized such that the volume of $K$ is $1$.

We define the algebraic modular forms via CM points as \cite{JYB1, G77, G84, Sh00}. That is a modular form $\varphi:\mathfrak{Z}_{m,r}\to\C$ is called algebraic if for all CM points $z\in\mathfrak{Z}_{m,r}$, $\varphi(z)$ is algebraic up to certain period $\mathcal{P}(z)$ (defined in above references) depending only on $z$ and the weight $\boldsymbol{l}$. To rephrase it in adelic language, we call $g\in G(\mathbb{A})$ a CM point if $g_v\cdot z_0\in\mathfrak{Z}_{m,r}$ is a CM point for any archimedean place $v$ of $F$ and set $\mathcal{P}(g)=\prod_{v|\infty}\mathcal{P}(g_v.z_0)$.

\begin{defn}
\label{definition 5.6}
The subspace 
\begin{equation}
\label{5.3.3}
\mathcal{M}_{\boldsymbol{l}}(K,\overline{\Q})\subset\mathcal{M}_{\boldsymbol{l}}(K),\text{ resp. }\mathcal{S}_{\boldsymbol{l}}(K,\overline{\Q})\subset\mathcal{S}_{\boldsymbol{l}}(K),
\end{equation}
of algebraic modular forms (resp. algebraic cusp forms) consisting functions $\boldsymbol{f}:G(\mathbb{A})\to\C$ such that $\boldsymbol{f}(g)\in\mathcal{P}(g)\overline{\Q}$ for any CM points $g$.
\end{defn}

The properties we need for algebraic modular forms are collected in the following proposition.

\begin{prop}
\label{proposition 5.7}
$\text{ }$\\
(1) There is a basis of $\mathcal{M}_{\boldsymbol{l}}(K)$ consisting of algebraic modular forms. That is $\mathcal{M}_{\boldsymbol{l}}(K)=\mathcal{M}_{\boldsymbol{l}}(K,\overline{\Q})\otimes_{\overline{\Q}}\C$.\\
(2) Let $\sigma\in\mathrm{Aut}(\C/\overline{\Q})$ act on $\mathcal{M}_{\boldsymbol{l}}(K)$ by acting on $\C$. Then $\mathcal{S}_{\boldsymbol{l}}(K,\overline{\Q})^{\sigma}=\mathcal{S}_{\boldsymbol{l}}(K,\overline{\Q})$ and $\mathcal{S}_{\boldsymbol{l}}(K)=\mathcal{S}_{\boldsymbol{l}}(K,\overline{\Q})\otimes_{\overline{\Q}}\C$.
\end{prop}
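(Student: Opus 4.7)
The plan is to use the canonical model of the Shimura variety $\mathrm{Sh}_K(G)$ attached to $G$, which exists because the underlying symmetric space is hermitian. The automorphic line bundle $\omega^{\boldsymbol{l}}$ of weight $\boldsymbol{l}$ descends to this canonical model, and one identifies $\mathcal{M}_{\boldsymbol{l}}(K)$ with $H^0(\mathrm{Sh}_K,\omega^{\boldsymbol{l}})\otimes_{\overline{\Q}}\C$. This furnishes a natural $\overline{\Q}$-structure on $\mathcal{M}_{\boldsymbol{l}}(K)$, and the first step is to identify this structure with the CM-point definition of $\mathcal{M}_{\boldsymbol{l}}(K,\overline{\Q})$ used in Definition \ref{definition 5.6}. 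Once this identification is in hand, part (1) follows immediately from the fact that $H^0$ of a coherent sheaf over $\overline{\Q}$ behaves well under extension of scalars.

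For the identification itself, I would invoke the equivalence of the four characterizations of algebraicity recalled at the beginning of Section \ref{section 5.3}. By \cite{G84}, the CM-point definition coincides with the Fourier-Jacobi definition and with the pullback-to-$\mathrm{GL}_2$ definition; by \cite{H85, H86, M90}, each of these in turn agrees with the automorphic-bundle definition via canonical models. Combining these identifications gives $\mathcal{M}_{\boldsymbol{l}}(K,\overline{\Q})=H^0(\mathrm{Sh}_K,\omega^{\boldsymbol{l}})$, and part (1) follows.

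For part (2), the crux is that cuspidality is a $\overline{\Q}$-rational condition: cusp forms correspond to global sections of $\omega^{\boldsymbol{l}}$ on a toroidal compactification of $\mathrm{Sh}_K$ that vanish along the boundary divisor, which itself is defined over $\overline{\Q}$. Hence, under the $\overline{\Q}$-structure provided by (1), the subspace $\mathcal{S}_{\boldsymbol{l}}(K,\overline{\Q})$ is intrinsically defined over $\overline{\Q}$. The Galois action on $\mathcal{M}_{\boldsymbol{l}}(K)=\mathcal{M}_{\boldsymbol{l}}(K,\overline{\Q})\otimes_{\overline{\Q}}\C$ acts on the second factor and therefore preserves this subspace, giving $\mathcal{S}_{\boldsymbol{l}}(K,\overline{\Q})^{\sigma}=\mathcal{S}_{\boldsymbol{l}}(K,\overline{\Q})$. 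The decomposition $\mathcal{S}_{\boldsymbol{l}}(K)=\mathcal{S}_{\boldsymbol{l}}(K,\overline{\Q})\otimes_{\overline{\Q}}\C$ then follows by restricting the decomposition in (1) to the cuspidal subspace.

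The main obstacle is verifying the equivalence of the CM-point and automorphic-bundle definitions uniformly in all four cases, particularly the quaternionic ones (Cases III and IV), where the associated Shimura varieties need not be of PEL type and the standard arguments must be adapted. A clean way to bypass the PEL machinery is to use Garrett's pullback test \cite{G84}: algebraicity of a form on $G$ can be checked via the algebraicity of the Fourier coefficients of its pullback to a product of elliptic modular forms along a suitable embedding of modular curves, reducing the question to the classical $\mathrm{GL}_2$ setting where both the algebraic structure and the Galois-equivariance of cuspidality are well understood.
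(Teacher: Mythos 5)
The paper does not actually prove Proposition \ref{proposition 5.7}: it states it as a known fact, deferring to the literature recalled at the opening of Section \ref{section 5.3} (the automorphic-bundle references \cite{H85, H86, M90}, Garrett's CM-point, Fourier--Jacobi, and pullback characterizations \cite{G81, G83, G84}, and Shimura's \cite{Sh00}), with Remark \ref{remark 5.8} spelling out the $r=0$ case concretely via Fourier coefficients. Your proposal is therefore being measured not against an internal argument but against the background those references establish, and on those terms it is a reasonable and broadly correct sketch.

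Your route through the canonical model of $\mathrm{Sh}_K(G)$ and the automorphic line bundle $\omega^{\boldsymbol{l}}$, followed by the identification with the CM-point definition via \cite{G84}, matches the references and is a standard way to get part (1). You are also right to single out the genuine difficulty: the quaternionic cases (Cases III and IV) are not PEL, so the canonical model, the $\overline{\Q}$-model of $\omega^{\boldsymbol{l}}$, and the equivalence of the four characterizations all require extra care there, and Garrett's pullback test is indeed the way \cite{G84} handles it. One small thing worth making explicit: the canonical model is over the reflex field, which is a number field rather than $\overline{\Q}$ itself, so the $\overline{\Q}$-structure is obtained by base change from that; this is harmless but should be said.

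The one place where I would push back is your argument for part (2). You assert that cuspidality is $\overline{\Q}$-rational because it corresponds to vanishing along the boundary divisor of a toroidal compactification defined over $\overline{\Q}$. That is fine in the PEL cases, but for the non-PEL quaternionic Shimura varieties the existence of a $\overline{\Q}$-rational toroidal compactification is itself a nontrivial input, and leaning on it sits awkwardly next to your stated plan of bypassing the PEL machinery via the pullback test. A lighter and uniform argument, closer to Remark \ref{remark 5.8} and to \cite{G81, G83, G84}, is this: cuspidality is detected by the vanishing of Fourier--Jacobi coefficients along proper parabolic subgroups, and by the equivalence of characterizations in \cite{G84} the Fourier--Jacobi coefficients of an algebraic form are algebraic; hence the cuspidality conditions cut out a $\overline{\Q}$-subspace of $\mathcal{M}_{\boldsymbol{l}}(K,\overline{\Q})$, from which both statements of (2) follow. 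This avoids the boundary-divisor machinery entirely and applies uniformly in all four cases, including the quaternionic ones.
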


\begin{rem}
\label{remark 5.8}
When $r=0$, the definition of algebraicity has a simple characterization using Fourier expansions. Let $e_{\mathbb{A}}=\prod_ve_v$ be the standard additive character of $\mathbb{A}$. That is $e_v(x)=e^{2\pi ix}$ for archimedean places $v$ and $e_v(\varpi_v)=\prod_{v|\infty}e_v(-q_v^{-1})$ with $\varpi_v$ the uniformizer of $F_v$ and $|\varpi_v|_v=q_v^{-1}$. Also set $e_{\infty}=\prod_{v|\infty}e_v$. Let $\boldsymbol{f}:G(\mathbb{A})\to\C$ be a  modular form in $\mathcal{M}_{\boldsymbol{l}}(K)$. Then for all $y\in\mathrm{GL}_n(\mathbb{A}_{D})$ and $x\in S_n(\mathbb{A})$ we have a Fourier expansion of the form
\begin{equation}
\label{adelicfourier}
\boldsymbol{f}\left(\left[\begin{array}{cc}
y & x\hat{y}\\
0 & \hat{y}
\end{array}\right]\right)=\prod_{v|\infty}\nu(y_v^{\ast})^{l_v}\cdot\sum_{\beta\in S_m(F)}\mathbf{c}(\beta;\boldsymbol{f},y)e_{\infty}(\tau(iy^{\ast}\beta y))e_{\mathbb{A}}(\tau(\beta x)).
\end{equation}
We call $\mathbf{c}(\beta;\boldsymbol{f},y)$ the Fourier coefficients of $\boldsymbol{f}$ and they have following properties (see for example \cite[Proposition 2.4]{B21}):\\
(1) $\mathbf{c}(\beta;\boldsymbol{f},y)=0$ unless $\beta$ is non-negative and $\prod_{v\nmid\infty}e_v(\tau(y^{\ast}\beta yx))=1$ for any $x\in S_m(\mathcal{O})$,\\
(2) $\mathbf{c}(\beta;\boldsymbol{f},y)=\mathbf{c}(\beta;\boldsymbol{f},\prod_{v\nmid\infty}y_v)$,\\
(3) $\mathbf{c}(b^{\ast}\beta b;\boldsymbol{f},y)=\nu(b^{\ast})^l\mathbf{c}(\beta;\boldsymbol{f},by)$ for any $b\in\mathrm{GL}_n(D)$,\\
(4) $\mathbf{c}(\beta;\boldsymbol{f},yk)=\mathbf{c}(\beta;\boldsymbol{f},y)$ for any $k\in\prod_{v\nmid\infty}\mathrm{GL}_n(\mathcal{O}_v)$,\\
(5) $\boldsymbol{f}$ is a cusp form if and only if for all $y$, $\mathbf{c}(\beta;\boldsymbol{f},y)=0$ unless $\beta$ is positive definite.

Then the action of $\sigma\in\mathrm{Aut}(\C)$ on $\boldsymbol{f}$ characterized by
\begin{equation}
\boldsymbol{f}^{\sigma}\left(\left[\begin{array}{cc}
y & x\hat{y}\\
0 & \hat{y}
\end{array}\right]\right)=\prod_{v|\infty}\nu(y_v^{\ast})^{l_v}\cdot\sum_{\beta\in S_m(F)}\mathbf{c}(\beta;\boldsymbol{f},y)^{\sigma}e_{\infty}(\tau(iy^{\ast}\beta y))e_{\mathbb{A}}(\tau(\beta x))
\end{equation}
is well defined. In particular, we have $\boldsymbol{f}\in\mathcal{M}_{\boldsymbol{l}}(K,\overline{\Q})$ if and only if $\mathbf{c}(\beta;\boldsymbol{f},y)\in\overline{\Q}$ for all $y$.

\end{rem}

\section{Reformulating the Integral Representations}
\label{section 6}

Keep the assumption of our global group as the beginning of Section \ref{section 5}. We conclude our integral representation from Theorem \ref{theorem 2.2} together with archimedean computations in Proposition \ref{proposition 5.4}. We also reformulate our integral representation for our later study of algebraic and $p$-adic properties.

Let $\boldsymbol{l}=(l_v)_{v|\infty}$ be a tuple of positive integers indexed by archimedean places of $F$. Fix a specific prime $\boldsymbol{p}$ of $\mathfrak{o}$ and an integral ideal $\mathfrak{n}=\mathfrak{n}_1\mathfrak{n}_2=\prod_{v}\mathfrak{p}_v^{\mathfrak{c}_v}$ with $\mathfrak{n}_1,\mathfrak{n}_2,\boldsymbol{p}$ coprime. Denote $\boldsymbol{\varpi}$ for the uniformizer of $\boldsymbol{p}$. Let $\boldsymbol{q}$ be the prime ideal of $\mathcal{O}$ above $\boldsymbol{p}$ and $\boldsymbol{\widetilde{\varpi}}$ the uniformizer of $\boldsymbol{q}$. We make the following assumptions:\\
(1) $2\in\mathcal{O}_v^{\times}$ and $\theta\in\mathrm{GL}_r(\mathcal{O}_v)$ for all $v|\mathfrak{n}\boldsymbol{p}$.\\
(2) $\boldsymbol{f}\in\mathcal{S}_{\boldsymbol{l}}(K(\mathfrak{n}\boldsymbol{p}))$ is an eigenform for the Hecke algebra $\mathcal{H}(K(\mathfrak{n}\boldsymbol{p}),\mathfrak{X})$ as in Section \ref{section 3.4}.\\
(3) $\boldsymbol{f}$ is an eigenform for the $U(\boldsymbol{p})$ operator with eigenvalue $\alpha(\boldsymbol{p})\neq 0$.\\
(4) $\chi=\chi_1\boldsymbol{\chi}$ with $\chi_1$ has conductor $\mathfrak{n}_2$ and $\boldsymbol{\chi}$ has conductor $\boldsymbol{p}^{\boldsymbol{c}}$ for some integer $\boldsymbol{c}\geq 0$. We assume $\chi$ has infinity type $\boldsymbol{l}$. That is, $\chi_v(x)=x^{l_v}|x|^{-l_v}$ for all $v|\infty$.\\
(5) In Case V, all places $v|\mathfrak{n}\boldsymbol{p}$ are nonsplit in $\mathcal{O}$.

Denote $\eta_1,\eta_2\in G(\mathbb{A})$ such that
\begin{equation}
\label{6.1}
(\eta_1)_v=\left\{\begin{array}{cc}
w & v|\mathfrak{n}_1\\
1 & \text{otherwise},
\end{array}\right.,\quad(\eta_2)_v=\left\{\begin{array}{cc}
w & v|\mathfrak{n}_2\boldsymbol{p}\\
1 & \text{otherwise},
\end{array}\right.,
\end{equation}
where
\[
w=\left[\begin{array}{ccc}
0 & 0 & 1_m\\
0 & 1_r & 0\\
\epsilon\cdot 1_m & 0 & 0
\end{array}\right]
\]
is an Weyl element.

Denote $E(h;f_s)$ be the Eisenstein series on $H(\mathbb{A})$ associated to $f_s$. Taking $\phi_1$ to be $\pi(\eta_1)\boldsymbol{f}$ and $\phi_2$ to be $\pi(\eta_2)\boldsymbol{f}$, our global integral \eqref{2.2.7} can be written as
\begin{equation}
\label{6.2}
\begin{aligned}
&\mathcal{Z}(s;\boldsymbol{f},f_s)\\
=&\int_{G(F)\times G(F)\backslash G(\mathbb{A})\times G(\mathbb{A})}E((g_1,g_2);f_s)\overline{\boldsymbol{f}(g_1\eta_1)}\boldsymbol{f}(g_2\eta_2)\chi(\nu(g_2))^{-1}dg_1dg_2.
\end{aligned}
\end{equation}

The integral representation is summarized in the following theorem.

\begin{thm}
\label{theorem 6.1}
Take the section $f_s$ to be
\begin{equation}
\label{6.3}
\begin{aligned}
f_s&=\prod_{v\nmid\mathfrak{n}\boldsymbol{p}\infty}f_{s,v}^0\cdot\prod_{v|\mathfrak{n}_1}f_{s,v}^{\dagger,\mathfrak{c}_v}\cdot\prod_{v|\mathfrak{n}_2}f_{s,v}^{\ddagger,\mathfrak{c}_v}\cdot f_{s,\boldsymbol{p}}^{\ddagger,\boldsymbol{c}}\cdot f_{s}^{\infty},\qquad&\boldsymbol{c}>0,\\
f_s&=\prod_{v\nmid\mathfrak{n}\boldsymbol{p}\infty}f_{s,v}^0\cdot\prod_{v|\mathfrak{n}_1}f_{s,v}^{\dagger,\mathfrak{c}_v}\cdot\prod_{v|\mathfrak{n}_2}f_{s,v}^{\ddagger,\mathfrak{c}_v}\cdot f_{s,\boldsymbol{p}}^p\cdot f_{s}^{\infty},\qquad&\boldsymbol{c}=0.
\end{aligned}
\end{equation}
with $f_{s,v}^0,f_{s,v}^{\dagger,\mathfrak{c}_v},f_{s,v}^{\ddagger,\mathfrak{c}_v},f_{s,v}^p$ are local sections defined in \eqref{4.2.1}, \eqref{4.3.2}, \eqref{4.4.1}, \eqref{4.5.6} and $f_{s}^{\infty}$ the archimedean section defined in \eqref{5.2.3}. Then 
\begin{equation}
\label{6.4}
\begin{aligned}
\mathcal{Z}(s;\boldsymbol{f},f_s)&=C'\cdot\prod_{v|\infty}c_{l_{v}}(s)\cdot L\left(s+\frac{1}{2},\boldsymbol{f}\times\chi\right)\cdot\langle\pi(\eta)\boldsymbol{f}|U'(\mathfrak{n_1}),\boldsymbol{f}\rangle,\quad &\boldsymbol{c}>0,\\
\mathcal{Z}(s;\boldsymbol{f},f_s)&=C''\cdot\prod_{v|\infty}c_{l_{v}}(s)\cdot L\left(s+\frac{1}{2},\boldsymbol{f}\times\chi\right)\cdot\langle\pi(\eta)\boldsymbol{f}|U'(\mathfrak{n_1}),\boldsymbol{f}\rangle &\\
&\times M\left(s+\frac{1}{2},\boldsymbol{f}\times\chi\right),\quad&\boldsymbol{c}=0.
\end{aligned}
\end{equation} 
Here:\\
(a) $M(s,\boldsymbol{f}\times\chi)$ is the modification factor given in Proposition \ref{proposition 4.6},\\
(b) $U'(\mathfrak{n}_1)$ is the Hecke operator defined by \eqref{3.2.8},\\
(c)
\begin{equation}
\label{6.5}
\begin{aligned}
\eta&=\left[\begin{array}{ccc}
0 & 0 & \boldsymbol{\varpi}^{-\boldsymbol{c}}\cdot 1_m\\
0 & 1_r & 0\\
\boldsymbol{\varpi}^{\boldsymbol{c}}\cdot 1_m & 0 & 0
\end{array}\right]\cdot\prod_{v|\mathfrak{n}_2}\left[\begin{array}{ccc}
0 & 0 & \varpi_v^{-\mathfrak{c}_v}\cdot 1_m\\
0 & 1_r & 0\\
\varpi_v^{\mathfrak{c}_v}\cdot 1_m & 0 & 0
\end{array}\right],\quad&\boldsymbol{c}>0,\\
\eta&=\left[\begin{array}{ccc}
0 & 0 & \boldsymbol{\widetilde{\varpi}}^{-1}\cdot 1_m\\
0 & 1_r & 0\\
\boldsymbol{\widetilde{\varpi}}\cdot 1_m & 0 & 0
\end{array}\right]\cdot\prod_{v|\mathfrak{n}_2}\left[\begin{array}{ccc}
0 & 0 & \varpi_v^{-\mathfrak{c}_v}\cdot 1_m\\
0 & 1_r & 0\\
\varpi_v^{\mathfrak{c}_v}\cdot 1_m & 0 & 0
\end{array}\right],\quad&\boldsymbol{c}=0,
\end{aligned}
\end{equation}
(d) The constants $c_{l_v}(s)$ are given in \eqref{5.2.5}. Up to a power of $2$ depending on $s$,
\begin{equation}
\label{6.6}
\begin{aligned}
C'=&\chi(\mathfrak{n}_1)^{m\mathbf{d}_1}|\mathfrak{n}_1|^{m\mathbf{d}_2(s+\kappa)}\mathrm{vol}(\mathrm{GL}_m(\mathcal{O})/\mathrm{GL}_m(\mathfrak{n}_2\boldsymbol{p}^{\boldsymbol{c}}\mathcal{O})),
\end{aligned}
\end{equation}
and
\begin{equation}
\label{6.7}
\begin{aligned}
C''=&(-1)^m|\boldsymbol{\varpi}|_{\boldsymbol{p}}^{\mathbf{d}_3\frac{m^2+m}{2}}\chi(\mathfrak{n}_1)^{m\mathbf{d}_1}|\mathfrak{n}_1|^{m\mathbf{d}_2(s+\kappa)}\mathrm{vol}(\mathrm{GL}_m(\mathcal{O})/\mathrm{GL}_m(\mathfrak{n}_2\mathcal{O})),
\end{aligned}
\end{equation}
with
\begin{equation}
\label{6.8}
\begin{aligned}
\mathbf{d}_1&=\left\{\begin{array}{cc}
1 & \text{ Case I, II, V},\\
2 & \text{ Case III, IV},
\end{array}\right.\qquad\mathbf{d}_2=\left\{\begin{array}{cc}
1 & \text{ Case I, II},\\
2 & \text{ Case III, IV, V},
\end{array}\right.\\
\mathbf{d}_3&=\left\{\begin{array}{cc}
1 & \text{ Case I, II, V Ramified,}\\
2 & \text{ Case III, IV, V Inert.}
\end{array}\right..
\end{aligned}
\end{equation}
\end{thm}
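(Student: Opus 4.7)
The plan is to derive Theorem \ref{theorem 6.1} as an immediate consequence of the Euler factorization \eqref{2.2.11} together with the local computations already established in the preceding sections. More precisely, since the chosen section is a pure tensor $f_s = \prod_v f_{s,v}$ and the cusp form $\boldsymbol{f}$ factors into local vectors, the global integral \eqref{6.2} splits as a product
\[
\mathcal{Z}(s;\boldsymbol{f},f_s) = \prod_v \mathcal{Z}_v(s;\pi((\eta_1)_v)\boldsymbol{f}_v,\pi((\eta_2)_v)\boldsymbol{f}_v,f_{s,v}).
\]
The roles of $\eta_1$ and $\eta_2$ are precisely to insert the Weyl element $w$ at the places where we need to match the normalizations used in Propositions \ref{proposition 4.2}, \ref{proposition 4.4}, \ref{proposition 4.6}: at $v \mid \mathfrak{n}_1$ we apply $\pi(w)$ to the first argument (as in Proposition \ref{proposition 4.2}), whereas at $v \mid \mathfrak{n}_2\boldsymbol{p}$ we apply $\pi(w)$ to the second argument (as in Propositions \ref{proposition 4.4} and \ref{proposition 4.6}). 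Once the local integrals are recognized in the right form, one simply multiplies the outputs.

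Concretely, I would proceed place by place. At each finite place $v \nmid \mathfrak{n}\boldsymbol{p}$, Proposition \ref{proposition 4.1} yields the local $L$-factor $L_v(s+\tfrac12,\boldsymbol{f}_v\times\chi_v)$ times the local inner product $\langle \boldsymbol{f}_v,\boldsymbol{f}_v\rangle$. At $v \mid \mathfrak{n}_1$ Proposition \ref{proposition 4.2} contributes $\chi_v(\varpi_v)^{\mathfrak{c}_v m\mathbf{d}_1}q_v^{-\mathfrak{c}_v m \mathbf{d}_2(s+\kappa)}L_v(s+\tfrac12,\boldsymbol{f}_v\times\chi_v)\langle \boldsymbol{f}_v|U'(\mathfrak{p}_v^{\mathfrak{c}_v}),\boldsymbol{f}_v\rangle$. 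At $v \mid \mathfrak{n}_2$, Proposition \ref{proposition 4.4} contributes the volume factor $\operatorname{vol}(\mathrm{GL}_m(\mathcal{O}_v)/\mathrm{GL}_m(\mathfrak{p}_v^{\mathfrak{c}_v}\mathcal{O}_v))\operatorname{vol}(K_1(\mathfrak{p}_v^{\mathfrak{c}_v}))$ together with the translated inner product (noting that the ramified local $L$-factor equals $1$ here by \eqref{3.2.11}). At the prime $\boldsymbol{p}$, when $\boldsymbol{c}>0$ we again apply Proposition \ref{proposition 4.4}, and when $\boldsymbol{c}=0$ we invoke Proposition \ref{proposition 4.6} which produces the extra modification factor $M(s+\tfrac12,\boldsymbol{f}\times\chi)$. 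Finally, the archimedean places are handled by Proposition \ref{proposition 5.4}, yielding $\prod_{v\mid\infty} c_{l_v}(s)$ times $\langle \boldsymbol{f}_\infty,\pi(w)\boldsymbol{f}_\infty\rangle$.

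The remaining task is bookkeeping: assemble the global $L$-function from its Euler factors via \eqref{3.4.6}, collect the non-archimedean volume and scalar constants into $C'$ (resp.\ $C''$), and recognize that the products of the translated local pairings recombine into a single global pairing $\langle \pi(\eta)\boldsymbol{f}|U'(\mathfrak{n}_1),\boldsymbol{f}\rangle$, where $\eta$ is the adelic element given in \eqref{6.5}. The appearance of $\eta$ comes from combining the local translations $\bigl[\begin{smallmatrix}0&0&\varpi_v^{-\mathfrak{c}_v}\\0&1&0\\\epsilon\varpi_v^{\mathfrak{c}_v}&0&0\end{smallmatrix}\bigr]$ at places $v\mid\mathfrak{n}_2$ and the analogous element at $\boldsymbol{p}$, absorbed via the factorizability of the global Petersson pairing on $\boldsymbol{f}$. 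The signs of $\epsilon$ and powers of $2$ contribute only to the ambiguous scalar $C(s)$ of Proposition \ref{proposition 5.4}, which is why the constants $C',C''$ are stated only up to a power of $2$.

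The main technical point is ensuring that the various translates $\pi(\eta_1),\pi(\eta_2)$ inserted into $\phi_1,\phi_2$ are consistent with the local propositions: each of Propositions \ref{proposition 4.2}, \ref{proposition 4.4}, \ref{proposition 4.6} is stated for a specific asymmetric choice of which side is twisted by $w$, so one needs to verify that \eqref{6.1} places $w$ exactly on the correct side at each place. Once this is checked, there is essentially nothing left beyond collecting factors. In particular there is no new analysis required — the theorem is a clean corollary of Theorem \ref{theorem 2.2} upgraded by the explicit archimedean computation and by the substitution of $f_{s,\boldsymbol{p}}^p$ for the $\boldsymbol{p}$-adic place when $\boldsymbol{c}=0$.
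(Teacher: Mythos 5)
Your proposal is correct and follows the same place-by-place Euler factorization strategy that the paper uses: the theorem is obtained by combining Propositions \ref{proposition 4.1}, \ref{proposition 4.2}, \ref{proposition 4.4}, \ref{proposition 4.6} and \ref{proposition 5.4} via the factorization \eqref{2.2.11}, with the same asymmetric placement of the Weyl element that you identify. The paper treats this as an essentially immediate upgrade of Theorem \ref{theorem 2.2} (see Remark \ref{remark 2.3}), so your bookkeeping account is faithful to its implied argument.
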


For any integer $\boldsymbol{n}\geq 0$, let $\mathcal{K}(\boldsymbol{p}^{\boldsymbol{n}})$ (resp. $\mathcal{K}'(\boldsymbol{p}^{\boldsymbol{n}})$) be an open compact subgroup of $G(\mathbb{A})$ defined by $\mathcal{K}(\boldsymbol{p}^{\boldsymbol{n}})=K'(\mathfrak{n}_1)K(\mathfrak{n}_2)K(\boldsymbol{p}^{\boldsymbol{n}})$ (resp. $\mathcal{K}'(\boldsymbol{p}^{\boldsymbol{n}})=K(\mathfrak{n}_1)K'(\mathfrak{n}_2)K'(\boldsymbol{p}^{\boldsymbol{n}})$). Denote $w_{\infty}\in G(\mathbb{A})$ be an element such that $w_v=w$ for any archimedean place $v$ and $w_v=1$ for all non-archimedean places. Then for $f_s$ as above, we have
\begin{equation}
\label{6.9}
\mathcal{E}(g_1,g_2;f_s):=\chi(\nu(g_2))^{-1}E((g_1,g_2^{\iota});f_s)\in\mathcal{M}_{\boldsymbol{l}}(\mathcal{K}(\boldsymbol{p}^{2\boldsymbol{n}}))\otimes\mathcal{M}_{\boldsymbol{l}}(\mathcal{K}'(\boldsymbol{p}^{2\boldsymbol{n}})),
\end{equation}
with 
\begin{equation}
\label{6.10}
g^{\iota}=\left[\begin{array}{ccc}
-1_m & 0 &0\\
0 & 1_r & 0\\
0 & 0 & 1_m
\end{array}\right]_{\infty}\cdot g\cdot \left[\begin{array}{ccc}
-1_m & 0 & 0\\
0 & -1_r & 0\\
0 & 0 & 1_m
\end{array}\right]_{\infty}
\end{equation}
where the matrix with subscript $\infty$ indicates an element in $G(F_{\infty})$. Here, in this section only, we abuse the notation by writing $\mathcal{M}_{\boldsymbol{l}}(\mathcal{K}(\boldsymbol{p}^{2\boldsymbol{n}}))$ for the space of functions transforming as a modular form (i.e. satisfying \eqref{5.3.2} but may not necessary holomorphic). That is, $\mathcal{E}(g_1,g_2;f_s)$ transforms as a modular form in $\mathcal{M}_{\boldsymbol{l}}(\mathcal{K}(\boldsymbol{p}^{2\boldsymbol{n}}))$ for the first variable and $\mathcal{M}_{\boldsymbol{l}}(\mathcal{K}'(\boldsymbol{p}^{2\boldsymbol{n}}))$ for the second variable. Indeed, later in Section \ref{section 7} and \ref{section 8}, we will specialize to the special points $s=s_0$ (as in \eqref{8.2}) in which case $\mathcal{E}(g_1,g_2;f_s)$ is holomorphic in both variables (follows from the Fourier expansion). Note that $\boldsymbol{n}$ can be took as any integer such that $\boldsymbol{n}\geq\boldsymbol{c}$ if $\boldsymbol{c}>0$ and $\boldsymbol{n}\geq1$ if $\boldsymbol{c}=0$. We also remark that the involution $\iota$ is included in the second variable since our doubling embedding of the symmetric space \eqref{5.2.2} is holomorphic in the first variable and anti-holomorphic in the second variable. To compare all these integral representations when varying the character $\boldsymbol{\chi}$ of different conductors, we further descend the level of Eisenstein series such that it is independent of $\boldsymbol{c}$. Our approach is an analogue of \cite[Section 4]{BS}.

\begin{rem}
\label{remark 6.2}
In the following we actually assume $\boldsymbol{p}$ is nonsplit in $D$. As our argument is local, it directly extended to the split cases of Case III and IV by identifying the local group $G(F_{\boldsymbol{p}})$ with the group in Case I or Case II as in Section \ref{section 3.3}.
\end{rem}

We will use the following general lemma to descend the level.

\begin{lem}
\label{lemma 6.3}
The Hecke operator $U(\boldsymbol{p}^{n-1})$ defined by \eqref{3.2.8} maps $\mathcal{M}_{\boldsymbol{l}}(\mathcal{K}(\boldsymbol{p}^{2\boldsymbol{n}}))$ to $\mathcal{M}_{\boldsymbol{l}}(\mathcal{K}(\boldsymbol{p}^{2}))$.
\end{lem}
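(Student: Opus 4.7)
The proof should reduce to a local statement at the prime $\boldsymbol{p}$, since $U(\boldsymbol{p}^{n-1})$ acts trivially at every other place and the open compact subgroups $\mathcal{K}(\boldsymbol{p}^{2\boldsymbol{n}})$ and $\mathcal{K}(\boldsymbol{p}^{2})$ differ only in their $\boldsymbol{p}$-component. The plan is to write the action of $U(\boldsymbol{p}^{n-1})$ as a finite sum
\[
\boldsymbol{f}\,|\,U(\boldsymbol{p}^{n-1})(g) \;=\; \sum_{(b,c)} \boldsymbol{f}(g\,y_{b,c}),
\]
using the coset decomposition of Lemma \ref{lemma 3.2} for the double coset $K(\boldsymbol{p}^{2\boldsymbol{n}})\,\xi\, K(\boldsymbol{p}^{2\boldsymbol{n}})$ with $\xi=\mathrm{diag}[\varpi^{n-1}1_m,\,1_r,\,\varpi^{-(n-1)}1_m]$, and then to exhibit, for each $k\in K(\boldsymbol{p}^2)$ at the place $\boldsymbol{p}$, a bijection $\sigma_k\colon(b,c)\mapsto(b',c')$ of the parameter set together with elements $k''_{(b,c)}\in K(\boldsymbol{p}^{2\boldsymbol{n}})$ such that
\[
k\,y_{b,c} \;=\; y_{\sigma_k(b,c)}\cdot k''_{(b,c)}.
\]
The $K(\boldsymbol{p}^{2\boldsymbol{n}})$-invariance of $\boldsymbol{f}$ will then give $(\boldsymbol{f}\,|\,U(\boldsymbol{p}^{n-1}))(gk) = (\boldsymbol{f}\,|\,U(\boldsymbol{p}^{n-1}))(g)$ term by term after relabeling.

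To construct $\sigma_k$, I would factor $y_{b,c} = \xi\cdot v(b,c)$ with $v(b,c)$ block upper-triangular, so that
\[
y_{b',c'}^{-1}\,k\,y_{b,c} \;=\; v(b',c')^{-1}\bigl(\xi^{-1}k\xi\bigr)v(b,c).
\]
Conjugation by $\xi$ scales the $(i,j)$-block of $k$ by $\varpi^{a_i-a_j}$ with $(a_1,a_2,a_3)=(n-1,0,-(n-1))$; consequently the lower blocks $D,G,H$ of $k$, which lie in $\mathfrak{p}^2\mathcal{O}$ by hypothesis, move respectively to $\mathfrak{p}^{n+1},\mathfrak{p}^{2n},\mathfrak{p}^{n+1}$, while the upper blocks $B,C,F$ acquire negative powers of $\varpi$. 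The matching off-diagonal entries of $v(b,c)$ and $v(b',c')^{-1}$ are then used to cancel these non-integral contributions block by block, producing congruence equations that determine $(b',c')$ as an affine function of $(b,c)$ with invertible linear part (essentially $b\mapsto A\,b$, where $A\in\mathrm{GL}_m(\mathcal{O})$ is the upper-left block of $k$). This makes $\sigma_k$ a well-defined bijection of the finite parameter set.

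The main obstacle, and the technical heart of the argument, is the verification that after these cancellations the lower blocks of the conjugate $y_{\sigma_k(b,c)}^{-1}ky_{b,c}$ actually lie in the deep ideal $\mathfrak{p}^{2\boldsymbol{n}}\mathcal{O}$ demanded by $K(\boldsymbol{p}^{2\boldsymbol{n}})$. For instance, the $(2,1)$-block takes the form $\varpi^{n-1}(D-b'G+\cdots)$ and is visibly only in $\mathfrak{p}^{n+1}\mathcal{O}$ from the $K(\boldsymbol{p}^2)$ hypothesis alone; the extra depth required to land in $\mathfrak{p}^{2\boldsymbol{n}}\mathcal{O}$ has to come from the group-theoretic relation $k\Phi k^\ast=\Phi$, whose $(3,2)$-block gives the identity $D = -\epsilon E\,\theta^\ast H^\ast\,\hat{I} - \epsilon F\,G^\ast\,\hat{I}$, together with the defining relation $\epsilon c + b^\ast\hat{\theta}\,b + c^\ast = 0$ among the coset parameters. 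Feeding these identities into the conjugate and exploiting the $\mathfrak{p}^2$-depth of $D,G,H$ supplies the missing factor of $\varpi^{n-1}$; the analogous bookkeeping for the $(3,1)$ and $(3,2)$ blocks then proceeds in parallel, uniformly across Cases II--V up to the variations in $\epsilon$, $\theta$ and the underlying algebra.
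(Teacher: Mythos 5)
The paper's proof does not work by permuting coset representatives $y_{b,c}$ directly. Instead it factors the operator through two torus conjugations and a trace: first $f\mapsto f_1(g)=f\bigl(g\,\mathrm{diag}[\boldsymbol{\varpi}^{\boldsymbol{n}}1_m,1_r,\boldsymbol{\varpi}^{-\boldsymbol{n}}1_m]\bigr)$, which is fixed by a group $K''(\boldsymbol{p}^{\boldsymbol{n}})$; then $f_2(g)=\sum_{\gamma\in K''(\boldsymbol{p})/K''(\boldsymbol{p}^{\boldsymbol{n}})}f_1(g\gamma)$, which is $K''(\boldsymbol{p})$-invariant by the formal property of a trace over cosets; then conjugation back by $\mathrm{diag}[\boldsymbol{\varpi}^{-1}1_m,1_r,\boldsymbol{\varpi}1_m]$ lands in $\mathcal{M}_{\boldsymbol{l}}(\mathcal{K}(\boldsymbol{p}^2))$. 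The identification with $U(\boldsymbol{p}^{\boldsymbol{n}-1})$ is then a comparison of the conjugated trace representatives with the cosets from Lemma~\ref{lemma 3.2}. The invariance under the larger group thus falls out structurally, rather than being checked block-by-block.

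Your plan takes the direct route: write $\boldsymbol{f}|U(\boldsymbol{p}^{\boldsymbol{n}-1})(g)=\sum_{(b,c)}\boldsymbol{f}(gy_{b,c})$ and show $K(\boldsymbol{p}^2)$ permutes the cosets $y_{b,c}K(\boldsymbol{p}^{2\boldsymbol{n}})$. That is a legitimate alternative framing, but as written there is a concrete gap. Your own reduction gives the $(2,1)$-block of $y_{b',c'}^{-1}\,k\,y_{b,c}$ as $\boldsymbol{\varpi}^{\boldsymbol{n}-1}(D-b'G)$, which must lie in $\boldsymbol{p}^{2\boldsymbol{n}}\mathcal{O}$, i.e.\ $D-b'G\in\boldsymbol{p}^{\boldsymbol{n}+1}\mathcal{O}$. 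You propose to obtain this from the $(3,2)$-entry of $k\Phi k^{\ast}=\Phi$, which reads $D=-\epsilon(E\theta^{\ast}H^{\ast}+FG^{\ast})\hat I$. But this identity expresses $D$ solely in terms of the \emph{other} blocks of $k$ — none of which involve $(b,c)$ or $(b',c')$ — so it places no restriction on how you may \emph{choose} $b'$; and $G\in\mathfrak{p}^{2}\mathcal{O}$ need not be invertible (it may even vanish), so the congruence $b'G\equiv D\pmod{\boldsymbol{p}^{\boldsymbol{n}+1}}$ has no reason to be solvable for $b'$. Likewise the heuristic $b'\mapsto Ab$ for the bijection $\sigma_k$ does not match what the $(2,1)$-block actually demands. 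In short, you have correctly isolated the technical heart of the claim, but the two identities you cite do not visibly close the argument, and no mechanism for the extra factor of $\boldsymbol{\varpi}^{\boldsymbol{n}-1}$ in the $(2,1)$- and $(3,2)$-blocks is actually produced. (The paper sidesteps exactly this: in its route the deep congruence comes for free from membership of the trace in $K''(\boldsymbol{p}^{\boldsymbol{n}})$ after conjugation, not from a block cancellation that one must exhibit by hand.) To salvage the direct approach you would in effect have to establish the same coset decomposition of $K''(\boldsymbol{p})$ modulo $K''(\boldsymbol{p}^{\boldsymbol{n}})$ that the paper invokes, at which point the conjugation/trace phrasing is both shorter and cleaner.
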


\begin{proof}
We define a map $\mathcal{M}_{\boldsymbol{l}}(\mathcal{K}(\boldsymbol{p}^{2\boldsymbol{n}}))\to\mathcal{M}_{\boldsymbol{l}}(\mathcal{K}(\boldsymbol{p}^{2}))$ in following steps. Let $f\in\mathcal{M}_{\boldsymbol{l}}(\mathcal{K}(\boldsymbol{p}^{2\boldsymbol{n}}))$ and first set 
\[
f_1(g):=f\left(g\left[\begin{array}{ccc}
\boldsymbol{\varpi}^{\boldsymbol{n}}\cdot 1_m & 0 & 0\\
0 & 1_r & 0\\
0 & 0 & \boldsymbol{\varpi}^{-\boldsymbol{n}}\cdot 1_m
\end{array}\right]\right).
\]
Then $f_1$ is fixed by
\[
K''(\boldsymbol{p}^{\boldsymbol{n}}):=
G(\mathfrak{o}_{\boldsymbol{p}})\cap\left[\begin{array}{ccc}
\mathrm{Mat}_{m}(\mathcal{O}_{\boldsymbol{p}}) & \mathrm{Mat}_{m,r}(\boldsymbol{p}^{\boldsymbol{n}}\mathcal{O}_{\boldsymbol{p}}) & \mathrm{Mat}_{m}(\boldsymbol{p}^{2\boldsymbol{n}}\mathcal{O}_{\boldsymbol{p}})\\
\mathrm{Mat}_{r,m}(\boldsymbol{p}^{\boldsymbol{n}}\mathcal{O}_{\boldsymbol{p})} &1+\mathrm{Mat}_{r}(\boldsymbol{p}\mathcal{O}_{\boldsymbol{p}}) & \mathrm{Mat}_{r,m}(\boldsymbol{p}^{\boldsymbol{n}}\mathcal{O}_{\boldsymbol{p}})\\
\mathrm{Mat}_{m}(\mathcal{O}_{\boldsymbol{p}}) & \mathrm{Mat}_{m,r}(\boldsymbol{p}^{\boldsymbol{n}}\mathcal{O}_{\boldsymbol{p}}) & \mathrm{Mat}_{m}(\mathcal{O}_{\boldsymbol{p}})
\end{array}\right],
\]
Secondly we define
\[
f_2(g):=\sum_{\gamma\in K''(\boldsymbol{p})/K''(\boldsymbol{p}^{\boldsymbol{n}})}f_1(g\gamma),
\]
where the representatives of $K''(\boldsymbol{p})/K''(\boldsymbol{p}^{\boldsymbol{n}})$ can be taken as
\[
\left[\begin{array}{ccc}
1_m & -\boldsymbol{\varpi}b^{\ast}\theta^{-1} & \boldsymbol{\varpi}^2 c\\
0 & 1_r & \boldsymbol{\varpi}b\\
0 & 0 & 1_r
\end{array}\right],
\]
with $b\in\mathrm{Mat}_{m,r}(\mathcal{O}_{\boldsymbol{p}}/\boldsymbol{p}^{\boldsymbol{n}-1}\mathcal{O}_{\boldsymbol{p}})$ and $c\in\mathrm{Mat}_{m}(\mathcal{O}_{\boldsymbol{p}}/\boldsymbol{p}^{2\boldsymbol{n}-2}\mathcal{O}_{\boldsymbol{p}})$ satisfying $\epsilon c+b^{\ast}\hat{\theta}b+c^{\ast}=0$. Then $f_2\in\mathcal{M}_{\boldsymbol{l}}(K''(\boldsymbol{p}))$. Finally we put
\[
f_3(g):=f_2\left(g\left[\begin{array}{ccc}
\boldsymbol{\varpi}^{-1}\cdot 1_m & 0 & 0\\
0 & 1_r & 0\\
0 & 0 & \boldsymbol{\varpi}\cdot 1_m
\end{array}\right]\right).
\]
to obtain $f_3\in\mathcal{M}_{\boldsymbol{l}}(\mathcal{K}(\boldsymbol{p}^{2}))$. Combining these three steps together, $f\mapsto f_3$ defines a map 
\[
\begin{aligned}
\mathrm{Tr}:\mathcal{M}_{\boldsymbol{l}}(\mathcal{K}(\boldsymbol{p}^{2\boldsymbol{n}}))&\to\mathcal{M}_{\boldsymbol{l}}(\mathcal{K}(\boldsymbol{p}^{2}))\\
f&\mapsto f|\mathrm{Tr}(g):=\sum_{\gamma}f(g\gamma)
\end{aligned}
\]
where $\gamma$ runs through elements of the form
\[
\left[\begin{array}{ccc}
\boldsymbol{\varpi}^{\boldsymbol{n}-1}\cdot 1_m & -b^{\ast}\theta^{-1} & \epsilon\boldsymbol{\varpi}^{1-\boldsymbol{n}}c^{\ast}\\
0 & 1_r & \boldsymbol{\varpi}^{1-\boldsymbol{n}}b\\
0 & 0 & \boldsymbol{\varpi}^{1-\boldsymbol{n}}\cdot 1_m 
\end{array}\right]
\]
with $b\in\mathrm{Mat}_{m,r}(\mathcal{O}_{\boldsymbol{p}}/\boldsymbol{p}^{\boldsymbol{n}-1}\mathcal{O}_{\boldsymbol{p}})$ and $c\in\mathrm{Mat}_{m}(\mathcal{O}_{\boldsymbol{p}}/\boldsymbol{p}^{2\boldsymbol{n}-2}\mathcal{O}_{\boldsymbol{p}})$ satisfying $\epsilon c+b^{\ast}\hat{\theta}b+c^{\ast}=0$. Comparing above matrix with the one in \eqref{3.2.6} for $U(\boldsymbol{p}^{\boldsymbol{n}-1})$ operator we obtain the lemma.
\end{proof}

We apply above process for both variables and define
\begin{equation}
\label{6.11}
\mathbb{E}(h;f_s):=E(h;f_s)|\boldsymbol{U}(\boldsymbol{p}^{\boldsymbol{n}-1}):=\sum_{\gamma}E(h\gamma;f_s),
\end{equation}
where $\gamma$ runs through elements of the form
\begin{equation}
\label{6.12}
\left[\begin{array}{cccccc}
1_r & 0 & 0 & 0 & \frac{\epsilon \boldsymbol{\varpi}^{1-\boldsymbol{n}}b_2}{2} & -\frac{\epsilon \boldsymbol{\varpi}^{1-\boldsymbol{n}}b_1}{2}\\
\epsilon b_2^{\ast}\theta^{-1} & \boldsymbol{\varpi}^{\boldsymbol{n}-1}\cdot 1_m & 0 & -\frac{b_2^{\ast}}{2} & -\boldsymbol{\varpi}^{1-\boldsymbol{n}}c_2 & 0\\
-\epsilon b_1^{\ast}\theta^{-1} & 0 & \boldsymbol{\varpi}^{\boldsymbol{n}-1}\cdot 1_m & -\frac{b_1^{\ast}}{2} & 0 & \boldsymbol{\varpi}^{1-\boldsymbol{n}}c_1\\
0 & 0 & 0 & 1_r & -\boldsymbol{\varpi}^{1-\boldsymbol{n}}\theta^{-1}b_2 & \boldsymbol{\varpi}^{1-\boldsymbol{n}}\theta^{-1}b_1\\
0 & 0 & 0 & 0 & \boldsymbol{\varpi}^{1-\boldsymbol{n}}\cdot 1_m & 0\\
0 & 0 & 0 & 0 & 0 & \boldsymbol{\varpi}^{1-\boldsymbol{n}}\cdot 1_m 
\end{array}\right]
\end{equation}
with $b_1,b_2\in\mathrm{Mat}_{m,r}(\mathcal{O}_{\boldsymbol{p}}/\boldsymbol{p}^{\boldsymbol{n}-1}\mathcal{O}_{\boldsymbol{p}})$ and $c_1,c_2\in\mathrm{Mat}_{m}(\mathcal{O}_{\boldsymbol{p}}/\boldsymbol{p}^{2\boldsymbol{n}-2}\mathcal{O}_{\boldsymbol{p}})$ satisfying $\epsilon c_1+b_1^{\ast}\hat{\theta}b_1+c_1^{\ast}=0,\epsilon c_2+b_2^{\ast}\hat{\theta}b+c_2^{\ast}=0$. 

Take $\mathbb{E}(g_1,g_2;f_s)$ as with $f_s$ as in \eqref{6.3}. Then
\begin{equation}
\label{6.13}
\boldsymbol{E}(g_1,g_2;f_s):=\chi(\nu(g_2))^{-1}\mathbb{E}((g_1,g_2^{\iota});f_s)\in\mathcal{M}_{\boldsymbol{l}}(\mathcal{K}(\boldsymbol{p}^{2}))\otimes\mathcal{M}_{\boldsymbol{l}}(\mathcal{K}'(\boldsymbol{p}^{2})).
\end{equation}
We may also denote $\mathbb{E}(h;f_s,\chi,\boldsymbol{n})$ and $\boldsymbol{E}(g_1,g_2;f_s,\chi,\boldsymbol{n})$ to emphasize their dependence on $\chi,\boldsymbol{n}$. Consider the global integral
\begin{equation}
\label{6.14}
\begin{aligned}
&\boldsymbol{Z}(s;\boldsymbol{f},f_s)\\
:=&\int_{G(F)\times G(F)\backslash G(\mathbb{A})\times G(\mathbb{A})}\boldsymbol{E}((g_1,g_2^{\iota});f_s)\overline{\boldsymbol{f}(g_1\eta_1\eta_{\boldsymbol{p}})}\boldsymbol{f}(g_2\eta_2)dg_1dg_2.
\end{aligned}
\end{equation}
with $\eta_1,\eta_2$ as \eqref{6.1} and 
\begin{equation}
\label{6.15}
\eta_{\boldsymbol{p}}=\left[\begin{array}{ccc}
0 & 0 & \boldsymbol{\varpi}^{-1}\cdot 1_m\\
0 & 1_r & 0\\
\boldsymbol{\varpi}\cdot 1_m & 0 & 0
\end{array}\right]\in G(F_{\boldsymbol{p}}).
\end{equation}
Again we may also denote $\boldsymbol{Z}(s;\boldsymbol{f},f_s,\chi,\boldsymbol{n})$ to emphasize its dependence on $\chi,\boldsymbol{n}$. By simply changing variables, we reformulate Theorem \ref{theorem 6.1} in the following corollary. We remark that it is essential to assume that the eigenvalue $\alpha(\boldsymbol{p})\neq 0$ otherwise the integral will be identically zero.

\begin{cor}
\label{corollary 6.4}
For $\boldsymbol{c}>0$ we have
\begin{equation}
\label{6.16}
\boldsymbol{Z}(s;\boldsymbol{f},f_s)=\alpha(\boldsymbol{p})^{2\boldsymbol{n}-2}C'\prod_{v|\infty}c_{l_{v}}(s)\cdot L\left(s+\frac{1}{2},\boldsymbol{f}\times\chi\right)\cdot\langle\pi(\eta)\boldsymbol{f}|U'(\mathfrak{n_1}),\boldsymbol{f}\rangle,\\
\end{equation}
and for $\boldsymbol{c}=0$ we have
\begin{equation}
\label{6.17}
\begin{aligned}
\boldsymbol{Z}(s;\boldsymbol{f},f_s)&=\alpha(\boldsymbol{p})^{2\boldsymbol{n}-2}C''\prod_{v|\infty}c_{l_{v}}(s)\cdot L\left(s+\frac{1}{2},\boldsymbol{f}\times\chi\right)\cdot\langle\pi(\eta)\boldsymbol{f}|U'(\mathfrak{n_1}),\boldsymbol{f}\rangle \\
&\times M\left(s+\frac{1}{2},\boldsymbol{f}\times\chi\right),
\end{aligned}
\end{equation} 
Here the notations are same as Theorem \ref{theorem 6.1} except 
\begin{equation}
\label{eta}
\eta=\prod_{v|\mathfrak{n}_2}\left[\begin{array}{ccc}
0 & 0 & \varpi_v^{-\mathfrak{c}_v}\cdot 1_m\\
0 & 1_r & 0\\
\varpi_v^{\mathfrak{c}_v}\cdot 1_m & 0 & 0
\end{array}\right].
\end{equation}
\end{cor}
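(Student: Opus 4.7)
The plan is to deduce the corollary directly from Theorem \ref{theorem 6.1} by transferring the Hecke operator $\boldsymbol{U}(\boldsymbol{p}^{\boldsymbol{n}-1})$ used in the definition \eqref{6.11} of $\mathbb{E}$ from the Eisenstein series onto the cusp form $\boldsymbol{f}$, where it acts by the eigenvalue $\alpha(\boldsymbol{p})$. Since the descending step was applied twice (once per variable in the doubling), the net contribution will be $\alpha(\boldsymbol{p})^{2\boldsymbol{n}-2}$.

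First I would verify the crucial matrix identification: the element $\gamma$ displayed in \eqref{6.12} is the image under the doubling embedding \eqref{2.1.9} of a pair $(\gamma_1,\gamma_2)\in G(F_{\boldsymbol{p}})\times G(F_{\boldsymbol{p}})$, where each $\gamma_i$ has the block-upper-triangular shape of a Lemma \ref{lemma 3.2} coset representative for the double coset $[K(\mathfrak{q}^2)\xi K(\mathfrak{q}^2)]$ with $\xi=\mathrm{diag}[\boldsymbol{\varpi}^{\boldsymbol{n}-1}\cdot 1_m,1_r,\boldsymbol{\varpi}^{1-\boldsymbol{n}}\cdot 1_m]$. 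In other words, $\boldsymbol{U}(\boldsymbol{p}^{\boldsymbol{n}-1})$ acts on the doubled group as $U(\boldsymbol{p}^{\boldsymbol{n}-1})\otimes U(\boldsymbol{p}^{\boldsymbol{n}-1})$. The involution $\iota$ defined in \eqref{6.10} is concentrated at archimedean places, so it commutes with $\gamma_2\in G(F_{\boldsymbol{p}})$ and $(g_1,g_2^\iota)\gamma=(g_1\gamma_1,(g_2\gamma_2)^\iota)$.

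Next, in the double integral \eqref{6.14} I make the change of variables $g_i\mapsto g_i\gamma_i^{-1}$ for $i=1,2$, which is legitimate because the Eisenstein series is left $G(F)$-invariant and the Haar measure is right $G(F_{\boldsymbol{p}})$-invariant. After summing over all $\gamma$, this converts the action $\mathbb{E}=E|\boldsymbol{U}(\boldsymbol{p}^{\boldsymbol{n}-1})$ into a $U(\boldsymbol{p}^{\boldsymbol{n}-1})=U(\boldsymbol{p})^{\boldsymbol{n}-1}$ action on each copy of $\boldsymbol{f}$. Conjugating the resulting coset representatives through the Weyl element sitting inside $\eta_1\eta_{\boldsymbol{p}}$ (for the first variable) and $\eta_2$ (for the second variable) turns them into bona fide $U(\boldsymbol{p})^{\boldsymbol{n}-1}$ representatives; since $\boldsymbol{f}$ is an eigenform with eigenvalue $\alpha(\boldsymbol{p})$ by assumption, each variable contributes $\alpha(\boldsymbol{p})^{\boldsymbol{n}-1}$, giving the claimed factor $\alpha(\boldsymbol{p})^{2\boldsymbol{n}-2}$. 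The character factor $\chi(\nu(\gamma_2))$ produced by the change of variables in the $g_2$-integral, together with the level compatibility of the local sections $f_{s,\boldsymbol{p}}^{\ddagger,\boldsymbol{c}}$ and $f_{s,\boldsymbol{p}}^p$, is absorbed into the normalization already accounted for in the constants $C',C''$.

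What remains is precisely the global integral $\mathcal{Z}(s;\boldsymbol{f},f_s)$ of Theorem \ref{theorem 6.1}, except that the $\boldsymbol{p}$-component of the element $\eta$ in \eqref{6.5} has been peeled off and absorbed into the explicit $\eta_{\boldsymbol{p}}$ appearing inside $\boldsymbol{f}(g_1\eta_1\eta_{\boldsymbol{p}})$ in the definition of $\boldsymbol{Z}$; this accounts for the $\eta$ in \eqref{eta} consisting only of the $\mathfrak{n}_2$-factors. Applying Theorem \ref{theorem 6.1} now yields \eqref{6.16} in the case $\boldsymbol{c}>0$, and in the case $\boldsymbol{c}=0$ the modification factor $M(s+\tfrac{1}{2},\boldsymbol{f}\times\chi)$ enters through Proposition \ref{proposition 4.6}, giving \eqref{6.17}. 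The main technical obstacle will be the second step: producing the precise matrix identities showing that, after the substitution, $\gamma_i^{-1}\eta_{\boldsymbol{p}}$ (respectively $\gamma_i^{-1}$ conjugated by the Weyl element) lies in the correct coset of $K(\mathfrak{q}^2)\mathfrak{Q}K(\mathfrak{q}^2)$ so that the $U(\boldsymbol{p})^{\boldsymbol{n}-1}$-eigenvalue $\alpha(\boldsymbol{p})^{\boldsymbol{n}-1}$ is genuinely extracted (rather than a different Hecke eigenvalue), and that the left $G(F)$-invariance plus the assumption $\alpha(\boldsymbol{p})\neq 0$ make the transfer unambiguous.
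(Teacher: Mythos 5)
The strategy you outline — pulling back the Eisenstein-level descent operator $\boldsymbol{U}(\boldsymbol{p}^{\boldsymbol{n}-1})$ onto the two copies of $\boldsymbol{f}$ by a change of variables, and reading off the eigenvalue $\alpha(\boldsymbol{p})^{\boldsymbol{n}-1}$ per variable — is indeed the approach the paper gestures at with its remark ``by simply changing variables.'' You correctly identify (6.12) as the doubled image of the Lemma \ref{lemma 6.3}/Lemma \ref{lemma 3.2} coset representatives and correctly note that $\iota$ is archimedean so it passes through $\gamma_2$. But the crucial third step, where you assert that ``conjugating the resulting coset representatives through the Weyl element \ldots turns them into bona fide $U(\boldsymbol{p})^{\boldsymbol{n}-1}$ representatives,'' is where the argument as stated breaks.

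Factor the representative $\gamma_1=n_1\xi$ with $\xi=\mathrm{diag}[\boldsymbol{\varpi}^{\boldsymbol{n}-1}1_m,1_r,\boldsymbol{\varpi}^{1-\boldsymbol{n}}1_m]$ and $n_1$ an upper-unipotent element of $K(\boldsymbol{p})$. After the substitution the element that hits $\boldsymbol{f}$ is $\gamma_1^{-1}\eta_{\boldsymbol{p}}=\xi^{-1}n_1^{-1}\eta_{\boldsymbol{p}}$, and since $\xi^{-1}\eta_{\boldsymbol{p}}=\eta_{\boldsymbol{p}}\xi$ this equals $\eta_{\boldsymbol{p}}\,\xi\,(\eta_{\boldsymbol{p}}n_1^{-1}\eta_{\boldsymbol{p}})$. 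The point is that $\eta_{\boldsymbol{p}}n_1^{-1}\eta_{\boldsymbol{p}}$ is a \emph{lower}-unipotent element that still lies in $K(\boldsymbol{p})$, and moreover it sits to the \emph{right} of $\xi$. Because $\boldsymbol{f}$ is only fixed by $K(\boldsymbol{p})$ at $\boldsymbol{p}$, this factor is absorbed, and every term of the $\gamma_1$-sum collapses to the \emph{same} value $\boldsymbol{f}(\cdot\,\eta_{\boldsymbol{p}}\xi)$. You therefore get $N\cdot\boldsymbol{f}(\cdot\,\eta_{\boldsymbol{p}}\xi)$, where $N$ is the number of cosets, not $\alpha(\boldsymbol{p})^{\boldsymbol{n}-1}\boldsymbol{f}$. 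This is a genuinely different quantity: in the Hecke action $\boldsymbol{f}|U(\boldsymbol{p}^{\boldsymbol{n}-1})=\sum_{n_1}\boldsymbol{f}(\cdot\,n_1\xi)$ the unipotent part $n_1$ sits to the \emph{left} of $\xi$, so the terms stay distinct and the eigenvalue is extracted; after your change of variables the analogous factor falls on the wrong side and dies against the level subgroup. The same collapse occurs in the second variable. The $\alpha(\boldsymbol{p})$-power and the removal of the $\boldsymbol{p}$-part of $\eta$ must therefore come from a second step that you do not supply: after the collapse one is left with an integral of the form $\mathcal{Z}(s;\pi(\eta_1\eta_{\boldsymbol{p}}\xi)\boldsymbol{f},\pi(\eta_2^{\boldsymbol{p}}\xi w)\boldsymbol{f},f_s)$, and recovering Corollary \ref{corollary 6.4} from this requires re-running the $\boldsymbol{p}$-local computation of Section \ref{section 4} with these $\xi$-translated vectors (or, equivalently, invoking a carefully normalized adjoint/Atkin--Lehner identity relating the translates to the $U(\boldsymbol{p})$-eigenvalue). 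Until that step is carried out the claimed factor $\alpha(\boldsymbol{p})^{2\boldsymbol{n}-2}$ is not established, and indeed the paper's own warning that $\alpha(\boldsymbol{p})\neq 0$ is ``essential, otherwise the integral will be identically zero'' points to this extra work rather than to a straight eigenvalue extraction.
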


\begin{rem}
If we denote $\boldsymbol{f}^1(g):=\boldsymbol{f}(g\eta_1\eta_{\boldsymbol{p}}), \boldsymbol{f}^2(g):=\boldsymbol{f}(g\eta_2)$ and let $\boldsymbol{V}$ be the operator defined by $\boldsymbol{f}^2|\boldsymbol{V}:=\pi(\eta)\boldsymbol{f}^2|U'(\mathfrak{n}_1)$, then our computations in Section \ref{section 4} also show that
\begin{equation}
\label{6.19}
\left\langle\boldsymbol{E}(g_1,g_2;f_s),\boldsymbol{f}^1(g_1)\right\rangle=\frac{\boldsymbol{Z}(s;\boldsymbol{f},f_s)}{\langle\boldsymbol{f}^2|\boldsymbol{V},\boldsymbol{f}^2\rangle}\cdot\overline{\boldsymbol{f}^2|\boldsymbol{V}(g_2^{\iota})},
\end{equation}
where the left hand side is the Petersson inner product respect to $g_1$. The integral \eqref{6.19} is the adelic version of the integral representation obtained in \cite{BS} and \cite{Sh97, Sh00}. One can also further reformulate the integral in a classical setting as there (see also \cite[Section 4]{JYB2}). Indeed, recall that by the weak approximation \eqref{weakapproximation} of $G$ there is finite number $h$ such that
\[
G(\mathbb{A})=\coprod_{1\leq i\leq h}G(F)t_iKG(F_{\infty}).
\]
For $1\leq i,j\leq h$ and $z=g_{\infty}z_0\in\mathfrak{Z}_{m,r},w=g_{\infty}'z_0\in\mathfrak{Z}_{m,r}$, we set 
\[
f^1_i(z)=\boldsymbol{f}^1(t_ig_{\infty}),\quad f^2_j(w)=\boldsymbol{f}^2|\boldsymbol{V}(t_jg'_{\infty}),\quad E_{ij}(z,w)=\boldsymbol{E}(t_ig_{\infty},t_jg'_{\infty};f_s).
\]
Then the integral \eqref{6.14} can be written as
\begin{equation}
\sum_{i,j}\left\langle\left\langle E_{ij}(z,-\overline{w}), f_i^1(z)\right\rangle, f_j^2(w)\right\rangle
\end{equation}
and \eqref{6.19} can be rewritten as
\begin{equation}
\sum_i\left\langle E_{ij}(z,w),f_i^1(z)\right\rangle=\frac{\boldsymbol{Z}(s;\boldsymbol{f},f_s)}{\langle\boldsymbol{f}^2|\boldsymbol{V},\boldsymbol{f}^2\rangle}\cdot \overline{f_j^2(-\overline{w})},
\end{equation}
which is the pullback formula obtained in \cite{Sh97,Sh00}.
\end{rem}

\section{Fourier Expansion of Eisenstein Series}
\label{section 7}

We calculate the Fourier expansion of the Eisenstein series in this section. Then the properties of Eisenstein series can be directly obtained from the properties of Fourier coefficients.

\subsection{Generalities}
\label{section 7.1}

Let $e_{\mathbb{A}}=\prod_ve_v$ be the standard additive character of $\mathbb{A}$. That is, $e_v(x)=e^{2\pi ix}$ for an archimedean place $v$ and $e_v(\varpi_v)=\prod_{v|\infty}e_v(-|\varpi_v|_v^{-1})$ with $\varpi_v$ the uniformizer of $F_v$. Denote $S_n$ to be the additive algebraic group such that
\begin{equation}
\label{7.1.1}
S_n(F)=\{\beta\in\mathrm{Mat}_n(D):\beta^{\ast}=-\epsilon\beta\}.
\end{equation}
The Eisenstein series $E(h;f_s)$ on $H(\mathbb{A})$ has a Fourier expansion of the form
\begin{equation}
\label{7.1.2}
\begin{aligned}
E(h;f_s)&=\sum_{\beta\in S_n(F)}E_{\beta}(h;f_s),\\
E_{\beta}(h;f_s)&=\int_{S_n(F)\backslash S_n(\mathbb{A})}E\left(\left[\begin{array}{cc}
1_n & S\\
0 & 1_n
\end{array}\right]h;f_s\right)e_{\mathbb{A}}(-\tau(\beta S))dS.
\end{aligned}
\end{equation}
By the Iwasawa decomposition, it suffices to calculate the Fourier coefficients for parabolic element $q\in P(\mathbb{A})$. In particular, we can take $q_v=\mathrm{diag}[y,\hat{y}]$ for non-archimedean places $v$ and for all but finitely many $v$ we can assume $q_v=1$. For an archimedean place $v$, we can take $q_v=\left[\begin{array}{cc}
y_v & x_v\hat{y}_v\\
0 & \hat{y}_v
\end{array}\right]$ with $z_v=x_v+iy_vy_v^{\ast}\in\mathcal{H}_n$. We shall also denote such $q$ as $q_z$ to indicate its dependence on $z=(z_v)_{v|\infty}$.

Since our $f_s$ is chosen such that, for at least one place $v$, the support of $f_{s,v}$ is in the big cell $P(F)J_nP(F)$, the Fourier coefficient $E_{\beta}(q;f_s)$ at parabolic element $q\in P(\mathbb{A})$ is factorizable. That is
\begin{equation}
\label{7.1.3}
\begin{aligned}
E_{\beta}(q;f_s)&=\prod_vE_{\beta,v}(q;f_s),\\
E_{\beta,v}(q;f_s)&=\int_{S_n(F_v)}f_{s,v}\left(J_n\left[\begin{array}{cc}
1_n & S\\
0 & 1_n
\end{array}\right]q\right)e_v(-\tau(\beta S))dS.
\end{aligned}
\end{equation}

For local sections $f_s=f_s^0, f_s^{\dagger,\mathfrak{c}}, f_s^{\ddagger,\mathfrak{c}}, f_s^p, f_s^{\infty}$ defined in Section \ref{section 4}, we calculate the local Fourier coefficients $E_{\beta,v}(q;f_s)$ place by place in next two subsections.

\subsection{Non-archimedean computations}

\label{section 7.2}

Let $F$ be a non-archimedean local field and $\mathfrak{o}$ its ring of integers with the maximal ideal $\mathfrak{p}$. Fix uniformizer $\varpi$ and the absolute value $|\cdot|$ on $F$ normalized so that $|\varpi|=q^{-1}$ with $q$ the cardinality of the residue field. We also fix a maximal order $\mathcal{O}$ of $D$ such that $D=\mathcal{O}\otimes_{\mathfrak{o}}F$. Let $\mathfrak{q}$ be a prime in $\mathcal{O}$ above $\mathfrak{p}$ and fix $\widetilde{\varpi}$ a uniformizer of $\mathfrak{q}$. 

We are going to calculate local Fourier coefficients
\begin{equation}
\label{7.2.1}
E_{\beta}(q;f_s)=\int_{S_n(F)}f_{s}\left(J_n\left[\begin{array}{cc}
1_n & S\\
0 & 1_n
\end{array}\right]q\right)e(-\tau(\beta S))dS
\end{equation}
for various local sections $f_s^0,f_s^{\dagger,\mathfrak{c}},f_s^{\ddagger,\mathfrak{c}},f_s^p$ defined in \eqref{4.2.1}, \eqref{4.3.2}, \eqref{4.4.1}, \eqref{4.5.6}.

\subsubsection{The unramified case}

We first consider the local section $f_s^0$. Denote
\begin{equation}
\label{7.2.1.1}
S_n(\mathfrak{o})^{\ast}=\{\beta\in S_{n}(F):\tau(\beta S)\in\mathfrak{o}\text{ for any }S\in S_{n}(\mathfrak{o})\}.
\end{equation}

\begin{prop}
\label{proposition 7.1}
Set $t=\mathrm{rank}(\beta)$ and $\transpose{b}\beta b=\mathrm{diag}[\beta',0]$ with $b\in\mathrm{GL}_n(\mathcal{O})$ and $\beta'\in S_t(F)$. Let $q=\mathrm{diag}[a,\hat{a}]$, then $E_{\beta}(q;f_s^0)$ is nonzero only if $a^{\ast}\beta a\in S_t(\mathfrak{o})^{\ast}$. In this case, up to the term $\chi(\nu(a))|N_{E/F}(\nu(a))|^{s+\kappa}$ and a power of the discriminant of $D$, $E_{\beta}(q;f_s^0)$ is given by the following list. \\
(Case I, Orthogonal) This case occurs as quaternionic unitary split case.
\[
\prod_{i=1}^{\left\lfloor\frac{n-t}{2}\right\rfloor}L\left(2s-n+t+2i,\chi^2\right)\cdot P_{a^{\ast}\beta a}(\chi(q)q^{-s-\kappa}),
\]
(Case II, Symplectic Even) Assume $t$ is even. Let $\lambda_{\beta}$ be the quadratic character associated to the quadratic field $F((-1)^{\frac{t}{2}}\nu(2\beta))$ over $F$. 
\[
L\left(s-\frac{n-1}{2}+\frac{t}{2},\chi\lambda_{\beta}\right)\cdot \prod_{i=1}^{\left\lfloor\frac{n-t}{2}\right\rfloor}L(2s-n+t+2i,\chi^2)\cdot P_{a^{\ast}\beta a}(\chi(q)q^{-s-\kappa}),
\]
(Case II, Symplectic Odd) Assume $t$ is odd. This case only occurs as quaternionic orthogonal split case.
\[
\prod_{i=1}^{\left\lfloor\frac{n-t+1}{2}\right\rfloor}L(2s-n+t-1+2i,\chi^2)\cdot P_{a^{\ast}\beta a}(\chi(q)q^{-s-\kappa}),
\]
(Case III, Quaternionic Orthogonal Nonsplit Even) Assume $t$ is even. Let $\lambda_{\beta}$ be the quadratic character associated to the quadratic field $F((-1)^{\frac{t}{2}}\nu(2\beta))$ over $F$. 
\[
L\left(s-\frac{2n-1}{2}+t,\chi\lambda_{\beta}\right)\cdot\prod_{i=1}^{n-t}L\left(2s-2n+2t+2i,\chi^2\right)\cdot P_{a^{\ast}\beta a}(\chi(q)q^{-s-\kappa}),
\]
(Case III, Quaternionic Orthogonal Nonsplit Odd) Assume $t$ is odd. 
\[
\prod_{i=1}^{n-t}L\left(2s-2n+2t+2i,\chi^2\right)\cdot P_{a^{\ast}\beta a}(\chi(q)q^{-s-\kappa}),
\]
(Case IV, Quaternionic Unitary Nonsplit)
\[
\prod_{i=1}^{n-t}L\left(2s-2n+2t+2i,\chi^2\right)\cdot P_{a^{\ast}\beta a}(\chi(q)q^{-s-\kappa})
\]
(Case V, Unitary)
\[
\prod_{i=1}^{n-t}L(2s-i+1,\chi^0\chi_{E/F}^{n+i})\cdot P_{a^{\ast}\beta a}(\chi^0(q)q^{-2s-2\kappa}).
\]
Here $L(s,\chi)$ means the local $L$-factor of Hecke $L$-functions and $P_{\alpha^{\ast}\beta\alpha}(X)\in\Z[X]$ is a polynomial with coefficients in $\Z$ whose constant term is $1$. 
\end{prop}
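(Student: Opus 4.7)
The plan is to reduce each local Fourier coefficient $E_\beta(q;f_s^0)$ to a Siegel-type local series in one variable $\beta$, and then to quote (or re-derive) the explicit evaluation of that Siegel series in each of the six cases from the existing literature (Shimura's work for the symplectic and unitary cases, together with analogues for the quaternionic cases).

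First I would handle the Iwasawa/parabolic dependence. Writing $q=\mathrm{diag}[a,\hat a]$, a change of variables $S\mapsto a^{-1}S a^{-\rho}$ in the defining integral \eqref{7.2.1}, together with the transformation rule \eqref{4.2.1} defining $f_s^0$, gives
\begin{equation*}
E_\beta(q;f_s^0)=\chi(\nu(a))\,|\mathrm{N}_{E/F}(\nu(a))|^{s+\kappa}\,E_{a^{\ast}\beta a}(1;f_s^0),
\end{equation*}
up to the Jacobian of the substitution, which is a power of $|\nu(a)|$ absorbed into the normalization. Next, since $f_s^0$ is right $H(\mathfrak{o})$-invariant and $e(-\tau(\beta S))$ is $S_n(\mathfrak o)$-invariant only when $\beta\in S_n(\mathfrak o)^{\ast}$, one obtains the stated support condition $a^{\ast}\beta a\in S_t(\mathfrak o)^{\ast}$. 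Further, for $b\in\mathrm{GL}_n(\mathcal O)$ the change of variables $S\mapsto bSb^{\ast}$ replaces $\beta$ by $\transpose{b}\beta b$, so after choosing $b$ that diagonalizes $\beta$ into the block form $\mathrm{diag}[\beta',0]$, only the rank-$t$ part $\beta'$ enters the remaining computation.

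Second, I would evaluate the reduced integral $E_{\beta'}(1;f_s^0)$ as a local Siegel series. Stratifying $S_n(F)$ by the elementary-divisor type of the denominator of $S$ and using the normalization \eqref{4.2.1} on each cell $P(F)J_n\begin{bmatrix}1&S\\0&1\end{bmatrix}\cap H(\mathfrak o)\neq\emptyset$, the integral becomes a finite or $q$-geometric sum weighted by $\chi(\nu)|\mathrm N_{E/F}(\nu)|^{s+\kappa}$. The well-established outcome (see \cite[\S 13, 14]{Sh97} and \cite[\S 18--19]{Sh00} for Cases II, V; \cite{Sh99} for Case III; and the analogues used in \cite{JYB1, JYB2} for Cases III, IV) is that this sum factors as a product of local $L$-factors of Hecke type, coming from the ``regular'' part of the sum, multiplied by a polynomial $P_{a^{\ast}\beta a}(X)$ with $P_{a^{\ast}\beta a}(0)=1$ capturing the singular stratum where $\beta$ meets the level $\mathfrak o$-lattice. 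The precise list of $L$-factors depends on the parity of $t$ and, in the even symplectic/quaternionic cases, on the discriminant character $\lambda_\beta$ of the quadratic extension cut out by $(-1)^{t/2}\nu(2\beta)$; this is exactly where the character $\lambda_\beta$ appears in the statement.

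The main obstacle will be Case III and Case IV, the two quaternionic cases, where the matching between our anisotropic-form setup and the standard references is not entirely routine. Shimura treats quaternionic orthogonal groups in \cite{Sh99} but with conventions that differ from \eqref{2.1.3}, so I would reduce to his setting by explicit matrix conjugation, verify that the normalizing factor $b(s,\chi)$ from Section \ref{section 3.1} matches the Siegel-series prefactor, and track the ``power of the discriminant of $D$'' that the proposition absorbs into the constant. For the remaining cases I expect the argument to be essentially a bookkeeping exercise matching local $L$-factor indices: one has to check that the product $\prod_i L(2s-\cdots,\chi^2)$ (resp.\ with $\chi^0\chi_{E/F}^{n+i}$ in the unitary case) that emerges from the Siegel-series evaluation agrees, index by index, with the list stated in the proposition. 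The integrality of $P_{a^{\ast}\beta a}$ and its constant term $1$ follow, as usual, from the fact that this polynomial is the generating series of a finite combinatorial count of $\mathfrak o$-lattices.
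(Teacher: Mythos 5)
Your argument matches the paper's: after extracting the factor $\chi(\nu(a))|\mathrm{N}_{E/F}(\nu(a))|^{s+\kappa}$ by a change of variables, you identify the residual integral as a local Siegel series and quote its explicit evaluation from \cite[Theorem 13.6]{Sh97} (and the analogues in \cite{Sh99}, \cite{Y17} for the quaternionic cases), which is exactly how the paper proceeds. The extra discussion of the support condition and the diagonalization of $\beta$ is correct detail that the paper leaves implicit, but it does not constitute a different route.
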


\begin{proof}
Conjugate $q$ to the left, we obtain
\[
E_{\beta}(q;f_s^0)=\chi(\nu(a))|N_{E/F}(\nu(a))|^{s+\kappa}\int_{S_n(F)}f_{s}\left(J_n\left[\begin{array}{cc}
1_n & a^{-1}S\hat{a}\\
0 & 1_n
\end{array}\right]q\right)e(-\tau(\beta S))dS.
\]
The above integral is the Siegel series $\alpha$ studied in \cite[Chapter III]{Sh97} (see also \cite{F89, F94}). The orthogonal, symplectic and unitary case are listed in \cite[Theorem 13.6]{Sh97}. (We remind the reader that we have already normalized the local section $f_s^0$ by $b(s,\chi)$). Two quaternionic cases can also be calculated in the same way as \cite[Section 13, 14, 15]{Sh97} (see also \cite[Proposition 3.5]{Sh99} and \cite{Y17}).
\end{proof}

\subsubsection{The ramified case}

We now assume $q=1$ and consider the local section $f_s^{\dagger,\mathfrak{c}},f_s^{\ddagger,\mathfrak{c}},f_s^p$. Denote 
\begin{equation}
\label{7.2.2.1}
\begin{aligned}
S_n(\mathfrak{o})^{\mathfrak{c}}&=S_n(\mathfrak{o})\cap\left[\begin{array}{cc}
\mathrm{Mat}_r(\mathfrak{p}\mathcal{O}) & \mathrm{Mat}_{r,2m}(\mathfrak{p^c}\mathcal{O})\\
\mathrm{Mat}_{2m,r}(\mathfrak{p^c}\mathcal{O}) & \mathrm{Mat}_{2m}(\mathfrak{p^c}\mathcal{O})
\end{array}\right],\\
S_n(\mathfrak{o})^{\ast,\mathfrak{c}}&=\{\beta\in S_n(F):\tau(\beta S)\in\mathfrak{o}\text{ for any }S\in S_n(\mathfrak{o})^{\mathfrak{c}}\}.
\end{aligned}
\end{equation}

\begin{prop}
\label{proposition 7.2}
Assume $\nu(\beta)\neq 0$, then
\begin{equation}
\label{7.2.2.2}
E_{\beta}(1;f_s^{\dagger,\mathfrak{c}})=\left\{\begin{array}{cc}
1 & \beta\in S_n(\mathfrak{o})^{\ast,\mathfrak{c}},\\
0 & \text{otherwise}.
\end{array}\right.
\end{equation} 
\end{prop}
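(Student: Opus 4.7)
The plan is to unfold the Fourier integral directly from the definition of the support of $f_s^{\dagger,\mathfrak{c}}$. Recall that $f_s^{\dagger,\mathfrak{c}}$ is supported on $P(F)J_n N'(\mathfrak{p}^{\mathfrak{c}})$, and there it is given by the value $\chi(\nu(a))|\mathrm{N}_{E/F}(\nu(a))|^{s+\kappa}$ on the Levi component. So the first step is to determine for which $S\in S_n(F)$ the element
\[
J_n\begin{bmatrix}1_n & S\\ 0 & 1_n\end{bmatrix}
\]
lies in $P(F)J_n N'(\mathfrak{p}^{\mathfrak{c}})$.

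Next, I would write $J_n\bigl[\begin{smallmatrix}1 & S\\ 0 & 1\end{smallmatrix}\bigr]=pJ_n n$ with $p\in P(F)$ and $n\in N'(\mathfrak{p}^{\mathfrak{c}})$, conjugate $J_n^{-1}pJ_n$ and compare entries. A short matrix computation (writing $p=\bigl[\begin{smallmatrix}a & b\\ 0 & \hat a\end{smallmatrix}\bigr]$ and $n=\bigl[\begin{smallmatrix}1 & S'\\ 0 & 1\end{smallmatrix}\bigr]$) forces $b=0$, $a=1$, and $S=S'$. Decomposing $S$ along the block structure $(r,2m)$ matching $N'(\mathfrak{p}^{\mathfrak{c}})$, this translates exactly to $S\in S_n(\mathfrak{o})^{\mathfrak{c}}$ as defined in \eqref{7.2.2.1}; moreover in that case the Levi part is trivial and $f_s^{\dagger,\mathfrak{c}}(J_n\bigl[\begin{smallmatrix}1 & S\\ 0 & 1\end{smallmatrix}\bigr])=1$. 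Thus the integral reduces to
\[
E_\beta(1;f_s^{\dagger,\mathfrak{c}})=\int_{S_n(\mathfrak{o})^{\mathfrak{c}}}e(-\tau(\beta S))\,dS,
\]
where the measure is normalized so that $S_n(\mathfrak{o})^{\mathfrak{c}}$ has volume $1$.

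Finally this is a character integral over the compact lattice $S_n(\mathfrak{o})^{\mathfrak{c}}$. The additive character $S\mapsto e(-\tau(\beta S))$ is trivial on the lattice precisely when $\tau(\beta S)\in\mathfrak{o}$ for every $S\in S_n(\mathfrak{o})^{\mathfrak{c}}$, which by definition is the condition $\beta\in S_n(\mathfrak{o})^{\ast,\mathfrak{c}}$. In that case the integral is $1$; otherwise orthogonality of characters gives $0$. This yields the dichotomy in \eqref{7.2.2.2}.

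The only subtle point, and the one I would double-check most carefully, is the first step: verifying that $J_n\bigl[\begin{smallmatrix}1 & S\\ 0 & 1\end{smallmatrix}\bigr]\in P(F)J_n N'(\mathfrak{p}^{\mathfrak{c}})$ forces $S$ to lie in the particular lattice $S_n(\mathfrak{o})^{\mathfrak{c}}$ (and not, for instance, a slightly larger one), since $N'(\mathfrak{p}^{\mathfrak{c}})$ has an asymmetric block description. The assumption $\nu(\beta)\ne 0$ ensures that there is no contribution from other Bruhat cells, so no boundary terms appear; with the non-degeneracy of $\beta$ one does not have to worry about the intermediate parabolic cells that would otherwise enter the unfolding.
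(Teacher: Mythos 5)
Your proof is correct and follows essentially the same route as the paper's (which simply notes that the integrand of the local integral \eqref{7.2.1} vanishes off $S_n(\mathfrak{o})^{\mathfrak{c}}$ and then concludes by orthogonality of characters); you merely spell out the matrix comparison $J_n\bigl[\begin{smallmatrix}1&S\\0&1\end{smallmatrix}\bigr]=pJ_nn$ that forces $p=1$ and $S\in S_n(\mathfrak{o})^{\mathfrak{c}}$, which is the content hidden behind the paper's ``easily follows.'' One small remark: the role you assign to the hypothesis $\nu(\beta)\neq 0$ is misplaced at this level. The formula \eqref{7.2.1} you are evaluating is already the purely local integral over $S_n(F)$; no Bruhat-cell unfolding occurs there, and the character-integral argument goes through for arbitrary $\beta$. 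The nondegeneracy assumption (and the fact that $f_{s,v}^{\dagger,\mathfrak{c}}$ is supported in the big cell) enters earlier, in the global factorization \eqref{7.1.3} of the Fourier coefficient into local integrals; it is not doing any work in the local computation itself.
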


\begin{proof}
Consider
\[
E_{\beta}(1;f^{\dagger,\mathfrak{c}}_s)=\int_{S_n(F)}f^{\dagger,\mathfrak{c}}_s\left(J_n\left[\begin{array}{cc}
1_n & S\\
0 & 1_n
\end{array}\right]\right)e(-\tau(\beta S))dS.
\]
By the definition of $f_s^{\dagger,\mathfrak{c}}$, the integrand vanishes unless $S\in S_n(\mathfrak{o})^{\mathfrak{c}}$ and the proposition easily follows.
\end{proof}

For a character $\chi$ of $F$ with conductor $\mathfrak{p^c}$, the local Gauss sum of $\chi$ is defined as
\begin{equation}
\label{7.2.2.3}
G(\chi)=\sum_{u\in\mathcal{O}/\mathfrak{p}^{\mathfrak{c}}\mathcal{O}}\chi(\nu(u))e\left(\frac{\tau(u)}{\varpi^{\mathfrak{c}}}\right).
\end{equation}

\begin{lem}
\label{lemma 7.3}
Consider
\begin{equation}
\label{7.2.2.4}
G(\chi;\beta,m)=q^{\mathfrak{c}\mathbf{d}_1\frac{m(m-1)}{2}}\sum_{u\in\mathrm{Mat}_m(\mathcal{O}/\mathfrak{p}^{\mathfrak{c}}\mathcal{O})}\chi(\nu(u))e\left(\frac{\tau(\beta u)}{\varpi^{\mathfrak{c}}}\right)
\end{equation}
for a matrix $\beta\in\mathrm{Mat}_m(\mathcal{O})$. Then 
\begin{equation}
\label{7.2.2.5}
G(\chi;\beta,m)=\left\{\begin{array}{cc}
\chi(\nu(\beta))G(\chi)^m & \beta\in\mathrm{GL}_m(\mathcal{O}),\\
0 & \text{otherwise.}
\end{array}\right..
\end{equation}
\end{lem}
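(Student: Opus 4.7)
The proof splits naturally into two cases based on whether $\beta$ lies in $\mathrm{GL}_m(\mathcal{O})$.

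First, if $\beta \in \mathrm{GL}_m(\mathcal{O})$, I would make the substitution $u \mapsto \beta^{-1}u$, which is a bijection of $\mathrm{Mat}_m(\mathcal{O}/\mathfrak{p}^{\mathfrak{c}}\mathcal{O})$. Multiplicativity of the reduced norm gives $\chi(\nu(\beta^{-1}v)) = \chi(\nu(\beta))^{-1}\chi(\nu(v))$, while $\tau(\beta\cdot\beta^{-1}v) = \tau(v)$, so up to the expected multiplicative factor in $\chi(\nu(\beta))$, the computation reduces to evaluating the ``reference'' sum $G(\chi;1,m)$. It then suffices to prove $G(\chi;1,m) = G(\chi)^m$. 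I would do this by induction on $m$. Writing a general matrix as $u = \begin{pmatrix} A & b \\ c & d \end{pmatrix}$ with $d$ the scalar in the $(m,m)$-position, on the invertible locus one has the block factorization
\[
u = \begin{pmatrix} 1 & bd^{-1} \\ 0 & 1 \end{pmatrix}\begin{pmatrix} A-bd^{-1}c & 0 \\ c & d \end{pmatrix},
\]
so $\nu(u) = \nu(A-bd^{-1}c)\nu(d)$; the trace $\tau(u) = \tau(A) + d$ already decouples. The change of variables $A \mapsto A + bd^{-1}c$ separates the $(A,d)$-variables from the off-diagonal blocks $(b,c)$, and integrating out $b, c$ by orthogonality of additive characters produces a volume factor $q^{\mathfrak{c}\mathbf{d}_1(m-1)}$ which, combined with the normalizing factor $q^{\mathfrak{c}\mathbf{d}_1 m(m-1)/2}$ in the definition, reconciles with the inductive hypothesis for $G(\chi;1,m-1)$ and the one-dimensional Gauss sum $G(\chi)$ coming from the sum over $d$.

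Second, if $\beta \notin \mathrm{GL}_m(\mathcal{O})$, I would apply the elementary divisor theorem to write $\beta = \alpha_1 \Delta \alpha_2$ with $\alpha_i \in \mathrm{GL}_m(\mathcal{O})$ and $\Delta = \mathrm{diag}[\widetilde{\varpi}^{e_1}, \ldots, \widetilde{\varpi}^{e_m}]$ where at least one $e_j \geq 1$. Substituting $u \mapsto \alpha_2^{-1} u \alpha_1^{-1}$ reduces the problem to $\beta = \Delta$. The additive character then splits as $e(\tau(\Delta u)/\varpi^{\mathfrak{c}}) = \prod_i e(\widetilde{\varpi}^{e_i} u_{ii}/\varpi^{\mathfrak{c}})$, and the factor involving $u_{jj}$ has conductor at most $\mathfrak{p}^{\mathfrak{c}-e_j}$, strictly smaller than the conductor of $\chi$. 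Parametrizing $\mathrm{GL}_m$ via Bruhat coordinates so that $\chi(\nu(u))$ depends only on the diagonal torus part and the critical entry $u_{jj}$ can be singled out, the sum over $u_{jj}$ then pairs a primitive multiplicative character of conductor $\mathfrak{p}^{\mathfrak{c}}$ against a non-primitive additive character, which vanishes by character orthogonality.

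The main technical obstacle will be the vanishing step, since $\chi(\nu(u))$ is a global function of all entries of $u$ through the reduced norm and does not decouple naively from a single diagonal entry. Isolating $u_{jj}$ cleanly requires a careful choice of coordinates and Jacobian bookkeeping, especially to treat uniformly the quaternionic cases ($\mathbf{d}_1 = 2$) and the symplectic/unitary cases ($\mathbf{d}_1 = 1$). The normalizing exponent $\mathfrak{c}\mathbf{d}_1 m(m-1)/2$ in the definition should emerge as precisely the volume factor accumulated in the induction of the first case.
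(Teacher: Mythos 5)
Your two-case structure (invertible $\beta$ versus non-invertible $\beta$) and the normal-form reduction of $\beta$ mirror the paper's approach, and the reduction of the invertible case to $\beta=1$ via the substitution $u\mapsto\beta^{-1}u$ is exactly the right first move. But both cases as written have a gap.

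\textbf{Gap in the inductive step.} The block factorization
\[
u=\begin{pmatrix}1&bd^{-1}\\0&1\end{pmatrix}\begin{pmatrix}A-bd^{-1}c&0\\c&d\end{pmatrix}
\]
requires the $(m,m)$-entry $d$ to be a unit. But $\mathrm{GL}_m(\mathcal{O}/\mathfrak{p}^{\mathfrak{c}}\mathcal{O})$ certainly contains matrices with $d\in\mathfrak{q}$ (e.g.\ the permutation matrix swapping the last two coordinates), so the induction as stated only runs over a proper subset of the summation range. You would need a separate argument that the contribution from the stratum $\{u\in\mathrm{GL}_m:u_{mm}\notin\mathcal{O}^\times\}$ vanishes, and that is not automatic. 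So the induction, as it stands, does not establish $G(\chi;1,m)=G(\chi)^m$.

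\textbf{Gap in the vanishing step.} You correctly diagnose that $\chi(\nu(u))$ does not factor through any single entry of $u$, and the appeal to Bruhat coordinates is not enough: $\tau(\Delta u)$ is not a function of the torus part alone, so the critical coordinate $u_{jj}$ cannot be isolated the way you describe. A much cleaner and fully rigorous route uses the multiplicative invariance of the sum itself. Writing $S(\beta)=\sum_u\chi(\nu(u))e(\tau(\beta u)/\varpi^{\mathfrak{c}})$, the substitution $u\mapsto ug$ for $g\in\mathrm{GL}_m(\mathcal{O})$ gives $S(\beta g)=\chi(\nu(g))^{-1}S(\beta)$. Now, if $\beta$ is not invertible mod $\mathfrak{q}$, after the Smith normal form reduction some diagonal entry is $\widetilde{\varpi}^{e_j}$ with $e_j\geq 1$. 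Choose $g$ diagonal with $1$'s off position $j$ and a unit $a$ in position $j$ satisfying $a\equiv 1\pmod{\mathfrak{q}^{\mathfrak{c}\mathbf{d}_1-e_j}}$ but $\chi(\nu(a))\neq 1$; such $a$ exists because $\chi$ has conductor exactly $\mathfrak{p}^{\mathfrak{c}}$. Then $\beta g\equiv\beta\pmod{\mathfrak{p}^{\mathfrak{c}}}$, so $S(\beta g)=S(\beta)$, while $\chi(\nu(g))\neq 1$, forcing $S(\beta)=0$. This same invariance, applied with $g,h$ diagonalizing $\beta$ to $1$, yields the invertible case $S(\beta)=\chi(\nu(\beta))^{-1}S(1)$ directly, with no need to track the factor ``up to the expected power.'' It remains only to compute $S(1)$, which is the one genuinely non-trivial input; the paper at this point simply asserts the factorization into $m$ one-variable Gauss sums, citing \cite{BS}, so neither proof spells that step out.

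One further point your derivation surfaces: the reduction produces $\chi(\nu(\beta))^{-1}$, not $\chi(\nu(\beta))$, and a direct check at $m=1$ (where $G(\chi;\beta,1)=\sum_u\chi(\nu(u))e(\tau(\beta u)/\varpi^{\mathfrak{c}})=\chi(\nu(\beta))^{-1}G(\chi)$) confirms this; there appears to be a sign/inverse slip in the paper's stated formula, and similarly the normalizing power of $q$ appears off by a sign in the exponent. Worth noting, but orthogonal to the logical structure of the argument.
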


\begin{proof}
This is an analogue of the computations in \cite[Section 6]{BS}. Multiply by some matrix of $\mathrm{GL}_m(\mathcal{O}_v)$ on the left and right of $\beta$, it suffices to prove the lemma for diagonal $\beta=\mathrm{diag}[\beta_1,...,\beta_m]$. In this case, we calculate that
\[
G(\chi;\beta,m)=\nu(\varpi^{\mathfrak{c}})^{\frac{m(m-1)}{2}}\prod_{i=1}^m\left(\sum_{u\in\mathcal{O}/\mathfrak{p}^{\mathfrak{c}}\mathcal{O}}\chi(\nu(u))e\left(\frac{\tau(\beta_iu)}{\varpi^{\mathfrak{c}}}\right)\right).
\] 
By the property of Gauss sums, the sum in the bracket is nonzero if and only if $\beta_i\in\mathcal{O}^{\times}$ and in this case it equals $\chi(\nu(\beta_i))G(\chi)$.
\end{proof}

We write $\beta\in S_n(F)$ as 
\begin{equation}
\label{7.2.2.6}
\beta=\left[\begin{array}{ccc}
\beta_1 & -\epsilon \beta^{\ast}_2 & -\epsilon \beta^{\ast}_3\\
\beta_2 & \beta_4 & -\epsilon \beta^{\ast}_5\\
\beta_3 & \beta_5 & \beta_6
\end{array}\right]
\end{equation}
with $\beta_1\in S_r(F), \beta_4,\beta_6\in S_m(F)$.

\begin{prop}
\label{proposition 7.4}
$E_{\beta}(1;f_s^{\ddagger,\mathfrak{c}})=0$ unless $\beta_5\in\mathrm{GL}_m(\mathcal{O})$. In this case, if we further assume $\nu(\beta)\neq 0$, then
\begin{equation}
\label{7.2.2.7}
E_{\beta}(1;f_s^{\ddagger,\mathfrak{c}})=\left\{\begin{array}{cc}
q^{\mathfrak{c}\mathbf{d}_1\frac{m(m-1)}{2}}\chi(\nu(\beta_5))G(\chi)^m & \beta\in S_n(\mathfrak{o})^{\ast,0},\\
0 & \text{ otherwise}.
\end{array}\right.
\end{equation}
\end{prop}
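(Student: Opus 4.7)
The plan is to substitute the definition \eqref{4.4.1} of $f_s^{\ddagger,\mathfrak{c}}$ into the local Fourier integral \eqref{7.2.1} and reduce everything to the already-computed case of $f_s^{\dagger,0}$ (Proposition \ref{proposition 7.2}) twisted by a matrix Gauss sum (Lemma \ref{lemma 7.3}). The key observation is that the twisting matrix in the definition of $f_s^{\ddagger,\mathfrak{c}}$ is itself Siegel unipotent. Indeed, I identify it with $n(T(u)) := \begin{bmatrix} 1_n & T(u) \\ 0 & 1_n \end{bmatrix}$, where
\[
T(u) = \begin{bmatrix} 0 & 0 & 0 \\ 0 & 0 & \varpi^{-\mathfrak{c}} u \\ 0 & -\epsilon \varpi^{-\mathfrak{c}} u^{\ast} & 0 \end{bmatrix} \in S_n(F);
\]
a direct check gives $T(u)^{\ast}=-\epsilon T(u)$. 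Hence $n(S)\,n(T(u))=n(S+T(u))$, and the substitution $S\mapsto S-T(u)$ in the Fourier integral yields
\[
E_\beta(1;f_s^{\ddagger,\mathfrak{c}}) \;=\; \sum_u \chi^{-1}(\nu(u))\, e(\tau(\beta T(u)))\, E_\beta(1;f_s^{\dagger,0}).
\]

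Next I will invoke Proposition \ref{proposition 7.2} under the hypothesis $\nu(\beta)\neq 0$: the factor $E_\beta(1;f_s^{\dagger,0})$ is the indicator of $\beta\in S_n(\mathfrak{o})^{\ast,0}$. It then remains to evaluate $\sum_u \chi^{-1}(\nu(u))\,e(\tau(\beta T(u)))$. A block-matrix computation shows that only the $(2,2)$- and $(3,3)$-blocks of $\beta T(u)$ contribute to the reduced trace, and they are $\varpi^{-\mathfrak{c}}\beta_5^{\ast}u^{\ast}$ and $\varpi^{-\mathfrak{c}}\beta_5 u$ respectively. Using $\tau(X^{\ast})=\tau(X)^{\rho}$, this collapses (in Cases II-IV, where $\rho|_E=\mathrm{id}$) to
\[
\tau(\beta T(u)) \;=\; \frac{2\,\tau(\beta_5 u)}{\varpi^{\mathfrak{c}}},
\]
with the analogous $\mathrm{Tr}_{E/F}$-expression in Case V.

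The residual sum is then precisely a matrix Gauss sum of the type computed by Lemma \ref{lemma 7.3}. The assumption $2\in\mathcal{O}^{\times}$ allows the factor $2$ to be absorbed by a change of variables, and since $\chi$ has conductor $\mathfrak{p}^{\mathfrak{c}}$, the sum over $\mathrm{GL}_m(\mathcal{O}/\varpi^{\mathfrak{c}}\mathcal{O})$ may be extended to $\mathrm{Mat}_m(\mathcal{O}/\varpi^{\mathfrak{c}}\mathcal{O})$ without change (the extra terms have $\chi^{-1}(\nu(u))=0$). Lemma \ref{lemma 7.3} then delivers both conclusions of the proposition: the Gauss sum vanishes unless $\beta_5\in\mathrm{GL}_m(\mathcal{O})$, and in that case it equals the claimed $q^{\mathfrak{c}\mathbf{d}_1 m(m-1)/2}\chi(\nu(\beta_5))G(\chi)^m$, after applying standard identities between $G(\chi)$ and $G(\chi^{-1})$ to match the stated normalization.

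The main obstacle is the case-by-case bookkeeping around the involution $\rho$. Case V in particular requires regarding $\tau$ as taking values in $E$ and recognizing that $e_v\circ\tau$ factors through $\mathrm{Tr}_{E/F}$, which must be matched against the definition of $G(\chi)$ used in \eqref{7.2.2.3}. A secondary technical point is the tracking of the normalizing power $q^{\mathfrak{c}\mathbf{d}_1 m(m-1)/2}$ and the passage between $(\chi^{-1}, G(\chi^{-1}))$ and $(\chi, G(\chi))$; both are routine given Lemma \ref{lemma 7.3} and the standard relation $G(\chi^{-1}) = \chi(-1)\overline{G(\chi)}$, but they must be executed with care to recover the sign and $q$-power stated in \eqref{7.2.2.7}.
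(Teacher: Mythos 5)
Your proposal follows essentially the same route the paper takes: recognize the twisting element in the definition of $f_s^{\ddagger,\mathfrak{c}}$ as a Siegel unipotent $n(T(u))$, absorb it into the $S$-integral by the change of variables $S\mapsto S-T(u)$, and thereby factor $E_\beta(1;f_s^{\ddagger,\mathfrak{c}})$ into the indicator $E_\beta(1;f_s^{\dagger,0})$ (Proposition \ref{proposition 7.2}) times a matrix Gauss sum. The block-trace computation yielding $\tau(\beta T(u))=2\varpi^{-\mathfrak{c}}\tau(\beta_5 u)$ and the invocation of Lemma \ref{lemma 7.3} also mirror the paper exactly. Your observation that the $\mathrm{GL}_m$-sum extends to a $\mathrm{Mat}_m$-sum because $\chi$ has exact conductor $\mathfrak{p}^{\mathfrak{c}}$ is correct and is implicit in the paper.

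One caution on the final normalization, which you flag but do not resolve. The twist in \eqref{4.4.1} carries $\chi^{-1}(\nu(u))$, so after the substitution the residual sum is $\sum_u\chi^{-1}(\nu(u))\,e(2\tau(\beta_5 u)/\varpi^{\mathfrak{c}})$. Feeding this into Lemma \ref{lemma 7.3} with $\chi$ replaced by $\chi^{-1}$ and $\beta$ replaced by $2\beta_5$ gives $q^{-\mathfrak{c}\mathbf{d}_1 m(m-1)/2}\,\chi^{-1}(\nu(2\beta_5))\,G(\chi^{-1})^m$, i.e.\ the opposite $q$-exponent, $\chi^{-1}$ in place of $\chi$, and $G(\chi^{-1})$ in place of $G(\chi)$, relative to \eqref{7.2.2.7}. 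The identity $G(\chi^{-1})=\chi(-1)\overline{G(\chi)}$ that you invoke does not by itself convert $G(\chi^{-1})^m$ into $G(\chi)^m$ nor flip the $q$-power; rather it would introduce $\overline{G(\chi)}$ and a $|G(\chi)|^2$-type correction. The paper's own proof does not perform this final accounting either (it only establishes the vanishing conditions and then states that the formula ``can be simply calculated''). So you are reproducing the same argument, but be aware that the precise constant in \eqref{7.2.2.7} as stated does not fall out of Lemma \ref{lemma 7.3} in the literal form you (and the paper) apply it; if you want the proof to be airtight you should either rederive Lemma \ref{lemma 7.3} with the exact normalization needed here, or carefully track whether the discrepancy in the $q$-power and in $\chi$ versus $\chi^{-1}$ is absorbed elsewhere (e.g.\ into the constant $C'$ in Theorem \ref{theorem 6.1}).
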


\begin{proof}
We need to compute
\[
\begin{aligned}
E_{\beta}(1;f_s^{\ddagger,\mathfrak{c}})&=\int_{S_n(F)}\sum_{u\in\mathrm{GL}_m(\mathcal{O})/\varpi^{\mathfrak{c}}\mathrm{GL}_m(\mathcal{O})}\chi^{-1}(\nu(u))e(-\tau(\beta S))\\
\times& f_s^{\dagger,0}\left(J_n\left[\begin{array}{cc}
1_n & S\\
0 & 1_n
\end{array}\right]\left[\begin{array}{cccccc}
1_r & 0 & 0 & 0 & 0 & 0\\
0 & 1_m & 0 & 0 & 0 & \frac{u}{\varpi^{\mathfrak{c}}}\\
0 & 0 & 1_m & 0 & -\frac{\epsilon u^{\ast}}{\varpi^{\mathfrak{c}}} & 0\\
0 & 0 & 0 & 1_r & 0 & 0\\
0 & 0 & 0 & 0 & 1_m & 0\\
0 & 0 & 0 & 0 & 0 & 1_m
\end{array}\right]\right)dS.
\end{aligned}
\]
Changing variables
\[
S\mapsto S-\left[\begin{array}{ccc}
0 & 0 & 0 \\
0 & 0 & \frac{u}{\varpi^{\mathfrak{c}}}\\
0 & -\frac{\epsilon u^{\ast}}{\varpi^{\mathfrak{c}}} & 0
\end{array}\right],
\]
we obtain
\[
\begin{aligned}
&\int_{S_n(F)}f_s^{\dagger,0}\left(J_n\left[\begin{array}{cc}
1_n & S\\
0 & 1_n
\end{array}\right]\right)e(-\tau(\beta S))dS\\
\times&\sum_{u\in\mathrm{GL}_m(\mathcal{O})/\varpi^{\mathfrak{c}}\mathrm{GL}_m(\mathcal{O})}\chi^{-1}(\nu(u))e\left(\frac{2\tau(\beta_5 u)}{\varpi^{\mathfrak{c}}}\right).
\end{aligned}
\]
The second line implies $\beta_5\in\mathrm{GL}_m(\mathcal{O})$ by Lemma \ref{lemma 7.3}. In this case and $\nu(\beta)\neq 0$ the integral in the first line can be simply calculated by the definition of $f_s^{\dagger,0}$.
\end{proof}

\begin{prop}
\label{proposition 7.5}
$E_{\beta}(1;f_s^{p})=0$ unless $\beta_5\in\mathrm{GL}_m(\mathcal{O})$. In this case, if we further assume $\nu(\beta)\neq 0$, then
\begin{equation}
\label{7.2.2.8}
E_{\beta}(1;f_s^p)=\left\{\begin{array}{cc}
1 & \beta\in S_n(\mathfrak{o})^{\ast,0},\\
0 &\text{ otherwise.}
\end{array}\right.
\end{equation}
\end{prop}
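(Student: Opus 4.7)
The plan is to parallel the proof of Proposition \ref{proposition 7.4} while exploiting the combinatorial structure of the weights $(-1)^iq^{\mathbf{d}_3(i(i-1)/2-im)}$ in the definition \eqref{4.5.6} of $f_s^p$. Concretely, I would first expand by linearity
\[
E_{\beta}(1;f_s^{p})=\sum_{i=0}^{m}(-1)^{i}q^{\mathbf{d}_3(i(i-1)/2-im)}E_{\beta}(1;f_s^{p,i}),
\]
and for each $i$ unfold the double sum defining $f_s^{p,i}$ and perform the same change of variables $S\mapsto S-\mathrm{diag}[0,0,u/\widetilde{\varpi}]^{\mathrm{sym}}$ that was used in the proof of Proposition \ref{proposition 7.4}. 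This decouples the $S$-integral, which becomes $E_{\beta}(1;f_s^{\dagger,0})$ and hence equals $1$ if $\beta\in S_n(\mathfrak{o})^{\ast,0}$ and $0$ otherwise by Proposition \ref{proposition 7.2}, from an additive character sum in $u$. Since $\chi$ is unramified here, the multiplicative weight $\chi^{-1}(\nu(u))$ that appeared in Proposition \ref{proposition 7.4} is absent, and what remains is purely an additive character sum over $u$.

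Next I would organize these character sums. For each $i$, after the change of variables, the $u$-contribution takes the form
\[
G_i(\beta_5)\;:=\;\sum_j\sum_{u\in\widetilde{\varpi}\mathrm{Mat}_m(\mathcal{O})\delta_{ij}^{-1}/\widetilde{\varpi}\mathrm{Mat}_m(\mathcal{O})} e\!\left(\frac{2\tau(\beta_5 u)}{\widetilde{\varpi}}\right),
\]
where $\{\delta_{ij}\}_j$ runs over representatives of $\mathrm{GL}_m(\mathcal{O})u_i\mathrm{GL}_m(\mathcal{O})/\mathrm{GL}_m(\mathcal{O})$. Using the Smith normal form of the $\delta_{ij}$ and orthogonality of additive characters, $G_i(\beta_5)$ can be written as a sum, indexed by Hecke orbits and the elementary divisor pattern of $\beta_5$, of standard lattice counts; this step is the additive analogue of Lemma \ref{lemma 7.3}.

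Finally, the heart of the argument is the combinatorial identity
\[
\sum_{i=0}^{m}(-1)^{i}q^{\mathbf{d}_3(i(i-1)/2-im)}G_i(\beta_5)\;=\;\begin{cases}1,&\beta_5\in\mathrm{GL}_m(\mathcal{O}),\\ 0,&\beta_5\notin\mathrm{GL}_m(\mathcal{O}),\end{cases}
\]
which is the Fourier-coefficient counterpart of Shimura's identity \cite[Lemma 19.13]{Sh00} already invoked in the proof of Proposition \ref{proposition 4.6}. There the weights collapse the Dirichlet series of Hecke eigenvalues $\lambda_\xi$ into the reciprocal of the standard Euler factor; here I expect the same weights to collapse the layered character sums $G_i(\beta_5)$ into the indicator that $\beta_5$ is unimodular. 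After diagonalizing $\beta_5$ via Smith normal form and tracking the stratification of $\delta_{ij}$-orbits by elementary divisors, the identity should reduce to a $q$-binomial identity of the type $\sum_{i}(-1)^iq^{\mathbf{d}_3 i(i-1)/2}\binom{m}{i}_q=\delta_{m,0}$. Verifying this identity uniformly, in particular tracking the $\mathbf{d}_3$ factor across the five cases and handling the inert versus ramified dichotomy in Case V, is the main technical obstacle. Once it is established, multiplying by the $S$-integral factor from Proposition \ref{proposition 7.2} yields the claimed formula.
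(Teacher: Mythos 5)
Your plan mirrors the paper's proof: unfold $f_s^{p,i}$, perform the same change of variables as in Proposition~\ref{proposition 7.4} to decouple the $S$-integral (which then equals $E_\beta(1;f_s^{\dagger,0})$ and is handled by Proposition~\ref{proposition 7.2}) from a weighted exponential sum in $u$, and argue that the alternating sum over $i$ collapses that exponential sum to the indicator that $\beta_5\in\mathrm{GL}_m(\mathcal{O})$. The only difference is in how the collapse is justified: the paper delegates it to an exponential-sum computation of B\"ocherer--Schmidt (their p.~1412), while you sketch a direct verification via Smith normal form and the $q$-binomial theorem but explicitly leave it unfinished; since this is the same step the paper handles by citation, the two routes coincide.
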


\begin{proof}
This is same as Proposition \ref{proposition 7.5} except, rather than a Gauss sum, we obtain a term
\[
\sum_{i=0}^m(-1)^iq^{\mathbf{d}_3\left(\frac{i(i-1)}{2}-im\right)}\sum_j\sum_{u\in\widetilde{\varpi}\mathrm{Mat}_m(\mathcal{O})\delta_{ij}^{-1}/\widetilde{\varpi}\mathrm{Mat}_m(\mathcal{O})}e(2\tau(\beta_5 u)),
\]
which is nonzero unless $\beta_5\in\mathrm{GL}_m(\mathcal{O})$. This can be shown by the property of exponential sums as in \cite[page 1412]{BS}.
\end{proof}

\subsection{Archimedean computations}
\label{section 7.3}

We now turn to the archimedean setting. We fix an archimedean place $v$ and omit it from the notation where we also abuse the notation by simply denoting $f_s^{\infty}:=f_{s,v}^{\infty}$. Fix a positive integer $l$ be our weight. Let $z=x+iyy^{\ast}\in\mathcal{H}_n$ and consider the local Fourier coefficients
\begin{equation}
\label{7.3.1}
E_{\beta}(z;f_s^{\infty})=\int_{S_n(\R)}f_s^{\infty}\left(J_n\left[\begin{array}{cc}
1_n & S\\
0 & 1_n
\end{array}\right]\left[\begin{array}{cc}
y & x\hat{y}\\
0 & \hat{y}
\end{array}\right]\right)e_{\infty}(-\tau(\beta S))dS.
\end{equation}
For $y,\beta\in S_n(\R)$ and $s_1,s_2\in\C$ we define a function $\xi_n$ by
\begin{equation}
\label{7.3.2}
\xi_n(y,\beta;s_1,s_2)=\int_{S_n(\R)}\nu(s+iy)^{-s_1}\nu(s-iy)^{-s_2}e_{\infty}(\tau(\beta S))dS.
\end{equation}
By definition of $f_s^{\infty}$, we have (for Case II, III, IV)
\begin{equation}
\label{7.3.3}
\begin{aligned}
&E_{\beta}(z,f_s^{\infty})\\
=&\int_{S_n(\R)}\nu(yi+x\hat{y}+S\hat{y})^{-l}|\nu(yi+x\hat{y}+S\hat{y})|^{l-s-\kappa}e_{\infty}(-\tau(\beta S))dS\\
=&e_{\infty}(\tau(\beta x))\nu(y)^{s+\kappa}\int_{S_n(\R)}\nu(S+yy^{\ast}i)^{-\frac{s+\kappa+l}{2}}\nu(S-yy^{\ast}i)^{-\frac{s+\kappa-l}{2}}e_{\infty}(-\tau(\beta S))dS\\
=&e_{\infty}(\tau(\beta x))\nu(y)^{s+\kappa}\xi_n\left(yy^{\ast},\beta;\frac{s+\kappa+l}{2},\frac{s+\kappa-l}{2}\right).
\end{aligned}
\end{equation}

Similarly, for Case V we have
\begin{equation}
\label{7.3.4}
E_{\beta}(z,f_s^{\infty})=e_{\infty}(\tau(\beta x))\nu(y^{\ast})^{l}|\nu(y^{\ast})|^{2s+2\kappa-l}\xi_n\left(yy^{\ast},\beta;s+\kappa+\frac{l}{2},s+\kappa-\frac{l}{2}\right).
\end{equation}

Recall that in Case III, the symmetric space is same as the one for Case II. Denote $\xi_n^{\mathrm{II}},\xi_n^{\mathrm{III}}$ to indicate above functions $\xi_n$ in two cases. After identify $\beta,yy^{\ast}$ with their image $\beta',(yy^{\ast})'$ in $\{\beta'\in\mathrm{GL}_{2n}(\R):\transpose{\beta}=\beta\}$ we have
\begin{equation}
\label{7.3.5}
\xi^{\mathrm{III}}_n\left(yy^{\ast},\beta;\frac{s+\kappa+l}{2},\frac{s+\kappa-l}{2}\right)=\xi^{\mathrm{II}}_{2n}\left((yy^{\ast})',\beta';s+\kappa+l,s+\kappa-l\right).
\end{equation}
The function $\xi_n$ is the confluent hypergeometric function studied in \cite{Sh82}. We record some of its property in the following lemma.

\begin{lem}
\label{lemma 7.6}
Let $t=\mathrm{rank}(\beta)$ be the rank of $\beta$ and $t_+$ (resp. $t_-$) the number of positive (resp. negative) eigenvalues of $\beta$. Then
\begin{equation}
\label{7.3.6}
\xi(y,\beta;s_1,s_2)=\boldsymbol{\Gamma}(s_1,s_2)\times\omega(y,\beta;s_1,s_2)
\end{equation}
where $\omega(y,\beta;s_1,s_2)$ is a holomorphic function in $s_1,s_2$ and $\boldsymbol{\Gamma}(s_1,s_2)$ is given by the following list. \\
(Case II, Symplectic)
\[
\frac{\prod_{i=0}^{n-t-1}\Gamma\left(s_1+s_2-\frac{n+1+i}{2}\right)}{\prod_{i=0}^{n-t_--1}\Gamma\left(s_1-\frac{i}{2}\right)\prod_{i=0}^{n-t_+-1}\Gamma\left(s_2-\frac{i}{2}\right)},
\]
(Case IV, Quaternionic Unitary)
\[
\frac{\prod_{i=0}^{n-t-1}\Gamma\left(2s_1+2s_2-2n+1-2i\right)}{\prod_{i=0}^{n-t_--1}\Gamma\left(2s_1-2i\right)\prod_{i=0}^{n-t_+-1}\Gamma\left(2s_2-2i\right)},
\]
(Case V, Unitary)
\[
\frac{\prod_{i=0}^{n-t-1}\Gamma\left(s_1+s_2-n-i\right)}{\prod_{i=0}^{n-t_--1}\Gamma\left(s_1-i\right)\prod_{i=0}^{n-t_+-1}\Gamma\left(s_2-i\right)}.
\]
In particular, if $\beta>0$, then 
\begin{equation}
\label{7.3.7}
\omega(y,\beta;l,0)=2^ni^{-nl}\pi^{nl-\frac{\iota n(n-1)}{4}}\nu(2\beta)^{l-\kappa}e_{\infty}(i\tau(\beta y)),\text{ with }\iota=\left\{\begin{array}{cc}
1 & \text{ Case II},\\
4 & \text{ Case IV},\\
2 & \text{ Case V}. 
\end{array}\right.
\end{equation}
\end{lem}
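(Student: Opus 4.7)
The function $\xi_n(y,\beta;s_1,s_2)$ is the confluent hypergeometric function on a classical symmetric cone, studied systematically by Shimura in \cite{Sh82}. My plan is to reduce each of the three cases in the lemma to results there, and then to read off the special value $\omega(y,\beta;l,0)$ for $\beta > 0$ by a direct evaluation.

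In Case II the function $\xi_n$ is literally Shimura's confluent hypergeometric function on the space of real symmetric $n \times n$ matrices. The factorization \eqref{7.3.6}, with all the poles of $\xi_n$ in $(s_1,s_2)$ collected into the displayed product of $\Gamma$-factors and the remainder $\omega$ entire, is the content of \cite[Theorem 4.2 and (4.34.K)]{Sh82}, where the precise dependence on the signature $(t_+,t_-)$ is worked out. Case V is the hermitian analogue over $\C$, again treated in loc.~cit., and the listed Gamma factors follow from the same proof upon replacing the half-integer shifts $i/2$ by integer shifts $i$, reflecting that hermitian matrices on $\C^n$ carry $n^2$ real parameters rather than $n(n+1)/2$.

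For Case IV I would reduce to Case II by the identification $\mathbb H \hookrightarrow \mathrm{Mat}_2(\R)$, in exact analogy with the reduction $\xi_n^{\mathrm{III}} = \xi_{2n}^{\mathrm{II}}$ recorded in \eqref{7.3.5}. Under this embedding a quaternionic skew-hermitian $n \times n$ matrix is sent (after multiplication by $i$) to a real symmetric $2n \times 2n$ matrix, and the constraint $S^{\ast} = S$ in Case IV cuts out a $\R$-subspace of $S_{2n}(\R)$. Tracking \eqref{7.3.2} through this identification expresses $\xi_n^{\mathrm{IV}}(y,\beta;s_1,s_2)$ as a restricted Case II integral at doubled parameters, which produces exactly the Gamma shifts $2s_j - 2i$ in place of $s_j - i/2$ as claimed. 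The special value \eqref{7.3.7} for $\beta>0$ is then obtained by setting $s_2=0$: the factor $\nu(S-iy)^{-s_2}$ becomes $1$ and the remaining integral is the Fourier transform of $\nu(S+iy)^{-l}$ on the appropriate cone, evaluated by the Bochner-type formula in \cite[(1.29)]{Sh82} to give the stated constant times $\nu(2\beta)^{l-\kappa} e_\infty(i\tau(\beta y))$.

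The main obstacle will be bookkeeping of normalizing constants — the factor $i^{-nl}$, the $\pi$-power $\pi^{nl - \iota n(n-1)/4}$ (with $\iota \in \{1,4,2\}$ encoding the real dimension of a diagonal entry in Cases II, IV, V), and the factor $2^n$ — across the three cases, since these absorb all discrepancies between Shimura's measure normalizations on $S_n(\R)$ and the normalization of $e_\infty$ fixed in Section \ref{section 7.1}. Verifying compatibility in the Case IV reduction, where the embedding $\mathbb H \hookrightarrow \mathrm{Mat}_2(\R)$ doubles the ambient dimension but simultaneously cuts out a subspace by linear constraints, will require the most careful tracking.
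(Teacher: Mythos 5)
There is a genuine gap in your handling of Case IV. Your plan is sound for Cases II and V, where you correctly point to Shimura's [Sh82] results on confluent hypergeometric functions over the spaces of real symmetric and complex hermitian matrices. But the reduction you propose for Case IV does not go through.

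First, the embedding $\mathbb{H}\hookrightarrow\mathrm{Mat}_2(\R)$ you invoke does not exist: the Hamilton quaternions are a $4$-dimensional division algebra over $\R$, while $\mathrm{Mat}_2(\R)$ is not a division algebra, and the correct matrix realization is $\mathbb{H}\hookrightarrow\mathrm{Mat}_2(\C)$. (The paper's sentence ``$D_v=\mathbb{H}$ can be embedded into $\mathrm{Mat}_2(\R)$'' in Section \ref{section 5.1} is itself a misprint.) Second, and more seriously, the analogy you draw with \eqref{7.3.5} breaks down structurally. In Case III, $D_v=\mathrm{Mat}_2(\R)$ is \emph{split} over $\R$, and the space of skew-hermitian $n\times n$ matrices over $\mathrm{Mat}_2(\R)$ is \emph{identified} with the full space of $2n\times 2n$ real symmetric matrices, which is why $\xi_n^{\mathrm{III}}=\xi_{2n}^{\mathrm{II}}$ is an equality of unrestricted integrals. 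In Case IV, $\mathbb{H}$ is non-split over $\R$: the $n(2n-1)$-dimensional space of quaternionic hermitian $n\times n$ matrices sits as a \emph{proper} real subspace of the $4n^2$-dimensional space of $2n\times 2n$ complex hermitian matrices. Restricting the domain of integration in \eqref{7.3.2} to such a subspace does not reproduce the Case II (or Case V) confluent hypergeometric function, and the resulting Gamma factors are genuinely different — the step in the quaternionic Gamma factor is $2$ (multiplicity parameter $d=4$), whereas a restriction of a Case II integral with parameters doubled would still carry step $1/2$, and a Case V integral with $n\to 2n$ would carry step $1$. So the claim that the restricted integral ``produces exactly the Gamma shifts $2s_j-2i$'' is unsupported; it is not the result of a change of variables.

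The Case IV factorization must instead be established directly for the cone of quaternionic hermitian matrices — either by appealing to the portion of [Sh82] that treats tube domains with quaternionic structure, or by a computation over the cone $H_n(\mathbb{H})$ analogous to Shimura's real and complex cases (the paper itself leans on [Hua] for the corresponding archimedean integrals in Proposition \ref{proposition 5.4}). You would then obtain the special value \eqref{7.3.7} by the same Bochner-type evaluation at $s_2=0$ you describe, but carried out on the quaternionic cone rather than on a real symmetric one.
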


We are interested in the special value 
\begin{equation}
\label{7.3.8}
s=s_0:=\left\{\begin{array}{cc}
l-\kappa & \text{ Case II, III, IV,}\\
\frac{l}{2}-\kappa & \text{ Case V}.
\end{array}\right.\text{ with }l\geq\left\{\begin{array}{cc}
m+1 & \text{ Case II,}\\
n+1 & \text{ Case III, IV, V.}\\
\end{array}\right..
\end{equation}
In this case we need to consider the special value of $\xi(y,\beta;s_1,s_2)$ at $s_1=l,s_2=0$. 

\begin{lem}
\label{lemma 7.7}
The function $\xi(y,\beta;s_1,s_2)$ does not have a zero at $s_1=l,s_2=0$ only if $\beta>0$.
\end{lem}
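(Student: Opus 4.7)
The approach is to exploit the factorization $\xi(y,\beta;s_1,s_2) = \boldsymbol{\Gamma}(s_1,s_2)\cdot\omega(y,\beta;s_1,s_2)$ from Lemma \ref{lemma 7.6}, where $\omega$ is holomorphic. For $\beta$ not positive definite, i.e.\ $t_+<n$, the plan is to show $\xi(y,\beta;l,0)=0$, either because $\boldsymbol{\Gamma}(l,0)=0$ (the denominator has strictly more gamma-function poles than the numerator at $(l,0)$) or, in a residual edge case, because $\omega(y,\beta;l,0)=0$ by an integral argument.

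I would first carry out the pole count in Cases IV and V. In Case V, the denominator factor $\prod_{i=0}^{n-t_+-1}\Gamma(s_2-i)$ contributes $n-t_+$ zeros to $\boldsymbol{\Gamma}(l,0)$ (one from each $\Gamma(-i)$), while the numerator $\prod_{i=0}^{n-t-1}\Gamma(l-n-i)$ contributes at most $\max(0,2n-t-l)$ poles; the hypothesis $l\geq n+1$ together with $t_-\geq 0$ gives $n-t_+ > 2n-t-l$, so $\boldsymbol{\Gamma}(l,0)=0$ whenever $t_+<n$. Case IV is identical using $\Gamma(-2i)$ in place of $\Gamma(-i)$. Case III reduces to Case II of rank $2n$ at weight $2l$ via the identity \eqref{7.3.5}, and since $2l\geq 2(n+1)$ lies strictly above the minimum admissible weight for Case II in rank $2n$, the sharper Case II analysis below applies without boundary issues.

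For Case II the count is parity-sensitive: $\Gamma(-i/2)$ in the denominator is a pole only for even $i$, giving denominator order $\lceil (n-t_+)/2\rceil$, while $\Gamma(l-(n+1+i)/2)$ in the numerator is a pole only for odd $i\geq 2l-n-1$ (since $n$ is even). A case split on the parity of $n-t_+$ and on whether $t_->0$ shows $\boldsymbol{\Gamma}(l,0)=0$ except in the sharp configuration $l=m+1$, $t_-=0$, $n-t_+$ even, namely at the minimum weight with $\beta$ positive semidefinite and rank-deficient by an even number.

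The main obstacle will be this residual degenerate case. To handle it I plan to invoke Shimura's analysis of the confluent hypergeometric function in \cite{Sh82} (see also \cite[Chapter III]{Sh00}): after the contour shift $S\mapsto S-iy$ in the defining integral \eqref{7.3.2} specialized at $s_2=0$, the function $\xi(y,\beta;l,0)$ is recognized as a Fourier transform supported on the positive definite cone, so that $\omega(y,\beta;l,0)=0$ whenever $\det\beta=0$. Combining this vanishing with the pole count for $\boldsymbol{\Gamma}$ then covers every configuration with $t_+<n$ and establishes the lemma.
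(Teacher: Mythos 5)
Your pole-counting in the $\boldsymbol{\Gamma}$-factor is the same approach the paper takes; the paper proves only Case IV explicitly and asserts that Cases II and V are ``the same.'' Your Cases IV and V counts are correct and carry the argument: when $t_+<n$ the denominator has at least one gamma-pole and, thanks to $l\geq n+1$, the numerator always has strictly fewer, forcing $\boldsymbol{\Gamma}(l,0)=0$. Your reduction of Case III to Case II at doubled weight via \eqref{7.3.5} is also what the paper does.

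Where you go further is in Case II itself. You correctly observe that the half-integer spacing of the gamma arguments introduces a parity issue, and you isolate the sharp configuration $l=m+1$, $t_-=0$, $n-t$ even, in which the numerator and denominator poles exactly cancel along the specialization line $s_1-s_2=l$, so that the limit of $\boldsymbol{\Gamma}$ at $(l,0)$ is finite and nonzero. This is a genuine subtlety that the paper's ``omit the same proof for Case II'' remark does not address: in Case IV every numerator gamma argument is a half-integer and the count is uniform, whereas in Case II the parity of $n-t$ interferes precisely at the minimum admissible weight.

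However, your proposed resolution of this residual case does not hold up. The contour shift $S\mapsto S-iy$ in \eqref{7.3.2} at $s_2=0$, and the recognition of $\xi(y,\beta;l,0)$ as the Fourier transform of $\nu(S+iy)^{-l}$ supported on the positive-definite cone, both require $l>n$ for the integral to converge absolutely; but the residual case lives exactly at $l=m+1\leq n=2m$ (for $m\geq 1$). In that range the integral is only defined by analytic continuation through the factorization $\xi=\boldsymbol{\Gamma}\cdot\omega$, and the Paley--Wiener/cone-support argument does not transfer to a statement about $\omega(y,\beta;l,0)$ without additional input. Closing this case would require Shimura's finer analysis of $\omega$ in \cite{Sh82}, or a different argument. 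It is also worth noting that in the paper's only application (proof of Proposition \ref{proposition 7.10}), Lemma \ref{lemma 7.7} is invoked after it is established that $\mathrm{rank}(\beta)\geq 2m$; in Case II ($r=0$, $n=2m$) this forces $\beta$ nonsingular, so the numerator is an empty product and the conclusion $t_+=n$ is immediate. Thus the degenerate configuration you isolate is irrelevant to the paper's purposes, though the lemma as stated does not exclude it, and neither your argument nor the paper's proof actually disposes of it.
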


\begin{proof}
We prove for Case IV and omit the same proof for Case II, V. Consider
\[
\frac{\prod_{i=0}^{n-t-1}\Gamma\left(2l-2n+1-2i\right)}{\prod_{i=0}^{n-t_--1}\Gamma\left(2l-2i\right)\prod_{i=0}^{n-t_+-1}\Gamma\left(-2i\right)}.
\]
and calculate the contributions of poles for each terms. By our assumption on $l$, $\prod_{i=0}^{n-t_--1}\Gamma(2l-2i)$ does not have poles. The numerator at most contributes $n-t$ poles while the denominator always has $n-t_+$ poles thus we must have $t=t_+$.
\end{proof}
 
We summarize the archimedean Fourier coefficients in the following proposition.

\begin{prop}
\label{proposition 7.8}
As a function of $s$, $E_{\beta}(z;f_s^{\infty})$ does not have a zero at $s_0$ only if $\beta>0$. In this case, its value at $s=s_0$ is given by the following list.\\
(Case II, Symplectic)
\[
\frac{2^ni^{-nl}\pi^{nl-\frac{n(n-1)}{4}}}{\prod_{i=0}^{n-1}\Gamma\left(l-\frac{i}{2}\right)}\nu(2\beta)^{l-\frac{n+1}{2}}\nu(y)^le_{\infty}(\tau(\beta z)),
\]
(Case III, Quaternionic Orthogonal)
\[
\frac{2^{2n}(-1)^{-nl}\pi^{2nl-\frac{n(2n-1)}{2}}}{\prod_{i=0}^{2n-1}\Gamma\left(l-\frac{i}{2}\right)}\nu(2\beta)^{l-\frac{2n+1}{2}}\nu(y)^le_{\infty}(\tau(\beta z)),
\]
(Case IV, Quaternionic Unitary)
\[
\frac{2^ni^{-nl}\pi^{nl-n(n-1)}}{\prod_{i=0}^{n-1}\Gamma(2l-2i)}\nu(2\beta)^{l-\frac{2n-1}{2}}\nu(y)^le_{\infty}(\tau(\beta z)),
\]
(Case V, Unitary)
\[
\frac{2^ni^{-nl}\pi^{nl-\frac{n(n-1)}{2}}}{\prod_{i=0}^{n-1}\Gamma(l-i)}\nu(2\beta)^{l-\frac{n}{2}}\nu(y^{\ast})^le_{\infty}(\tau(\beta z)).
\]
\end{prop}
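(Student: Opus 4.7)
The plan is to specialize the general Fourier coefficient formulas (7.3.3), (7.3.4) and the confluent hypergeometric data of Lemma 7.6 directly to $s=s_0$. By the definition of $s_0$ in (7.3.8), the parameters $(s_1,s_2)$ feeding into $\xi_n$ become $(l,0)$ in every case: in Cases II, III, IV one has $s_0+\kappa=l$, so $(s_1,s_2)=(\tfrac{s_0+\kappa+l}{2},\tfrac{s_0+\kappa-l}{2})=(l,0)$, and in Case V one has $s_0+\kappa=l/2$, so again $(s_1,s_2)=(l,0)$. Lemma 7.7 then forces $\beta>0$ for nonvanishing, which I would invoke first.

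Assuming $\beta>0$, so that $t=t_+=n$ and $t_-=0$, I would evaluate $\boldsymbol{\Gamma}(l,0)$ from Lemma 7.6 case by case. The key observation is that the numerator $\prod_{i=0}^{n-t-1}(\cdots)$ and the denominator factor $\prod_{i=0}^{n-t_+-1}\Gamma(\cdots)$ are both empty products, so only the remaining denominator survives: $1/\prod_{i=0}^{n-1}\Gamma(l-i/2)$ in Case II, $1/\prod_{i=0}^{n-1}\Gamma(2l-2i)$ in Case IV, and $1/\prod_{i=0}^{n-1}\Gamma(l-i)$ in Case V. Combining with the explicit $\omega(y,\beta;l,0)$ from (7.3.7) yields $\xi_n(yy^{\ast},\beta;l,0)$ in closed form for each case.

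I would then substitute back into (7.3.3), (7.3.4). For Cases II, IV the prefactor is $\nu(y)^{s_0+\kappa}=\nu(y)^{l}$. For Case V, since $2s_0+2\kappa-l=0$, the archimedean prefactor collapses to $\nu(y^{\ast})^{l}$. The identity $e_{\infty}(\tau(\beta x))\cdot e_{\infty}(i\tau(\beta yy^{\ast}))=e_{\infty}(\tau(\beta z))$ with $z=x+iyy^{\ast}$ then absorbs the exponential factors, producing the displayed formulas. For Case III, I would invoke the identification (7.3.5), which reduces $\xi_n^{\mathrm{III}}$ at $(l,0)$ to $\xi_{2n}^{\mathrm{II}}$ at the corresponding point applied to the $2n\times 2n$ symmetric image of $\beta$; the Case II formula with index $2n$ and weight $l$ then gives the Case III answer directly, once one notes that $\nu(2\beta)^{l-(2n+1)/2}$ on the quaternionic side matches $\det(2\beta')^{l-(2n+1)/2}$ under this embedding.

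The only real obstacle is the careful bookkeeping of empty products and of the normalization of $\nu$ versus $\det$ under the embedding used in Case III; everything else is a direct substitution. In particular one must verify that no stray Gamma poles appear in the denominator under the hypothesis $l\geq n+1$ (respectively $l\geq m+1$ in Case II), which is exactly the hypothesis built into (7.3.8); this guarantees $\xi_n(yy^{\ast},\beta;l,0)$ is finite and nonzero whenever $\beta>0$, completing the proof.
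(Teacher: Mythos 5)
Your proposal is correct and reflects the (implicit) intent of the paper: Proposition 7.8 is stated without proof precisely because it is a direct specialization of \eqref{7.3.3}, \eqref{7.3.4}, Lemma \ref{lemma 7.6}, and Lemma \ref{lemma 7.7} at $s=s_0$, which is exactly what you carry out. Your bookkeeping for Cases II, IV, V—reducing to $(s_1,s_2)=(l,0)$, observing that $t=t_+=n$, $t_-=0$ kills both the numerator product and the $t_+$-denominator product in $\boldsymbol{\Gamma}$, and absorbing $e_\infty(\tau(\beta x))e_\infty(i\tau(\beta yy^{\ast}))$ into $e_\infty(\tau(\beta z))$—is all accurate, as is the collapse of the modulus prefactor in Case V since $2s_0+2\kappa-l=0$.

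One caveat on Case III: if one reads \eqref{7.3.5} literally, it identifies $\xi^{\mathrm{III}}_n(yy^{\ast},\beta;l,0)$ with $\xi^{\mathrm{II}}_{2n}((yy^{\ast})',\beta';2l,0)$, i.e.\ the arguments double, and this would produce $\prod_{i=0}^{2n-1}\Gamma(2l-\tfrac{i}{2})^{-1}$ and the exponent $2l-\tfrac{2n+1}{2}$ on $\nu(2\beta)$, neither of which matches the stated formula. The consistent reading—forced by the definition \eqref{7.3.2}, under which $\nu$ on $\mathrm{Mat}_n(\mathrm{Mat}_2(\R))\cong\mathrm{Mat}_{2n}(\R)$ is just $\det$ and $\tau$ is $\tr$, with no rescaling of exponents—is that $\xi^{\mathrm{III}}_n(y,\beta;s_1,s_2)=\xi^{\mathrm{II}}_{2n}(y',\beta';s_1,s_2)$, so the Case II formula with index $2n$, the same $(s_1,s_2)=(l,0)$, and $\kappa^{\mathrm{II}}_{2n}=\tfrac{2n+1}{2}$ applies. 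You in fact invoke the un-doubled version (``the Case II formula with index $2n$ and weight $l$''), which is what yields the correct answer, including $i^{-2nl}=(-1)^{-nl}$ and $\tfrac{2n(2n-1)}{4}=\tfrac{n(2n-1)}{2}$; it is just worth flagging that you are silently correcting the apparent scaling in \eqref{7.3.5} rather than using it verbatim.
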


\subsection{The global Fourier expansion}
\label{section 7.4}

We now study the global Fourier expansion of $\mathbb{E}(h;f_s,\chi,\boldsymbol{n})$ defined in \eqref{6.11}. From now on, we assume the weight $\boldsymbol{l}=(l_v)_{v|\infty}$ is parallel. That is $l_v=l$ for all $v|\infty$. We also assume
\begin{equation}
\label{7.4.1}
\begin{aligned}
l&\geq\left\{\begin{array}{cc}
m+1 & \text{ Case II,}\\
n+1 & \text{ Case III, IV, V.}\\
\end{array}\right.\text{ when }F\neq\Q,\\
l&\geq\left\{\begin{array}{cc}
m+1 & \text{ Case II,}\\
n+r+1 & \text{ Case III, IV, V,}\\
\end{array}\right.\text{ when }F=\Q.
\end{aligned}
\end{equation}
For such $l$, we are interested in the special value at
\begin{equation}
\label{7.4.2}
s=s_0:=\left\{\begin{array}{cc}
l-\kappa & \text{ Case II, III, IV,}\\
\frac{l}{2}-\kappa & \text{ Case V}.
\end{array}\right.
\end{equation}
Let $q_z\in G(\mathbb{A})$ be an element such that
\begin{equation}
\label{7.4.3}
q_z=\left\{\begin{array}{cc}
\mathrm{diag}[a_v,\hat{a}_v] & v\nmid\mathfrak{n}\boldsymbol{p}\infty,\\
1 & v|\mathfrak{n}\boldsymbol{p},\\
\left[\begin{array}{cc}
y_v & x_v\hat{y}_v\\
0 & \hat{y}_v
\end{array}\right] & v|\infty,
\end{array}\right.
\end{equation}
for any $z=(z_v)_{v|\infty}$ with $z_v=x_v+y_vy_v^{\ast}i\in\mathcal{H}_n$. Denote
\begin{equation}
\label{7.4.4}
\boldsymbol{S}=S_n(F)\cap\prod_{v\nmid\mathfrak{n}\infty}\hat{a}S_n(\mathfrak{o}_v)^{\ast}a^{-1}\cdot\prod_{v|\mathfrak{n}_2\boldsymbol{p}}S_n(\mathfrak{o}_v)^{\ast,0}\cdot\prod_{v|\mathfrak{n}_1}S_n(\mathfrak{o}_v)^{\ast,\mathfrak{c}_v}
\end{equation}
and
\begin{equation}
\label{sp}
\boldsymbol{S}^{\boldsymbol{p}}=\left\{\beta=\left[\begin{array}{ccc}
\beta_1 & -\epsilon\beta_2^{\ast} & -\epsilon\beta_3^{\ast}\\
\beta_2 & \beta_4 & -\epsilon\beta_5^{\ast}\\
\beta_3 & \beta_5 & \beta_6
\end{array}\right]\in\boldsymbol{S}:\begin{array}{c}
\beta_2,\beta_3\in\mathrm{Mat}_{m,r}(\boldsymbol{p}^{\boldsymbol{n}-1}\mathcal{O}_{\boldsymbol{p}}),\\
\beta_4,\beta_6\in\mathrm{Mat}_m(\boldsymbol{p}^{2\boldsymbol{n}-2}\mathcal{O}_{\boldsymbol{p}}).
\end{array}
\right\}.
\end{equation}

For a Hecke character $\chi=\prod_v\chi_v$, we define the Gauss sum $G(\chi)=\prod_vG(\chi_v)$ with $G(\chi_v)$ the local Gauss sum defined in \eqref{7.2.2.3}. We may write $G^D(\chi)$ to indicate that the Gauss sum is defined for $D$. In Case II, IV, V, we always omit the superscript `$D$' for simplicity as no confusion will occur. In Case III, we will need both $G^D(\chi)$ and $G^F(\chi)$ and we make the convention $G(\chi):=G^D(\chi)$.

Combining with the local computations in previous subsections, we summarize the Fourier expansion of $\mathbb{E}(h;f_s,\chi,\boldsymbol{n})$ in the following proposition.

\begin{prop}
\label{proposition 7.10}
At $s=s_0$, the Eisenstein series $\mathbb{E}(h;f_s,\chi,\boldsymbol{n})$ has a Fourier expansion of the form
\begin{equation}
\label{7.4.7}
\mathbb{E}(q_z;f_s,\chi,\boldsymbol{n})=\mathfrak{D}C(q_z)\nu(y'^{\ast})^l\sum_{0<\beta\in\boldsymbol{S}^{\boldsymbol{p}}}C(\beta,\chi)\cdot\mathbb{E}(\beta;\chi)e_{\infty}(\tau(\beta z')).
\end{equation}
Here:\\
(1) $\mathfrak{D}$ is a power of the discriminant of $D$, \\
(2) $C(q_z)=\prod_{v\nmid\mathfrak{n}\infty}\chi(\nu(a_v))|N_{E/F}(\nu(a_v))|^{s_0+\kappa}$,\\
(3) $z'=x+iy'y'^{\ast}$ with $y'=\mathrm{diag}[1_r,\boldsymbol{\varpi}^{1-\boldsymbol{n}}\cdot 1_m,\boldsymbol{\varpi}^{1-\boldsymbol{n}}\cdot 1_m]y.$\\
(4) The constant $C(\beta,\chi)$ is given by
\begin{equation}
C(\beta,\chi)=\prod_{v|\mathfrak{n}_2}|\varpi_v|^{-\mathfrak{c}\mathbf{d}_1\frac{m(m-1)}{2}}\cdot|\boldsymbol{\varpi}|^{-\boldsymbol{c}\mathbf{d_1}\frac{m(m-1)}{2}}G(\chi)^m\chi(\nu(\beta_5))\nu(2\beta)^{l-\kappa}
\end{equation}
with $\mathbf{d}_1$ as in \eqref{4.3.5} and $\chi(\nu(\beta_5))$ is understood as zero if $\nu(\beta_5)\notin\mathcal{O}^{\times}_{\mathfrak{n}_2\boldsymbol{p}}$,\\
(5) $\mathbb{E}(\beta;\chi)$ is given by the following list with $d(F)=[F:\Q]$\\
(Case II, Symplectic)
\[
\left(\frac{2^ni^{-nl}\pi^{nl-\frac{n(n-1)}{4}}}{\prod_{i=0}^{n-1}\Gamma\left(l-\frac{i}{2}\right)}\right)^{d(F)}\prod_{v\nmid\mathfrak{n}}L_{v}\left(s_0+\frac{1}{2},\chi\lambda_{\beta}\right)\cdot\prod_{v\nmid\mathfrak{n}}P_{a_v^{\ast}\beta a_v}(\chi(q_v)q_v^{-l}),
\]
(Case III, Quaternionic Orthogonal with $r=0$)
\[
\left(\frac{2^{2n}(-1)^{-nl}\pi^{2nl-\frac{n(2n-1)}{2}}}{\prod_{i=0}^{2n-1}\Gamma\left(l-\frac{i}{2}\right)}\right)^{d(F)}\prod_{v\nmid\mathfrak{n}}L_v\left(s_0+\frac{1}{2},\chi\lambda_{\beta}\right)\cdot\prod_{v\nmid\mathfrak{n}}P_{a_v^{\ast}\beta a_v}(\chi(q_v)q_v^{-l}),
\]
(Case III, Quaternionic Orthogonal with $r=1$)
\[
\left(\frac{2^{2n}(-1)^{-nl}\pi^{2nl-\frac{n(2n-1)}{2}}}{\prod_{i=0}^{2n-1}\Gamma\left(l-\frac{i}{2}\right)}\right)^{d(F)}\prod_{\substack{v\nmid\mathfrak{n}\\v\text{split}}}L_v\left(s_0+\frac{1}{2},\chi\lambda_{\beta}\right)\prod_{v\nmid\mathfrak{n}}P_{a_v^{\ast}\beta a_v}(\chi(q_v)q_v^{-l}),
\]
(Case IV, Quaternionic Unitary)
\[
\left(\frac{2^ni^{-nl}\pi^{nl-n(n-1)}}{\prod_{i=0}^{n-1}\Gamma(2l-2i)}\right)^{d(F)}\prod_{v\nmid\mathfrak{n}}P_{a_v^{\ast}\beta a_v}(\chi(q_v)q_v^{-l}),
\]
(Case V, Unitary)
\[
\left(\frac{2^ni^{-nl}\pi^{nl-\frac{n(n-1)}{2}}}{\prod_{i=0}^{n-1}\Gamma(l-i)}\right)^{d(F)}\prod_{v\nmid\mathfrak{n}}P_{a_v^{\ast}\beta a_v}(\chi^0(q_v)q_v^{-l}).
\]
\end{prop}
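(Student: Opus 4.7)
\medskip

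\noindent\textbf{Proof plan for Proposition 7.10.}

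The proof is fundamentally the assembly of the local Fourier coefficient computations carried out in Sections \ref{section 7.2} and \ref{section 7.3}, together with a careful analysis of how the Hecke operator $\boldsymbol{U}(\boldsymbol{p}^{\boldsymbol{n}-1})$ from \eqref{6.11}–\eqref{6.12} acts on the Fourier expansion. First I would start from the standard Fourier expansion \eqref{7.1.2} of the ordinary Eisenstein series $E(h;f_s)$. Because each of our chosen local sections $f_{s,v}^{\dagger,\mathfrak{c}_v},f_{s,v}^{\ddagger,\mathfrak{c}_v},f_{s,v}^{p}$ is supported in $P(F_v)J_nN'(\mathfrak{p}_v^{\mathfrak{c}_v})$, the $\beta$-th Fourier coefficient evaluated at the parabolic element $q_z\in P(\mathbb{A})$ factorizes as in \eqref{7.1.3}, reducing the calculation to a product of local integrals.

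Next I would insert the local results place by place at $s=s_0$: Proposition \ref{proposition 7.1} at the unramified finite places $v\nmid\mathfrak{n}\boldsymbol{p}$ produces the $L$-factor together with the polynomial $P_{a_v^{\ast}\beta a_v}$ and imposes the lattice condition $\hat a_v\beta a_v^{-1}\in S_n(\mathfrak{o}_v)^{\ast}$; Proposition \ref{proposition 7.2} at $v\mid\mathfrak{n}_1$ contributes the factor $1$ subject to $\beta\in S_n(\mathfrak{o}_v)^{\ast,\mathfrak{c}_v}$; Propositions \ref{proposition 7.4} and \ref{proposition 7.5} at $v\mid\mathfrak{n}_2\boldsymbol{p}$ contribute the Gauss sums $G(\chi_v)^m\chi(\nu(\beta_5))$ (together with the power of $|\varpi_v|$) and impose $\beta_5\in\mathrm{GL}_m(\mathcal{O}_v)$ together with $\beta\in S_n(\mathfrak{o}_v)^{\ast,0}$; and Proposition \ref{proposition 7.8} at the archimedean places forces $\beta>0$ and yields the constants $\nu(2\beta)^{l-\kappa}$, $\nu(y)^l$, the exponential $e_\infty(\tau(\beta z))$, and the inverse products of gamma factors. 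Combining these support conditions gives $\beta\in\boldsymbol{S}$ as in \eqref{7.4.4}, and multiplying all the factors together produces the prospective expansion for $E(q_z;f_s)$ before applying the Hecke operator.

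The remaining and technically most delicate step is to propagate the Hecke action $\boldsymbol{U}(\boldsymbol{p}^{\boldsymbol{n}-1})$ through the Fourier expansion. For each right translate by a matrix $\gamma$ of the form \eqref{6.12}, I would write $q_z\gamma$ as the product of a parabolic element (whose Levi part is a diagonal rescaling by $\mathrm{diag}[1_r,\boldsymbol{\varpi}^{\boldsymbol{n}-1}\!\cdot\! 1_m,\boldsymbol{\varpi}^{\boldsymbol{n}-1}\!\cdot\! 1_m]$ and whose unipotent part absorbs the upper-triangular $b_i,c_i$ blocks of $\gamma$) times a unipotent lower block that leaves the Fourier variable invariant after change of variables. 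The diagonal rescaling is exactly what replaces $y$ by $y'=\mathrm{diag}[1_r,\boldsymbol{\varpi}^{1-\boldsymbol{n}}\!\cdot\! 1_m,\boldsymbol{\varpi}^{1-\boldsymbol{n}}\!\cdot\! 1_m]y$ (after using the inverse on the other side, as in the proof of Lemma \ref{lemma 6.3}); the unipotent shift by $b_i,c_i$ multiplies the $\beta$-Fourier coefficient by a character $e_{\boldsymbol{p}}(\tau(\beta\cdot(\text{shift})))$, and averaging over the cosets $b_1,b_2\in\mathrm{Mat}_{m,r}(\mathcal{O}_{\boldsymbol{p}}/\boldsymbol{p}^{\boldsymbol{n}-1})$ and $c_1,c_2\in\mathrm{Mat}_m(\mathcal{O}_{\boldsymbol{p}}/\boldsymbol{p}^{2\boldsymbol{n}-2})$ kills every Fourier term except those for which $\beta_2,\beta_3\in\mathrm{Mat}_{m,r}(\boldsymbol{p}^{\boldsymbol{n}-1}\mathcal{O}_{\boldsymbol{p}})$ and $\beta_4,\beta_6\in\mathrm{Mat}_m(\boldsymbol{p}^{2\boldsymbol{n}-2}\mathcal{O}_{\boldsymbol{p}})$. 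This is precisely the support condition defining $\boldsymbol{S}^{\boldsymbol{p}}$ in \eqref{sp}.

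Finally I would assemble the bookkeeping: the unramified part collects into $C(q_z)\cdot\prod_v P_{a_v^{\ast}\beta a_v}$ and the normalization $L$-factors in $\mathbb{E}(\beta;\chi)$; the ramified places produce $C(\beta,\chi)$ (up to the recorded powers of the discriminant of $D$); the archimedean contribution turns into the global prefactor $\nu(y'^{\ast})^l$ and the exponential $e_\infty(\tau(\beta z'))$. The main obstacle throughout is purely combinatorial/unipotent: keeping track of the decomposition $q_z\gamma=p\cdot n$ and showing that the resulting character sums in the $b_i,c_i$-variables collapse to exactly the lattice conditions defining $\boldsymbol{S}^{\boldsymbol{p}}$ without generating any extra constants beyond the explicitly listed $\mathfrak{D}$, $C(q_z)$, $C(\beta,\chi)$, and $\mathbb{E}(\beta;\chi)$. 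The case distinction $\boldsymbol{c}=0$ versus $\boldsymbol{c}>0$ is absorbed uniformly by interpreting $\chi(\nu(\beta_5))=0$ when $\nu(\beta_5)\notin\mathcal{O}_{\mathfrak{n}_2\boldsymbol{p}}^{\times}$ and by the two parallel computations in Propositions \ref{proposition 7.4} and \ref{proposition 7.5}.
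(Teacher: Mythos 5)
Your plan follows the paper's strategy closely and correctly identifies the main moving parts: factorize the local Fourier coefficient as in \eqref{7.1.3}, insert the explicit local formulas from Propositions \ref{proposition 7.1}, \ref{proposition 7.2}, \ref{proposition 7.4}, \ref{proposition 7.5}, \ref{proposition 7.8}, decompose $q_z\gamma$ for each summand in the $\boldsymbol{U}(\boldsymbol{p}^{\boldsymbol{n}-1})$-average into a diagonal rescaling (yielding $z\mapsto z'$) plus a unipotent shift (yielding a character sum in the $b_i,c_i$ parameters), and observe that averaging the character sum collapses the support to $\boldsymbol{S}^{\boldsymbol{p}}$ as in \eqref{sp}. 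This is exactly the paper's approach.

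There is, however, a gap in the deduction that only $\beta>0$ contributes. You write that ``Proposition \ref{proposition 7.8} at the archimedean places forces $\beta>0$'' as though the local vanishing automatically propagates to the global coefficient. But Proposition \ref{proposition 7.8} (via Lemma \ref{lemma 7.7}) only asserts that the \emph{archimedean local} factor has a zero at $s_0$ when $\beta\not>0$. For this to force the \emph{global} Fourier coefficient to vanish, the nonarchimedean contributions must be finite at $s_0$; but the $L$-factors appearing in Proposition \ref{proposition 7.1} depend on $t=\mathrm{rank}(\beta)$ and can develop poles at $s=s_0$ for small $t$ under the working lower bound \eqref{7.4.1} on $l$, in which case an archimedean zero may be cancelled and the coefficient need not vanish. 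The paper closes this loop explicitly: the $\boldsymbol{S}^{\boldsymbol{p}}$ support condition together with the invertibility of $\beta_5$ forced by Propositions \ref{proposition 7.4}, \ref{proposition 7.5} implies $\mathrm{rank}(\beta)\geq 2m$, and it is \emph{only} for such $\beta$ that the bound \eqref{7.4.1} excludes poles in the unramified $L$-factors, whence the archimedean zero decides the matter. This interaction is the whole reason the $\boldsymbol{U}(\boldsymbol{p}^{\boldsymbol{n}-1})$-averaging buys the sharper bound $l\geq m+1$ (resp.\ $n+1$) instead of $l\geq 2m+1$ (resp.\ $2n+1$), as noted in Remark \ref{remark 7.11}. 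Your outline already records $\beta_5\in\mathrm{GL}_m(\mathcal{O}_v)$, so the ingredients are present; you need to make the link from the rank bound to the absence of nonarchimedean poles explicit before the archimedean positivity argument is valid.
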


\begin{proof}
We first show that only $\beta\in\boldsymbol{S}^{\boldsymbol{p}}$ can contribute a nonzero Fourier coefficient. Recall that the Eisenstein series $\mathbb{E}(h;f_s)$ is defined as
\[
\mathbb{E}(h;f_s):=E(h;f_s)|\boldsymbol{U}(\boldsymbol{p}^{\boldsymbol{n}-1}):=\sum_{\gamma}E(h\gamma;f_s)
\]
with $\gamma$ running through elements of the form in \eqref{6.12}. To ease the notation, we write
\[
A=\left[\begin{array}{ccc}
1_r & 0 & 0\\
\epsilon b_2^{\ast}\theta^{-1} & \boldsymbol{\varpi}^{\boldsymbol{n}-1}\cdot 1_m & 0\\
-\epsilon b_1^{\ast}\theta^{-1} & 0 & \boldsymbol{\varpi}^{\boldsymbol{n}-1}\cdot 1_m
\end{array}\right],\,B=\left[\begin{array}{ccc}
0 & \frac{\epsilon\boldsymbol{\varpi}^{1-\boldsymbol{n}}b_2}{2} & -\frac{\epsilon\boldsymbol{\varpi}^{1-\boldsymbol{n}}b_1}{2}\\
-\frac{b_2^{\ast}}{2} & -\boldsymbol{\varpi}^{1-\boldsymbol{n}}c_2 & 0\\
-\frac{b_1^{\ast}}{2} & 0 & \boldsymbol{\varpi}^{1-\boldsymbol{n}}c_1
\end{array}\right].
\]
In the following, we write $\gamma_{\boldsymbol{p}},A_{\boldsymbol{p}},B_{\boldsymbol{p}}$ to indicate they are matrices with entries in $F_{\boldsymbol{p}}$. By straightforward computations,
\[
\begin{aligned}
\mathbb{E}_{\beta}(q_z;f_s)&=\int_{S_n(F)\backslash S_n(\mathbb{A})}\sum_{\gamma_{\boldsymbol{p}}}E\left(\left[\begin{array}{cc}
1_n & S\\
0 & 1_n
\end{array}\right]q_z\gamma_{\boldsymbol{p}};f_s\right)e_{\mathbb{A}}(-\tau(\beta S))dS\\
&=\int_{S_n(F)\backslash S_n(\mathbb{A})}E\left(\left[\begin{array}{cc}
1_n & S\\
0 & 1_n
\end{array}\right]q_{z'};f_s\right)e_{\mathbb{A}}(-\tau(\beta S))dS\cdot\sum_{\gamma_{\boldsymbol{p}}}e_{\mathbb{A}}(\tau(\beta A_{\boldsymbol{p}}^{-1}B_{\boldsymbol{p}})).
\end{aligned}
\]
The integral is the Fourier coefficients of $E(h;f_s)$ calculated in previous subsection. Hence, by Proposition \ref{proposition 7.1}, Proposition \ref{proposition 7.2}, Proposition \ref{proposition 7.4}, Proposition \ref{proposition 7.5}, this integral is nonzero unless $\beta\in\boldsymbol{S}$. The exponential sum is nonzero unless further $\beta\in\boldsymbol{S}^{\boldsymbol{p}}$.

Secondly, we show that only $\beta>0$ can contribute a nonzero Fourier coefficient. Note that the condition $\beta\in\boldsymbol{S}^{\boldsymbol{p}}$ and Proposition \ref{proposition 7.4}, Proposition \ref{proposition 7.5} imply that only such $\beta$ with $\mathrm{rank}(\beta)\geq 2m$ can contribute a nonzero term. Under our assumptions on $l$, the $L$-functions occurring form Proposition \ref{proposition 7.1} does not provide any poles. Therefore, the Fourier coefficients are nonvanishing if and only if the confluent hypergeometric function in archimedean computations does not have zeros. Then by Lemma \ref{lemma 7.7}, when specializing to $s=s_0$, $\mathbb{E}_{\beta}(q_z;f_s)\neq 0$ unless $\beta>0$. 

The proposition then follows from the explicit formulas in Proposition \ref{proposition 7.1}, Proposition \ref{proposition 7.2}, Proposition \ref{proposition 7.4}, Proposition \ref{proposition 7.5} and Proposition \ref{proposition 7.8}.
\end{proof}

\begin{rem}
\label{remark 7.11}
We emphasize that we indeed obtain a better bound on $l$ due the action of $\boldsymbol{U}(\boldsymbol{p})$ operator. Without such process, we have to consider $\beta$ of all rank so that one need to assume $l\geq 2m+1$ in Case II and $2n+1$ in Case III, IV, V to avoid the occurrence of the poles in $L$-functions from unramified computations. Also note that when $r=0$ or when $\boldsymbol{p}$ splits in Case III, we always have $\nu(\beta)$ is a square mod $\boldsymbol{p}$. This property is essential in the construction of $p$-adic $L$-functions for Case II, III (see also \cite[Section 3.5]{LZ20}). 
\end{rem}

From these explicit formulas of the Fourier coefficients, we immediately have

\begin{cor}
\label{corollary 7.12}
Up to a power of $\pi$, $\mathbb{E}(h;f_s,\chi,\boldsymbol{n})$ is an algebraic modular form on $H(\mathbb{A})$ at $s=s_0$. More precisely,  we have
\begin{equation}
\label{7.4.8}
\frac{\mathbb{E}(\beta,\chi)}{\pi^{d(F)d(\pi)}}\in\overline{\Q}\qquad\text{ with }d(\pi)=\left\{\begin{array}{cc}
nl-\frac{n(n-1)}{4}+l-\frac{n}{2} & \text{ Case II,}\\
2nl-\frac{n(2n-1)}{2}+l-n & \text{ Case III,}\\
nl-n(n-1) & \text{ Case IV,}\\
nl-\frac{n(n-1)}{2} & \text{ Case V.}
\end{array}\right.
\end{equation}
Furthermore, for any $\sigma\in\mathrm{Aut}(\C/F)$, we have\\
(Case II, Symplectic)
\[
\left(\frac{\mathbb{E}(\beta;\chi)}{\pi^{d(F)\left(nl-\frac{n(n-1)}{4}\right)}(\pi i)^{d(F)\left(l-\frac{n}{2}\right)}G(\chi)}\right)^{\sigma}=\frac{\mathbb{E}(\beta;\chi^{\sigma})}{\pi^{d(F)\left(nl-\frac{n(n-1)}{4}\right)}(\pi i)^{d(F)\left(l-\frac{n}{2}\right)}G(\chi^{\sigma})},
\]
(Case III, Quaternionic Orthognoal) In this case we denote $G^F(\chi)$ to indicate that the Gauss sum is defined for $F$.
\[
\left(\frac{\mathbb{E}(\beta;\chi)}{\pi^{d(F)\left(2nl-\frac{n(2n-1)}{2}\right)}(\pi i)^{d(F)(l-n)}G^F(\chi)}\right)^{\sigma}=\frac{\mathbb{E}(\beta;\chi^{\sigma})}{\pi^{d(F)\left(2nl-\frac{n(2n-1)}{2}\right)}(\pi i)^{d(F)(l-n)}G^F(\chi^{\sigma})},
\]
(Case IV, Quaternionic Unitary)
\[
\left(\frac{\mathbb{E}(\beta;\chi)}{\pi^{d(F)\left(nl-n(n-1)\right)}}\right)^{\sigma}=\frac{\mathbb{E}(\beta;\chi^{\sigma})}{\pi^{d(F)\left(nl-n(n-1)\right)}},
\]
(Case V, Unitary)
\[
\left(\frac{\mathbb{E}(\beta;\chi)}{\pi^{d(F)\left(nl-\frac{n(n-1)}{2}\right)}}\right)^{\sigma}=\frac{\mathbb{E}(\beta;\chi^{\sigma})}{\pi^{d(F)\left(nl-\frac{n(n-1)}{2}\right)}}.
\]
\end{cor}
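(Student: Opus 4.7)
My plan is to read off both statements directly from the explicit formulas for the Fourier coefficients in Proposition \ref{proposition 7.10}. Write $\mathbb{E}(\beta;\chi)$ as a product of three factors: (i) an archimedean constant built from powers of $\pi$, $i$ and Gamma values, raised to the $d(F)$-th power; (ii) the partial Hecke $L$-value $\prod_{v\nmid\mathfrak{n}}L_v(s_0+\tfrac{1}{2},\chi\lambda_\beta)$ (present only in Cases II and III); and (iii) the product $\prod_{v\nmid\mathfrak{n}}P_{a_v^{\ast}\beta a_v}(\chi(q_v)q_v^{-l})$ of polynomial evaluations. Factor (iii) is automatically algebraic, since each $P_X\in\Z[Y]$ and $\chi(q_v)\in\overline{\Q}^{\times}$. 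So the whole problem reduces to a careful accounting of the $\pi$-content of (i) and (ii).

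For the archimedean Gamma denominator I would split into integer versus half-integer arguments. In Cases IV and V only $\Gamma(2l-2i)$ and $\Gamma(l-i)$ appear, with integer arguments, so they are algebraic; the explicit numerators $\pi^{nl-n(n-1)}$ and $\pi^{nl-n(n-1)/2}$ then match $d(\pi)$ on the nose, and no $L$-value enters, finishing those cases directly. In Cases II and III half of the $\Gamma(l-i/2)$ values are half-integral, contributing further $\pi^{1/2}$-powers in the denominator, and these must be balanced against both the numerator and the $L$-value. Here $s_0+\tfrac{1}{2}=l-n/2$ is a critical point for the Hecke $L$-function of the character $\chi\lambda_\beta$ (which has parallel infinity type $l$, since $\lambda_\beta$ is quadratic), and by Shimura's theorem on critical values of Hecke $L$-series over totally real fields
\begin{equation*}
\frac{L(s_0+\tfrac{1}{2},\chi\lambda_\beta)}{(\pi i)^{d(F)(l-n/2)}G(\chi\lambda_\beta)}\in\overline{\Q}.
\end{equation*}
Passing from the complete $L$-function to the partial one only removes finitely many Euler factors, each polynomial in $\chi(\varpi_v)q_v^{-s}$ with algebraic coefficients, while passing to the finite part introduces an inverse archimedean Gamma factor that can be absorbed into factor (i). Collecting all the $\pi$- and $i$-contributions then reproduces exactly $\pi^{d(F)d(\pi)}$, giving the first assertion in \eqref{7.4.8}. (For Case III with $r=1$, the $L$-product runs only over split places, but the omitted Euler factors at non-split $v\nmid\mathfrak{n}$ are individually algebraic, so the same argument applies after inserting them.)

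For the Galois equivariance I would combine two standard inputs. First, since each $P_{a_v^{\ast}\beta a_v}$ has coefficients in $\Z$, one has $P_{a_v^{\ast}\beta a_v}(\chi(q_v)q_v^{-l})^\sigma=P_{a_v^{\ast}\beta a_v}(\chi^\sigma(q_v)q_v^{-l})$. Second, Shimura's reciprocity law for critical Hecke $L$-values gives
\begin{equation*}
\left(\frac{L(s_0+\tfrac{1}{2},\chi\lambda_\beta)}{(\pi i)^{d(F)(l-n/2)}G(\chi\lambda_\beta)}\right)^{\sigma}=\frac{L(s_0+\tfrac{1}{2},\chi^\sigma\lambda_\beta)}{(\pi i)^{d(F)(l-n/2)}G(\chi^\sigma\lambda_\beta)}
\end{equation*}
for every $\sigma\in\mathrm{Aut}(\C/F)$; combined with $G(\chi)^\sigma=G(\chi^\sigma)$ up to algebraic factors and the fact that $\lambda_\beta$ is quadratic hence Galois-fixed, one deduces the displayed transformation laws in each of Cases II--V. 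The main obstacle will be bookkeeping: tracking exactly how the half-integer Gamma powers, the $(\pi i)^{d(F)(l-n/2)}$ from the $L$-value, and the Gauss sum $G(\chi\lambda_\beta)$ recombine (against $G(\chi)$ in the Galois divisor) to give precisely the exponents $d(\pi)$ in \eqref{7.4.8} and the normalizing factors stated in the corollary, uniformly over the case distinctions (in particular the split/non-split dichotomy in Case III and the quadratic character $\lambda_\beta$ depending on the parity of the rank of $\beta$, which under the positive-definiteness forced by Proposition \ref{proposition 7.10} is always $n$).
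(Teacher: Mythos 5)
Your plan --- read off the three factors (archimedean $\pi/\Gamma$-constant, finite Dirichlet $L$-value, integral Siegel-series polynomials) from Proposition~\ref{proposition 7.10}, note that (iii) is trivially algebraic, and invoke Siegel--Shimura on critical Hecke $L$-values for (ii) --- is exactly the route the paper has in mind when it says the corollary ``immediately'' follows from the explicit formulas, and you correctly isolate the two non-elementary cases (II, III) where half-integer Gamma arguments and the auxiliary $L(l-n/2,\chi\lambda_\beta)$ both contribute powers of $\pi$.

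Where the proposal falls short is precisely at the step you flag as ``bookkeeping'' and then assert without carrying out. You claim that ``collecting all the $\pi$- and $i$-contributions then reproduces exactly $\pi^{d(F)d(\pi)}$,'' but if one actually does the collection in Case~II the numbers do not close as written. The Siegel Gamma function is $\Gamma_n(l)=\pi^{n(n-1)/4}\prod_{i=0}^{n-1}\Gamma(l-i/2)$, and for $n=2m$ there are exactly $m$ half-integral arguments, so $\prod_{i=0}^{n-1}\Gamma(l-i/2)\in\pi^{n/4}\,\Q^\times$ and the whole archimedean constant in Proposition~\ref{proposition 7.10} lies in $\pi^{\,d(F)(nl-n^2/4)}\overline{\Q}$, not in $\pi^{\,d(F)(nl-n(n-1)/4)}\overline{\Q}$. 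Adding the $L$-value contribution $\pi^{\,d(F)(l-n/2)}$ (which you correctly identify via the functional equation \eqref{8.14}, since $\Gamma(l-m)$ is rational and $L(1-(l-m),\,\cdot\,)$ is a Klingen rational), the total $\pi$-content of $\mathbb{E}(\beta;\chi)$ comes out to $d(F)(nl-n^2/4+l-n/2)$, which is $d(F)\cdot n/4$ smaller than the exponent $d(F)d(\pi)$ appearing in the statement; a concrete instance ($F=\Q$, $n=2$, $m=1$, $l=2$: archimedean constant $8\pi^3$, $L$-value in $\pi\overline{\Q}$, total $\pi^4$, versus $\pi^{d(\pi)}=\pi^{9/2}$) exhibits the mismatch, and the same count gives a discrepancy of $n/2$ in Case~III. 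Your write-up needs to either locate the missing $\pi^{n/4}$-contribution that makes the stated $d(\pi)$ correct, or conclude that the exponent in the corollary (and correspondingly $\boldsymbol{d}(\pi)$ in Theorem~\ref{theorem 8.1}) should be adjusted; a proof that merely asserts the exponents match does not establish the statement.

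A second, smaller gap is in the Galois step. You argue that $\lambda_\beta$ is ``quadratic hence Galois-fixed,'' which is true of the character but not of its Gauss sum: for $\sigma\in\mathrm{Aut}(\C/F)$ one only has $G^F(\lambda_\beta)^\sigma=\pm G^F(\lambda_\beta)$, with a $\beta$-dependent sign, so $G^F(\chi\lambda_\beta)/G^F(\chi)$ does not transparently become $G^F(\chi^\sigma\lambda_\beta)/G^F(\chi^\sigma)$ under $\sigma$. The clean way to avoid this is the one the paper itself sets up: use the functional equation \eqref{8.14} to rewrite the partial $L$-value as an explicit elementary factor (carrying all the $\pi$, $i$, discriminant and Gauss-sum data) times $L(\tfrac12-s_0,(\chi\lambda_\beta)^{-1})$, and then use Klingen's theorem that $L(1-k,\psi^{-1})^\sigma=L(1-k,(\psi^\sigma)^{-1})$ --- no Gauss sum of $\lambda_\beta$ survives on the ``Bernoulli side,'' and the only Gauss sum left to transform is $G^F(\chi)$, which is exactly what appears in the normalization of the displayed formulas.
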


\section{Algebraicity and the $p$-adic $L$-function}
\label{section 8}

In this final section, we apply our computations to study the properties of special $L$-values. Let $\boldsymbol{l}=(l,...,l)$ be a parallel weight satisfying
\begin{equation}
\label{8.1}
\begin{aligned}
l&\geq\left\{\begin{array}{cc}
m+1 & \text{ Case II,}\\
n+1 & \text{ Case III, IV, V.}\\
\end{array}\right.\text{ when }F\neq\Q,\\
l&\geq\left\{\begin{array}{cc}
m+1 & \text{ Case II,}\\
n+r+1 & \text{ Case III, IV, V,}\\
\end{array}\right.\text{ when }F=\Q.
\end{aligned}
\end{equation}
Fix a specific prime $\boldsymbol{p}$ of $\mathfrak{o}$ and an integral ideal $\mathfrak{n}=\mathfrak{n}_1\mathfrak{n}_2=\prod_{v}\mathfrak{p}_v^{\mathfrak{c}_v}$ with $\mathfrak{n}_1,\mathfrak{n}_2,\boldsymbol{p}$ coprime. Denote $\boldsymbol{\varpi}$ for the uniformizer of $\boldsymbol{p}$. We make the following assumptions:\\
(1) $2\in\mathcal{O}_v^{\times}$ and $\theta\in\mathrm{GL}_r(\mathcal{O}_v)$ for all $v|\mathfrak{n}\boldsymbol{p}$.\\
(2) $\boldsymbol{f}\in\mathcal{S}_{\boldsymbol{l}}(K(\mathfrak{n}\boldsymbol{p}),\overline{\Q})$ is an algebraic eigenform for the Hecke algebra $\mathcal{H}(K(\mathfrak{n}\boldsymbol{p}),\mathfrak{X})$ as in Section \ref{section 3.4}.\\
(3) $\boldsymbol{f}$ is an eigenform for the $U(\boldsymbol{p})$ operator with eigenvalue $\alpha(\boldsymbol{p})\neq 0$.\\
(4) $\chi=\chi_1\boldsymbol{\chi}$ with $\chi_1$ has conductor $\mathfrak{n}_2$ and $\boldsymbol{\chi}$ has conductor $\boldsymbol{p}^{\boldsymbol{c}}$ for some integer $\boldsymbol{c}\geq 0$. We assume $\chi$ has infinity type $\boldsymbol{l}$. That is, $\chi_v(x)=x^{l}|x|^{-l}$ for all $v|\infty$.\\
(5) In Case V, all places $v|\mathfrak{n}\boldsymbol{p}$ are nonsplit in $\mathcal{O}$.

We study the special values of $L$-functions $L(s,\boldsymbol{f}\times\chi)$ at 
\begin{equation}
\label{8.2}
s=s_0:=\left\{\begin{array}{cc}
l-\kappa & \text{ Case II, III, IV,}\\
\frac{l}{2}-\kappa & \text{ Case V}.
\end{array}\right.
\end{equation}

\subsection{The algebraicity of special $L$-values}

The following algebraic result is also studied in \cite{BS, Sh00} for symplectic and unitary groups and in \cite{JYB1} for quaternionic unitary groups. Comparing with our previous work in \cite{JYB1}, here we obtain a better bound $l\geq n+r+1$ rather than $l\geq 2n+1$.

\begin{thm}
\label{theorem 8.1}
Let $l$ and $s_0$ as in \eqref{8.1}, \eqref{8.2}. Then
\begin{equation}
\label{8.3}
\begin{aligned}
&\frac{L\left(s_0+\frac{1}{2},\boldsymbol{f}\times\chi\right)}{\pi^{d(F)\boldsymbol{d}(\pi)}\Omega\cdot\langle\boldsymbol{f},\boldsymbol{f}\rangle}\in\overline{\Q},\qquad & \text{ if }\boldsymbol{c}>0,\\
&\frac{L\left(s_0+\frac{1}{2},\boldsymbol{f}\times\chi\right)M\left(s_0+\frac{1}{2},\boldsymbol{f}\times\chi\right)}{\pi^{d(F)\boldsymbol{d}(\pi)}\Omega\cdot\langle\boldsymbol{f},\boldsymbol{f}\rangle}\in\overline{\Q},\qquad & \text{ if }\boldsymbol{c}=0.
\end{aligned}
\end{equation}
with
\begin{equation}
\label{8.4}
\boldsymbol{d}(\pi)=\left\{\begin{array}{cc}
nl-\frac{3m^2}{2}+l-\frac{n}{2} & \text{ Case II,}\\
2nl-\frac{3n^2}{2}+l-n & \text{ Case III,}\\
nl-\frac{3}{2}n(n-1) & \text{ Case IV,}\\
nl-\frac{n(n-1)}{2}-m(m+r) & \text{ Case V.}
\end{array}\right.
\end{equation}
Here $\Omega=1$ in Case II, III, IV and in Case V, $\Omega\in\C^{\times}$ is the following CM period
\begin{equation}
\label{cmperiod}
\Omega=p_E\left(l\tau,r\tau_v\right),
\end{equation}
where $(E,\tau)$ is a fixed CM type and $p_E$ is the period notation given in \cite[Section 11.3]{Sh00}.
\end{thm}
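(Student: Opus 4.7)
The plan is to derive the algebraicity directly from the integral representation in Corollary \ref{corollary 6.4} combined with the Fourier-expansion statement Corollary \ref{corollary 7.12}. Solving \eqref{6.16} and \eqref{6.17} for the $L$-value gives
\[
L\!\left(s_0+\tfrac{1}{2},\boldsymbol{f}\times\chi\right)\cdot M \;=\;\frac{\boldsymbol{Z}(s_0;\boldsymbol{f},f_s)}{\alpha(\boldsymbol{p})^{2\boldsymbol{n}-2}\,C\,\prod_{v\mid\infty}c_l(s_0)\,\cdot\,\langle\pi(\eta)\boldsymbol{f}|U'(\mathfrak{n}_1),\boldsymbol{f}\rangle},
\]
where $M=1$ if $\boldsymbol{c}>0$. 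Thus the proof reduces to an algebraicity statement for the quotient $\boldsymbol{Z}(s_0;\boldsymbol{f},f_s)/(\pi^{d(F)\boldsymbol{d}(\pi)}\,\Omega\,\langle\boldsymbol{f},\boldsymbol{f}\rangle)$. The constant $C$ is manifestly algebraic; the eigenvalue $\alpha(\boldsymbol{p})$ is algebraic since $\boldsymbol{f}\in\mathcal{S}_{\boldsymbol{l}}(K(\mathfrak{n}\boldsymbol{p}),\overline{\Q})$; and $\langle\pi(\eta)\boldsymbol{f}|U'(\mathfrak{n}_1),\boldsymbol{f}\rangle$ lies in $\overline{\Q}\cdot\langle\boldsymbol{f},\boldsymbol{f}\rangle$, because $\pi(\eta)\boldsymbol{f}|U'(\mathfrak{n}_1)$ belongs to the space of algebraic cusp forms of the same type as $\boldsymbol{f}$. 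Under assumption \eqref{8.1}, the $\Gamma$-quotients in Proposition \ref{proposition 5.4} are all products of $\Gamma$-values at positive integers or positive half-integers, so the duplication formula collapses $\prod_{v\mid\infty}c_l(s_0)$ to an explicit power of $\pi$ times an algebraic number.

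The main step is therefore to establish algebraicity of $\boldsymbol{Z}(s_0;\boldsymbol{f},f_s)$ modulo $\pi^{d(F)\boldsymbol{d}(\pi)}\,\Omega\,\langle\boldsymbol{f},\boldsymbol{f}\rangle$. By \eqref{6.14} this is a double Petersson pairing of the Eisenstein series $\boldsymbol{E}(g_1,g_2^{\iota};f_s,\chi,\boldsymbol{n})$ against translates of $\boldsymbol{f}$. Corollary \ref{corollary 7.12} shows that $\pi^{-d(F)d(\pi)}\boldsymbol{E}(\,\cdot\,,\,\cdot\,;f_s,\chi,\boldsymbol{n})$ at $s=s_0$ is an algebraic modular form, since all its Fourier coefficients are algebraic. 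I would then invoke the pullback formula \eqref{6.19} and perform the $g_1$-integration first: by the classical Shimura-type result on algebraicity of Petersson quotients $\langle\mathcal{F},\boldsymbol{f}\rangle/\langle\boldsymbol{f},\boldsymbol{f}\rangle$ for $\mathcal{F}\in\mathcal{S}_{\boldsymbol{l}}(\mathcal{K},\overline{\Q})$ (\cite{Sh00} for the symplectic and unitary cases; adapted in \cite{JYB1} for the quaternionic unitary case), this produces an algebraic cusp form in $g_2$, modulo one factor of $\Omega$ in Case V. Taking the residual Petersson pairing of this form with $\boldsymbol{f}^2|\boldsymbol{V}$ and dividing once more by $\langle\boldsymbol{f},\boldsymbol{f}\rangle$ yields the desired algebraic number.

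The hard part will be the precise bookkeeping of the powers of $\pi$ and of the CM period $\Omega$ to match the exponent $\boldsymbol{d}(\pi)$ in \eqref{8.4}. Contributions come from three distinct sources: the intrinsic $\pi^{d(F)d(\pi)}$ built into the Fourier coefficients per \eqref{7.4.8}, the inverse $\Gamma$-factors contained in $c_l(s_0)$ (which, after the duplication formula, contribute an additional $\pi^{d(F)(l-n/2)}$ in Case II, $\pi^{d(F)(l-n)}$ in Case III, and analogously in Cases IV and V), and—in Case V only—the CM normalization of algebraic modular forms, which forces exactly one surviving factor of $\Omega$ since the second copy cancels against the second application of the Petersson-quotient principle. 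In Cases II, III, IV the reflex field is totally real and no period intervenes, so $\Omega=1$. The same tracking, applied with the explicit Galois equivariance in Corollary \ref{corollary 7.12}, then delivers the refined $\mathrm{Gal}(\overline{\Q}/F)$-equivariant version (Theorem \ref{theorem 8.2}) when $r=0$.
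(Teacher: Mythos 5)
Your overall skeleton — reduce \eqref{8.3} to the quotient $\boldsymbol{Z}(s_0;\boldsymbol{f},f_s)/(\pi^{d(F)\boldsymbol{d}(\pi)}\Omega\langle\boldsymbol{f},\boldsymbol{f}\rangle)$ via Corollary~\ref{corollary 6.4} and then feed in Corollary~\ref{corollary 7.12} — matches the paper's setup. But the crucial middle step is where the proposal breaks, and it breaks precisely on the point the paper goes out of its way to avoid.

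You invoke ``the classical Shimura-type result on algebraicity of Petersson quotients $\langle\mathcal{F},\boldsymbol{f}\rangle/\langle\boldsymbol{f},\boldsymbol{f}\rangle$ for $\mathcal{F}\in\mathcal{S}_{\boldsymbol{l}}(\mathcal{K},\overline{\Q})$,'' performing the $g_1$-pairing first. The object you pair against $\boldsymbol{f}^1$, namely $\pi^{-d(F)d(\pi)}\boldsymbol{E}(\cdot,g_2;f_{s_0})$, is in $\mathcal{M}_{\boldsymbol{l}}(\mathcal{K}(\boldsymbol{p}^2),\overline{\Q})$, \emph{not} in $\mathcal{S}_{\boldsymbol{l}}(\mathcal{K}(\boldsymbol{p}^2),\overline{\Q})$. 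The Petersson pairing only sees its cuspidal projection, so the Shimura-quotient principle applies only if that cuspidal projection is again algebraic. That requires $\mathcal{M}_{\boldsymbol{l}}(\mathcal{K},\overline{\Q})=\mathcal{S}_{\boldsymbol{l}}(\mathcal{K},\overline{\Q})\oplus\mathcal{E}_{\boldsymbol{l}}(\mathcal{K},\overline{\Q})$, i.e.\ that the algebraic Eisenstein subspace splits off rationally. As the paper emphasizes in the opening of its proof of Theorem~\ref{theorem 8.1}, this splitting is known (via~\cite{H84}) only in the range where $E(h;f_s)$ converges absolutely at $s=s_0$, which forces $l\geq 2n+1$ in Cases~II, III, IV — strictly larger than the bound~\eqref{8.1} the theorem claims ($l\geq m+1$ in Case~II, $l\geq n+1$ in Cases~III, IV, V). So your route re-proves the weaker bound already available from~\cite{Sh00,JYB1} but does not establish the theorem as stated.

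The paper instead follows the mechanism of~\cite[Appendix]{BS}: it decomposes $\pi^{-d(F)d(\pi)}\boldsymbol{E}$ over a Hecke-eigenbasis of $\mathcal{S}_{\boldsymbol{l}}(K(\mathfrak{n}\boldsymbol{p}))$ plus an orthogonal complement, uses the pullback identity~\eqref{6.19} (taken on \emph{both} variables) to kill the cross terms and identify the diagonal coefficient $a_{11}$, then applies $\sigma\in\mathrm{Aut}(\C/\overline{\Q})$ to the resulting identity and pairs against $\boldsymbol{f}_1^{1,\sigma}$. The $\sigma$-invariance of the Eisenstein Fourier coefficients (Corollary~\ref{corollary 7.12}) closes the loop without ever asserting algebraicity of a cuspidal projection. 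This argument hinges on $\alpha(\boldsymbol{p})\neq 0$: that hypothesis (and the $U'(\mathfrak{n}_1)$ pairing) guarantees $\langle\boldsymbol{f}^2|\boldsymbol{V},\boldsymbol{f}^2\rangle\neq 0$, so that $a_{11}$ is actually pinned down by the $g_1$-pairing and the integral representation gives a nonvanishing denominator. Your proposal never uses $\alpha(\boldsymbol{p})\neq 0$, which is a symptom of having silently imported the stronger hypothesis $l\geq 2n+1$ in its place. To repair the argument within the stated weight range you would need to reproduce the Böcherer–Schmidt Galois-twisting step rather than the Shimura–Petersson quotient.
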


\begin{proof}
The proof is similar to \cite[Appendix]{BS} and \cite{JYB1, Sh00}. We remark that in \cite{JYB1, Sh00}, one needs to use the fact that the space of algebraic modular forms is a direct sum of space of algebraic cusp forms and algebraic Eisenstein series. This result is proved in \cite{H84} when the Eisenstein series is absolutely convergent at $s=s_0$ which forces $l\geq n$ in Case I and $l\geq 2n+1$ in Case II, III, IV. This result is not necessary and not used in \cite[Appendix]{BS} so that the special value below the absolutely convergence bound can be considered. However, the proof there need the assumption that the eigenvalue $\alpha(\boldsymbol{p})$ of the $U(\boldsymbol{p})$ operator for $\boldsymbol{f}$ is nonzero as we made here. We sketch the proof following \cite[Appendix]{BS}.

Let $\{\boldsymbol{f}_i\}$ be an orthogonal basis of $\mathcal{S}_{\boldsymbol{l}}(K(\mathfrak{n}\boldsymbol{p}))$ consisting of eigenforms of the Hecke algebra $\mathcal{H}(K(\mathfrak{n}\boldsymbol{p}),\mathfrak{X})$, which without losing generality we assume $\boldsymbol{f}_1=\boldsymbol{f}$. Take $\{\boldsymbol{h}_i\}$ be a basis of the orthogonal complement of $\mathcal{S}_{\boldsymbol{l}}(K(\mathfrak{n}\boldsymbol{p}))$ in $\mathcal{M}_{\boldsymbol{l}}(K(\mathfrak{n}\boldsymbol{p}))$. Denote $\boldsymbol{f}_i^1,\boldsymbol{f}_i^2$ (resp. $\boldsymbol{h}_i^1,\boldsymbol{h}_i^2$) such that $\boldsymbol{f}_i^1(g)=\boldsymbol{f}_i(g\eta_1\eta_{\boldsymbol{p}})$ (resp. $\boldsymbol{h}_i^1(g)=\boldsymbol{h}_i(g\eta_1\eta_{\boldsymbol{p}})$)  and $\boldsymbol{f}_i^2(g)=\boldsymbol{f}_i(g\eta_2)$ (resp. $\boldsymbol{h}_i^2(g)=\boldsymbol{h}_i(g\eta_2)$) with $\eta_1,\eta_2$ in \eqref{6.1} and $\eta_{\boldsymbol{p}}$ in \eqref{6.15}. Let $\boldsymbol{V}$ be the operator defined by $\boldsymbol{f}|\boldsymbol{V}:=\pi(\eta)\boldsymbol{f}|U'(\mathfrak{n}_1)$ and we use the superscript $c$ to mean $\boldsymbol{f}^c(g):=\overline{\boldsymbol{f}(g^{\iota})}$. 

We can write the Eisenstein series $\boldsymbol{E}(g_1,g_2;f_s)$ as
\begin{equation}
\label{I}
\begin{aligned}
\frac{\boldsymbol{E}(g_1,g_2;f_s)}{\pi^{d(F)d(\pi)}}&=\sum_{i,j}a_{ij}\boldsymbol{f}_i^1(g_1)\boldsymbol{f}_j^{2,c}|\boldsymbol{V}(g_2)+\sum_{i,j}b_{ij}\boldsymbol{h}_i^1(g_1)\boldsymbol{h}_j^{2,c}(g_2)\\
&+\sum_{i,j}c_{ij}\boldsymbol{f}_i^1(g_1)\boldsymbol{h}_j^{2,c}(g_2)+\sum_{i,j}d_{ij}\boldsymbol{h}_i^1(g_1)\boldsymbol{f}_j^{2,c}|\boldsymbol{V}(g_2).
\end{aligned}
\end{equation}

We take the Petersson inner product on both sides of \eqref{I} with $\boldsymbol{f}_i^1$ for the first variable. Then the integral representation \eqref{6.19} shows that
\[
\begin{aligned}
&\frac{\boldsymbol{Z}(s;\boldsymbol{f}_i,f_s)}{\pi^{d(F)d(\pi)}\langle\boldsymbol{f}^2_i|\boldsymbol{V},\boldsymbol{f}^2_i\rangle}\boldsymbol{f}_i^{2,c}|\boldsymbol{V}(g_2)\\
=&\sum_{j}a_{ij}\langle\boldsymbol{f}_i,\boldsymbol{f}_i\rangle\boldsymbol{f}_j^{2,c}|\boldsymbol{V}(g_2)+\sum_{j}c_{ij}\langle\boldsymbol{f}_i,\boldsymbol{f}_i\rangle\boldsymbol{h}_j^{2,c}(g_2).
\end{aligned}
\]
Clearly, we have $a_{ij}=0$ if $j\neq i$ and $c_{ij}=0$ for all $j$. Similarly taking the Petersson inner product on both sides of \eqref{I} with $\boldsymbol{f}_j^{2,c}|\boldsymbol{V}$ for the second variable, we conclude that $c_{ij}=d_{ij}=0$ for all $i,j$ and $a_{ij}\neq0$ unless $i=j$ in which case
\[
a_{ii}=\frac{\boldsymbol{Z}(s;\boldsymbol{f}_i,f_s)}{\pi^{d(F)d(\pi)}\langle\boldsymbol{f}_i,\boldsymbol{f}_i\rangle\langle\boldsymbol{f}^2_i|\boldsymbol{V},\boldsymbol{f}^2_i\rangle}.
\] 
Hence we can write
\begin{equation}
\label{Ia}
\frac{\boldsymbol{E}(g_1,g_2;f_s)}{\pi^{d(F)d(\pi)}}=\sum_ia_{ii}\boldsymbol{f}_i^1(g_1)\boldsymbol{f}_i^{2,c}|\boldsymbol{V}(g_2)+\sum_{i,j}b_{ij}\boldsymbol{h}_i^1(g_1)\boldsymbol{h}_j^{2,c}(g_2).
\end{equation}

Applying $\sigma\in\mathrm{Aut}(\C/\overline{\Q})$ on both sides of \eqref{Ia}, we have
\begin{equation}
\label{II}
\begin{aligned}
\left(\frac{\boldsymbol{E}(g_1,g_2;f_s)}{\pi^{d(F)d(\pi)}}\right)^{\sigma}&=\sum_ia_{ii}^{\sigma}\boldsymbol{f}_i^{1,\sigma}(g_1)(\boldsymbol{f}_i^{2,c}|\boldsymbol{V})^{\sigma}(g_2)+\sum_{i,j}b^{\sigma}_{ij}\boldsymbol{h}_i^{1,\sigma}(g_1)\boldsymbol{h}_i^{2,c,\sigma}(g_2).
\end{aligned}
\end{equation}

We now take the Petersson inner product on both sides of \eqref{II} with $\boldsymbol{f}_1^{1,\sigma}$ for the first variable $g_1$. For the left hand side, by Corollary \ref{corollary 7.12}, we have
\[
\left(\frac{\boldsymbol{E}(g_1,g_2;f_s)}{\pi^{d(F)d(\pi)}}\right)^{\sigma}=\frac{\boldsymbol{E}(g_1,g_2;f_s)}{\pi^{d(F)d(\pi)}},
\]
and the integral representation \eqref{6.19} shows that
\begin{equation}
\label{III}
\left\langle\frac{\boldsymbol{E}(g_1,g_2;f_s)}{\pi^{d(F)d(\pi)}},\boldsymbol{f}_1^{1,\sigma}(g_1)\right\rangle=\frac{\boldsymbol{Z}(s,\boldsymbol{f}^{\sigma},f_s)}{\pi^{d(F)d(\pi)}\langle\boldsymbol{f}_1^{2,\sigma}|\boldsymbol{V},\boldsymbol{f}_1^{2,\sigma}\rangle}\cdot\left(\boldsymbol{f}_1^{2,\sigma}|\boldsymbol{V}\right)^c(g_2).
\end{equation}
For the right hand side, we obtain
\begin{equation}
\label{IV}
\begin{aligned}
&\left(\frac{\boldsymbol{Z}(s;\boldsymbol{f},f_s)}{\pi^{d(F)d(\pi)}\langle\boldsymbol{f}^{\sigma},\boldsymbol{f}^{\sigma}\rangle\langle\boldsymbol{f}_1^{2,\sigma}|\boldsymbol{V},\boldsymbol{f}_1^{2,\sigma}\rangle}\right)^{\sigma}\langle\boldsymbol{f}^{\sigma},\boldsymbol{f}^{\sigma}\rangle\left(\boldsymbol{f}_1^{2,c}|\boldsymbol{V}\right)^{\sigma}(g_2)\\
+&\sum_{i,j}b^{\sigma}_{ij}\langle\boldsymbol{h}_i^{\sigma},\boldsymbol{f}_1^{\sigma}\rangle\boldsymbol{h}_i^{2,c,\sigma}(g_2).
\end{aligned}
\end{equation}
Our assumption on the algebraicity of $\boldsymbol{f}$ implies 
\[
\left(\boldsymbol{f}_1^{2,\sigma}\right)^{c}=\boldsymbol{f}_1^{2,c}\text{ and }\left(\Omega\cdot\boldsymbol{f}_{1}^{2,c}\right)^{\sigma}=\Omega\cdot\boldsymbol{f}_{1}^{2,c}.
\]
Comparing \eqref{III} and \eqref{IV} we conclude that
\[
\frac{\boldsymbol{Z}(s,\boldsymbol{f},f_s)}{\Omega\cdot\pi^{d(F)d(\pi)}\langle\boldsymbol{f},\boldsymbol{f}\rangle\langle\boldsymbol{f}^2_1|\boldsymbol{V},\boldsymbol{f}^2_1\rangle}=\left(\frac{\boldsymbol{Z}(s,\boldsymbol{f},f_s)}{\Omega\cdot\pi^{d(F)d(\pi)}\langle\boldsymbol{f},\boldsymbol{f}\rangle\langle\boldsymbol{f}^2_1|\boldsymbol{V},\boldsymbol{f}^2_1\rangle}\right)^{\sigma}.
\]

We finally conclude the theorem by the integral representation in Corollary \ref{corollary 6.4}. Assume $\boldsymbol{c}>0$ (the case $\boldsymbol{c}=0$ is similar), the term $\alpha(\boldsymbol{p})^{2\boldsymbol{n}-2}$ and the constant $C'$ are algebraic so that
\[
\begin{aligned}
&\frac{c_l(s)^{d(F)}L\left(s+\frac{1}{2},\boldsymbol{f}\times\chi\right)}{\Omega\cdot\pi^{d(F)d(\pi)}\langle \boldsymbol{f},\boldsymbol{f}\rangle}=\left(\frac{c_l(s)^{d(F)}L\left(s+\frac{1}{2},\boldsymbol{f}\times\chi\right)}{\Omega\cdot\pi^{d(F)d(\pi)}\langle\boldsymbol{f},\boldsymbol{f}\rangle}\right)^{\sigma}.
\end{aligned}
\]
The theorem then follows by the easy calculation of the power of $\pi$.
\end{proof}

When $r=0$, we can define the action of $\sigma\in\mathrm{Gal}(\overline{\Q}/F)$ on $\boldsymbol{f}\in\mathcal{S}_{\boldsymbol{l}}(K(\mathfrak{n}\boldsymbol{p}),\overline{\Q})$ on the Fourier coefficients of $\boldsymbol{f}$. In this case we have the following refined version of above theorem.

\begin{thm}
\label{theorem 8.2}
Assume $r=0$. Let $l$ and $s_0$ as in \eqref{8.1}, \eqref{8.2}. For $\boldsymbol{c}>0$ and $\sigma\in\mathrm{Gal}(\overline{\Q}/F)$we have\\
(Case II, Symplectic)
\[
\left(\frac{\chi(\mathfrak{n}_1)^{m\boldsymbol{d}_1}L\left(s_0+\frac{1}{2},\boldsymbol{f}\times\chi\right)}{\pi^{d(F)\boldsymbol{d}(\pi)}i^mG(\chi)^{m+1}\langle\boldsymbol{f},\boldsymbol{f}\rangle}\right)^{\sigma}=\frac{\chi^{\sigma}(\mathfrak{n}_1)^{m\boldsymbol{d}_1}L\left(s_0+\frac{1}{2},\boldsymbol{f}^{\sigma}\times\chi^{\sigma}\right)}{\pi^{d(F)\boldsymbol{d}(\pi)}i^mG(\chi^{\sigma})^{m+1}\langle\boldsymbol{f}^{\sigma},\boldsymbol{f}^{c\sigma c}\rangle},
\]
(Case III, Quaternionic Orthogonal)
\[
\left(\frac{\chi(\mathfrak{n}_1)^{m\boldsymbol{d}_1}L\left(s_0+\frac{1}{2},\boldsymbol{f}\times\chi\right)}{\pi^{d(F)\boldsymbol{d}(\pi)}G^F(\chi)G(\chi)^m\langle\boldsymbol{f},\boldsymbol{f}\rangle}\right)^{\sigma}=\frac{\chi^{\sigma}(\mathfrak{n}_1)^{m\boldsymbol{d}_1}L\left(s_0+\frac{1}{2},\boldsymbol{f}^{\sigma}\times\chi^{\sigma}\right)}{\pi^{d(F)\boldsymbol{d}(\pi)}G^F(\chi^{\sigma})G(\chi^{\sigma})^m\langle\boldsymbol{f}^{\sigma},\boldsymbol{f}^{c\sigma c}\rangle},
\]
(Case IV, Quaternionic Unitary)
\[
\left(\frac{\chi(\mathfrak{n}_1)^{m\boldsymbol{d}_1}L\left(s_0+\frac{1}{2},\boldsymbol{f}\times\chi\right)}{\pi^{d(F)\boldsymbol{d}(\pi)}G(\chi)^m\langle\boldsymbol{f},\boldsymbol{f}\rangle}\right)^{\sigma}=\frac{\chi^{\sigma}(\mathfrak{n}_1)^{m\boldsymbol{d}_1}L\left(s_0+\frac{1}{2},\boldsymbol{f}^{\sigma}\times\chi^{\sigma}\right)}{\pi^{d(F)\boldsymbol{d}(\pi)}G(\chi^{\sigma})^m\langle\boldsymbol{f}^{\sigma},\boldsymbol{f}^{c\sigma c}\rangle},
\]
(Case V, Unitary)
\[
\left(\frac{\chi(\mathfrak{n}_1)^{m\boldsymbol{d}_1}L\left(s_0+\frac{1}{2},\boldsymbol{f}\times\chi\right)}{\pi^{d(F)\boldsymbol{d}(\pi)}G(\chi)^m\Omega\cdot\langle\boldsymbol{f},\boldsymbol{f}\rangle}\right)^{\sigma}=\frac{\chi^{\sigma}(\mathfrak{n}_1)^{m\boldsymbol{d}_1}L\left(s_0+\frac{1}{2},\boldsymbol{f}^{\sigma}\times\chi^{\sigma}\right)}{\pi^{d(F)\boldsymbol{d}(\pi)}G(\chi^{\sigma})^m\Omega\cdot\langle\boldsymbol{f}^{\sigma},\boldsymbol{f}^{c\sigma c}\rangle}.
\]
Here we use the superscript $c$ to mean $\boldsymbol{f}^c(g)=\overline{\boldsymbol{f}(g^{\iota})}$. When $\boldsymbol{c}=0$, one replace $L\left(s_0+\frac{1}{2},\boldsymbol{f}\times\chi\right)$ by $L\left(s_0+\frac{1}{2},\boldsymbol{f}\times\chi\right)M\left(s_0+\frac{1}{2},\boldsymbol{f}\times\chi\right)$ in above formulas.
\end{thm}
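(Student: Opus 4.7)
\textbf{Proof proposal for Theorem \ref{theorem 8.2}.} The plan is to refine the argument of Theorem \ref{theorem 8.1} by tracking the $\mathrm{Gal}(\overline{\Q}/F)$-action through the Fourier coefficients. This is only available when $r=0$, where Remark \ref{remark 5.8} gives a well-defined action $\boldsymbol{f} \mapsto \boldsymbol{f}^{\sigma}$ on $\mathcal{M}_{\boldsymbol{l}}(K(\mathfrak{n}\boldsymbol{p}),\overline{\Q})$ via Fourier expansion, and where Corollary \ref{corollary 7.12} upgrades the algebraicity of the Fourier coefficients of $\mathbb{E}(h;f_s,\chi,\boldsymbol{n})$ to explicit $\sigma$-equivariance formulas involving the Gauss sums.

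First I would set up the spectral decomposition exactly as in the proof of Theorem \ref{theorem 8.1}: choose an orthogonal basis $\{\boldsymbol{f}_i\}$ of $\mathcal{S}_{\boldsymbol{l}}(K(\mathfrak{n}\boldsymbol{p}))$ of Hecke eigenforms with $\boldsymbol{f}_1 = \boldsymbol{f}$, complete it to a basis of $\mathcal{M}_{\boldsymbol{l}}(K(\mathfrak{n}\boldsymbol{p}))$ by Eisenstein vectors $\{\boldsymbol{h}_j\}$, and use \eqref{6.19} to show that at $s=s_0$,
\begin{equation*}
\boldsymbol{E}(g_1,g_2;f_s,\chi,\boldsymbol{n}) = \sum_i a_{ii}(\chi)\,\boldsymbol{f}_i^1(g_1)\,\boldsymbol{f}_i^{2,c}|\boldsymbol{V}(g_2) + \sum_{i,j} b_{ij}(\chi)\,\boldsymbol{h}_i^1(g_1)\,\boldsymbol{h}_j^{2,c}(g_2),
\end{equation*}
with $a_{ii}(\chi)$ proportional to $\boldsymbol{Z}(s_0;\boldsymbol{f}_i,f_s,\chi,\boldsymbol{n})/(\langle\boldsymbol{f}_i,\boldsymbol{f}_i\rangle\langle\boldsymbol{f}_i^2|\boldsymbol{V},\boldsymbol{f}_i^2\rangle)$ and the constant of proportionality being an explicit algebraic factor from Corollary \ref{corollary 6.4}. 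The key point is now to divide $\boldsymbol{E}$ by the exact normalization $N(\chi) = \pi^{d(F)d(\pi)}\cdot (\text{Gauss sum factor})$ dictated by Corollary \ref{corollary 7.12}, so that $\boldsymbol{E}/N(\chi)$ has algebraic Fourier coefficients at finite adelic $y$ and satisfies $(\boldsymbol{E}(\cdot;\chi)/N(\chi))^{\sigma} = \boldsymbol{E}(\cdot;\chi^{\sigma})/N(\chi^{\sigma})$ for $\sigma\in\mathrm{Gal}(\overline{\Q}/F)$.

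Applying $\sigma$ to the decomposition and pairing with $\boldsymbol{f}_1^{1,\sigma}$ in the first variable, I would use that for $\sigma\in\mathrm{Gal}(\overline{\Q}/F)$ and a form with algebraic Fourier coefficients, complex conjugation interacts with $\sigma$ via $\overline{(\boldsymbol{f})^{\sigma}} = \overline{\boldsymbol{f}}{}^{c\sigma c}$, which forces the second argument of the Petersson pairing on the right-hand side to be $\boldsymbol{f}^{c\sigma c}$ rather than $\boldsymbol{f}^{\sigma}$. Pairing again with $(\boldsymbol{f}_1^{2,c}|\boldsymbol{V})^{\sigma}$ in the second variable isolates the diagonal $i=1$ contribution, and Corollary \ref{corollary 6.4} applied with $(\boldsymbol{f}^{\sigma},\chi^{\sigma})$ produces the $L$-value $L(s_0+\tfrac12,\boldsymbol{f}^{\sigma}\times\chi^{\sigma})$. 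Matching the two sides then yields the claimed equivariance. The explicit normalizations ($i^m G(\chi)^{m+1}$ in Case II, $G^F(\chi)G(\chi)^m$ in Case III, $G(\chi)^m$ in Cases IV, V, with the $\Omega$ factor in Case V) follow by combining the $G(\chi)^m$ coming from $C(\beta,\chi)$ in Proposition \ref{proposition 7.10} with the extra Gauss-sum factor (exponent $1$ in Cases II and III, trivial in Cases IV and V) coming from the Dirichlet $L$-factor $L(s_0+\tfrac12,\chi\lambda_{\beta})$ inside $\mathbb{E}(\beta;\chi)$, together with the constants $C'$ or $C''$ of Corollary \ref{corollary 6.4} which contribute the $\chi(\mathfrak{n}_1)^{m\boldsymbol{d}_1}$. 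The factor $M(s_0+\tfrac12,\boldsymbol{f}\times\chi)$ in the $\boldsymbol{c}=0$ case is algebraic in $\chi$ and transforms under $\sigma$ as $\chi \mapsto \chi^{\sigma}$, which accounts for the stated modification.

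The main obstacle is the interplay between the complex conjugation implicit in the Petersson inner product and the Galois action: unlike in Theorem \ref{theorem 8.1}, where $\sigma \in \mathrm{Aut}(\C/\overline{\Q})$ commutes with the abstract form $\boldsymbol{f}$ since it fixes its coefficients, here $\sigma$ and $c$ genuinely do not commute, and the appearance of $\boldsymbol{f}^{c\sigma c}$ in $\langle\boldsymbol{f}^{\sigma},\boldsymbol{f}^{c\sigma c}\rangle$ must be carefully justified using Proposition \ref{proposition 5.7} together with the Fourier-coefficient characterization in Remark \ref{remark 5.8}. A secondary technical point is to check that the eigenform basis can be chosen inside $\mathcal{S}_{\boldsymbol{l}}(K(\mathfrak{n}\boldsymbol{p}),\overline{\Q})$ (so that $\sigma$ permutes it), which follows from the commutativity of the Hecke algebra action with $\sigma$ and the algebraic structure of the cuspidal subspace in Proposition \ref{proposition 5.7}.
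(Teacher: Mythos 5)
The paper gives no proof of Theorem \ref{theorem 8.2} beyond saying it is ``similar to Theorem \ref{theorem 8.1}'', so there is nothing to compare against line by line; your sketch is a sound reconstruction of the intended argument, and its skeleton matches what the paper's Theorem \ref{theorem 8.1} proof would look like once the extra bookkeeping for a nontrivial Galois action is added. You correctly identify the three ingredients that make the refinement possible when $r=0$: the Fourier-coefficient characterization of $\mathcal{M}_{\boldsymbol{l}}(K,\overline{\Q})$ from Remark \ref{remark 5.8}, the explicit $\sigma$-equivariance of $\mathbb{E}(\beta;\chi)$ from Corollary \ref{corollary 7.12}, and the fact that the eigenform basis can be taken in $\mathcal{S}_{\boldsymbol{l}}(K(\mathfrak{n}\boldsymbol{p}),\overline{\Q})$ (Proposition \ref{proposition 5.7} plus equivariance of Hecke operators). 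Your accounting of the Gauss-sum normalization --- $G(\chi)^m$ from $C(\beta,\chi)$ in Proposition \ref{proposition 7.10}, plus an extra $G(\chi)$ in Case II and $G^F(\chi)$ in Case III coming from the Dirichlet factor $L(s_0+\tfrac12,\chi\lambda_\beta)$ inside $\mathbb{E}(\beta;\chi)$, none in Cases IV, V --- agrees with the denominators in the statement, and $\chi(\mathfrak{n}_1)^{m\mathbf{d}_1}$ indeed comes from the constant $C'$ (resp. $C''$) in Corollary \ref{corollary 6.4}.

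One point is stated imprecisely: the relation you write as $\overline{(\boldsymbol{f})^{\sigma}}=\overline{\boldsymbol{f}}{}^{\,c\sigma c}$ does not quite make sense, since $\overline{\boldsymbol{f}}$ is antiholomorphic and the Fourier-coefficient Galois action of Remark \ref{remark 5.8} is only defined on holomorphic forms. The clean statement you actually need is $(\boldsymbol{f}^{\sigma})^{c}=(\boldsymbol{f}^{c})^{c\sigma c}$, which one checks directly on Fourier coefficients: both sides have coefficients $c\sigma(\mathbf{c}(\beta;\boldsymbol{f},y))$ since $\mathbf{c}(\beta;\boldsymbol{f}^{c},y)=\overline{\mathbf{c}(\beta;\boldsymbol{f},y)}$. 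With this correction, pairing the $\sigma$-twisted decomposition \eqref{Ia} against $\boldsymbol{f}_1^{1,\sigma}$ in the first variable produces a multiple of $\left(\boldsymbol{f}_1^{2,c}|\boldsymbol{V}\right)^{\sigma}=\left(\boldsymbol{f}_1^{2}\right)^{c\sigma c,c}|\boldsymbol{V}^{\sigma}$ in the second variable, which is why $\boldsymbol{f}^{c\sigma c}$ (and not $\boldsymbol{f}^{\sigma}$) appears inside the Petersson pairing on the right-hand side. This is exactly the departure from Theorem \ref{theorem 8.1}, where $\sigma\in\mathrm{Aut}(\C/\overline{\Q})$ fixes $\boldsymbol{f}$ and the step degenerates to $(\boldsymbol{f}^{2,\sigma})^{c}=\boldsymbol{f}^{2,c}$.

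Beyond this notational fix, your proposal is correct and is essentially the argument the paper has in mind; the additional detail you supply (spectral decomposition, isolation of the diagonal term, comparison of normalizations through Corollaries \ref{corollary 6.4} and \ref{corollary 7.12}) is exactly what the cryptic ``similar to Theorem \ref{theorem 8.1}'' is covering, following the template of the Appendix in the Böcherer--Schmidt reference.
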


\begin{proof}
We omit as it can be proved by the similar argument as in Theorem \ref{theorem 8.1} (see also \cite[Appendix]{BS}).
\end{proof}

\begin{rem}
\label{remark 8.3}
We do not obtain above theorem in general for any $r$ because we do not have a well defined action of $\sigma\in\mathrm{Gal}(\overline{\Q}/F)$. If for a field $\Psi\subset\overline{\Q}$, one can define the meaning of $\mathcal{M}_{\boldsymbol{l}}(K,\Psi)\subset\mathcal{M}_{\boldsymbol{l}}(K,\overline{\Q})$ properly such that $\mathcal{M}_{\boldsymbol{l}}(K,\overline{\Q})=\mathcal{M}_{\boldsymbol{l}}(K,\Psi)\otimes_{\Psi}\overline{\Q}$, then one can further define the action $\mathrm{Gal}(\overline{\Q}/\Psi)$ on $\mathcal{M}_{\boldsymbol{l}}(K,\overline{\Q})$ by acting on $\overline{\Q}$. If this action preserves the subspace $\mathcal{S}_{\boldsymbol{l}}(K,\Psi)$, then one can refine Theorem \ref{theorem 8.1} to obtain similar formulas as in Theorem \ref{theorem 8.2} for $\sigma\in\mathrm{Gal}(\overline{\Q}/\Psi)$.
\end{rem}

\subsection{Preliminary on $p$-adic $L$-functions }
\label{section 8.2}

We now turn to the $p$-adic interpolation of the special value $L(s_0+\frac{1}{2},\boldsymbol{f}\times\chi)$.  For our specified prime $\boldsymbol{p}$, let $p$ be the prime number under $\boldsymbol{p}$ and $\C_p=\widehat{\overline{\Q}}_p$ the completion of $\overline{\Q}_p$. Fix an embedding $\overline{\Q}\to\C_p$. The $p$-adic absolute value $|\cdot|_p$ naturally extends to $\C_p$ and we denote
\begin{equation}
\label{8.2.1}
\mathcal{O}_{\C_p}=\{x\in\C_p:|x|_p\leq 1\}.
\end{equation}

Consider the $p$-adic analytic group
\begin{equation}
\label{8.2.2}
\mathrm{Cl}_E^+(\boldsymbol{p}^{\infty})=E^{\times}\backslash\mathbb{A}_E^{\times}/U(\boldsymbol{p}^{\infty})E^+_{\infty}
\end{equation}
where $U(\boldsymbol{p}^{\infty})$ is the group of elements of $\hat{\mathfrak{o}}^{\times}$ that are congruent to $1$ mod $\boldsymbol{p}^n$ for all integers $n$ with $\hat{\mathfrak{o}}$ the completion of $\mathfrak{o}$ and $E^+_{\infty}$ the connected component of the identity in $E_{\infty}=E\otimes_{\Q}\R$. We refer the reader to \cite[Section 10.2]{CW} for more details on the geometry of this space and the locally analytic functions on this space. We denote by $\mathcal{A}(\mathrm{Cl}_E^{+}(\boldsymbol{p}^{\infty}),\C_p)$ the space of locally analytic functions on $\mathrm{Cl}_E^+(\boldsymbol{p}^{\infty})$ and the space of $p$-adic distributions $\mathcal{D}(\mathrm{Cl}_E^+(\boldsymbol{p}^{\infty}),\C_p)$ are defined as the topological dual of $\mathcal{A}(\mathrm{Cl}_E^{+}(\boldsymbol{p}^{\infty}),\C_p)$. Clearly there is a natural pairing
\begin{equation}
\label{8.2.3}
\begin{aligned}
\mathcal{A}(\mathrm{Cl}_E^{+}(\boldsymbol{p}^{\infty}),\C_p)\times\mathcal{D}(\mathrm{Cl}_E^+(\boldsymbol{p}^{\infty}),\C_p)&\to\C_p,\\
(f,\mu)&\mapsto\mu(f)=:\int_{\mathrm{Cl}_E^{+}(\boldsymbol{p}^{\infty})}fd\mu.
\end{aligned}
\end{equation}
A $p$-adic distribution is called a $p$-adic measure if it is bounded.

The Hecke character $\boldsymbol{\chi}$ of conductor $\boldsymbol{p}^{\boldsymbol{c}}$ defines a locally analytic function on $\mathrm{Cl}_E^{+}(\boldsymbol{p}^{\infty})$. Since they forms a dense subspace of $\mathcal{A}(\mathrm{Cl}_E^{+}(\boldsymbol{p}^{\infty}),\C_p)$, a $p$-adic distribution is uniquely determined by its value at all these Hecke characters. In following two subsections, we define the $p$-adic distribution interpolating the special value $L(s_0+\frac{1}{2},\boldsymbol{f}\times\chi)$ for $p$-ordinary $\boldsymbol{f}$ and prove that the distribution we constructed is indeed a $p$-adic measure. Note that Case II, III and Case IV, V are treated separately because of the occurrence of the Hecke $L$-function for the Fourier expansion in Proposition \ref{proposition 7.10} for Case II, III.

We end up this subsection by the following important preliminary lemma.

\begin{lem}
\label{lemma 8.4}
Let $F(h)$ be a modular form on $H(\mathbb{A})$ with a Fourier expansion of the form
\begin{equation}
\label{8.2.4}
F(q_z)=\nu(y^{\ast})^l\sum_{\beta\in S}C(\beta)e_{\infty}(\tau(\beta z))
\end{equation}
with $q_z$ as in \ref{7.4.3}. We further assume that $F(g_1,g_2)\in\mathcal{M}_{\boldsymbol{l}}(\mathcal{K}(\boldsymbol{p}^2),\overline{\Q})\otimes\mathcal{M}_{\boldsymbol{l}}(\mathcal{K}'(\boldsymbol{p}^2),\overline{\Q})$ with notation as in Section \ref{section 6}. \\
(1) Let $\{\boldsymbol{f}_i\}$ be a basis of $\mathcal{M}_{\boldsymbol{l}}(K(\mathfrak{n}\boldsymbol{p}^2),\overline{\Q})$ and denote $\boldsymbol{f}_i^1$ (resp. $\boldsymbol{f}_i^2$) such that $\boldsymbol{f}_i^1(g)=\boldsymbol{f}_i(g\eta_1\eta_{\boldsymbol{p}})$ (resp. $\boldsymbol{f}_i^2(g)=\boldsymbol{f}_i(g\eta_2)$) with $\eta_1,\eta_2$ in \eqref{6.1} and $\eta_{\boldsymbol{p}}$ in \eqref{6.15}. Then there exists some constants $a_{ij}$ such that
\begin{equation}
\label{8.2.5}
F(g_1,g_2)=\sum_{i,j}a_{ij}\boldsymbol{f}_i^1(g_1)\boldsymbol{f}_j^2(g_2).
\end{equation}
(2) There exist a constant $\Omega_{\boldsymbol{p}}\in\overline{\Q}^{\times}$ independent of $F$ such that if $C(\beta)\in\mathcal{O}_{\C_p}$ then $a_{ij}\in\Omega\cdot\Omega_{\boldsymbol{p}}\cdot\mathcal{O}_{\C_p}$ where $\Omega=1$ in Case II, III, IV and $\Omega$ is the CM period \eqref{cmperiod} in Case V.
\end{lem}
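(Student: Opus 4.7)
The plan is to prove (1) as a direct consequence of the algebraicity hypothesis together with the transfer between congruence subgroups effected by $\eta_1\eta_{\boldsymbol{p}}$ and $\eta_2$, and to prove (2) by inverting the expansion against a fixed ``Fourier-coefficient dual basis,'' showing that the transition matrix is a fixed invertible algebraic object whose inverse has denominators bounded by a universal $\Omega_{\boldsymbol{p}}$.

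For (1), right translation by $\eta_1\eta_{\boldsymbol{p}}$ conjugates $K(\mathfrak{n}_1)K(\boldsymbol{p}^2)$ at the places $v \mid \mathfrak{n}_1\boldsymbol{p}$ into the ``opposite'' congruence condition appearing in $\mathcal{K}(\boldsymbol{p}^2)=K'(\mathfrak{n}_1)K(\mathfrak{n}_2)K(\boldsymbol{p}^2)$, and similarly $\eta_2$ intertwines $K(\mathfrak{n}_2)$ with $K'(\mathfrak{n}_2)$. These intertwinings preserve $\overline{\Q}$-rationality by Definition \ref{definition 5.6}, because right translation by rational elements permutes CM points and leaves the period $\mathcal{P}(g)$ unchanged. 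Consequently $\{\boldsymbol{f}_i^1\}$ is a $\overline{\Q}$-basis of $\mathcal{M}_{\boldsymbol{l}}(\mathcal{K}(\boldsymbol{p}^2),\overline{\Q})$ and $\{\boldsymbol{f}_j^2\}$ a $\overline{\Q}$-basis of $\mathcal{M}_{\boldsymbol{l}}(\mathcal{K}'(\boldsymbol{p}^2),\overline{\Q})$. Since by hypothesis $F$ lies in the algebraic tensor product of these two finite-dimensional spaces, the expansion \eqref{8.2.5} with $a_{ij}\in\overline{\Q}$ is immediate.

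For (2), the strategy is to solve for $a_{ij}$ linearly using ``test Fourier data'' applied in each variable. Once and for all I fix finite index sets and data $(\beta^{(k)},y^{(k)})$ and $(\gamma^{(l)},u^{(l)})$ chosen so that the matrices
\[
M_{ki}=\mathbf{c}(\beta^{(k)};\boldsymbol{f}_i^1,y^{(k)}),\qquad N_{lj}=\mathbf{c}(\gamma^{(l)};\boldsymbol{f}_j^2,u^{(l)})
\]
are invertible over $\overline{\Q}$; such choices exist because the $\boldsymbol{f}_i^1$ are $\overline{\Q}$-linearly independent (when $r=0$ this is a direct consequence of Remark \ref{remark 5.8}; when $r>0$, as in Case V, one replaces Fourier coefficients by values at a sufficient number of CM points, with the normalization by the period $\Omega$ ensuring that the entries of $M$ and $N$ are in $\overline{\Q}$). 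Let $\Omega_{\boldsymbol{p}}\in\overline{\Q}^{\times}$ be a common denominator of the entries of $M^{-1}$ and $N^{-1}$. This $\Omega_{\boldsymbol{p}}$ depends only on the chosen bases, not on $F$.

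Now the Fourier expansion of $F$ along the Siegel parabolic of $H$ restricts, under the doubling embedding $(q_{z_1},q_{z_2})\mapsto q_{[z_1,z_2]}$ described in \eqref{2.1.9} and \eqref{5.2.2}, to a joint expansion in $(z_1,z_2)$. At each pair $(\beta^{(k)},\gamma^{(l)})$ the ``$(k,l)$-joint coefficient'' of $F(q_{z_1},q_{z_2})$ is, by direct inspection of the doubling formula, a finite $\overline{\Z}$-linear combination of the Siegel coefficients $C(\beta)$ for explicit $\beta\in S_n(F)$ built from $\beta^{(k)}$ and $\gamma^{(l)}$. Hence if $C(\beta)\in\mathcal{O}_{\C_p}$ for all $\beta$, these joint coefficients lie in $\mathcal{O}_{\C_p}$ (in Case V, in $\Omega\cdot\mathcal{O}_{\C_p}$ after dividing by the appropriate CM period). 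On the other hand, writing out the same joint coefficients for $\sum_{i,j}a_{ij}\boldsymbol{f}_i^1(q_{z_1})\boldsymbol{f}_j^2(q_{z_2})$ yields the matrix product $M\,(a_{ij})\,N^{t}$. Inverting by $M^{-1}$ and $(N^{t})^{-1}$ produces the formula
\[
a_{ij}=\sum_{k,l}(M^{-1})_{ik}(N^{-1})_{jl}\cdot(\text{$(k,l)$-joint coefficient of $F$}),
\]
from which $a_{ij}\in\Omega\cdot\Omega_{\boldsymbol{p}}\cdot\mathcal{O}_{\C_p}$ follows directly.

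The main obstacle will be the bookkeeping needed to identify the ``joint Fourier coefficient'' of $F$ at $(\beta^{(k)},\gamma^{(l)})$ as an $\mathcal{O}_{\C_p}$-linear combination of the Siegel coefficients $C(\beta)$: the doubling matrix $R$ in \eqref{2.1.9} is not block-diagonal, so a given $\beta^{(k)}\oplus(-\gamma^{(l)})$ in the $G\times G$ coordinates corresponds to a specific $R$-twisted $\beta\in S_n(F)$, and one must check this change of variables has $p$-integral Jacobian (which it does because the relevant entries of $R$ lie in $\mathfrak{o}_{(\boldsymbol{p})}[1/2]$ and $2$ is a $\boldsymbol{p}$-adic unit under our assumptions). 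The only case requiring extra care is Case V, where the ``Fourier coefficient'' at a CM point must be divided by $\Omega$ to land in $\overline{\Q}$; this is precisely where the factor of $\Omega$ enters the final bound.
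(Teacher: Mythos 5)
Your part (1) argument is fine: right translation by the finite-adelic elements $\eta_1\eta_{\boldsymbol{p}}$, $\eta_2$ does preserve $\overline{\Q}$-rationality since it permutes CM points while leaving the archimedean components (hence the period $\mathcal{P}(g)$) untouched, so the expansion with $a_{ij}\in\overline{\Q}$ drops out of the tensor-product hypothesis.

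For part (2), however, you have taken a genuinely different route from the paper, and the route has a gap for $r>0$. Your inversion argument is essentially the B\"ocherer--Schmidt ``Fourier-coefficient dual basis'' device: fix test data on each $G$-factor so that the matrices $M$, $N$ of Fourier coefficients of the $\boldsymbol{f}_i^1,\boldsymbol{f}_j^2$ are invertible, then match joint coefficients with the Siegel coefficients $C(\beta)$ of $F$ via the doubling. When $r=0$ this works: both $\mathfrak{Z}_{m,0}$ and $\mathcal{H}_n$ are tube domains, every $\boldsymbol{f}_i$ has a genuine Fourier expansion (Remark \ref{remark 5.8}), and the joint $(\beta^{(k)},\gamma^{(l)})$-coefficient of $F$ restricted to $\mathcal{H}_m\times\mathcal{H}_m$ really is a finite algebraic linear combination of the $C(\beta)$ with a controllable, $\boldsymbol{p}$-integral change of variable coming from $R$. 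But the lemma is stated for all the cases of Section \ref{section 5}, including those with $r>0$, where $\mathfrak{Z}_{m,r}$ is not a tube domain and the $\boldsymbol{f}_i$ have only Fourier--Jacobi expansions. You acknowledge this and propose substituting CM-point values for Fourier coefficients on the $G$-side, but then the key identity ``$(k,l)$-joint coefficient of $F$ equals a finite $\overline{\Z}$-linear combination of the $C(\beta)$'' breaks down: the CM-point value $F((g^{(k)},g^{(l)}))$ is a convergent \emph{infinite} sum of the $C(\beta)$ against transcendental exponentials, not a finite algebraic combination, and it only becomes algebraic after dividing by the CM period $\mathcal{P}((g^{(k)},g^{(l)}))$ on $H$. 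Precisely at this point one needs a period comparison $\mathcal{P}((g,g))\sim\Omega\cdot\mathcal{P}(g)^2$, and supplying that comparison (rather than just noting that $\Omega$ ``enters the final bound'') is the content you are missing.

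The paper avoids this by decoupling the two steps entirely: it first expands $F=\sum_i A_i\boldsymbol{h}_i$ against a fixed $\overline{\Q}$-basis $\{\boldsymbol{h}_i\}$ of algebraic modular forms on $H$ (where the tube-domain Fourier expansion always exists, regardless of $r$), inverts a fixed matrix of Fourier coefficients $[c_i(\beta_k)]$ to bound the $A_i$, and only then evaluates the $\boldsymbol{h}_i$ and the $\boldsymbol{f}_i$ at CM points of $G$ to pass from $A_i$ to $a_{ij}$; the CM-period comparison of Shimura's Lemma 26.12 is invoked at this last step and is exactly what produces the factor $\Omega$. To repair your proposal you would either need to restrict to $r=0$, or replace the Fourier-coefficient test functionals on the $G$-side with period-normalized CM-point evaluations \emph{and} carry out the period comparison between $\mathcal{P}((g,g))$ and $\mathcal{P}(g)^2$ explicitly -- at which point you have essentially reconstructed the paper's argument.
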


\begin{proof}
This lemma is a $p$-adic analogue of \cite[Lemma 24.11, Lemma 26.12]{Sh00}. The first part is already proved there and it is also shown that if $C(\beta)\in\overline{\Q}$ then $a_{ij}\in\Omega\cdot\overline{\Q}$. We descent the argument there to $\mathcal{O}_{\C_p}$.

Let $\{\boldsymbol{h}_i\}$ be a basis of $\mathcal{M}_{\boldsymbol{l}}^H(\mathcal{K},\overline{\Q})$ (i.e. space of algebraic modular forms over $H$) where $\mathcal{K}$ is the image of $\mathcal{K}(\boldsymbol{p}^2)\times\mathcal{K}'(\boldsymbol{p}^2)$ under doubling map. We can write
\[
F(h)=\sum_{i}A_i\cdot\boldsymbol{h}_i(h)\text{ for }A_i\in\C
\]
and note that each $\boldsymbol{h}_i$ has a Fourier expansion of the form
\[
\boldsymbol{h}_i(q_z)=\nu(y^{\ast})^l\sum_{\beta\in S}c_i(\beta)e_{\infty}(\tau(\beta z)).
\]
There exist a system $\{\beta_i\}$ such that the matrix $[c_i(\beta_k)]_{ik}$ is of full rank. For each $i,k$, $c_i(\beta_k)\in\overline{\Q}$ by the algebraicity of $\boldsymbol{h}_i$ and we can pick a constant $\Omega_1$ depending on $\{\beta_i\}$ and $\{\boldsymbol{h}_i\}$ such that $c_i(\beta_k)\in\Omega_1\mathcal{O}_{\C_p}$. Then $C(\beta)\in\mathcal{O}_{\C_p}$ implies $A_i\in\Omega_1^{-1}\mathcal{O}_{\C_p}$.

Choose a system of CM points $\{g_i\}$ of $G$ such that the matrix $X=[\boldsymbol{f}_i(g_k)]_{ik}$ is of full rank. Note that for any $k$, $(g_k,g_k)$ is a CM point of $H$ so that $\boldsymbol{h}_i((g_k,g_k))\in\mathcal{P}((g_k,g_k))\overline{\Q}$ where $\mathcal{P}((g_k,g_k))$ is the period of CM points over $H$. There exist a constant $\Omega_2$ depending on $\{g_k\}$ and $\{\boldsymbol{h}_i\}$ such that 
\[
\boldsymbol{h}_i((g_k,g_k))\in\Omega_2\mathcal{P}((g_k,g_k))\mathcal{O}_{\C_p}.
\]
Then $C(\beta)\in\mathcal{O}_{\C_p}$ further implies
\[
F((g_k,g_k))\in\Omega_1^{-1}\Omega_2\mathcal{P}((g_k,g_k))\mathcal{O}_{\C_p}.
\]

By the algebraicity of $\boldsymbol{f}_i$, we have $\boldsymbol{f}_i(g_k)\in\mathcal{P}(g_k)\overline{\Q}$ where $\mathcal{P}(g_k)$ is the period of CM points over $G$. We can choose a constant $\Omega_3$ depending on $\{g_k\}$ and $\{\boldsymbol{f}_i\}$ such that
\[
\boldsymbol{f}_i(g_k)\in\Omega_3\mathcal{P}(g_k)\mathcal{O}_{\C_p}.
\]
Now write $F$ as in \eqref{8.2.5} and compare the period $\mathcal{P}(g_k,g_k),\mathcal{P}(g_k)$ as in the proof of \cite[Lemma 26.12]{Sh00}, we conclude that $C(\beta)\in\mathcal{O}_{\C_p}$ implies
\[
a_{ij}\in\Omega\cdot\Omega_1^{-1}\Omega_2\Omega_3^{-1}\cdot\mathcal{O}_{\C_p}.
\]
Take $\Omega_{\boldsymbol{p}}=\Omega_1^{-1}\Omega_2\Omega_3^{-1}$ which is clearly independent of $F$ by our above constructions and the lemma follows.
\end{proof}

\subsection{$p$-adic $L$-functions for unitary and quaternionic unitary groups}

Denote $\Omega=1$ for quaternionic unitary groups and $\Omega$ is the CM period \eqref{cmperiod} for unitary groups. Fix $\chi_1$ be a Hecke character of conductor $\mathfrak{n}_2$ and infinity type $\boldsymbol{l}$. We define a $p$-adic distribution $\mu(\boldsymbol{f})$ such that for any Hecke character $\boldsymbol{\chi}$ of conductor $\boldsymbol{p^c}$, 
\begin{equation}
\label{8.11}
\begin{aligned}
\int_{\mathrm{Cl}_E^+(\boldsymbol{p}^{\infty})}\boldsymbol{\chi}d\mu(\boldsymbol{f})&:=\alpha(\boldsymbol{p})^{2-2\boldsymbol{n}}C'^{-1}|\boldsymbol{\varpi}|^{\boldsymbol{c}\mathbf{d}_1\frac{m(m-1)}{2}}G(\chi)^{-m}\pi^{d(F)d(\pi)}\\
&\times\left(\prod_{i=0}^{n-1}\Gamma(\mathbf{d}_1(l-i))\right)^{d(F)}\cdot\frac{\boldsymbol{Z}(s_0;\boldsymbol{f},f_s,\chi,\boldsymbol{n})}{\Omega\cdot\langle\boldsymbol{f},\boldsymbol{f}\rangle^2}.
\end{aligned}
\end{equation}
Here we are again denoting $\chi=\boldsymbol{\chi}\chi_1$ when $\boldsymbol{\chi}$ varying. The right hand side in above formula is indeed independent of $\boldsymbol{n}$ and $\mu(\boldsymbol{f})$ is a well-defined $p$-adic distribution. 

Assume $\boldsymbol{\chi}$ is of finite order and $\chi$ has infinity type $\boldsymbol{l}$. By \eqref{6.16}, \eqref{6.17} we have for $\boldsymbol{c}>0$,
\begin{equation}
\label{8.12}
\begin{aligned}
\int_{\mathrm{Cl}_E^+(\boldsymbol{p}^{\infty})}\boldsymbol{\chi}d\mu(\boldsymbol{f})&=|\boldsymbol{\varpi}|^{\boldsymbol{c}\mathbf{d}_1\frac{m(m-1)}{2}}G(\chi)^{-m}\pi^{d(F)d(\pi)}\\
&\times\left(c_l(s_0)\prod_{i=0}^{n-1}\Gamma(\mathbf{d}_1(l-i))\right)^{d(F)}\cdot\frac{L\left(s_0+\frac{1}{2},\boldsymbol{f}\times\chi\right)}{\Omega\cdot\langle\boldsymbol{f},\boldsymbol{f}\rangle},
\end{aligned}
\end{equation}
and for $\boldsymbol{c}=0$,
\begin{equation}
\label{8.13}
\begin{aligned}
\int_{\mathrm{Cl}_E^+(\boldsymbol{p}^{\infty})}\boldsymbol{\chi}d\mu(\boldsymbol{f})&=G(\chi)^{-m}\pi^{d(F)d(\pi)}M\left(s_0+\frac{1}{2},\boldsymbol{f}\times\chi\right)\\
&\times\left(c_l(s_0)\prod_{i=0}^{n-1}\Gamma(\mathbf{d}_1(l-i))\right)^{d(F)}\cdot\frac{L\left(s_0+\frac{1}{2},\boldsymbol{f}\times\chi\right)}{\Omega\cdot\langle\boldsymbol{f},\boldsymbol{f}\rangle}.
\end{aligned}
\end{equation}

\begin{thm}
\label{theorem 8.5}
Assume $\boldsymbol{f}$ is $p$-ordinary in the sense that $\alpha(\boldsymbol{p})\in\mathcal{O}_{\C_p}^{\times}$. Then $\mu(\boldsymbol{f})$ defined above is a $p$-adic measure.
\end{thm}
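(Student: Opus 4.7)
The plan is to verify boundedness of $\mu(\boldsymbol{f})(\boldsymbol{\chi})$ as $\boldsymbol{\chi}$ ranges over all finite-order characters of $\mathrm{Cl}_E^+(\boldsymbol{p}^{\infty})$ of arbitrary $\boldsymbol{p}$-power conductor; by density of locally constant functions in $\mathcal{A}(\mathrm{Cl}_E^+(\boldsymbol{p}^{\infty}),\C_p)$ this will extend $\mu(\boldsymbol{f})$ to a bounded $\C_p$-valued measure. The $p$-ordinarity hypothesis $\alpha(\boldsymbol{p})\in\mathcal{O}_{\C_p}^{\times}$ makes the factor $\alpha(\boldsymbol{p})^{2-2\boldsymbol{n}}$ in \eqref{8.11} a $p$-adic unit, and the remaining constants $C'$, $c_l(s_0)^{d(F)}$, and $\prod_{i}\Gamma(\mathbf{d}_1(l-i))^{d(F)}$ are algebraic numbers whose $p$-adic behaviour will be absorbed by cancellations against the Fourier expansion of the Eisenstein series.

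First I would rewrite $\boldsymbol{Z}(s_0;\boldsymbol{f},f_s,\chi,\boldsymbol{n})$ using \eqref{6.14} as a double Petersson pairing and apply Lemma \ref{lemma 8.4}(1) to expand the algebraic modular form $\boldsymbol{E}(g_1,g_2;f_s,\chi,\boldsymbol{n})$ at $s=s_0$ as a finite linear combination $\sum_{i,j}a_{ij}(\chi)\boldsymbol{f}_i^1(g_1)\boldsymbol{f}_j^2(g_2)$ in a basis of $\mathcal{M}_{\boldsymbol{l}}(\mathcal{K}(\boldsymbol{p}^2),\overline{\Q})\otimes\mathcal{M}_{\boldsymbol{l}}(\mathcal{K}'(\boldsymbol{p}^2),\overline{\Q})$ that contains $\boldsymbol{f}$. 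Because $\boldsymbol{f}$ is a Hecke eigenform, only the diagonal term $a_{\boldsymbol{f}\boldsymbol{f}}(\chi)$ contributes to the pairing against $\boldsymbol{f}\otimes\boldsymbol{f}$, so
\[
\boldsymbol{Z}(s_0;\boldsymbol{f},f_s,\chi,\boldsymbol{n}) = a_{\boldsymbol{f}\boldsymbol{f}}(\chi)\cdot\langle\boldsymbol{f}^1,\boldsymbol{f}^1\rangle\cdot\langle\boldsymbol{f}^2,\boldsymbol{f}^2\rangle\cdot(\text{algebraic scalar}).
\]
Since $\langle\boldsymbol{f}^j,\boldsymbol{f}^j\rangle$ and $\langle\boldsymbol{f},\boldsymbol{f}\rangle$ differ by an algebraic unit, dividing by $\langle\boldsymbol{f},\boldsymbol{f}\rangle^2$ in \eqref{8.11} reduces the claim to showing that, after the remaining normalizations, $a_{\boldsymbol{f}\boldsymbol{f}}(\chi)/\Omega$ lies in a fixed $\mathcal{O}_{\C_p}$-lattice uniformly in $\boldsymbol{\chi}$.

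Second, by Lemma \ref{lemma 8.4}(2) this last statement reduces to $p$-integrality of the Fourier coefficients of the normalized Eisenstein series. Using the explicit expansion in Proposition \ref{proposition 7.10}, the Fourier coefficient at $\beta\in\boldsymbol{S}^{\boldsymbol{p}}$ is $C(\beta,\chi)\,\mathbb{E}(\beta;\chi)$, and each factor in \eqref{8.11} is designed precisely to cancel the corresponding piece: the prefactor $|\boldsymbol{\varpi}|^{\boldsymbol{c}\mathbf{d}_1 m(m-1)/2}$ cancels the identical factor in $C(\beta,\chi)$; $G(\chi)^{-m}$ cancels the Gauss sum in $C(\beta,\chi)$; and the product $\pi^{d(F)d(\pi)}\prod_i\Gamma(\mathbf{d}_1(l-i))^{d(F)}$ cancels the archimedean numerator and $\Gamma$-denominator of $\mathbb{E}(\beta;\chi)$ by the explicit formulas in Proposition \ref{proposition 7.10} and the definition \eqref{7.4.8} of $d(\pi)$. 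After these cancellations the Fourier coefficient reduces to an algebraic quantity built from $\nu(2\beta)^{l-\kappa}$, the root of unity $\chi(\nu(\beta_5))$, and the unramified Hecke polynomial values $P_{a_v^{\ast}\beta a_v}(\chi(q_v)q_v^{-l})$ at $v\nmid\mathfrak{n}\boldsymbol{p}$; each lies in $\mathcal{O}_{\C_p}$ since $\beta\in\boldsymbol{S}^{\boldsymbol{p}}$ has no $\boldsymbol{p}$-denominators, the $P$'s have integer coefficients, and $\chi(q_v)q_v^{-l}$ is a $p$-adic unit when $v\nmid\boldsymbol{p}$.

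The main obstacle will be tracking the $\boldsymbol{c}$- and $\boldsymbol{n}$-dependence uniformly. The Gauss sum contributes $p$-adic valuation $-m\boldsymbol{c}\mathbf{d}_1/2$; the volume factor in $C'^{-1}$ grows with $\boldsymbol{c}$; and the $\alpha(\boldsymbol{p})^{2-2\boldsymbol{n}}$ must exactly absorb the $\boldsymbol{n}$-dependence introduced by the trace-type operator $\boldsymbol{U}(\boldsymbol{p}^{\boldsymbol{n}-1})$ used to descend the level of the Eisenstein series to $\mathcal{K}(\boldsymbol{p}^2)$. Manifest $\boldsymbol{n}$-independence of $\mu(\boldsymbol{f})(\boldsymbol{\chi})$ is already built into \eqref{8.11}, so the substantive content is uniform control as $\boldsymbol{c}\to\infty$; this follows from combining the explicit cancellations above with the $p$-ordinarity unit $\alpha(\boldsymbol{p})^{2-2\boldsymbol{n}}$, but writing it out cleanly requires careful bookkeeping of all the $|\boldsymbol{\varpi}|_p$-powers in $C'$, $G(\chi)$, and $C(\beta,\chi)$ at once.
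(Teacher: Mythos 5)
Your proposal is correct and takes essentially the same route as the paper: reduce boundedness of $\mu(\boldsymbol{f})$ to $p$-integrality of the Fourier coefficients of the normalized Eisenstein series via Lemma~\ref{lemma 8.4}, and then verify that integrality from the explicit formulas (Proposition~\ref{proposition 7.1} feeding into Proposition~\ref{proposition 7.10}), with $p$-ordinarity absorbing the $\alpha(\boldsymbol{p})^{2-2\boldsymbol{n}}$ factor. The paper's proof is a terse single paragraph citing \cite{BS} and \cite{JYB2}; you have simply made explicit the intermediate eigenform-orthogonality step (picking out the diagonal coefficient $a_{\boldsymbol{f}\boldsymbol{f}}$, as in the proof of Theorem~\ref{theorem 8.1}) and the cancellation bookkeeping between the normalizing factors in \eqref{8.11} and the constants $C(\beta,\chi)$, $\mathbb{E}(\beta;\chi)$, which the paper leaves implicit.
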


\begin{proof}
The proof is similar to \cite[Section 9]{BS}. Indeed, by Lemma \ref{lemma 8.4}, the boundness of the distribution $\mu(\boldsymbol{f})$ defined above follows from the boundness of the Fourier coefficients of Eisenstein series which can be checked straightforwardly from explicit formulas in Proposition \ref{proposition 7.1}. One can also verify $\mu(\boldsymbol{f})$ is a $p$-adic measure by checking the Kummer congruences following \cite{CP96}. For more details see also \cite[Theorem 6.4]{JYB2}, in which we prove the Kummer congruences for totally isotropic quaternionic unitary groups when $F=\Q$.
\end{proof}

\subsection{$p$-adic $L$-functions for symplectic and quaternionic orthogonal groups}

As we have mentioned before, the Case II, III are different to Case IV, V because of the occurrence of the Hecke $L$-function for the Fourier expansion in Proposition \ref{proposition 7.10}. Therefore, we treat these two cases by the known $p$-adic interpolation of Hecke $L$-functions as in \cite[Section 8]{BS} and \cite{LZ20}. 

We first recall some fact about Hecke $L$-functions. Let $\psi:F^{\times}\backslash\mathbb{A}_F^{\times}\to\C^{\times}$ be any Hecke character trivial at infinity with conductor $c(\psi)$. We assume for simplicity that $l,m$ has the same parity, i.e. $l\equiv m\text{ mod }2$ in Case II so that $s_0+\frac{1}{2}$ is always even. In this case, there is a functional equation (\cite[Theorem 18.12]{Sh00})
\begin{equation}
\label{8.14}
L\left(s_0+\frac{1}{2},\psi\right)=\left(\frac{(2\pi i)^{s_0+\frac{1}{2}}}{2\Gamma\left(s_0+\frac{1}{2}\right)}\right)^{d(F)}\frac{\mathfrak{D}_F^{1/2}G^F(\psi)}{N_{F/\Q}(c(\psi))^{s_0-\frac{1}{2}}}\cdot L\left(\frac{1}{2}-s_0,\psi^{-1}\right)
\end{equation}

We denote
\begin{equation}
\label{8.18}
L_{\boldsymbol{p}}\left(s_0+\frac{1}{2},\psi\right)=\left\{\begin{array}{cc}
1 & \boldsymbol{p}|c(\psi),\\
\left(1-\psi(\boldsymbol{\varpi})|\boldsymbol\varpi|^{s_0+\frac{1}{2}}\right)^{-1} & \boldsymbol{p}\nmid c(\psi),
\end{array}\right.
\end{equation}
for the local $L$-factor at $\boldsymbol{p}$.

When $c(\psi)$ is coprime to $\boldsymbol{p}$, there is a $p$-adic measure $\mu(\psi)$ (see for example \cite{B78, CN79, DR80}) such that for all Hecke character $\boldsymbol{\chi}$ of conductor $\boldsymbol{p^c}$,
\begin{equation}
\label{8.15}
\int_{\mathrm{Cl}_F^+(\boldsymbol{p}^{\infty})}\boldsymbol{\chi}\mu(\psi)=L_{\boldsymbol{p}}\left(\frac{1}{2}-s_0,\psi^{-1}\boldsymbol{\chi}\right)\cdot L\left(\frac{1}{2}-s_0,\psi^{-1}\boldsymbol{\chi}\right).
\end{equation}
The existence of such measure is equivalent to the existence of Kummer congruences (\cite[Proposition 1.7]{CP96}). In particular, for some constant $C\in\C_p$ with $C\cdot\boldsymbol{\chi}(x)\in \mathcal{O}_{\C_p}$ for all $x\in\mathrm{Cl}_F^+(\boldsymbol{p}^{\infty})$, we have
\begin{equation}
\label{8.16}
C\cdot L_{\boldsymbol{p}}\left(\frac{1}{2}-s_0,\psi^{-1}\boldsymbol{\chi}\right)\cdot L\left(\frac{1}{2}-s_0,\psi^{-1}\boldsymbol{\chi}\right)\in\mathcal{O}_{\C_p}
\end{equation}

Fix $\chi_1$ be a Hecke character of conductor $\mathfrak{n}_2$ and infinity type $\boldsymbol{l}$. We define a $p$-adic distribution $\mu(\boldsymbol{f})$ such that for any Hecke character $\boldsymbol{\chi}$ of conductor $\boldsymbol{p^c}$, 
\begin{equation}
\label{8.17}
\begin{aligned}
\int_{\mathrm{Cl}_F^+(\boldsymbol{p}^{\infty})}\boldsymbol{\chi}d\mu(\boldsymbol{f})&:=\alpha(\boldsymbol{p})^{2-2\boldsymbol{n}}
C'^{-1}|\boldsymbol{\varpi}|^{\boldsymbol{c}\mathbf{d}_1\frac{m(m-1)}{2}}N_{F/\Q}(\boldsymbol{p})^{\boldsymbol{c}\left(s_0-\frac{1}{2}\right)}\pi^{d(F)d(\pi)}\\
&\times G(\chi)^{-m}G^F(\chi)^{-1}\left(\Gamma\left(s_0+\frac{1}{2}\right)\prod_{i=0}^{n\mathbf{d}_1-1}\Gamma\left(l-\frac{i}{2}\right)\right)^{d(F)}\\
&\times\frac{L_{\boldsymbol{p}}\left(s_0+\frac{1}{2},\chi\right)}{L_{\boldsymbol{p}}\left(\frac{1}{2}-s_0,\chi^{-1}\right)}\cdot\frac{\boldsymbol{Z}(s_0;\boldsymbol{f},f_s,\chi,\boldsymbol{n})}{\langle\boldsymbol{f},\boldsymbol{f}\rangle}.
\end{aligned}
\end{equation}

Assume $\boldsymbol{\chi}$ is of finite order and $\chi$ has infinity type $\boldsymbol{l}$. By \eqref{6.16}, \eqref{6.17} we have for $\boldsymbol{c}>0$,
\begin{equation}
\label{8.19}
\begin{aligned}
\int_{\mathrm{Cl}_F^+(\boldsymbol{p}^{\infty})}\boldsymbol{\chi}d\mu(\boldsymbol{f})&=|\boldsymbol{\varpi}|^{\boldsymbol{c}\mathbf{d}_1\frac{m(m-1)}{2}}N_{F/\Q}(\boldsymbol{p})^{\boldsymbol{c}\left(s_0-\frac{1}{2}\right)}G(\chi)^{-m}G^F(\chi)^{-1}\pi^{d(F)d(\pi)}\\
&\times\left(c_l(s_0)\Gamma\left(s_0+\frac{1}{2}\right)\prod_{i=0}^{n\mathbf{d}_1-1}\Gamma\left(l-\frac{i}{2}\right)\right)^{d(F)}\cdot\frac{1-\chi^{-1}(\boldsymbol{\varpi})|\boldsymbol{\varpi}|^{\frac{1}{2}-s_0}}{1-\chi(\boldsymbol{\varpi})|\boldsymbol{\varpi}|^{s_0+\frac{1}{2}}}\\
&\times\frac{L\left(s_0+\frac{1}{2},\boldsymbol{f}\times\chi\right)}{\langle\boldsymbol{f},\boldsymbol{f}\rangle},
\end{aligned}
\end{equation}
and for $\boldsymbol{c}=0$,
\begin{equation}
\label{8.20}
\begin{aligned}
&\int_{\mathrm{Cl}_F^+(\boldsymbol{p}^{\infty})}\boldsymbol{\chi}d\mu(\boldsymbol{f})\\
=&|\boldsymbol{\varpi}|^{\boldsymbol{c}\mathbf{d}_1\frac{m(m-1)}{2}}N_{F/\Q}(\boldsymbol{p})^{\boldsymbol{c}\left(s_0-\frac{1}{2}\right)}G(\chi)^{-m}G^F(\chi)^{-1}\pi^{d(F)d(\pi)}\\
\times&\left(c_l(s_0)\Gamma\left(s_0+\frac{1}{2}\right)\prod_{i=0}^{n\mathbf{d}_1-1}\Gamma\left(l-\frac{i}{2}\right)\right)^{d(F)}\\
\times&M\left(s_0+\frac{1}{2},\boldsymbol{f}\times\chi\right)\cdot\frac{L\left(s_0+\frac{1}{2},\boldsymbol{f}\times\chi\right)}{\langle\boldsymbol{f},\boldsymbol{f}\rangle}.
\end{aligned}
\end{equation}

\begin{thm}
\label{theorem 8.6}
Assume $l\equiv m\text{ mod }2$ in Case II and $\boldsymbol{p}$ splits in Case III when $r=1$. Assume $\boldsymbol{f}$ is $p$-ordinary in the sense that $\alpha(\boldsymbol{p})\in\mathcal{O}_{\C_p}^{\times}$. Then $\mu(\boldsymbol{f})$ defined above is a $p$-adic measure.
\end{thm}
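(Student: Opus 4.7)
The plan is to adapt the argument of Theorem \ref{theorem 8.5}, with the essential new ingredient being the $p$-adic interpolation of the Hecke $L$-factor $L(s_0+\frac{1}{2},\chi\lambda_\beta)$ that appears in the Fourier expansion from Proposition \ref{proposition 7.10} for Cases II and III. The insertion of the ratio $L_{\boldsymbol{p}}(s_0+\frac{1}{2},\chi)/L_{\boldsymbol{p}}(\frac{1}{2}-s_0,\chi^{-1})$ into the definition \eqref{8.17} of $\mu(\boldsymbol{f})$ is designed precisely to match the $\boldsymbol{p}$-local factors of the functional equation \eqref{8.14}, so that the special values can be interpolated by the known $p$-adic measure $\mu(\psi)$ of \eqref{8.15}.

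More concretely, I would first invoke Lemma \ref{lemma 8.4} to write $\boldsymbol{E}(g_1,g_2;f_s)=\sum_{i,j}a_{ij}(\chi)\boldsymbol{f}_i^1(g_1)\boldsymbol{f}_j^2(g_2)$ for a basis $\{\boldsymbol{f}_i\}$ of $\mathcal{M}_{\boldsymbol{l}}(K(\mathfrak{n}\boldsymbol{p}^2),\overline{\Q})$ containing $\boldsymbol{f}_1=\boldsymbol{f}$. By the usual Petersson unfolding argument used in the proof of Theorem \ref{theorem 8.1}, the coefficient $a_{11}(\chi)$ is proportional to $\boldsymbol{Z}(s_0;\boldsymbol{f},f_s,\chi,\boldsymbol{n})/\bigl(\langle\boldsymbol{f},\boldsymbol{f}\rangle\langle\boldsymbol{f}^2|\boldsymbol{V},\boldsymbol{f}^2\rangle\bigr)$, and by part (2) of Lemma \ref{lemma 8.4}, if the normalized Fourier coefficients of $\boldsymbol{E}(h;f_s,\chi,\boldsymbol{n})$ lie uniformly in $\mathcal{O}_{\C_p}$ then so does $a_{11}(\chi)/\Omega_{\boldsymbol{p}}$. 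The whole problem is thus reduced to the $p$-adic boundedness of the coefficients $C(\beta,\chi)\cdot\mathbb{E}(\beta;\chi)$ from Proposition \ref{proposition 7.10} as $\boldsymbol{\chi}$ varies.

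Second, I would use the functional equation \eqref{8.14} to rewrite each $L_v(s_0+\frac{1}{2},\chi\lambda_\beta)$ appearing in $\mathbb{E}(\beta;\chi)$ as an algebraic multiple of $L_v(\frac{1}{2}-s_0,\chi^{-1}\lambda_\beta)$ times explicit $\Gamma$-factors, Gauss sums, and powers of $N_{F/\Q}(c(\chi\lambda_\beta))^{s_0-1/2}$ (using that $\lambda_\beta$ is quadratic). The parity assumption $l\equiv m\pmod 2$ in Case II ensures $s_0+\frac{1}{2}$ is an even integer where the functional equation applies; together with the observation in Remark \ref{remark 7.11} (when $r=0$, or when $\boldsymbol{p}$ splits in $D$ in Case III), this guarantees that $\nu(2\beta)$ is a square modulo $\boldsymbol{p}$ for every $\beta$ contributing to the sum, so $\lambda_\beta$ is unramified at $\boldsymbol{p}$. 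After this conversion the normalizing ratio $L_{\boldsymbol{p}}(s_0+\frac{1}{2},\chi)/L_{\boldsymbol{p}}(\frac{1}{2}-s_0,\chi^{-1})$ built into \eqref{8.17} combines with the factors to yield exactly
\begin{equation*}
L_{\boldsymbol{p}}\!\left(\tfrac{1}{2}-s_0,\chi^{-1}\lambda_\beta\right)\cdot L\!\left(\tfrac{1}{2}-s_0,\chi^{-1}\lambda_\beta\right),
\end{equation*}
which by \eqref{8.15} is the value at $\boldsymbol{\chi}$ of the $p$-adic measure $\mu(\chi_1\lambda_\beta)$ and hence is $p$-adically bounded uniformly in $\boldsymbol{\chi}$ by \eqref{8.16}.

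Third, the remaining algebraic factors in $\mathbb{E}(\beta;\chi)$ (the polynomials $P_{a_v^*\beta a_v}(\chi(q_v)q_v^{-l})$ with integral coefficients, and the $G(\chi)^m G^F(\chi)$ absorbed by $G(\chi)^{-m}G^F(\chi)^{-1}$ in \eqref{8.17}) are $p$-integral, as is the $N_{F/\Q}(\boldsymbol{p})^{\boldsymbol{c}(s_0-1/2)}$ factor matching the conductor contribution from the functional equation. The $\boldsymbol{p}$-ordinary hypothesis $\alpha(\boldsymbol{p})\in\mathcal{O}_{\C_p}^\times$ absorbs the $\alpha(\boldsymbol{p})^{2-2\boldsymbol{n}}$ factor uniformly in $\boldsymbol{n}$. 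Combining these inputs with Lemma \ref{lemma 8.4}(2) yields a uniform bound on $\int_{\mathrm{Cl}_F^+(\boldsymbol{p}^\infty)}\boldsymbol{\chi}\,d\mu(\boldsymbol{f})$, proving $\mu(\boldsymbol{f})$ is a measure. The main obstacle is the arithmetic control of $\lambda_\beta$ at $\boldsymbol{p}$: without the splitting hypothesis on $\boldsymbol{p}$ in $D$ in the $r=1$ Case III subcase, the discriminant $\nu(2\beta)$ may fail to be a $\boldsymbol{p}$-adic square for some $\beta$ with nonzero Fourier coefficient, so $\lambda_\beta$ can become ramified at $\boldsymbol{p}$ and the measure $\mu(\chi_1\lambda_\beta)$ of \eqref{8.15} would no longer interpolate the required $L$-values; this is exactly why the splitting assumption appears in the statement.
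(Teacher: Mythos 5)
Your proposal reproduces the paper's argument faithfully: reduce to $p$-adic boundedness of the Fourier coefficients via Lemma \ref{lemma 8.4}, then handle the Hecke $L$-factor $\prod_{v\nmid\mathfrak{n}}L_v(s_0+\tfrac12,\chi\lambda_\beta)$ by passing through the functional equation \eqref{8.14} so that the inserted ratio $L_{\boldsymbol{p}}(s_0+\tfrac12,\chi)/L_{\boldsymbol{p}}(\tfrac12-s_0,\chi^{-1})$ matches the interpolation formula \eqref{8.15}--\eqref{8.16}, with the parity hypothesis guaranteeing $s_0+\tfrac12$ is an even integer and the $\boldsymbol{p}$-splitting hypothesis (via Remark \ref{remark 7.11}) guaranteeing $\lambda_\beta(\boldsymbol{\varpi})=1$. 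This is essentially the same approach as the paper's proof; the only small imprecision is the claim that the factors combine ``exactly'' into $L_{\boldsymbol{p}}(\tfrac12-s_0,\chi^{-1}\lambda_\beta)L(\tfrac12-s_0,\chi^{-1}\lambda_\beta)$ --- the residual expression also carries the Gauss-sum ratio $G^F(\chi\lambda_\beta)/G^F(\chi)$ and the product $\prod_{v\mid\mathfrak{n}}L_v(s_0+\tfrac12,\chi\lambda_\beta)^{-1}$, which are bounded but nonzero; you do account for the Gauss sums in your third step, so this is a matter of phrasing rather than a gap.
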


\begin{proof}
The argument for checking $\mu(\boldsymbol{f})$ is again similar to \cite[Section 9]{BS} or \cite{CP96}. We have also proved the Kummer congruences for isotropic quaternionic orthogonal groups when $F=\Q$ in \cite[Theorem 6.5]{JYB2}. The main difference between Case II, III and Case IV, V is the occurrence of following term
\[
\prod_{v\nmid\mathfrak{n}}L_v\left(s_0+\frac{1}{2},\chi\lambda_{\beta}\right)
\]
for the Fourier expansion in Proposition \ref{proposition 7.10}. We now explain how to deal with this term. When $r=1$ in Case III, we take the product only for those $v$ splits in $D$, but this does not change our argument. Comparing \eqref{8.11} and \eqref{8.17}, notice that we have multiply a term
\[
N_{F/\Q}^{\boldsymbol{c}\left(s_0-\frac{1}{2}\right)}G^F(\chi)^{-1}\Gamma\left(s_0+\frac{1}{2}\right)^{d(F)}\frac{L_{\boldsymbol{p}}\left(s_0+\frac{1}{2},\chi\right)}{L_{\boldsymbol{p}}\left(\frac{1}{2}-s_0,\chi^{-1}\right)}.
\] 
By the functional equation \eqref{8.14}, after multiplying above term and cancel out the power of $\pi$ it remains to consider
\[
\frac{G^F(\chi\lambda_{\beta})}{G^F(\chi)}\cdot\prod_{v|\mathfrak{n}}L_v\left(s_0+\frac{1}{2},\chi\lambda_{\beta}\right)^{-1}\cdot\frac{L_{\boldsymbol{p}}\left(s_0+\frac{1}{2},\chi\right)}{L_{\boldsymbol{p}}\left(\frac{1}{2}-s_0,\chi^{-1}\right)}L\left(\frac{1}{2}-s_0,\chi^{-1}\lambda_{\beta}^{-1}\right).
\]
Under our assumption ($\boldsymbol{p}$ splits in Case III when $r=1$), $\nu(\beta)$ is always a square mod $\boldsymbol{p}$ (Remark \ref{remark 7.11}) so that $\lambda_{\beta}(\boldsymbol{\varpi})=1$. Using the $p$-adic interpolation of above Hecke $L$-function, especially \eqref{8.16}, one checks that above term is in $\mathcal{O}_{\C_p}$ up to a bounded constant. Then our theorem follows from the explicit formulas for Fourier expansion of Eisenstein series in Proposition \ref{proposition 7.1} and Lemma \ref{lemma 8.4}. 
\end{proof}

We give a final remark on what we have not done in this paper.

\begin{rem}
$\text{ }$\\
(1) In this paper we have only consider one critical point at $s_0$. Of course one may also discuss other critical points by the standard process of applying differential operators. There are two approaches for applying the differential operators. One is following \cite{CP96, LZ20, Sh00}, in which the differential operator studied in \cite{Sh94} is applied. This kind of differential operators are defined for all classical groups discussed here but one need to consider the nearly holomorphic Eisenstein series and apply the holomorphic projection. Another approach is following \cite{BS}, in which the holomorphic differential operator constructed in \cite{B85} is used and the holomorphic projection is avoided. The differential operator constructed there can also generalized to other groups with $r=0$ (see for example \cite{JYB2} for the quaternionic unitary case). However, we do not know whether one can construct such differential operators for general groups with $r>0$. \\
(2) For the $p$-adic $L$-functions, we have only computed the interpolation at $\chi$ as assumed at the beginning of Section \ref{section 8}. In particular, we have only considered $\chi$ of infinity type $\boldsymbol{l}$ coincided with the weight of modular forms $\boldsymbol{f}$. This is because in our integral representation, we need the weight of Eisenstein series (which equals the infinity type of $\chi$) coincide with the weight of modular forms. To consider Hecke characters of other infinity type, we will need applying the differential operators on the Eisenstein series to shift the weight.\\
(3) We have only considered the parallel weight in this section. Especially, our archimedean computations have only done for scalar weight. For the general weight, \cite{EL} and \cite{LZ21} have computed the archimedean integrals for unitary and symplectic groups. Also in \cite{Pi21,Pi22} the archimedean integrals are calculated for symplectic group in a different way. 
\end{rem}

\section*{Acknowledgement}

This paper is prepared as part of my thesis at Durham University. I would like to express my gratitude to my supervisor, Thanasis Bouganis, for his valuable suggestions. I thank my thesis examiners Tobias Berger and Herbert Gangl for providing several helpful comments. I also thank Alexei Panchishkin for inviting me to the Fourier Institute and the nice discussion there with him and Siegfried Böcherer.

\end{document}